\newtheorem{theorem}{Theorem}
\newtheorem{corollary}{Corollary}
\newtheorem{definition}{Definition}
\newtheorem{example}{Example}
\newtheorem{lemma}{Lemma}
\newtheorem{proposition}{Proposition}
\newtheorem{remark}[theorem]{Remark}
\newenvironment{proof}[1][Proof]{\noindent\textbf{#1.} }{\ \rule{0.5em}{0.5em}}
\DeclareMathOperator{\pa}{pa}
\DeclareMathOperator{\doo}{do}
\DeclareMathOperator{\ch}{ch}
\DeclareMathOperator{\an}{an}
\DeclareMathOperator{\de}{de}
\DeclareMathOperator{\cn}{cn}
\DeclareMathOperator{\nd}{nd}
\DeclareMathOperator{\forb}{forb}
\DeclareMathOperator{\indir}{indir}
\DeclareMathOperator{\dir}{dir}
\DeclareMathOperator{\irrel}{irrel}
\newcommand{\cw}{\overset{d}{\rightarrow}}
\newcommand{\ort}{\perp \!\!\!\perp}
\newcommand{\A}{\mathbf{A}}
\author[1]{Andrea Rotnitzky \thanks{arotnitzky@utdt.edu}}
\affil[1]{Department of Economics, Universidad Torcuato Di Tella, CONICET and Harvard T. H. Chan School of Public Health}
\author[2]{Ezequiel Smucler \thanks{esmucler@utdt.edu}}
\affil[2]{Department of Mathematics and Statistics, Universidad Torcuato Di Tella}
\begin{document}

\title{Efficient adjustment sets for population average treatment effect estimation in
non-parametric causal graphical models}
\maketitle

\begin{abstract}

The method of covariate adjustment is often used for estimation of
population average causal treatment effects in observational studies.
Graphical rules for determining all valid covariate adjustment sets from an
assumed causal graphical model are well known. Restricting attention to
causal linear models, a recent article \citep{perkovic} derived two novel
graphical criteria: one to compare the asymptotic variance of linear
regression treatment effect estimators that control for certain distinct
adjustment sets and another to identify the optimal adjustment set that
yields the adjusted least squares treatment effect estimator with the
smallest asymptotic variance among consistent adjusted least squares
estimators. In this paper we show that the same graphical criteria can be
used in non-parametric causal graphical models when treatment effects are
estimated by contrasts involving non-parametrically adjusted estimators of
the interventional means. We also provide a graphical criterion for
determining the optimal adjustment set among the minimal adjustment sets, which is valid for both linear and non-parametric estimators. We
additionally provide a new graphical criterion for comparing 
time dependent adjustment sets, that is, sets comprised by covariates
that adjust for future treatments and that are themselves affected by
earlier treatments. We show by example that uniformly optimal time dependent
adjustment sets do not always exist. In addition, for point interventions,
we provide a sound and complete graphical criterion for determining when a
non-parametric optimally adjusted estimator of an interventional mean, or of
a contrast of interventional means, is as efficient as an efficient
estimator of the same parameter that exploits the information in the
conditional independencies encoded in the non-parametric causal graphical
model. The algorithm also checks for possible simplifications of the
efficient influence function of the parameter. We find an interesting
connection between identification and efficient covariate adjustment
estimation. Specifically, we show that if there exists an identifying
formula for an interventional mean that depends only on treatment, outcome
and mediators, then the non-parametric optimally adjusted estimator can
never be globally efficient under the non-parametric causal graphical model.
\end{abstract}

\section{Introduction}

Estimating total, population average, causal treatment effects by
controlling for, that is, conditioning on, a subset of covariates is known as
the method of covariate adjustment. Assuming a causal directed acyclic graph (DAG) model, the
back-door criterion \citep{pearl-causality} is a popular graphical criterion
that gives sufficient conditions for a covariate set to be such that control
for this set yields consistent estimators of total treatment effects.
\cite{shpitser-adjustment} gives a necessary and sufficient graphical criterion
for a subset of covariates to qualify for adjustment.

The graphical criteria of Pearl and Shpitser et al. are particularly useful for
designing observational studies. Specifically,  investigators planning
an observational study might be prepared to hypothesize a
causal diagram and apply the aforementioned criteria to aid them in
selecting the covariates to measure in order to control for confounding.
When many covariate adjustment sets are available, a natural question is
which one should be selected.

\cite{perkovic} gave an answer to this question under the following assumptions:
(i) the causal DAG model is linear,
 that is, each vertex in the
DAG stands for a random variable that follows a  linear regression model on its parents in the DAG, with an
independent error that has an arbitrary distribution and (ii) the total
treatment effects are estimated with the coefficients associated with
treatments in the ordinary least squares (OLS) fit of the outcome on treatments
and a set of valid adjustment covariates. They derive
a graphical criterion that identifies the optimal covariate
adjustment set in the sense that this set yields the OLS treatment
effect estimator which has the smallest asymptotic
variance among all OLS estimators of treatment effects that control for
valid adjustment sets.

Our first contribution, see Section \ref{sec:optimal_adj_point}, is to establish that the same criterion holds for identifying the optimal
valid covariate adjustment set when (i) the causal DAG model is non-parametric in the sense that no assumptions
are made on the conditional distribution of each node given is parents, and,  (ii) the treatment effects are estimated non-parametrically, that is,
without exploiting the conditional indepencences in the data generating law encoded in the causal DAG model. 
For instance, the treatment effects could be estimated by inverse probability weighting
with the propensity score estimated non-parametrically \citep{hirano, cattaneo}, or by
doubly-robust or double-machine learning approaches \citep{chernozhukov2018double, smucler}.
Our second contribution is to provide a graphical criterion for identifying
the optimal adjustment set among the class of minimal adjustment sets. A
minimal adjustment set is a valid adjustment set such that removal of any
vertex from the set yields a non-valid adjustment set. We note that our criterion
holds for non-parametric causal DAG models and estimators as well as linear causal DAG models
and estimators.

A second important contribution of \cite{perkovic} is a
graphical criterion, assuming linear DAG models and OLS estimators, to
compare certain pairs of valid adjustment sets which is more broadly
applicable than earlier existing criteria \citep{kuroki-miyawa, kuroki-cai}. Building on their criterion Henckel et al. also provided a simple
procedure that, for a valid adjustment set, returns a
pruned valid adjustment set that yields OLS estimators of treatment effects
with smaller asymptotic variance. The procedure was conjectured to yield
improved efficiency in \cite{vanderweele2011}. The contribution of \cite{perkovic} was to rigorously show that the conjecture
is valid for causal linear models and OLS estimators of treatment effects.
Our third contribution is to prove that both the graphical criterion and the
pruning procedure of \cite{perkovic} also apply for non-parametric causal DAG models and estimators.

\cite{perkovic} considered not only DAGs but also (linear) completed partially directed acyclic graphs (CPDAGs) and maximal
PDAGs. A CPDAG \citep{meek, andersson, spirtes, chickering} represents, under causal sufficiency and faithfulness, the
Markov equivalence class of DAGs that can be deduced from the conditional independences in the observed data distribution. A maximal PDAGs is a
maximally oriented partially directed acyclic graph that maximally refines
the Markov equivalence class when the orientation of some edges are known
a-priori \citep{meek, scheines, hoyer, hauser, eigenmann, wang}. 
\cite{perkovic} derived graphical criteria for identifying the optimal adjustment
set and for comparing certain adjustment sets under linear CPDAGs and maximal PDAGs, assuming treatment effects are estimated by least squares. These criteria are consequences of the corresponding criteria for DAGs.
This is because the criteria are based solely on d-separation conditions on CPDAGs and maximal PDAGs, and d-separations that hold on CPDAGs and maximal PDAGs hold on all possible DAGs represented by them. Because, as indicated earlier, we show that the graphical criteria developed by \cite{perkovic} for linear DAGs and estimators also holds for non-parametric DAGs and estimators, we conclude that the criteria derived by \cite{perkovic} for linear CPDAGs and maximal PDAGs using linear estimators of treatment effects, also hold for non-parametric CPDAGs and maximal PDAGs when non-parametric estimators of treatment effects are used. To avoid repetitions we do not expand on this topic in the present paper and refer the reader to \cite{perkovic}.

The  aforementioned graphical criterion of \cite{perkovic} for comparing certain
adjustment sets in DAGs applies to OLS estimators of the causal effects of
both point and joint interventions. However, for joint interventions, 
the criterion makes the restrictive assumption that the adjustment
sets are time independent. As \cite{perkovic} pointed out, 
time independent covariate adjustment sets for joint interventions do not 
always exist. In contrast, time \textit{dependent} covariate adjustment sets,
which are comprised by covariates that are needed to adjust for future treatments
but are themselves affected by earlier treatments, always exist.
 The g-formula \citep{robins1986}, is the
generalization of the adjustment formula from time independent to time dependent covariate
adjustment sets. This raises the
question of whether it is possible to generalize the results obtained for
comparing time independent covariate adjustment sets to time dependent covariate adjustment
sets. The
answer is mixed. Specifically, in Section \ref{sec:optimal_adj_time} we establish a result (Theorem \ref{theo:compare_adj_time}) that allows the comparison of certain time dependent covariate adjustment sets
and which generalizes the results obtained for non-parametric models and
estimators in Theorem \ref{theo:compare_adj} of the present article from time independent to
time dependent covariate adjustment sets. However, in that section we also exhibit a DAG in which no uniformly optimal time dependent covariate
adjustment set exists. We do so by exhibiting two data generating laws, both
satisfying the restrictions implied by the non-parametric causal DAG, such that a given
time dependent covariate adjustment set dominates all others for one law, in
the sense of yielding non-parametric estimators of the g-formula with
smallest asymptotic variance, but for the second law a different time
dependent covariate adjustment set dominates the rest. 

Next we investigate the following problem. If we could measure all the
variables of the causal DAG, we could then exploit the 
conditional independences encoded in the non-parametric causal DAG model to
efficiently estimate the total treatment effects. 
For a point exposure, we can also estimate each treatment
effect by the method of covariate adjustment using the optimal time independent
covariate adjustment set. A natural question then is under which DAG configurations, if any,
do the two procedures result in estimators with the same asymptotic efficiency?
From a practical perspective this question is
interesting for the planning of observational studies since for DAGs for
which no efficiency loss is incurred by non-parametric optimal covariate
adjustment estimation, then the optimal covariate adjustment set, the treatment and the
outcome are all the variables that one needs to measure not only for
consistent but also for efficient estimation of treatment effects. 
In Section \ref{sec:efficient_est} we provide a sound and complete algorithm that
answers this question. 
The completeness of our algorithm and of the ID algorithm \citep{id-orig,shpitser-id} imply the  following interesting result, linking
identification and efficient covariate adjustment estimation: if
there exists an identifying formula for an interventional mean that depends
only on treatment, outcome and mediators, then the non-parametric optimally adjusted
estimator can never be globally efficient under the causal DAG model.

When the
optimal covariate adjustment estimator is not efficient, it may nevertheless
be the case that not all the variables in the DAG enter into the calculation
of an efficient estimator. As such, from the perspective of planning a
study, it is useful to learn which variables are irrelevant for efficient
estimation since such variables need not be measured.
In Section \ref{sec:semiparam_effi} we
review a general one-step estimation strategy for computing semiparametric efficient
estimators. We argue that only variables entering the efficient influence
function of a interventional mean under the non-parametric causal graphical model
are required for computing the one-step estimator of treatment effects. As such, all variables that do
not enter into the efficient influence function are irrelevant for efficient
estimation. The aforementioned algorithm conducts sound checks for variables
that do not enter into the efficient influence function.  In addition, the
algorithm conducts sound checks for possible simplifications of the formula
for the efficient influence function. As we indicate in Section \ref{sec:semiparam_effi}, such
simplifications not only facilitate the computation of the one-step
estimator but also relax the requirements on smoothness or complexity of
certain conditional expectations for the convergence of the estimator.

In Section \ref{sec:back} we review the basic concepts of causal graphical models. In Section \ref{sec:optimal_adj} we provide the main results concerning optimal adjustment sets. In Section \ref{sec:efficient_est} we provide an algorithm for determining if a non-parametric optimally
adjusted estimator is efficient under the Bayesian Network implied by the causal graphical model. Section \ref{sec:disc} concludes with a list of open problems. Proofs of all the results stated
in the main text are given in the Appendix.

\section{Background}\label{sec:back}
In this section we review some elements of the theory of causal graphical models.

\subsection{Definitions and notation}
\label{sec:def}

\textbf{Directed graph}. A directed graph $\mathcal{G}=(\mathbf{V},\mathbf{E}%
)$ consists of a finite node set $\mathbf{V}$ and a set of directed edges $%
\mathbf{E}$. A directed edge between two nodes $V$, $W$ is represented by $%
V\rightarrow W$. Given a set of nodes $\mathbf{Z}\subset \mathbf{V}$ the
induced subgraph $\mathcal{G}_{\mathbf{Z}}=(\mathbf{Z},\mathbf{E}_{Z})$ is
the graph obtained by considering only nodes in $\mathbf{Z}$ and edges
between nodes in $\mathbf{Z}$.

\noindent\textbf{Paths}. Two nodes are adjacent if there exists an edge
between them. A path from a node $V$ to a node $W$ in graph $\mathcal{G}$ is
a sequence of nodes $(V_1, \dots, V_{j})$ such that $V_{1}=V$, $V_{j}=W$ and 
$V_i$ and $V_{i+1}$ are adjacent in $\mathcal{G}$ for all $i \in \{1, \dots,
j-1\}$. Then $V$ and $W$ are called the endpoints of the path. A path $(V_1,
\dots, V_{j})$ is directed or causal if $V_{i}\to V_{i+1}$ for all $i \in
\lbrace 1,\dots,j-1\rbrace$.

\noindent \textbf{Ancestry}. If $V\rightarrow W$, then $V$ is a parent of $W$
and $W$ is a child of $V$. If there is a directed path from $V$ to $W$, then 
$V$ is an ancestor of $W$ and $W$ a descendant of $V$. We follow the
convention that very node is an ancestor and a descendant of itself. The
sets of parents, children, ancestors and descendants of $V$ in $\mathcal{G}$
are denoted by $\pa_{\mathcal{G}}(V)$, $\ch_{\mathcal{G}}(V)$, $\an_{%
\mathcal{G}}(V)$, $\de_{\mathcal{G}}(V)$. The set of non-descendants of a
vertex $V$ is defined as $\nd_{\mathcal{G}}(V)\equiv \de^{c}_{\mathcal{G}%
}(V) $.

\noindent \textbf{Colliders and forks}. A node $V$ is a collider on a path $%
\delta $ if $\delta $ contains a subpath $(U,V,W)$ such that $U\rightarrow
V\leftarrow W$. A node $V$ is called a fork on $\delta $ if $\delta $
contains a subpath $(U,V,W)$ such that $U\leftarrow V\rightarrow W$.

\noindent \textbf{Directed cycles, DAGs}. A directed path from $V$ to $W$,
together with the edge $W\rightarrow V$ forms a directed cycle. A directed
graph without directed cycles is called a directed acyclic graph (DAG). The
nodes $(V_{k_{1}},\dots ,V_{k_{s}})$ are said
to follow a topological order relative to a DAG $\mathcal{G}$ if $V_{k_{j}}$
is not an ancestor of $V_{k_{j^{\prime }}}$ in $\mathcal{G}$ whenever $%
j>j^{\prime }$.

\noindent \textbf{d-separation} \citep{pearl-causality}. Consider a DAG $\mathcal{G}$ and distinct
sets of nodes $\mathbf{U},\mathbf{W},\mathbf{Z}$. A path $\delta $ between $%
U\in \mathbf{U}$ and $W\in \mathbf{W}$ is blocked by $\mathbf{Z}$ in $%
\mathcal{G}$ if one of the following holds:

\begin{itemize}
\item[1.] $\delta $ contains a node that is not a collider and is a member
of $\mathbf{Z}$, or

\item[2.] If there exists a collider $C$ in $\delta $ such that neither $C$
nor its descendants are in $\mathbf{Z}$.
\end{itemize}

$\mathbf{U},\mathbf{W}$ are d-separated by $\mathbf{Z}$ in $\mathcal{G}$
(denoted as $\mathbf{U}\perp \!\!\!\perp _{\mathcal{G}}\mathbf{W}\mid 
\mathbf{Z)}$ if for any $U\in \mathbf{U}$ and $W\in \mathbf{W,}$ all paths
between $U$ and $W $ are blocked given $\mathbf{Z}$.

\noindent \textbf{Marginal DAG model} \citep{evans-marginal}. Let $\mathcal{G%
}$ be a DAG with vertices $\mathbf{V}\overset{\cdot }{\cup }\mathbf{U}$, and 
$\mathcal{V}$ a state-space for $\mathbf{V}$. Define the \textit{marginal
DAG model }$\mathcal{M}\left( \mathcal{G},\mathbf{V}\right) $ by the
collection of probability distributions $P$ over $\mathbf{V}$ such that
there exist

\begin{enumerate}
\item some state-space $\mathcal{U}$ for $\mathbf{U}$,

\item a probability measure $Q$ $\in \mathcal{M}\left( \mathcal{G},\mathbf{V}%
\right) $ over $\mathcal{V}\times \mathcal{U}$
\end{enumerate}
and $P$ is the marginal distribution of $Q$ over $\mathbf{V}$.

\noindent \textbf{Exogenized DAG} \citep{evans-marginal}. Let $\mathcal{G}$
be a DAG and let $U$ be a vertex of $\mathcal{G}$ with a single child $R$.
Define the exogenized DAG $\tau \left( \mathcal{G},U\right) $ as follows:
take the vertices and edges of $\mathcal{G}$, and then (i) add an edge $H$ $%
\rightarrow $ $R$ from every $H$ $\in \pa_{\mathcal{G}}\left( U\right) $ to $%
R$, and (ii) delete $U$ and any edge $H$ $\rightarrow $ $U$ for $H$ $\in \pa%
_{\mathcal{G}}\left( U\right) $. All other edges and vertices are as in $%
\mathcal{G}$. In words, to exogenize a DAG $\mathcal{G}$ relative to a
vertex $U$ with a single child, we join all parents of $U$ to the child of $U
$ with directed edges, and then remove $U$ and all edges into and out of $U$.

\medskip

Throughout we use standard set theory notation. For a DAG with node set $%
\mathbf{V}$ and for $\mathbf{U},\mathbf{W}\subset \mathbf{V}$ we have $%
\mathbf{U}^{c}=\mathbf{V}\setminus \mathbf{U}$, $\mathbf{U}\setminus \mathbf{%
W}=\mathbf{U}\cap \mathbf{W}^{c}$ and $\mathbf{U}\bigtriangleup \mathbf{W}%
=\left( \mathbf{U}\setminus \mathbf{W}\right) \cup \left( \mathbf{W}%
\setminus \mathbf{U}\right) $. For a vector 
$
\mathbf{U}=(U_{0},\dots, U_{r}) \subset \mathbf{V}
$
and $j\leq r$ we let 
$$
\overline{\mathbf{U}}_{j}\equiv \left(U_{0},\dots,U_{j} \right).
$$
If $U$ and $V$ are independent random variables defined on a common probability space we write $U\ort V$.

\subsection{Causal graphical models}

Given a DAG $\mathcal{G}$ with a vertex set $\mathbf{V}$ that represents a random vector defined on a given probability space, a law $P$ for $\mathbf{V}$ is said to satisfy the Local Markov Property relative to $\mathcal{G}$ if and only if
\begin{equation}
    V
\perp \!\!\!\perp \nd_{\mathcal{G}}\left( V\right) \text{ }|\text{ }\pa_{%
\mathcal{G}}\left( V\right) \text{ under } P \text{ for all }V\in\mathbf{V} .
\nonumber
\end{equation}
The Bayesian Network represented by DAG $\mathcal{G}$ \citep{pearl-causality} is defined as the collection
\begin{equation*}
\mathcal{M}\left( \mathcal{G}\right) \equiv \left\{ P: P \text{ satisfies the Local Markov Property relative to } \mathcal{G} 
\right\}.
\end{equation*}
\cite{verma-pearl} and \cite{geiger} 
show that for any disjoint sets $\mathbf{A,B,C}$ included in $\mathbf{V}$%
\begin{equation*}
\mathbf{A}\text{ }\mathbf{\perp \!\!\!\perp }_{\mathcal{G}}\text{ }\mathbf{B}%
\text{ }|\text{ }\mathbf{C} \Leftrightarrow \mathbf{A\perp \!\!\!\perp
B}\text{ }|\text{ }\mathbf{C} \text{ under $P$ for all } P\in \mathcal{M}%
\left( \mathcal{G}\right).
\end{equation*}

A causal (agnostic) graphical model \citep{spirtes, mediation} represented by $\mathcal{G}$ assumes that the law of $\mathbf{V}\equiv (V_{1},\dots, V_{s})$ belongs to $\mathcal{M(G)}$ and that for
any $\mathbf{A=}\left\{ A_{1},\dots,A_{p}\right\} \mathbf{\subset }\mathbf{V},$
the post-intervention density (with respect to a dominating measure) $f\left[ \mathbf{v}\mid \doo(\mathbf{a})\right]$ of $\mathbf{V}$ when $\mathbf{A}$ is set to $\mathbf{a}$ on the entire population satisfies
\begin{equation}
f\left[ \mathbf{v}\mid \doo(\mathbf{a})\right] = 
\begin{cases}
\prod\limits_{V_{j}\in \mathbf{V}\setminus \mathbf{A}} f(v_{j} \mid \pa_{\mathcal{G}}(V_{j})) &\text{if } \mathbf{A}=\mathbf{a} \\
\quad 0 &\text{otherwise}.
\end{cases}
\label{eq:g-form}
\end{equation}
Formula \eqref{eq:g-form} is known as the g-formula \citep{robins1986}, the manipulated density formula \citep{spirtes} and the truncated factorization formula \citep{pearl-causality}. 

The non-parametric structural equations model with independent errors (NPSEM-IE, \citealt{pearl-causality}) is a sub-model of the causal agnostic graphical model that additionally assumes the existence of counterfactuals. Specifically, the model associates each vertex $V\in \mathbf{V}$ with a factual
random variable satisfying 
\begin{equation*}
V=g_{V}\left( \pa_{\mathcal{G}}\left( V\right) ,\varepsilon _{V}\right) 
\text{ for all }V\in \mathbf{V}
\end{equation*}%
where $\left\{ \varepsilon _{V}\right\} _{V\in \mathbf{V}}$ are mutually
independent and $\left\{ g_{V}\right\} _{V\in \mathbf{V}}$ are arbitrary
functions. The model also assumes that for
any $\mathbf{A=}\left\{ A_{1},\dots,A_{p}\right\} \mathbf{\subset }\mathbf{V,}$
the counterfactual vector $\mathbf{V}_{\mathbf{a}}$ that would be observed
had $\mathbf{A}$ been set to $\mathbf{a}$ exists, and is generated according to%
\begin{eqnarray*}
V_{\mathbf{a}} &=&g_{V}\left( \pa_{\mathcal{G}}\left( V_{\mathbf{a}}\right)
,\varepsilon _{V}\right) \text{ for all }V\in \mathbf{V\backslash A} \\
A_{\mathbf{a},k} &=&a_{k} \quad \quad \quad \quad \quad \quad \quad \:\text{%
for all }k=1,\dots,p.
\end{eqnarray*}
The finest fully randomized causally interpretable  structured tree graph model (FFRCISTG, \citealt{robins1986}) makes the same assumptions as the NPSEM-IE model, except that it relaxes
the assumption that the $\left\{ \varepsilon _{V}\right\} _{V\in \mathbf{V}}$ are mutually independent. We note that the only restriction that the NPSEM-IE and the FFRCISTG models place on the law $P$ of the factual random vector $\mathbf{V}$, is that $P\in\mathcal{M(G)}$. Furthermore, \eqref{eq:g-form} remains valid under both models. See \cite{swig} for more details.

The results that we will derive in this paper rely solely on the assumption that $P\in\mathcal{M(G)}$  and on the validity of \eqref{eq:g-form}. Therefore, the results hold for the causal agnostic graphical models, the NPSEM-IE, and the FFRCISTG.

A causal (agnostic) graphical linear model represented by $\mathcal{G}$ is the submodel of the causal (agnostic) graphical model which additionally imposes the restriction that $\mathbf{V}=(V_{1},\dots, V_{s})$ satisfies
$$
V_{i}= \sum\limits_{V_{j}\in \pa_{\mathcal{G}}(V_{i})} \alpha_{ij}V_{j} + \varepsilon_{i},
$$
for $i\in \lbrace 1,\dots,S\rbrace$, where $\alpha_{ij}\in\mathbb{R}$ and $\varepsilon_{1},\dots,\varepsilon_{p}$ are jointly independent random variables with zero mean and finite variance.

Throughout this paper we let $\mathbf{V}_{\mathbf{a}}$ be a random vector with density $f\left[ \mathbf{v}\mid \doo(\mathbf{a})\right]$. In particular for $Y\in \mathbf{V}$ we let $Y_{\mathbf{a}}$ be the corresponding component of $\mathbf{V}_{\mathbf{a}}$. We call $E\left[ Y_{\mathbf{a}}\right]= E\left[Y \mid \doo(\mathbf{A}) \right]$ the interventional mean under $\mathbf{A}=\mathbf{a}$. Note that  $\mathbf{V}_{\mathbf{a}}$ is not a counterfactual random vector if only the causal agnostic graphical model is assumed.

\subsection{Interventional mean}

Under the causal graphical model, for any $\mathbf{A=}\left\{ A_{0},\dots,A_{p}\right\} 
\mathbf{\subset }\mathbf{V}$ topologically ordered, where each $A_{k}$ a discrete random variable 
and $Y\in \mathbf{V\backslash A}$, the interventional mean on the outcome $Y$ satisfies
\begin{equation*}
E\left[ Y_{\mathbf{a}}\right] =E_{P}\left[ \prod\limits_{k=0}^{p}\left\{ \frac{%
I_{a_{k}}\left( A_{k}\right) }{P\left( A_{k}=a_{k}|\pa_{\mathcal{G}}\left(
A_{k}\right) \right) }\right\} Y\right].
\end{equation*}
This is an immediate consequence of formula \eqref{eq:g-form}. The Local Markov Property for $P\in\mathcal{M(G)}$ further implies that 
$$
E\left[Y_{\mathbf{a}} \right]=E_{P}\left\lbrace\left\{ E_{P}\left\{ E_{P}\left[ E_{P}\left[ Y|\mathbf{a},%
\overline{\pa_{\mathcal{G}}(A_{p})}\right] |\overline{\mathbf{a}}_{p-1},\overline{\pa_{\mathcal{G}}(A_{p-1})}\right] |\overline{\mathbf{a}}_{p-2},%
\overline{\pa_{\mathcal{G}}(A_{p-2})}\right\} \cdots \mid {\mathbf{a}}_{0},%
\overline{\pa_{\mathcal{G}}(A_{0})} \right\}\right\rbrace,
$$
where for every $j\in\lbrace 0,\dots,p \rbrace$
$$
\overline{\pa_{\mathcal{G}}(A_{j})}=\bigcup\limits_{k=0}^{j} \pa_{\mathcal{G}}(\mathcal{A}_{j}).
$$
In particular, if $\mathbf{A}$ is a point intervention, so that it is a single variable $A,$ then 
\begin{eqnarray}
E\left[ Y_{a}\right] &=&E_{P}\left[ \frac{I_{a}\left( A\right) }{P\left( A=a|\pa%
_{\mathcal{G}}\left( A\right) \right) }Y\right]  \label{eq:countermeanbinary}
\\
&=&E_{P}\left[ E\left[ Y|A=a,\pa_{\mathcal{G}}\left( A\right) \right] \right] . 
\notag
\end{eqnarray}%
For a binary point intervention $A$, the average treatment effect (ATE),  $ATE\equiv E\left[
Y_{a=1}\right] -E\left[ Y_{a=0}\right] $, quantifies the effect on the mean of the outcome of setting $A=1$ versus $A=0$ on the entire population.
Under a causal graphical model, equation \eqref{eq:g-form} implies
\begin{equation*}
ATE=E_{P}\left[ E_{P}\left[ Y|A=1,\pa_{\mathcal{G}}\left( A\right) \right] \right] -E_{P}
\left[ E_{P}\left[ Y|A=0,\pa_{\mathcal{G}}\left( A\right) \right] \right].
\end{equation*}

\subsection{Adjustment sets}

\begin{definition}[Time dependent covariate adjustment set]
\label{def:time_dep_adj}
$ $\newline
Let $\mathcal{G}$ be a DAG with vertex set $\mathbf{V}$ let $\mathbf{A=}%
\left( A_{0},\dots,A_{p}\right) \mathbf{\subset V}$ be topologically ordered
and $Y\in  \mathbf{V} \backslash \mathbf{A}$. We say that $\mathbf{Z}\equiv \left( \mathbf{Z}%
_{0},\mathbf{Z}_{1},\dots,\mathbf{Z}_{p}\right) \subset \mathbf{V}\backslash \left\{ 
\mathbf{A},Y\right\} $ where ${\mathbf{Z}_{0},\mathbf{Z}_{1},\dots}$
and ${\mathbf{Z}_{p}}$ are disjoint, is a time dependent covariate adjustment
set relative to $\left( \mathbf{A},Y\right) $ in $\mathcal{G}$ if under all $%
P\in \mathcal{M}\left( \mathcal{G}\right) $  and all $y\in \mathbb{R}$
\begin{eqnarray*}
&&E_{P}\left[ \prod\limits_{k=0}^{p}\left\{ \frac{I_{a_{k}}\left(
A_{k}\right) }{P\left( A_{k}=a_{k}|\pa_{\mathcal{G}}\left( A_{k}\right)
\right) }\right\} I_{(-\infty,y]}(Y)\right] \\
&=&E_{P}\left\lbrace \left\{ E_{P}\left\{ E_{P}\left[ E_{P}\left[ I_{(-\infty,y]}(Y)|\mathbf{A}=\mathbf{a},%
\mathbf{Z}\right] |\overline{\mathbf{A}}_{p-1}=\overline{\mathbf{a}}_{p-1},\overline{%
\mathbf{Z}}_{p-1}\right] |\overline{\mathbf{A}}_{p-2}=\overline{\mathbf{a}}_{p-2},%
\overline{\mathbf{Z}}_{p-2}\right\} \cdots \mid {\mathbf{A}_{0}}=\mathbf{a}_{0}, \mathbf{Z}_{0}\right\}\right\rbrace.
\label{eq:time_dep_adj}
\end{eqnarray*}%
\end{definition}
The preceding definition extends the following definition of covariate adjustment set of \cite{shpitser-adjustment} and \cite{maathuis2015}. We use the appellatives time dependent and time independent to distinguish the two definitions.
\begin{definition}[Time independent covariate adjustment set]
\citep{shpitser-adjustment,maathuis2015}
Let $\mathcal{G}$ be a DAG with vertex set $\mathbf{V},$ let $\mathbf{A}\subset \mathbf{V}$ and $Y\in \mathbf{V}\setminus \mathbf{A}$. A set $\mathbf{Z\subset V\backslash }%
\left\{ \mathbf{A},Y\right\} $ is a time independent adjustment set relative to $\left( \mathbf{A},Y\right) $
in $\mathcal{G}$ if under all $P\in \mathcal{M}\left( \mathcal{G}\right) $ 
\begin{equation}
E_{P}\left[ E_{P}\left[ I_{(-\infty, y]}(Y)|\mathbf{A}=\mathbf{a},\pa_{\mathcal{G}}\left( \mathbf{A}\right) \right] %
\right] =E_{P} \left[ E_{P}\left[ I_{(-\infty, y]}(Y)|\mathbf{A}=\mathbf{a},\mathbf{Z}\right] \right] \quad \text{for all } y\in\mathbb{R}.
\label{eq:colombo}
\end{equation}
\end{definition}

Note that $\mathbf{Z}$ is a time independent adjustment set if and only if $\widetilde{\mathbf{Z}}=\left( \mathbf{Z}_{0},\dots, \mathbf{Z}_{p}\right)$ with $\mathbf{Z}_{0}=\mathbf{Z}$ and $\mathbf{Z}_{j}=\emptyset$ for $j=1\dots,p$ is a time dependent adjustment set. 

The back-door criterion \citep{pearl-causality} is a sufficient
graphical condition for $\mathbf{Z}$ to be a time independent adjustment set. 
\cite{shpitser-adjustment} gives a necessary and sufficient graphical
condition for $\mathbf{Z}$ to be a time independent covariate adjustment set. 
These authors also show that if $\mathbf{Z}$ is a time independent covariate adjustment
set, then there exists $\mathbf{Z}_{sub}\subset \mathbf{Z}$ such that $%
\mathbf{Z}_{sub}$ is a time independent adjustment set and it satisfies the back-door
criterion. On the other hand \cite{prob-eval} provides a sufficient graphical criterion for $\mathbf{Z}$ to be a time dependent adjustment set. \cite{robinsaddendum} derives analogous sufficient conditions assuming the causal diagram  represents a non-parametric structural equations model. See also \cite{swig}.

 When $\mathbf{A}$ is a point intervention $A$, a time independent adjustment sets always exist. For instance, $\mathbf{Z}=\pa_{\mathcal{G}}(A)$ is one such set. However, for $\mathbf{A=}\left( A_{0},\dots,A_{p}\right)$ a joint intervention, a time independent covariate adjustment set $\mathbf{Z}$
may not exist in some graphs, as noted in \cite{perkovic}. In contrast, a time
dependent adjustment sets always exists, since $\mathbf{Z}\equiv \left( \mathbf{Z}_{0},\mathbf{Z}_{1},\dots,\mathbf{Z}_{p}\right) $
where $\mathbf{Z}_{0}\equiv \pa_{\mathcal{G}}\left( A_{0}\right) $ and $%
\mathbf{Z}_{k}\equiv \pa_{\mathcal{G}}\left( A_{k}\right) \backslash \left[
\cup _{j=0}^{k-1}\pa_{\mathcal{G}}\left( A_{j}\right) \right] ,k=1,\dots,p$ is
a time dependent adjustment set.

\begin{example}
In the DAG of Figure \ref{fig:examp_time},
there is no time independent adjustment set relative to $(\mathbf{A},Y)$ for $\mathbf{A}=(A_0,A_1)$.
For instance, $\mathbf{Z}=\left( \mathbf{Z}_0, \mathbf{Z}_{1} \right)$ with 
$\mathbf{Z}_{0}=\left\{ L_{0}\right\} $ and $\mathbf{Z}%
_{1}=\left\{ L_{1}\right\}$, and $\widetilde{\mathbf{Z}}=\left( \widetilde{\mathbf{Z}}_0, \widetilde{\mathbf{Z}}_{1} \right)$, with $\widetilde{\mathbf{Z}}_{0}=\left\{ L_{0}\right\} $ and $\widetilde{\mathbf{Z}}_{1}=\left\{ L_1, U\right\} $, are two time dependent adjustment sets \citep{robinsaddendum}. 
\end{example}

\begin{figure}[ht!]
\begin{center}
\begin{tikzpicture}[>=stealth, node distance=1.5cm,
pre/.style={->,>=stealth,ultra thick,line width = 1.4pt}]
  \begin{scope}
    \tikzstyle{format} = [circle, inner sep=2.5pt,draw, thick, circle, line width=1.4pt, minimum size=6mm]
    \node[format] (L0) {$L_{0}$};
        \node[format, right of=L0] (A0) {$A_0$};
       \node[format, right of=A0] (L1) {$L_1$};
         \node[format, right of=L1] (A1) {$A_1$};
         \node[format, right of=A1] (Y) {$Y$};
        \node[format, above of=A1] (U) {$U$};
                 \draw (L0) edge[pre, black] (A0);
      \draw (A0) edge[pre, black] (L1);
     \draw (L1) edge[pre, black] (A1);
     \draw (A1) edge[pre, black] (Y);
    \draw (U) edge[pre, black] (L1);
    \draw (U) edge[pre, black] (Y);
        \draw (L1) edge[pre, black, out=30, in=155] (Y);
        \draw (L0) edge[pre, black, out=300,in=230] (Y);
     \draw (A0) edge[pre, black, out=300,in=200] (Y);
  \end{scope} 
  \end{tikzpicture}
\end{center}
\caption{A DAG with two possible time dependent adjustment sets and no time independent adjustment sets.}
\label{fig:examp_time}
\end{figure}
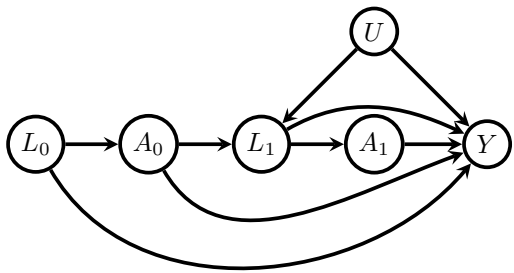

\FloatBarrier

We also have the following definition.
\begin{definition}[Minimal covariate adjustment set]
Let $\mathcal{G}$ be a DAG with vertex set $\mathbf{V},$ let $\mathbf{A}$ and $Y\in \mathbf{V}\setminus \mathbf{A}$. A set $\mathbf{Z\subset V\backslash }%
\left\{ \mathbf{A},Y\right\} $ is a minimal time dependent (independent) adjustment set relative to $\left(
\mathbf{A},Y\right) $ in $\mathcal{G}$ if $\mathbf{Z}$ is a time dependent (independent) adjustment set and no
proper subset of $\mathbf{Z}$ is a time dependent (independent) adjustment set.
\end{definition}

\subsection{Non-parametric estimation of an interventional mean}

In order to discuss the non-parametric estimation of an interventional mean $E[Y_{\mathbf{a}}]$, we begin by reviewing some elements of the theory of asymptotic inference.

An estimator $\widehat{\gamma }$ of a scalar parameter $\gamma \left( P\right) $
based on $n$ i.i.d. copies $\mathbf{V}_{1},\dots,\mathbf{V}_{n}$ of $\mathbf{V}
$ is asymptotically linear at $P$ if there exists a random variable $\varphi
_{P}\left( \mathbf{V}\right) $ with mean zero and finite variance such
that under $P$ 
\begin{equation*}
n^{1/2}\left\{ \widehat{\gamma } -\gamma \left( P\right) \right\} =\frac{1}{%
n^{1/2}}\sum_{i=1}^{n}\varphi
_{P}\left( \mathbf{V}_{i}\right) +o_{p}(1).
\end{equation*}
The random variable $\varphi_{P}\left( \mathbf{V}\right)$ is called
the influence function of $\gamma(P)$ at $P$. By the Central Limit
Theorem any asymptotically linear estimator is consistent and asymptotically
normal (CAN) with asymptotic variance equal to $var_{P}\left[ \varphi
_{P}\left( \mathbf{V}_{i}\right) \right] $, provided that $var_{P}\left[\varphi
_{P}\left( \mathbf{V}_{i}\right) \right]<\infty $. Furthermore any two asymptotically
linear estimators, say $\widehat{\gamma }_{1}$ and $\widehat{\gamma }_{2}$, with
the same influence function are asymptotically equivalent in the sense that $%
n^{1/2}\left( \widehat{\gamma }_{1}\,-\ \widehat{\gamma }_{2}\right)
=o_{p}\left( 1\right) $

Given a collection of probability laws $\mathcal{M}$ for $\mathbf{V}$,
an estimator of $\widehat{\gamma }$ of $\gamma \left( P\right) $ is regular in $\mathcal{M}$ at $P$ if its convergence to $\gamma \left( P\right) $ is
locally uniform \citep{van1998asymptotic}. Regularity is a necessary
condition for a nominal $1-\alpha $ level Wald interval centered at the
estimator to be an honest confidence interval in the sense that there exists
a sample size $n^{\ast }$ such that for all $n>n^{\ast }$ the interval
attains at least its nominal coverage over all laws in  $\mathcal{M}$.

Suppose that $\mathbf{A}$ is a vector of variables taking values on a finite set $\mathcal{A}$ and one
is interested in estimating some contrast
$$
\Delta \equiv
\sum_{ \mathbf{a} \in \mathcal{A}}c_{\mathbf{a}}E[Y_{\mathbf{a}}]
$$
for given constants $c_{\mathbf{a}}$, $\mathbf{a}\in\mathcal{A}$.
In particular if $\mathbf{A}=A$ is binary and $c_{1}=1$ and $c_{0}=-1$ the preceding linear combination is equal to $ATE$.
Suppose that, having postulated a causal graphical model, one finds that  time independent adjustment sets exist. Having decided on one adjustment
set $\mathbf{Z,}$ one estimates
$$
\Delta(P;\mathcal{G})\equiv \sum_{ \mathbf{a} \in \mathcal{A}}c_{\mathbf{a}}  \chi _{\mathbf{a}}\left( P;%
\mathcal{G}\right),
$$
where
$$
\chi _{\mathbf{a}}\left( P;%
\mathcal{G}\right) \equiv E_{P}\left[ E_{P}\left[ Y|\A=\mathbf{a},\mathbf{Z}\right] \right]
=E_{P}\left[ \pi _{\mathbf{a}}\left( \mathbf{Z};P\right) ^{-1}I_{\mathbf{a}}\left( \A\right) Y%
\right],
$$
by estimating each $\chi _{\mathbf{a}}\left( P;%
\mathcal{G}\right)$ under a model $\mathcal{M}$ that makes at most smoothness
or complexity assumptions on 
$$
b_{\mathbf{a}}\left( \mathbf{Z};P\right) \equiv E_{P}%
\left[ Y|\A=\mathbf{a},\mathbf{Z}\right] 
$$
and/or
$$
\pi _{\mathbf{a}}\left( \mathbf{Z};P\right)
\equiv P\left[ \A=\mathbf{a}|\mathbf{Z}\right].
$$
Examples of such estimating strategies are the inverse probability weighted estimator $$
\widehat{\chi }%
_{\mathbf{a},IPW}=\mathbb{P}_{n}\left[ \widehat{\pi }_{\mathbf{a}}\left( \mathbf{Z}\right)
^{-1}I_{\mathbf{a}}\left( \A\right) Y\right] $$
where $\widehat{\pi }_{\mathbf{a}}\left( \cdot
\right) $ is a series or kernel estimator of $P\left[ \A=\mathbf{a}|\mathbf{Z=\cdot }%
\right] $ \citep{hirano}, the outcome regression estimator $\mathbb{P}_{n}%
\left[ \widehat{b}_{\mathbf{a}}\left( \mathbf{Z}\right) \right] $ where $\widehat{b}%
_{\mathbf{a}}\left( \cdot \right) $ is a smooth estimator of $b_{\mathbf{a}}\left( \mathbf{Z}%
;P\right) $ \citep{hahn} or the doubly-robust estimator 
\citep{vanderlaan,
chernozhukov2018double, smucler}.

This estimation strategy effectively uses the causal model solely to provide
guidance on the selection of the adjustment set but otherwise ignores the
information about the interventional means $\chi _{\mathbf{a}}\left( P;\mathcal{G}\right) $ encoded
in the causal model. This is a strategy frequently followed in applications %
\citep{cattaneo, bottou, hernan}. It is well known \citep{robins1994} that
estimators $\widehat{\chi }_{\mathbf{a},\mathbf{Z}}$ of $\chi _{\mathbf{a}}\left( P;\mathcal{G}%
\right) $ based on the adjustment set $\mathbf{Z}$ that are regular and
asymptotically linear under a model $\mathcal{M}$ that imposes at most
smoothness or complexity assumptions on $b_{\mathbf{a}}\left( \mathbf{Z};P\right) $
and/or $\pi _{\mathbf{a}}\left( \mathbf{Z};P\right) $ have a unique influence
function equal to 
\begin{equation}
\psi _{P,\mathbf{a}}\left( \mathbf{Z};\mathcal{G}\right) \equiv \frac{I_{\mathbf{a}}\left(
\mathbf{A}\right) }{\pi _{\mathbf{a}}\left( \mathbf{Z};P\right) }\left( Y-b_{\mathbf{a}}\left( \mathbf{Z%
};P\right) \right) +b_{\mathbf{a}}\left( \mathbf{Z};P\right) -\chi _{\mathbf{a}}\left( P;%
\mathcal{G}\right) ,  \label{eq:inf_fc_sing}
\end{equation}%
where to avoid overloading the notation in $\psi _{P,\mathbf{a}}$ we do not
explicitly write its dependence on $\left( Y,\mathbf{A}\right) $. 

Consequently,
estimators $%
\widehat{\Delta }_{\mathbf{Z}}\equiv \sum_{\mathbf{a}\in \mathcal{A}}c_{\mathbf{a}}\widehat{%
\chi }_{\mathbf{a},\mathbf{Z}}$ 
of $\Delta(P;\mathcal{G})$
have a unique influence function
equal to 
$$
\psi _{P,\Delta}\left( \mathbf{Z};\mathcal{G}\right) =\sum_{\mathbf{a}\in\mathcal{A}}c_{\mathbf{a}}\psi
_{P,\mathbf{a}}\left( \mathbf{Z};\mathcal{G}\right).
$$ 
For simplicity, we refer to asymptotically
linear estimators of $\chi _{\mathbf{a}}\left( P;\mathcal{G}\right) $ with influence function $\psi _{P,\mathbf{a}}\left( \mathbf{Z};\mathcal{G%
}\right) $  as non-parametric estimators that use the adjustment set $\mathbf{Z}$ and
we abbreviate them with NP-$\mathbf{Z.}$ 

The preceding discussion implies that any NP-$\mathbf{Z}$ estimator $\widehat{\chi }_{\mathbf{a},%
\mathbf{Z}}$ satisfies 
\begin{equation*}
\sqrt{n}\left\{ \widehat{\chi }_{\mathbf{a},\mathbf{Z}}-\chi _{\mathbf{a}}\left( P;\mathcal{G}%
\right) \right\} \cw  N\left( 0,\sigma _{\mathbf{a},\mathbf{Z}}^{2}\left(
P\right) \right)
\end{equation*}%
where $\sigma _{\mathbf{a},\mathbf{Z}}^{2}\left( P\right) \equiv var_{P}\left[ \psi
_{P,\mathbf{a}}\left( \mathbf{Z};\mathcal{G}\right) \right] .$
Likewise, $\sqrt{n}%
\left\{ \widehat{\Delta}_{\mathbf{Z}}-\Delta(P;\mathcal{G}) \right\rbrace \cw
N\left( 0,\sigma _{\Delta,\mathbf{Z}}^{2}\right) $ where 
\begin{equation*}
\sigma _{\Delta,\mathbf{Z}}^{2}\left( P\right) \equiv var_{P}\left[ \psi
_{P,\Delta}\left( \mathbf{Z};\mathcal{G}\right) \right] .
\end{equation*}

Two natural questions of practical interest arise. The first is whether any two given time independent covariate adjustment sets, say $\mathbf{Z}, \mathbf{Z}^{\prime}$, are comparable in the sense that either
$$
\sigma _{\Delta,\mathbf{Z}}^{2} \leq \sigma _{\Delta,\mathbf{Z}^{\prime}}^{2} \text{ for all } P \in \mathcal{M(G)} \text{ or } \sigma _{\Delta,\mathbf{Z}^{\prime}}^{2}\leq \sigma _{\Delta,\mathbf{Z}}^{2} \text{ for all } P \in \mathcal{M(G)}.
$$
The second is whether an optimal time independent adjustment set $\mathbf{O}
$ exists such that for any other time independent adjustment set $\mathbf{Z,}$
\begin{equation}
\sigma _{\Delta,\mathbf{O}}^{2}\left( P\right) \leq \sigma _{\Delta,\mathbf{Z}%
}^{2}\left( P\right) .  \label{eq:var_compare}
\end{equation}%
These questions were answered by \cite{perkovic} under (i) a linear causal graphical 
model, (ii) when 
$$
\Delta= E\left[Y_{\mathbf{a}}-Y_{\mathbf{a}^{\prime}} \right]
$$ 
where $\mathbf{a}-\mathbf{a}^{\prime}$ is the vector with all coordinates equal to zero except for coordinate $j$ which is equal to one, and (iii)
when $\Delta$ is estimated as the ordinary least squares estimator of the coefficient of $A_{j}$ in the linear regression of $Y$ on $\mathbf{A}$ and $\mathbf{Z}$ and $\sigma^{2}_{\Delta, \mathbf{Z}}$ is the asymptotic variance of such estimators.
These authors showed that not all time independent covariate adjustment sets are comparable. However, they provided a graphical criterion to compare certain pairs of time independent covariate adjustment sets. 
They also provided a graphical criterion for characterizing the set $
\mathbf{O}$, whenever a valid time independent covariate adjustment set exists. In particular, the criterion always returns an optimal valid time independent covariate adjustment set for $\mathbf{A}=A$ a point interventions.

In Section \ref{sec:optimal_adj_point} we prove that the same
graphical criteria remain valid for comparing time independent covariate adjustment sets and for characterizing the set $\mathbf{O}$ that
satisfies $\left( \ref{eq:var_compare}\right) $ under an arbitrary, not necessarily linear, causal graphical model and for NP-$\mathbf{Z}$ estimators of an arbitrary contrast $\Delta$.
Moreover, for $\mathbf{A}=A$ a point intervention, we further show that there exists a minimal adjustment set $\mathbf{O}%
_{\min }$ included in $\mathbf{O}$ such that $\mathbf{O}_{\min } $ is optimal among
the minimal adjustment sets; that is, for any other minimal adjustment set $%
\mathbf{Z}_{\min },$ 
\begin{equation}
\sigma _{\Delta,\mathbf{O}_{\min }}^{2}\left( P\right) \leq \sigma _{\Delta,%
\mathbf{Z}_{\min }}^{2}\left( P\right),   \label{eq:sigmaopt}
\end{equation}
where  $\sigma _{\Delta,%
\mathbf{Z}_{\min }}^{2}(P)$ stands for either the asymptotic variance of the NP-$\mathbf{Z}_{min}$ estimator or the asymptotic variance of the OLS estimator of treatment effect of \cite{perkovic}.
In addition, we provide a graphical criterion for identifying $\mathbf{O}_{\min }$.
Using the tools developed in \cite{vanfinding}, $\mathbf{O}$ and $%
\mathbf{O}_{\min }$ it can be shown that can be computed in polynomial time.

Consider next the case in which $\mathbf{A}=\left( A_{0},\dots,A_{p}\right) $
is a joint intervention  with $p>0.$ In analogy with the time independent covariate adjustment case we consider in Section \ref{sec:optimal_adj_time} the setting in which
one uses the causal model to identify the collection of time dependent adjustment sets, but
then for any given time dependent adjustment set $\mathbf{Z,}$ one estimates each
 $E\left[ Y_{\mathbf{a}}\right] $ ignoring the conditional
independences encoded in the causal graphical model. For instance, for $p=1, $ we study the asymptotic efficiency of estimators of 
\begin{eqnarray*}
\chi _{a_{0},a_{1}}\left( P;\mathcal{G}\right) &\mathbf{\equiv }%
&E_{P}\left\{ E_{P}\left[ E_{P}\left[ Y|A_{0}=a_{0},A_{1}=a_{1},\mathbf{Z}%
_{0},\mathbf{Z}_{1}\right] |A_{0}=a_{0},\mathbf{Z}_{0}\right] \right\} \\
&\mathbf{=}&E_{P}\left[ \frac{I_{a_{0}}\left( A_{0}\right) }{P\left[
A_{0}=a_{0}|\mathbf{Z}_{0}\right] }\frac{I_{a_{1}}\left( A_{1}\right) }{P%
\left[ A_{1}=a_{1}|A_{0}=a_{0},\mathbf{Z}_{0},\mathbf{Z}_{1}\right] }Y\right]
\end{eqnarray*}%
for different time dependent adjustment sets $\left( \mathbf{Z}_{0},\mathbf{Z}_{1}\right)$, under a model $\mathcal{M}$ that makes at most smoothness or
complexity assumptions on 
\begin{align*}
 &b_{a_{0},a_{1}}\left( \mathbf{Z}_{0},\mathbf{Z}%
_{1};P\right) \equiv E_{P}\left[ Y|A_{0}=a_{0},A_{1}=a_{1},\mathbf{Z}_{0},%
\mathbf{Z}_{1}\right] ,   \\
&b_{a_{0}}\left( \mathbf{Z}_{0};P\right) \equiv
E_{P}\left[ b_{a_{0},a_{1}}\left( \mathbf{Z}%
_{0},\mathbf{Z}_{1};P\right)|A_{0}=a,\mathbf{Z}_{0}\right] 
\end{align*}
 and/or 
\begin{align*}
 &   \pi
_{a_{0},a_{1}}\left( \mathbf{Z}_{0},\mathbf{Z}_{1};P\right) \equiv P\left[
A_{1}=a_{1}|A_{0}=a_{0},\mathbf{Z}_{0},\mathbf{Z}_{1}\right], \\
&\pi
_{a_{0}}\left( \mathbf{Z}_{0};P\right) \equiv P\left[ A_{0}=a_{0}|\mathbf{Z}%
_{0}\right].
\end{align*} 
See \cite{vanderlaan}. 
Just as for the case of time independent adjustment sets, not all time dependent adjustment sets are comparable in terms of their asymptotic variance uniformly for all $P\in\mathcal{M(G)}$.
However, in Section \ref{sec:optimal_adj_time} we generalize the aforementioned graphical criterion that allows the comparison of certain time dependent
adjustment sets. Nevertheless we show by example that unlike the case of time independent adjustment sets, even though a time dependent adjustment set always exists, there are DAGs in which no uniformly optimal  time dependent adjustment set exists.

\section{Comparison of adjustment sets}
\label{sec:optimal_adj}

In Section  \ref{sec:optimal_adj_point} we show that the graphical criteria for comparing time independent adjustment sets and for identifying the optimal time independent adjustment set of \cite{perkovic} is valid also when treatment effects are estimated non-parametrically. In Section \ref{sec:optimal_adj_time} we provide results for time dependent adjustment sets.
\subsection{Time independent adjustment sets}

\label{sec:optimal_adj_point}

\begin{lemma}[Supplementation with time independent precision variables]
\label{lemma:supplementation} Let $\mathcal{G}$ be a DAG with vertex set $%
\mathbf{V},$ let $\mathbf{A}\subset \mathbf{V}$ and $Y\in \mathbf{V}%
\setminus \mathbf{A}$ with $\mathbf{A}$ a random vector taking values on a
finite set. Suppose $\mathbf{B\subset V\backslash }\left\{ \mathbf{A}%
,Y\right\} $ is a time independent adjustment set relative to $\left( 
\mathbf{A},Y\right) $ in $\mathcal{G}$ and suppose $\mathbf{G}$ is a
disjoint set with $\mathbf{B}$ that satisfies 
\[
\mathbf{A}\text{ }\mathbf{\perp \!\!\!\perp }_{\mathcal{G}}\text{ }\mathbf{G}%
\text{ }\mathbf{|}\text{ }\mathbf{B}.
\]%
Then $\left( \mathbf{G,B}\right) $ is also a time independent adjustment set
relative to $\left( \mathbf{A},Y\right) $ in $\mathcal{G}$ and for all $P\in 
\mathcal{M}\left( \mathcal{G}\right) $%
\begin{equation}
\sigma _{\mathbf{a},\mathbf{B}}^{2}\left( P\right) -\sigma _{\mathbf{a},%
\mathbf{G},\mathbf{B}}^{2}\left( P\right) =E_{P}\left[ \left\{ \frac{1}{\pi
_{\mathbf{a}}\left( \mathbf{B};P\right) }-1\right\} var_{P}\left[ \left. b_{%
\mathbf{a}}(\mathbf{G,B};P)\right\vert \mathbf{B}\right] \right] \geq 0.
\label{eq:suplemento}
\end{equation}%
Furthermore,%
\[
\sigma _{\Delta ,\mathbf{B}}^{2}\left( P\right) -\sigma _{\Delta ,\mathbf{G},%
\mathbf{B}}^{2}\left( P\right) =\mathbf{c}^{T}var_{P}\left( \mathbf{Q}%
\right) \mathbf{c\geq }0
\]%
where $\mathbf{c}\equiv \left( c_{\mathbf{a}}\right) _{\mathbf{a}\in 
\mathcal{\mathbf{A}}}$  and $\mathbf{Q\equiv }\left[ Q_{\mathbf{a}}\right]
_{\mathbf{a}\in \mathcal{\mathbf{A}}}$ with 
\begin{align*}
&Q_{\mathbf{a}}\equiv \left\{ 
\frac{I_{\mathbf{a}}(\mathbf{A})}{\pi _{\mathbf{a}}(\mathbf{G},\mathbf{B};P)}%
-1\right\} \left\{ b_{\mathbf{a}}(\mathbf{G},\mathbf{B};P)-b_{\mathbf{a}}(%
\mathbf{B};P)\right\},\\
&var_{P}\left( Q_{\mathbf{a}}\right) =E_{P}\left[
\left\{ \frac{1}{\pi _{\mathbf{a}}(\mathbf{B};P)}-1\right\} var_{P}(b_{%
\mathbf{a}}(\mathbf{G,B};P)\mid \mathbf{B})\right], \\
&\text{and } cov_{P}\left[ Q_{%
\mathbf{a}},Q_{\mathbf{a}^{\prime }}\right] =-E_{P}\left[ cov_{P}\left\{ b_{%
\mathbf{a}}(\mathbf{G},\mathbf{B};P),b_{\mathbf{a}^{\prime }}(\mathbf{G},%
\mathbf{B};P)|\mathbf{B}\right\} \right]  \text{ for } \mathbf{a\not=a}^{\prime }.
\end{align*}
In particular,
\begin{eqnarray*}
\sigma _{ATE,\mathbf{B}}^{2}\left( P\right) -\sigma _{ATE,\mathbf{G},\mathbf{%
B}}^{2}\left( P\right)  &=&E_{P}\left[ \left\{ \frac{1}{\pi _{a=1}(\mathbf{B}%
;P)}-1\right\} var_{P}(b_{a=1}(\mathbf{G,B};P)\mid \mathbf{B})\right]  \\
&&+E_{P}\left[ \left\{ \frac{1}{\pi _{a=0}(\mathbf{B};P)}-1\right\}
var_{P}(b_{a=0}(\mathbf{G,B};P)\mid \mathbf{B})\right]  \\
&&-2E_{P}\left[ cov_{P}\left\{ b_{a=1}(\mathbf{G},\mathbf{B};P),b_{a=0}(%
\mathbf{G},\mathbf{B};P)|\mathbf{B}\right\} \right].
\end{eqnarray*}
\end{lemma}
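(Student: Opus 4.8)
The plan is to reduce the single d-separation hypothesis $\mathbf{A}\ort_{\mathcal{G}}\mathbf{G}\mid\mathbf{B}$ to two probabilistic identities that hold for every $P\in\mathcal{M}(\mathcal{G})$, and then to write $\psi_{P,\mathbf{a}}(\mathbf{B};\mathcal{G})$ as the sum of $\psi_{P,\mathbf{a}}(\mathbf{G},\mathbf{B};\mathcal{G})$ and an orthogonal correction term $Q_{\mathbf{a}}$, so that the entire variance comparison collapses to computing one covariance matrix. By the equivalence between d-separation and conditional independence under all laws in $\mathcal{M}(\mathcal{G})$ recalled in Section \ref{sec:back}, the hypothesis yields $\mathbf{A}\ort\mathbf{G}\mid\mathbf{B}$ under every such $P$. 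First, this leaves the propensity score unchanged under supplementation,
\[
\pi_{\mathbf{a}}(\mathbf{G},\mathbf{B};P)=P[\mathbf{A}=\mathbf{a}\mid\mathbf{G},\mathbf{B}]=P[\mathbf{A}=\mathbf{a}\mid\mathbf{B}]=\pi_{\mathbf{a}}(\mathbf{B};P),
\]
a common value I abbreviate $\pi_{\mathbf{a}}$. Second, the conditional law of $\mathbf{G}$ given $\mathbf{B}$ coincides with its law given $(\mathbf{A}=\mathbf{a},\mathbf{B})$, so integrating the finer regression over $\mathbf{G}$ reproduces the coarser one,
\[
E_{P}[b_{\mathbf{a}}(\mathbf{G},\mathbf{B};P)\mid\mathbf{B}]=b_{\mathbf{a}}(\mathbf{B};P).
\]
Running this last calculation with $I_{(-\infty,y]}(Y)$ in place of $Y$ shows that the adjustment functional based on $(\mathbf{G},\mathbf{B})$ equals the one based on $\mathbf{B}$ for every $y$; since $\mathbf{B}$ is a valid adjustment set this proves $(\mathbf{G},\mathbf{B})$ is one as well, and a further expectation in the displayed identity shows that both sets share the target $\chi_{\mathbf{a}}(P;\mathcal{G})$.

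Next I would subtract the two influence functions. Because the weights agree, the $Y$-terms combine and the $\chi_{\mathbf{a}}$-terms cancel, leaving
\[
\psi_{P,\mathbf{a}}(\mathbf{B};\mathcal{G})-\psi_{P,\mathbf{a}}(\mathbf{G},\mathbf{B};\mathcal{G})=\left\{\frac{I_{\mathbf{a}}(\mathbf{A})}{\pi_{\mathbf{a}}}-1\right\}\left\{b_{\mathbf{a}}(\mathbf{G},\mathbf{B};P)-b_{\mathbf{a}}(\mathbf{B};P)\right\}=Q_{\mathbf{a}},
\]
exactly the term in the statement. The heart of the argument is the orthogonality $cov_{P}[\psi_{P,\mathbf{a}^{\prime}}(\mathbf{G},\mathbf{B};\mathcal{G}),Q_{\mathbf{a}}]=0$ for all $\mathbf{a},\mathbf{a}^{\prime}$, which I would prove by iterated conditioning. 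Conditioning on $(\mathbf{A},\mathbf{G},\mathbf{B})$, the residual part of $\psi_{P,\mathbf{a}^{\prime}}(\mathbf{G},\mathbf{B};\mathcal{G})$ vanishes, since $I_{\mathbf{a}^{\prime}}(\mathbf{A})\{E_{P}[Y\mid\mathbf{A},\mathbf{G},\mathbf{B}]-b_{\mathbf{a}^{\prime}}(\mathbf{G},\mathbf{B};P)\}\equiv 0$, leaving $b_{\mathbf{a}^{\prime}}(\mathbf{G},\mathbf{B};P)-\chi_{\mathbf{a}^{\prime}}$, a function of $(\mathbf{G},\mathbf{B})$; conditioning the resulting product on $(\mathbf{G},\mathbf{B})$, the only remaining random factor is $I_{\mathbf{a}}(\mathbf{A})/\pi_{\mathbf{a}}-1$, whose conditional mean is zero by the first identity. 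With orthogonality and $\psi_{P,\Delta}(\mathbf{B};\mathcal{G})=\psi_{P,\Delta}(\mathbf{G},\mathbf{B};\mathcal{G})+\mathbf{c}^{T}\mathbf{Q}$, the variances split as $\sigma_{\Delta,\mathbf{B}}^{2}(P)-\sigma_{\Delta,\mathbf{G},\mathbf{B}}^{2}(P)=var_{P}[\mathbf{c}^{T}\mathbf{Q}]=\mathbf{c}^{T}var_{P}(\mathbf{Q})\mathbf{c}\geq 0$, and the scalar identity \eqref{eq:suplemento} is the single-intervention instance $\sigma_{\mathbf{a},\mathbf{B}}^{2}(P)-\sigma_{\mathbf{a},\mathbf{G},\mathbf{B}}^{2}(P)=var_{P}(Q_{\mathbf{a}})$.

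Finally I would evaluate the entries of $var_{P}(\mathbf{Q})$ by the same two-stage conditioning, using $E_{P}[Q_{\mathbf{a}}]=0$. Conditioning $Q_{\mathbf{a}}^{2}$ on $(\mathbf{G},\mathbf{B})$ replaces $\{I_{\mathbf{a}}(\mathbf{A})/\pi_{\mathbf{a}}-1\}^{2}$ by its conditional variance $1/\pi_{\mathbf{a}}-1$, the indicator being Bernoulli with parameter $\pi_{\mathbf{a}}$, and conditioning what remains on $\mathbf{B}$ turns $\{b_{\mathbf{a}}(\mathbf{G},\mathbf{B};P)-b_{\mathbf{a}}(\mathbf{B};P)\}^{2}$ into $var_{P}[b_{\mathbf{a}}(\mathbf{G},\mathbf{B};P)\mid\mathbf{B}]$ via the second identity, giving the right-hand side of \eqref{eq:suplemento}. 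For $\mathbf{a}\neq\mathbf{a}^{\prime}$ the cross term uses $I_{\mathbf{a}}(\mathbf{A})I_{\mathbf{a}^{\prime}}(\mathbf{A})=0$, so $E_{P}[\{I_{\mathbf{a}}/\pi_{\mathbf{a}}-1\}\{I_{\mathbf{a}^{\prime}}/\pi_{\mathbf{a}^{\prime}}-1\}\mid\mathbf{G},\mathbf{B}]=-1$, producing the stated $cov_{P}[Q_{\mathbf{a}},Q_{\mathbf{a}^{\prime}}]=-E_{P}[cov_{P}\{b_{\mathbf{a}}(\mathbf{G},\mathbf{B};P),b_{\mathbf{a}^{\prime}}(\mathbf{G},\mathbf{B};P)\mid\mathbf{B}\}]$; the $ATE$ display is then the case $\mathcal{A}=\{0,1\}$ with $c_{1}=1$, $c_{0}=-1$, obtained by expanding the quadratic form $\mathbf{c}^{T}var_{P}(\mathbf{Q})\mathbf{c}$. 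The main obstacle is the orthogonality step: it is where the d-separation is genuinely used, through the mean-zero weight $I_{\mathbf{a}}(\mathbf{A})/\pi_{\mathbf{a}}-1$ whose mean-zero property rests on $\pi_{\mathbf{a}}(\mathbf{G},\mathbf{B};P)=\pi_{\mathbf{a}}$, and it must be checked across all pairs $(\mathbf{a},\mathbf{a}^{\prime})$, not merely the diagonal, to license the full quadratic-form decomposition.
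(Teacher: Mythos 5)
Your proposal is correct and follows essentially the same route as the paper's proof: the identical decomposition $\psi_{P,\mathbf{a}}\left( \mathbf{B};\mathcal{G}\right) =\psi_{P,\mathbf{a}}\left( \mathbf{G},\mathbf{B};\mathcal{G}\right) +Q_{\mathbf{a}}$ enabled by $\pi_{\mathbf{a}}\left( \mathbf{G},\mathbf{B};P\right) =\pi_{\mathbf{a}}\left( \mathbf{B};P\right)$, the same orthogonality of $\psi_{P,\mathbf{a}^{\prime}}\left( \mathbf{G},\mathbf{B};\mathcal{G}\right)$ to mean-zero-given-$\left(\mathbf{G},\mathbf{B}\right)$ functions (the paper's equation \eqref{eq:uncorr}, which you prove explicitly by iterated conditioning rather than citing), and the same conditional variance and covariance computations using $b_{\mathbf{a}}(\mathbf{B};P)=E_{P}\left[ b_{\mathbf{a}}(\mathbf{G},\mathbf{B};P)\mid \mathbf{B}\right]$ and $I_{\mathbf{a}}(\mathbf{A})I_{\mathbf{a}^{\prime}}(\mathbf{A})=0$. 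Your extra care in verifying the adjustment-set property with $I_{(-\infty,y]}(Y)$ in place of $Y$, as Definition 2 strictly requires, is a minor but welcome refinement of the paper's argument.
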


For the special case in
which $\mathbf{B=\emptyset}$, formula \eqref{eq:suplemento} was derived in 
\cite{AIDS} and \cite{hahn}.
The formula quantifies the reduction in
variance associated with supplementing an adjustment set with `precision'
variables, i.e. variables that may help predict the outcome within 
treatment levels but are not associated with treatments after controlling for
the already existing adjustment set. Notice that $var_{P}\left[ \left. b_{a}(%
\mathbf{G,B};P)\right\vert \mathbf{B}\right] $ quantifies the additional
explanatory power carried by $\mathbf{G}$ for $Y$ after adjusting for $%
\mathbf{B.}$ In the DAG represented in Figure \ref{fig:supp}, $\mathbf{B=}%
\left\{ B\right\} $ and $\mathbf{G=}\left\{ G\right\} $ satisfy the
conditions of Lemma \ref{lemma:supplementation}. In that DAG, $var_{P}\left[
\left. b_{a}(\mathbf{G,B};P)\right\vert \mathbf{B}\right] $ increases as the
strength of the association encoded in the red edge increases and the one
encoded in the green edge decreases. In contrast, $\left\{ 1/{\pi _{a}\left( 
\mathbf{B};P\right) }-1\right\} $ is always greater than 0, and it is more
variable, and thus tends to have larger values, the stronger the marginal
association of $\mathbf{B}$ with $A.$ In the DAG in Figure \ref{fig:supp},
this association is represented by the blue edge.

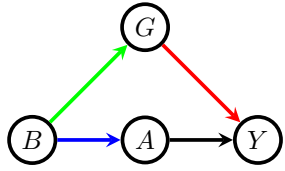
\begin{figure}[ht]
\begin{center}
\begin{tikzpicture}[>=stealth, node distance=1.5cm,
pre/.style={->,>=stealth,ultra thick,line width = 1.4pt}]
  \begin{scope}
    \tikzstyle{format} = [circle, inner sep=2.5pt,draw, thick, circle, line width=1.4pt, minimum size=6mm]
    \node[format] (B) {$B$};
        \node[format, right of=B] (A) {$A$};
       \node[format, right of=A] (Y) {$Y$};
    \node[format, above of=A] (G) {$G$};

                 \draw (A) edge[pre, black] (Y);
                 \draw (B) edge[pre, blue] (A);
                 \draw (B) edge[pre, green] (G);
                  \draw (G) edge[pre, red] (Y);
  \end{scope} 
  \end{tikzpicture}
\end{center}
\caption{A DAG illustrating Lemmas \ref{lemma:supplementation} and \ref{lemma:deletion}.}
\label{fig:supp}
\end{figure}

\begin{lemma}[Deletion of time independent overadjustment variables]
\label{lemma:deletion} Let $\mathcal{G}$ be a DAG with vertex set $\mathbf{V}%
,$ let $\mathbf{A}\subset \mathbf{V}$ and $Y\in \mathbf{V}\setminus \mathbf{A%
}$ with $\mathbf{A}$ a random vector taking values on a finite set. Suppose $%
\left( \mathbf{G}\cup \mathbf{B}\right) \mathbf{\subset V\backslash }\left\{ 
\mathbf{A},Y\right\} $ is a time independent adjustment set relative to $%
\left( \mathbf{A},Y\right) $ in $\mathcal{G}$ with $\mathbf{G}$ and $\mathbf{%
B}$\ disjoint and suppose 
\[
Y\text{ }\mathbf{\perp \!\!\!\perp }_{\mathcal{G}}\text{ }\mathbf{B}\text{ }%
\mathbf{|}\text{ }\mathbf{G},\mathbf{A}.
\]%
Then $\mathbf{G}$ is also an adjustment set relative to $\left( \mathbf{A}%
,Y\right) $ in $\mathcal{G}$ and for all $P\in \mathcal{M}\left( \mathcal{G}%
\right) $%
\begin{equation}
\sigma _{\mathbf{a},\mathbf{G},\mathbf{B}}^{2}\left( P\right) -\sigma _{%
\mathbf{a},\mathbf{G}}^{2}\left( P\right) =E_{P}\left[ \pi _{\mathbf{a}%
}\left( \mathbf{G};P\right) var_{P}\left( Y|\mathbf{A}=\mathbf{a},\mathbf{G}%
\right) var_{P}\left( \left. \frac{1}{\pi _{\mathbf{a}}\left( \mathbf{G,B}%
;P\right) }\right\vert \mathbf{A}=\mathbf{a},\mathbf{G}\right) \right] \geq
0.  \label{eq:deletion}
\end{equation}%
Furthermore,%
\[
\sigma _{\Delta ,\mathbf{G},\mathbf{B}}^{2}\left( P\right) -\sigma _{\Delta ,%
\mathbf{B}}^{2}\left( P\right) =\sum_{a\in \mathcal{A}}c_{\mathbf{a}%
}^{2}E_{P}\left\{ \pi _{\mathbf{a}}(\mathbf{G};P)var_{P}(Y\mid \mathbf{A}=%
\mathbf{a},\mathbf{G})var_{P}\left[ \frac{1}{\pi _{\mathbf{a}}(\mathbf{G,B}%
;P)}\mid \mathbf{A=a},\mathbf{G}\right] \right\} \mathbf{\geq }0.
\]%
In particular,
\begin{eqnarray*}
\sigma _{ATE,\mathbf{B}}^{2}\left( P\right) -\sigma _{ATE,\mathbf{G},\mathbf{%
B}}^{2}\left( P\right)  &=&E_{P}\left\{ \pi _{a=0}(\mathbf{G}%
;P)var_{P}(Y\mid A=0,\mathbf{G})var_{P}\left[ \frac{1}{\pi _{a=0}(\mathbf{G,B%
};P)}\mid A=0,\mathbf{G}\right] \right\}  \\
&&+E_{P}\left\{ \pi _{a=1}(\mathbf{G};P)var_{P}(Y\mid A=1,\mathbf{G})var_{P}%
\left[ \frac{1}{\pi _{a=0}(\mathbf{G,B};P)}\mid A=1,\mathbf{G}\right]
\right\}.
\end{eqnarray*}
\end{lemma}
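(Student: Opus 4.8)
The plan is to prove everything at the level of the influence function \eqref{eq:inf_fc_sing}, for which both adjustment sets admit closed forms, and to reduce the variance comparison to a single pointwise algebraic identity followed by two applications of the law of iterated expectations. First I would cash in the d-separation hypothesis. Since $Y\ \mathbf{\perp \!\!\!\perp}_{\mathcal{G}}\ \mathbf{B}\mid \mathbf{G},\mathbf{A}$, the equivalence between d-separation and conditional independence under every $P\in\mathcal{M}(\mathcal{G})$ gives $Y\ort\mathbf{B}\mid\mathbf{G},\mathbf{A}$ under $P$. Applied to $I_{(-\infty,y]}(Y)$ this yields $E_P[I_{(-\infty,y]}(Y)\mid\mathbf{A}=\mathbf{a},\mathbf{G},\mathbf{B}]=E_P[I_{(-\infty,y]}(Y)\mid\mathbf{A}=\mathbf{a},\mathbf{G}]$, so that taking expectations and using the validity of $(\mathbf{G},\mathbf{B})$ shows that $\mathbf{G}$ is itself a valid adjustment set. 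Applied to $Y$ it gives the two identities I will use repeatedly: $b_{\mathbf{a}}(\mathbf{G},\mathbf{B};P)=b_{\mathbf{a}}(\mathbf{G};P)$ and $var_P(Y\mid\mathbf{A}=\mathbf{a},\mathbf{G},\mathbf{B})=var_P(Y\mid\mathbf{A}=\mathbf{a},\mathbf{G})=:v_{\mathbf{a}}(\mathbf{G})$.

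Next I would form the difference $D_{\mathbf{a}}\equiv \psi_{P,\mathbf{a}}(\mathbf{G},\mathbf{B};\mathcal{G})-\psi_{P,\mathbf{a}}(\mathbf{G};\mathcal{G})$. Because $b_{\mathbf{a}}(\mathbf{G},\mathbf{B};P)=b_{\mathbf{a}}(\mathbf{G};P)$, the outcome-regression and centering terms cancel and only the inverse-probability weight changes, leaving
\[
D_{\mathbf{a}}=I_{\mathbf{a}}(\mathbf{A})\,\bigl(Y-b_{\mathbf{a}}(\mathbf{G};P)\bigr)\,R,\qquad R\equiv \frac{1}{\pi_{\mathbf{a}}(\mathbf{G},\mathbf{B};P)}-\frac{1}{\pi_{\mathbf{a}}(\mathbf{G};P)} .
\]
Conditioning on $(\mathbf{A},\mathbf{G},\mathbf{B})$ and using $E_P[Y-b_{\mathbf{a}}(\mathbf{G};P)\mid\mathbf{A}=\mathbf{a},\mathbf{G},\mathbf{B}]=0$ shows $E_P[D_{\mathbf{a}}]=0$, consistent with both influence functions being centered. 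I would then expand $\sigma^2_{\mathbf{a},\mathbf{G},\mathbf{B}}-\sigma^2_{\mathbf{a},\mathbf{G}}=var_P(D_{\mathbf{a}})+2\,cov_P\bigl(\psi_{P,\mathbf{a}}(\mathbf{G};\mathcal{G}),D_{\mathbf{a}}\bigr)$ and evaluate each piece by iterated expectations, conditioning first on $(\mathbf{A},\mathbf{G},\mathbf{B})$ (which replaces $(Y-b_{\mathbf{a}}(\mathbf{G};P))^2$ by $v_{\mathbf{a}}(\mathbf{G})$ and annihilates, through the centered factor, the non-weighted summand of $\psi_{P,\mathbf{a}}(\mathbf{G};\mathcal{G})$) and then on $(\mathbf{G},\mathbf{B})$ (which replaces $I_{\mathbf{a}}(\mathbf{A})$ by $\pi_{\mathbf{a}}(\mathbf{G},\mathbf{B};P)$). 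This gives $var_P(D_{\mathbf{a}})=E_P[\pi_{\mathbf{a}}(\mathbf{G},\mathbf{B};P)\,v_{\mathbf{a}}(\mathbf{G})\,R^2]$ and $cov_P(\cdots)=E_P[\pi_{\mathbf{a}}(\mathbf{G},\mathbf{B};P)\,\pi_{\mathbf{a}}(\mathbf{G};P)^{-1}\,v_{\mathbf{a}}(\mathbf{G})\,R]$.

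The crux is then purely algebraic: writing $p\equiv\pi_{\mathbf{a}}(\mathbf{G},\mathbf{B};P)$ and $q\equiv\pi_{\mathbf{a}}(\mathbf{G};P)$, the pointwise identity $pR^2+2(p/q)R=1/p-p/q^2$ collapses the two pieces into $\sigma^2_{\mathbf{a},\mathbf{G},\mathbf{B}}-\sigma^2_{\mathbf{a},\mathbf{G}}=E_P[v_{\mathbf{a}}(\mathbf{G})\,(1/p-p/q^2)]$. To recognize this as the stated formula I would condition on $\mathbf{G}$ and use $E_P[p\mid\mathbf{G}]=\pi_{\mathbf{a}}(\mathbf{G};P)=q$ together with the Bayes identity $E_P[g\mid\mathbf{A}=\mathbf{a},\mathbf{G}]=E_P[g\,p\mid\mathbf{G}]/q$, valid for any function $g$ of $(\mathbf{G},\mathbf{B})$; these give $E_P[1/p-p/q^2\mid\mathbf{G}]=E_P[1/p\mid\mathbf{G}]-1/q=q\,var_P(1/p\mid\mathbf{A}=\mathbf{a},\mathbf{G})$, and substituting back produces exactly \eqref{eq:deletion}. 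Nonnegativity is then immediate from the factored integrand, a product of a probability, a conditional variance of $Y$, and a conditional variance of $1/\pi_{\mathbf{a}}(\mathbf{G},\mathbf{B};P)$.

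For the contrast and ATE statements I would write $\psi_{P,\Delta}(\mathbf{Z};\mathcal{G})=\sum_{\mathbf{a}}c_{\mathbf{a}}\psi_{P,\mathbf{a}}(\mathbf{Z};\mathcal{G})$, so the difference equals $\sum_{\mathbf{a}}c_{\mathbf{a}}D_{\mathbf{a}}$; since $D_{\mathbf{a}}\propto I_{\mathbf{a}}(\mathbf{A})$ and $I_{\mathbf{a}}(\mathbf{A})I_{\mathbf{a}'}(\mathbf{A})=0$ for $\mathbf{a}\neq\mathbf{a}'$, every off-diagonal covariance $cov_P(D_{\mathbf{a}},D_{\mathbf{a}'})$ vanishes, and $cov_P(\psi_{P,\mathbf{a}'}(\mathbf{G};\mathcal{G}),D_{\mathbf{a}})=0$ for $\mathbf{a}\neq\mathbf{a}'$ as well, because the only term not already killed by the disjoint indicator supports carries the centered factor $Y-b_{\mathbf{a}}(\mathbf{G};P)$. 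Hence only the diagonal survives and $\sigma^2_{\Delta,\mathbf{G},\mathbf{B}}-\sigma^2_{\Delta,\mathbf{G}}=\sum_{\mathbf{a}}c_{\mathbf{a}}^2\bigl(\sigma^2_{\mathbf{a},\mathbf{G},\mathbf{B}}-\sigma^2_{\mathbf{a},\mathbf{G}}\bigr)$, from which the ATE specialization follows by taking $\mathbf{A}=A$ binary with $c_1=1,c_0=-1$. I expect the main obstacle to be not conceptual but the careful bookkeeping of the nested conditional expectations, and in particular the Bayes-rule conversion that rewrites the $\mathbf{G}$-conditional expectation as the $(\mathbf{A}=\mathbf{a},\mathbf{G})$-conditional variance of $1/\pi_{\mathbf{a}}(\mathbf{G},\mathbf{B};P)$ displayed in \eqref{eq:deletion}.
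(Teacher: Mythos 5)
Your proof is correct, but it follows a genuinely different decomposition than the paper's. The paper's proof is a law-of-total-variance argument with conditioning set $(\mathbf{A},Y,\mathbf{G})$: using $Y\ \ort_{\mathcal{G}}\ \mathbf{B}\mid \mathbf{G},\mathbf{A}$ together with Lemma \ref{lemma:inv_pi} (your ``Bayes identity'' is exactly that lemma), it shows $E_{P}\left[\psi_{P,\mathbf{a}}(\mathbf{G},\mathbf{B};\mathcal{G})\mid \mathbf{A},Y,\mathbf{G}\right]=\psi_{P,\mathbf{a}}(\mathbf{G};\mathcal{G})$, so that $\psi_{P,\mathbf{a}}(\mathbf{G};\mathcal{G})$ is the projection of $\psi_{P,\mathbf{a}}(\mathbf{G},\mathbf{B};\mathcal{G})$ onto $\sigma(\mathbf{A},Y,\mathbf{G})$, the cross term vanishes by construction, and the variance difference is immediately the expected conditional variance $E_{P}\left[var_{P}\left(\psi_{P,\mathbf{a}}(\mathbf{G},\mathbf{B};\mathcal{G})\mid \mathbf{A},Y,\mathbf{G}\right)\right]$, which is then evaluated. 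You instead expand $\sigma^{2}_{\mathbf{a},\mathbf{G},\mathbf{B}}-\sigma^{2}_{\mathbf{a},\mathbf{G}}=var_{P}(D_{\mathbf{a}})+2\,cov_{P}\bigl(\psi_{P,\mathbf{a}}(\mathbf{G};\mathcal{G}),D_{\mathbf{a}}\bigr)$ with a genuinely nonzero cross term, and collapse the two pieces through the pointwise identity $pR^{2}+2(p/q)R=1/p-p/q^{2}$; the change-of-measure conversions then enter only at the very end, to rewrite $E_{P}[1/p\mid \mathbf{G}]-1/q$ as $q\,var_{P}\left(1/p\mid \mathbf{A}=\mathbf{a},\mathbf{G}\right)$. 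I checked your computations of $var_{P}(D_{\mathbf{a}})$, the covariance, the collapsing identity, and the final conversion, and they are all correct, as are the off-diagonal cancellations in the $\Delta$ step (which coincide with the paper's argument there, resting on $I_{\mathbf{a}}(\mathbf{A})I_{\mathbf{a}^{\prime}}(\mathbf{A})=0$ and the centered factor $Y-b_{\mathbf{a}}(\mathbf{G};P)$). What the paper's route buys is structural transparency: nonnegativity and the conditional-variance form are visible from the first line, and the same projection template carries over essentially verbatim to the time-dependent analogue, Lemma \ref{lemma:deletion_dep}. What your route buys is elementarity: you never need to discover that $(\mathbf{A},Y,\mathbf{G})$ is the magic conditioning set under which the cross term dies — you only grind out a quadratic — at the price that nonnegativity emerges only after the final factorization. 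One incidental point in your favor: you phrased the validity of $\mathbf{G}$ via the indicators $I_{(-\infty,y]}(Y)$, matching the definition in \eqref{eq:colombo} exactly, whereas the paper's proof argues with $Y$ itself; and you proved the intended ``Furthermore'' and ATE identities (the displayed versions in the lemma statement contain evident typos, e.g.\ $\sigma^{2}_{\Delta,\mathbf{B}}$ where $\sigma^{2}_{\Delta,\mathbf{G}}$ is meant, as the paper's own proof confirms).
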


Formula $\left( \ref{eq:deletion}\right) $ quantifies the increase in
variance incurred by keeping `overadjustment' variables that are marginally
associated with treatment but that do not help predict the outcome within
levels of treatment and the remaining adjusting variables. Notice that $%
var_{P}\left( Y|A=a,\mathbf{G}\right) $ is zero if $\mathbf{G}$ is a perfect
predictor of $Y$. In such extreme case, the formula indicates that it is
irrelevant whether one keeps the overadjustment variables $\mathbf{B}$. In
general, $\mathbf{B}$ is more harmful the weaker the association between $%
\mathbf{G}$ and $Y$ within levels of $A$ is. For example, in the causal
diagram in Figure \ref{fig:supp}, the penalty for keeping overadjustment variables
increases as the strength of the association represented in the red arrow
decreases. Furthermore, the quantity $var_{P}\left( \left. 1/{\pi _{a}\left( 
\mathbf{G,B};P\right) }\right\vert A=a,\mathbf{G}\right) $ indicates that $%
\mathbf{B}$ is also more harmful the weaker the association between $%
\mathbf{G}$ and $\mathbf{B}$ within levels of $A$, and the stronger the
association between $\mathbf{B}$ and $A$ within levels of $\mathbf{G.}$
For instance, in the causal diagram in Figure \ref{fig:supp}, $\mathbf{B}$ is also
more harmful the weaker the association represented by the green arrow is
and the stronger the association represented by the blue arrow is.

\begin{theorem}
\label{theo:compare_adj} Let $\mathcal{G}$ be a DAG with vertex set $\mathbf{%
V},$ let $\mathbf{A}\subset \mathbf{V}$ and $Y\in \mathbf{V}\setminus 
\mathbf{A}$ with $\mathbf{A}$ a random vector taking values on a finite set.
Suppose $\mathbf{G\mathbf{\subset V\backslash }\left\{ \mathbf{A},Y\right\} }
$ and $\mathbf{B\subset V\backslash }\left\{ \mathbf{A},Y\right\} $ are two
time independent adjustment sets relative to $\left( \mathbf{A},Y\right) $
in $\mathcal{G}$ such that%
\begin{equation}
\mathbf{A}\text{ }\mathbf{\perp \!\!\!\perp }_{\mathcal{G}}\text{ }\left[ 
\mathbf{G\backslash B}\right] \text{ }\mathbf{|}\text{ }\mathbf{B}
\label{eq:cond_indep_A}
\end{equation}%
\begin{equation}
Y\text{ }\mathbf{\perp \!\!\!\perp }_{\mathcal{G}}\text{ }\left[ \mathbf{%
B\backslash G}\right] \text{ }\mathbf{|}\text{ }\mathbf{G},\mathbf{A}.
\label{eq:cond_indep_Y}
\end{equation}%
Then, 
\begin{eqnarray*}
\sigma _{\mathbf{a},\mathbf{B}}^{2}\left( P\right) -\sigma _{\mathbf{a},%
\mathbf{G}}^{2}\left( P\right)  &=&E_{P}\left[ \left\{ \frac{1}{\pi _{%
\mathbf{a}}\left( \mathbf{B};P\right) }-1\right\} var_{P}\left[ \left. b_{%
\mathbf{a}}(\mathbf{G,B};P)\right\vert \mathbf{B}\right] \right]  \\
&&+E_{P}\left[ \pi _{\mathbf{a}}\left( \mathbf{G};P\right) var_{P}\left( Y|%
\mathbf{A}=\mathbf{a},\mathbf{G}\right) var_{P}\left( \left. \frac{1}{\pi
_{a}\left( \mathbf{G,B};P\right) }\right\vert \mathbf{A}=\mathbf{a},\mathbf{G%
}\right) \right] 
\end{eqnarray*}%
and
\begin{align*}
\sigma _{\Delta ,\mathbf{B}}^{2}\left( P\right) -\sigma _{\Delta ,\mathbf{G}%
}^{2}\left( P\right) &=\mathbf{c}^{T}var_{P}\left( \mathbf{Q}\right) \mathbf{%
c}\\
&+\sum_{a\in \mathcal{A}}c_{\mathbf{a}}^{2}E_{P}\left\{ \pi _{\mathbf{a}}(%
\mathbf{G};P)var_{P}(Y\mid \mathbf{A}=\mathbf{a},\mathbf{G})var_{P}\left[ 
\frac{1}{\pi _{\mathbf{a}}(\mathbf{G,B};P)}\mid \mathbf{A=a},\mathbf{G}%
\right] \right\} ,
\end{align*}
where $\mathbf{Q}$ is defined as in Lemma \ref{lemma:supplementation}. In
particular, 
\begin{eqnarray*}
\sigma _{ATE,\mathbf{B}}^{2}\left( P\right) -\sigma _{ATE,\mathbf{G}%
}^{2}\left( P\right)  &=&E_{P}\left[ \left\{ \frac{1}{\pi _{a=1}(\mathbf{B}%
;P)}-1\right\} var_{P}(b_{a=1}(\mathbf{G,B};P)\mid \mathbf{B})\right]  \\
&&+E_{P}\left[ \left\{ \frac{1}{\pi _{a=0}(\mathbf{B};P)}-1\right\}
var_{P}(b_{a=0}(\mathbf{G,B};P)\mid \mathbf{B})\right]  \\
&&-2E_{P}\left[ cov_{P}\left\{ b_{a=1}(\mathbf{G},\mathbf{B};P),b_{a=0}(%
\mathbf{G},\mathbf{B};P)|\mathbf{B}\right\} \right]  \\
&&+E_{P}\left\{ \pi _{a=0}(\mathbf{G};P)var_{P}(Y\mid A=0,\mathbf{G})var_{P}%
\left[ \frac{1}{\pi _{a=0}(\mathbf{G,B};P)}\mid A=0,\mathbf{G}\right]
\right\}  \\
&&+E_{P}\left\{ \pi _{a=1}(\mathbf{G};P)var_{P}(Y\mid A=1,\mathbf{G})var_{P}%
\left[ \frac{1}{\pi _{a=0}(\mathbf{G,B};P)}\mid A=1,\mathbf{G}\right].
\right\} 
\end{eqnarray*}
\end{theorem}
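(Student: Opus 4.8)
The plan is to prove the theorem as a direct telescoping composition of Lemma \ref{lemma:supplementation} and Lemma \ref{lemma:deletion}, using the union $\mathbf{G}\cup\mathbf{B}$ as an intermediate adjustment set. Write $\mathbf{G}\setminus\mathbf{B}$ for the variables of $\mathbf{G}$ not already in $\mathbf{B}$, and $\mathbf{B}\setminus\mathbf{G}$ for those of $\mathbf{B}$ not in $\mathbf{G}$. These are disjoint from $\mathbf{B}$ and from $\mathbf{G}$ respectively, and in both cases recombining the extra set with the base recovers the same set $\mathbf{G}\cup\mathbf{B}$. The two separation hypotheses \eqref{eq:cond_indep_A} and \eqref{eq:cond_indep_Y} are tailored exactly so that the path $\mathbf{B}\to\mathbf{G}\cup\mathbf{B}\to\mathbf{G}$ can be traversed by supplementation followed by deletion.

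First I would invoke Lemma \ref{lemma:supplementation} with base adjustment set $\mathbf{B}$ and supplementary set $\mathbf{G}\setminus\mathbf{B}$. Hypothesis \eqref{eq:cond_indep_A}, namely $\mathbf{A}\ort_{\mathcal{G}}[\mathbf{G}\setminus\mathbf{B}]\mid\mathbf{B}$, is precisely the condition that lemma requires. It delivers two things at once: that $(\mathbf{G}\setminus\mathbf{B})\cup\mathbf{B}=\mathbf{G}\cup\mathbf{B}$ is itself a time independent adjustment set, and the identity
\[
\sigma _{\mathbf{a},\mathbf{B}}^{2}\left( P\right) -\sigma _{\mathbf{a},\mathbf{G},\mathbf{B}}^{2}\left( P\right) =E_{P}\left[ \left\{ \frac{1}{\pi _{\mathbf{a}}\left( \mathbf{B};P\right) }-1\right\} var_{P}\left[ \left. b_{\mathbf{a}}(\mathbf{G},\mathbf{B};P)\right\vert \mathbf{B}\right] \right],
\]
where, because $\sigma^2_{\mathbf{a},\mathbf{G},\mathbf{B}}$ and $b_{\mathbf{a}}(\mathbf{G},\mathbf{B};P)$ are computed by conditioning on the union $\mathbf{G}\cup\mathbf{B}$, they are unaffected by the chosen partition. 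This is exactly the first summand in the claimed formula.

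Second I would invoke Lemma \ref{lemma:deletion} with retained set $\mathbf{G}$ and deleted set $\mathbf{B}\setminus\mathbf{G}$. Its standing hypothesis, that the combined set $\mathbf{G}\cup(\mathbf{B}\setminus\mathbf{G})=\mathbf{G}\cup\mathbf{B}$ be an adjustment set, was just established in the previous step, and its separation hypothesis $Y\ort_{\mathcal{G}}[\mathbf{B}\setminus\mathbf{G}]\mid\mathbf{G},\mathbf{A}$ is exactly \eqref{eq:cond_indep_Y}. The lemma then yields that $\mathbf{G}$ is an adjustment set together with
\[
\sigma _{\mathbf{a},\mathbf{G},\mathbf{B}}^{2}\left( P\right) -\sigma _{\mathbf{a},\mathbf{G}}^{2}\left( P\right) =E_{P}\left[ \pi _{\mathbf{a}}\left( \mathbf{G};P\right) var_{P}\left( Y|\mathbf{A}=\mathbf{a},\mathbf{G}\right) var_{P}\left( \left. \frac{1}{\pi _{\mathbf{a}}\left( \mathbf{G},\mathbf{B};P\right) }\right\vert \mathbf{A}=\mathbf{a},\mathbf{G}\right) \right].
\]
Adding the two displayed identities cancels the intermediate term $\sigma^2_{\mathbf{a},\mathbf{G},\mathbf{B}}(P)$ and produces the asserted expression for $\sigma^2_{\mathbf{a},\mathbf{B}}(P)-\sigma^2_{\mathbf{a},\mathbf{G}}(P)$. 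The analogous telescoping of the two $\Delta$-identities supplied by the same lemmas yields the $\mathbf{c}^{T}var_{P}(\mathbf{Q})\mathbf{c}$ term and the $\sum_{a}c_{\mathbf{a}}^{2}E_{P}\{\cdots\}$ term, and specializing to binary $\mathbf{A}$ with $c_{1}=1$, $c_{0}=-1$ gives the $ATE$ display.

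The argument carries no genuine analytic difficulty, since the two lemmas already perform all the variance bookkeeping. The single checkpoint that warrants care is confirming that $\mathbf{G}\cup\mathbf{B}$ is a legitimate intermediate adjustment set, so that Lemma \ref{lemma:deletion} may be applied; this is not an assumption of the theorem but is furnished by the supplementation step. I would also record the elementary set-theoretic facts that $\mathbf{G}\setminus\mathbf{B}$ is disjoint from $\mathbf{B}$ and $\mathbf{B}\setminus\mathbf{G}$ from $\mathbf{G}$, and that each recombines to $\mathbf{G}\cup\mathbf{B}$, which guarantees that the quantities $\pi_{\mathbf{a}}(\mathbf{G},\mathbf{B};P)$ and $b_{\mathbf{a}}(\mathbf{G},\mathbf{B};P)$ appearing in the two conclusions denote one and the same object; it is this coincidence that licenses the clean cancellation. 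Thus the only conceptual point, as opposed to an obstacle, is recognizing that \eqref{eq:cond_indep_A} and \eqref{eq:cond_indep_Y} are precisely the hypotheses of the two lemmas along the intermediate union.
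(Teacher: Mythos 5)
Your proposal is correct and takes essentially the same route as the paper's own proof, which writes $\sigma_{\mathbf{a},\mathbf{B}}^{2}-\sigma_{\mathbf{a},\mathbf{G}}^{2}=\bigl(\sigma_{\mathbf{a},\mathbf{B}}^{2}-\sigma_{\mathbf{a},\mathbf{B}\cup(\mathbf{G}\backslash\mathbf{B})}^{2}\bigr)+\bigl(\sigma_{\mathbf{a},\mathbf{G}\cup(\mathbf{B}\backslash\mathbf{G})}^{2}-\sigma_{\mathbf{a},\mathbf{G}}^{2}\bigr)$ and applies Lemmas \ref{lemma:supplementation} and \ref{lemma:deletion} exactly as you do, with \eqref{eq:cond_indep_A} feeding the supplementation step and \eqref{eq:cond_indep_Y} the deletion step. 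Your explicit checkpoint that the validity of the intermediate adjustment set $\mathbf{G}\cup\mathbf{B}$ is furnished by the supplementation lemma rather than assumed is a detail the paper leaves implicit, but the argument is the same.
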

\medskip
\begin{proof}
Write $\sigma _{\mathbf{a},\mathbf{B}}^{2}-\sigma _{\mathbf{a},\mathbf{G}}^{2}=\sigma _{\mathbf{a},\mathbf{B}%
}^{2}-\sigma _{\mathbf{a},\mathbf{B\cup }\left( \mathbf{G\backslash B}\right)
}^{2}+\sigma _{\mathbf{a},\mathbf{G\cup }\left( \mathbf{B\backslash G}\right)
}^{2}-\sigma _{\mathbf{a},\mathbf{G}}^{2}$ and apply Lemmas \ref{lemma:supplementation}
and \ref{lemma:deletion}. The derivations for the expressions for $\sigma _{\Delta ,\mathbf{B}}^{2}\left( P\right) -\sigma _{\Delta ,\mathbf{G}%
}^{2}\left( P\right)$ and  $\sigma _{ATE,\mathbf{B}}^{2}\left( P\right) -\sigma _{ATE,\mathbf{G}}^{2}\left( P\right)$ are similar.
\end{proof}

The preceding theorem provides an intuitive decomposition for the gain in
efficiency of using adjustment set $\mathbf{G}$ as opposed to set $\mathbf{B}$. The difference $\sigma _{\mathbf{a},\mathbf{B}}^{2}-\sigma _{\mathbf{a},\mathbf{B\cup }\left( \mathbf{%
G\backslash B}\right) }^{2}$ represents the gain due to supplementing $%
\mathbf{B}$ with the precision component $\mathbf{G\backslash B}$ and $%
\sigma _{\mathbf{a},\mathbf{G\cup }\left( \mathbf{B\backslash G}\right) }^{2}-\sigma _{\mathbf{a},
\mathbf{G}}^{2}$ represents the gain from removing from $\mathbf{G\cup B}$
the overadjustment component $\mathbf{B\backslash G.}$

Theorem \ref{theo:compare_adj} is analogous to 
Theorem 3.10 from \cite{perkovic}, except that it is valid for arbitrary causal graphical models, instead of causal linear models, and for NP-$\mathbf{Z}$ estimators of treatment effects instead of ordinary least squares estimators. Likewise, Lemmas \ref{lemma:supplementation} and \ref{lemma:deletion} are analogous to Henckel et al's Corollaries 3.4 and 3.5.
Building on their Corollary 3.5, \cite{perkovic} provided a simple
procedure that, for a valid adjustment set, returns a
pruned valid adjustment set that yields OLS estimators of treatment effects
with smaller asymptotic variance. Because the validity of their pruning procedure relies only on the ordering of the asymptotic variances corresponding to two adjustment sets implied by the d-separation assumptions of their Corollary 3.5, and because the same ordering of the adjustment sets is valid for the variances of the corresponding NP-$\mathbf{Z}$ estimators, then we conclude that the pruning algorithm of \cite{perkovic} also returns a pruned valid adjustment set that yields an NP-$\mathbf{Z}$ estimator of treatment effect with smaller asymptotic variance.

As noted by \cite{perkovic}, not all pairs of valid time independent adjustment sets can be ordered using the d-separation conditions in Theorem \ref{theo:compare_adj}. In fact, there exist DAGs $%
\mathcal{G}$ with time independent  adjustment sets $\mathbf{Z}$ and $\widetilde{\mathbf{Z}}$
for which $\sigma _{a,\mathbf{Z}}^{2}\left( P\right) >\sigma _{a,\widetilde{%
\mathbf{Z}}}^{2}\left( P\right) $ for some $P\in \mathcal{M}\left( \mathcal{G%
}\right) $ and $\sigma _{a,\widetilde{\mathbf{Z}}}^{2}\left( P^{\prime
}\right) >\sigma _{a,\mathbf{Z}}^{2}\left( P^{\prime }\right) $ for some
other $P^{\prime }\in \mathcal{M}\left( \mathcal{G}\right) $ as the
following example illustrates.

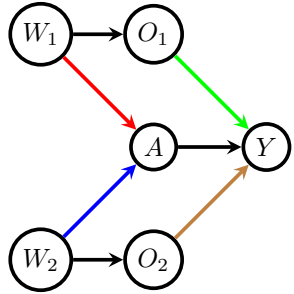
\begin{figure}[ht]
\begin{center}
\begin{tikzpicture}[>=stealth, node distance=1.5cm,
pre/.style={->,>=stealth,ultra thick,line width = 1.4pt}]
  \begin{scope}
    \tikzstyle{format} = [circle, inner sep=2.5pt,draw, thick, circle, line width=1.4pt, minimum size=6mm]
    \node[format] (W1) {$W_{1}$};
        \node[below of=W1] (0) {};
    \node[format, below of=0] (W2) {$W_{2}$};
    \node[format, right of=0] (A) {$A$};
        \node[format, right of=A] (Y) {$Y$};
            \node[format, above of=A] (O1) {$O_{1}$};
      \node[format, below of=A] (O2) {$O_{2}$};
                 \draw (A) edge[pre, black] (Y);
                 \draw (W2) edge[pre, blue] (A);
                 \draw (W2) edge[pre, black] (O2);
                    \draw (W1) edge[pre, black] (O1);
                  \draw (W1) edge[pre, red] (A);
                  \draw (O2) edge[pre, brown] (Y);
             \draw (O1) edge[pre, green] (Y);
  \end{scope} 
  \end{tikzpicture}
\end{center}
\caption{A DAG with two time independent adjustment sets, $\mathbf{Z=}\left\lbrace
O_{1},W_{2}\right\rbrace $ and $\widetilde{\mathbf{Z}}\mathbf{=}\left\lbrace
O_{2},W_{1}\right \rbrace$, that cannot be compared. Note that $\mathbf{Z}$ and $\widetilde{\mathbf{Z}}$ are minimal time independent adjustment sets.}
\label{fig:no_min_opt}
\end{figure}

\begin{example}
In the DAG in Figure \ref{fig:no_min_opt}, $\mathbf{Z=}\left\lbrace
O_{1},W_{2}\right\rbrace $ and $\widetilde{\mathbf{Z}}\mathbf{=}\left\lbrace
O_{2},W_{1}\right \rbrace$ are time independent adjustment sets relative to $\left( A,Y\right) .$
The adjustment set $\mathbf{Z}$ yields a smaller asymptotic variance than
the adjustment set $\widetilde{\mathbf{Z}}$ if the association encoded in
the green edge is stronger than that in the brown edge and the one encoded
in the blue edge is weaker than the one in the red edge. By symmetry, the
adjustment set $\widetilde{\mathbf{Z}}$ is more efficient than $\mathbf{Z}$
if the words stronger and weaker are interchanged in the preceding sentence. \cite{perkovic} illustrated the impossibility of ordering all time independent adjustment sets by the asymptotic variances of the corresponding adjusted linear estimators with a diagram different from the one in Figure \ref{fig:no_min_opt}, in which the treatment was unconfounded.
\end{example}

Following \cite{perkovic} we let $\cn(\A,Y,\mathcal{G})$ be the set of all
nodes that lie on a causal path between a node in $\mathbf{A}$ and $Y$ and are not equal to any node in $\mathbf{A}$ and we define the forbidden set as
\begin{equation*}
\forb(\A,Y,\mathcal{G})\equiv \de_{\mathcal{G}}\left( \cn(\A,Y,\mathcal{G}%
)\right) \cup \left\{ \A\right\}.
\end{equation*}%
Also, 
\begin{equation*}
\mathbf{O}(\A,Y,\mathcal{G})\equiv \pa_{\mathcal{G}}\left( \cn(\A,Y,\mathcal{G}%
) \right) \backslash \forb(\A,Y,\mathcal{G}).
\end{equation*}
\cite{perkovic} showed that, if a time independent adjustment set relative to $(\mathbf{A},Y)$ in $\mathcal{G}$ exists, then $\mathbf{O}(\A,Y,\mathcal{G})$ satisfies the graphical necessary and sufficient conditions of \cite{shpitser-adjustment}
to be an adjustment set.
Furthermore, Lemmas E.4 and E.5 of \cite{perkovic} showed that the conditions \eqref{eq:cond_indep_A} and \eqref{eq:cond_indep_Y} hold 
for $\mathbf{G}=\mathbf{O}(\mathbf{A},Y,%
\mathcal{G})$ and $\mathbf{B}$ any adjustment set. Consequently, we have
the following important corollary to Theorem \ref{theo:compare_adj}.

\begin{theorem}
\label{thep:opt_adj} 
 Let $\mathcal{G} $ be a DAG with vertex set $%
\mathbf{V},$ let $\mathbf{A}\subset \mathbf{V}$ and $Y \in \mathbf{V}\setminus \mathbf{A}$ with $\mathbf{A}$ a random vector taking values on a finite set. If a valid time independent adjustment set $%
\mathbf{Z}$ relative to $\left( \A,Y\right) $ in $\mathcal{G}$ exists then  $\mathbf{%
O=O}(\A,Y,\mathcal{G})$ is a time independent adjustment set and 
\begin{eqnarray*}
\sigma _{\mathbf{a},\mathbf{Z}}^{2}\left( P\right) -\sigma _{\mathbf{a},\mathbf{O}}^{2}\left(
P\right) &=&E_{P}\left[ \left\{ \frac{1}{\pi _{\mathbf{a}}\left( \mathbf{Z};P\right) }%
-1\right\} var_{P}\left[ \left. b_{\mathbf{a}}(\mathbf{O,Z};P)\right\vert \mathbf{Z}%
\right] \right] \\
&&+E_{P}\left[ \pi _{\mathbf{a}}\left( \mathbf{O};P\right) var_{P}\left( Y|A=a,%
\mathbf{O}\right) var_{P}\left( \left. \frac{1}{\pi _{\mathbf{a}}\left( \mathbf{O,Z}%
;P\right) }\right\vert \A=\mathbf{a},\mathbf{O}\right) \right]
\end{eqnarray*}%
and the corresponding formulae for $\Delta$ and ATE hold.
\end{theorem}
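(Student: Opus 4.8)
The plan is to obtain Theorem~\ref{thep:opt_adj} as a direct corollary of Theorem~\ref{theo:compare_adj}, by specializing the generic adjustment set $\mathbf{G}$ to the candidate optimal set $\mathbf{O}=\mathbf{O}(\A,Y,\mathcal{G})$ and taking $\mathbf{B}=\mathbf{Z}$ to be the given valid time independent adjustment set. The first task is to verify that $\mathbf{O}$ is itself a valid time independent adjustment set. By the result of \cite{perkovic} recalled in the paragraph preceding the theorem, whenever a valid adjustment set relative to $(\A,Y)$ exists, $\mathbf{O}$ satisfies the graphical necessary and sufficient conditions of \cite{shpitser-adjustment}. Since those conditions characterize exactly the time independent adjustment sets in the sense of \eqref{eq:colombo}, it follows that $\mathbf{O}$ is a valid adjustment set, which establishes the first assertion of the theorem.

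Next I would check that the two d-separation hypotheses of Theorem~\ref{theo:compare_adj} hold under the identification $\mathbf{G}=\mathbf{O}$ and $\mathbf{B}=\mathbf{Z}$; namely, that $\mathbf{A}\text{ }\mathbf{\perp \!\!\!\perp }_{\mathcal{G}}\text{ }[\mathbf{O}\setminus\mathbf{Z}]\mid\mathbf{Z}$ and $Y\text{ }\mathbf{\perp \!\!\!\perp }_{\mathcal{G}}\text{ }[\mathbf{Z}\setminus\mathbf{O}]\mid\mathbf{O},\mathbf{A}$. These are precisely the relations established in Lemmas E.4 and E.5 of \cite{perkovic}, which assert that conditions \eqref{eq:cond_indep_A} and \eqref{eq:cond_indep_Y} hold with $\mathbf{G}=\mathbf{O}$ against an arbitrary adjustment set $\mathbf{B}$, so I would simply invoke those lemmas. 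The key observation that makes this importation legitimate is that \eqref{eq:cond_indep_A} and \eqref{eq:cond_indep_Y} are purely d-separation statements about the graph $\mathcal{G}$; they carry no reference to linearity or to the form of the estimator, so they transfer verbatim from the setting of \cite{perkovic} to the present non-parametric one.

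The final step is to apply Theorem~\ref{theo:compare_adj} with this choice of $\mathbf{G}$ and $\mathbf{B}$. After the substitution $\mathbf{G}\mapsto\mathbf{O}$, $\mathbf{B}\mapsto\mathbf{Z}$, its conclusion is term-for-term the claimed decomposition of $\sigma_{\mathbf{a},\mathbf{Z}}^{2}-\sigma_{\mathbf{a},\mathbf{O}}^{2}$ into a supplementation contribution and a deletion contribution, and the corresponding formulae for $\Delta$ and for ATE follow identically from the $\Delta$- and ATE-level conclusions of Theorem~\ref{theo:compare_adj}. No further computation is required.

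Because every ingredient is already available, there is no serious obstacle; the only points requiring care are bookkeeping ones. I would double-check that the roles of $(\mathbf{G},\mathbf{B})$ in Theorem~\ref{theo:compare_adj} are correctly matched with $(\mathbf{O},\mathbf{Z})$ here, that Lemmas E.4 and E.5 of \cite{perkovic} deliver the two d-separation relations in the precise conditioning form demanded by \eqref{eq:cond_indep_A} and \eqref{eq:cond_indep_Y}, and that the equivalence between Shpitser's graphical criterion and the adjustment property of \eqref{eq:colombo} holds for joint interventions $\mathbf{A}$ and not merely for point interventions, so that the validity of $\mathbf{O}$ as an adjustment set is guaranteed at the stated level of generality.
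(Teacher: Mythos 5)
Your proposal is correct and follows essentially the same route as the paper: the paper also establishes validity of $\mathbf{O}$ via the result of \cite{perkovic} combined with the criterion of \cite{shpitser-adjustment}, imports the two d-separation conditions \eqref{eq:cond_indep_A} and \eqref{eq:cond_indep_Y} from Lemmas E.4 and E.5 of \cite{perkovic} with $\mathbf{G}=\mathbf{O}$ and $\mathbf{B}=\mathbf{Z}$, and then reads the variance decomposition off Theorem~\ref{theo:compare_adj}, noting as you do that the d-separation statements are purely graphical and hence carry over to the non-parametric setting. Your bookkeeping checks (matching of roles and validity for joint interventions) are exactly the right points of care and present no obstacle.
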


\begin{corollary}\label{corollary:point}
If $\mathbf{A}=A$ is point intervention then $\mathbf{O}(A,Y, \mathcal{G})$ is an optimal valid time independent adjustment set.
\end{corollary}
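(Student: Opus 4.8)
The plan is to obtain the corollary as an essentially immediate consequence of Theorem \ref{thep:opt_adj}; the only ingredient special to the point intervention case is that the existence hypothesis of that theorem is automatically satisfied, which is what upgrades the conditional statement of Theorem \ref{thep:opt_adj} into the unconditional optimality asserted here.

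First I would record that when $\mathbf{A}=A$ is a single variable, a valid time independent adjustment set relative to $(A,Y)$ always exists. Taking $\mathbf{Z}=\pa_{\mathcal{G}}(A)$, the second equality in \eqref{eq:countermeanbinary} gives $E[Y_{a}]=E_{P}\left[E_{P}\left[Y\mid A=a,\pa_{\mathcal{G}}(A)\right]\right]$ for every $P\in\mathcal{M}(\mathcal{G})$, so $\pa_{\mathcal{G}}(A)$ satisfies the defining identity \eqref{eq:colombo} and is therefore a time independent adjustment set. This is precisely the feature that distinguishes the point intervention case from a genuine joint intervention $\mathbf{A}=(A_{0},\dots,A_{p})$, for which no time independent adjustment set need exist and the hypothesis of Theorem \ref{thep:opt_adj} can fail.

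Next I would invoke Theorem \ref{thep:opt_adj} with this existence guarantee in hand. The theorem then yields both that $\mathbf{O}=\mathbf{O}(A,Y,\mathcal{G})$ is itself a valid time independent adjustment set and that, for every valid time independent adjustment set $\mathbf{Z}$ and every $P\in\mathcal{M}(\mathcal{G})$, the difference $\sigma_{\Delta,\mathbf{Z}}^{2}(P)-\sigma_{\Delta,\mathbf{O}}^{2}(P)$ equals $\mathbf{c}^{T}var_{P}(\mathbf{Q})\,\mathbf{c}$ plus the sum over $\mathbf{a}\in\mathcal{A}$ of the terms $c_{\mathbf{a}}^{2}E_{P}\{\pi_{\mathbf{a}}(\mathbf{O};P)\,var_{P}(Y\mid \mathbf{A}=\mathbf{a},\mathbf{O})\,var_{P}[\pi_{\mathbf{a}}(\mathbf{O},\mathbf{Z};P)^{-1}\mid \mathbf{A}=\mathbf{a},\mathbf{O}]\}$, with $\mathbf{Q}$ as in Lemma \ref{lemma:supplementation}.

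It then remains only to verify that this expression is nonnegative for all $P$, thereby establishing \eqref{eq:var_compare}. The summands in the second term are products of a probability and two variances, hence nonnegative; the first term is nonnegative because $var_{P}(\mathbf{Q})$ is the covariance matrix of the random vector $\mathbf{Q}=[Q_{\mathbf{a}}]_{\mathbf{a}\in\mathcal{A}}$ and is therefore positive semidefinite, so $\mathbf{c}^{T}var_{P}(\mathbf{Q})\,\mathbf{c}\ge 0$ for any contrast vector $\mathbf{c}$. Consequently $\sigma_{\Delta,\mathbf{O}}^{2}(P)\le\sigma_{\Delta,\mathbf{Z}}^{2}(P)$ for every valid time independent adjustment set $\mathbf{Z}$ and every $P\in\mathcal{M}(\mathcal{G})$, which is exactly the optimality in \eqref{eq:var_compare}. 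Since the analytic work is already carried by Theorem \ref{thep:opt_adj}, there is no substantive obstacle here; the one point demanding care is the unconditional nature of the claim, which is secured by the existence step via $\pa_{\mathcal{G}}(A)$.
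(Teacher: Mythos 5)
Your proof is correct and takes essentially the same route as the paper, which also obtains the corollary immediately from Theorem \ref{thep:opt_adj} combined with the observation that $\pa_{\mathcal{G}}(A)$ is always a valid time independent adjustment set relative to $(A,Y)$ for a point intervention. Your explicit verification that the variance decomposition is nonnegative (positive semidefiniteness of $var_{P}(\mathbf{Q})$ and nonnegativity of the deletion terms) is already built into Lemmas \ref{lemma:supplementation} and \ref{lemma:deletion}, so it adds detail rather than a new idea.
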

Corollary \ref{corollary:point} follows immediately from Theorem \ref{thep:opt_adj} and the fact that $\pa_{\mathcal{G}}(A)$ is always a valid time independent adjustment set relative to $(A,Y)$ in $\mathcal{G}$.

As an example, in the DAG in Figure \ref{fig:no_min_opt}, $\mathbf{O}(A,Y,\mathcal{G})=\left( O_{1},O_{2}\right) $ is the optimal adjustment
set. 

\cite{vanfinding} proposed an algorithm that, given a DAG $\mathcal{G}=(%
\mathbf{V,E}),$ computes $\mathbf{O}(\A,Y,\mathcal{G})$ with worst-case
complexity $\mathcal{O}(\left\vert \mathbf{V}\right\vert +\left\vert \mathbf{%
E}\right\vert )$ , where $\left\vert \mathbf{V}\right\vert $ is the number
of nodes in $\mathcal{G}$ and $\left\vert \mathbf{E}\right\vert $ is the
number of edges in $\mathcal{G}.$

For simplicity, from now on when no confusion can arise, we abbreviate $\mathbf{O}\equiv \mathbf{O}(\A,Y,\mathcal{G%
})$. 

An interesting question is whether one can find an optimal adjustment set
among the minimal adjustment sets. In the next theorem we show that such
adjustment exists for point interventions. Specifically, let $\mathbf{A}=A$ be a point intervention and let $\mathbf{O}_{\min }\subset \mathbf{O}$
be the subset of $\mathbf{O}$ with the smallest number of vertices such that 
\begin{equation*}
A\perp \!\!\!\perp _{\mathcal{G}}\left[ \mathbf{O\backslash O}_{\min }\right]
|\mathbf{O}_{\min }.
\end{equation*}%
The graphoid properties of d-separation \citep{lauritzen-book} imply that $%
\mathbf{O}_{\min }$ is unique. For completeness we provide a proof of this
result in Lemma \ref{lemma:unique_minimal} in the Appendix. Note that $%
\mathbf{O}_{\min }$ is empty when the empty set is a valid time independent adjustment set. The next theorem
establishes that $\mathbf{O}_{\min }$ is a minimal adjustment set relative
to $(A,Y)$ in $\mathcal{G}$. Furthermore, it establishes that it is optimal
among all minimal adjustment sets.
\begin{theorem}
\label{theo:optimal_adj_min} Let $\mathcal{G}$ be a DAG with vertex set $%
\mathbf{V},$ let $A$ and $Y$ be two distinct vertices in $\mathbf{V}$ with $A
$ corresponding to a point intervention taking values on a finite set.

\begin{enumerate}
\item $\mathbf{O}_{\min }$ as defined above is a minimal adjustment set
relative to $(A,Y)$ in $\mathcal{G}$.

\item If $\mathbf{Z}_{\min }$ is another minimal adjustment set relative to $%
(A,Y)$ in $\mathcal{G}$ then, 
\[
A\text{ }\mathbf{\perp \!\!\!\perp }_{\mathcal{G}}\text{ }\left[ \mathbf{O}%
_{\min }\mathbf{\backslash Z}_{\min }\right] \text{ }\mathbf{|}\text{ }%
\mathbf{Z}_{\min }\quad \text{and}\quad Y\text{ }\mathbf{\perp \!\!\!\perp }%
_{\mathcal{G}}\text{ }\left[ \mathbf{Z}_{\min }\mathbf{\backslash O}_{\min }%
\right] \text{ }\mathbf{|}\text{ }\mathbf{O}_{\min }\mathbf{,}A.
\]
Consequently, 
\begin{align*}
&\sigma _{a,\mathbf{Z}_{\min }}^{2}\left( P\right) -\sigma _{a,\mathbf{O}%
_{\min }}^{2}\left( P\right)  
\\&=E_{P}\left[ \left\{ \frac{1}{\pi _{a}\left( 
\mathbf{Z}_{\min };P\right) }-1\right\} var_{P}\left[ \left. b_{a}(\mathbf{O}%
_{\min }\mathbf{,Z}_{\min };P)\right\vert \mathbf{Z}_{\min }\right] \right] 
\\
&+E_{P}\left[ \pi _{a}\left( \mathbf{O}_{\min };P\right) var_{P}\left(
Y|A=a,\mathbf{O}_{\min }\right) var_{P}\left( \left. \frac{1}{\pi _{a}\left( 
\mathbf{O}_{\min }\mathbf{,Z}_{\min };P\right) }\right\vert A=a,\mathbf{O}%
_{\min }\right) \right].
\end{align*}%
and the corresponding formulae hold for $\Delta $ and ATE.

\item For any minimal adjustment set $\mathbf{Z}_{\min },$ $\mathbf{Z}%
_{\min }\cap \left[ \mathbf{O\backslash O}_{\min }\right] =\emptyset .$
\end{enumerate}
\end{theorem}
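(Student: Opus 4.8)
The plan is to reduce everything to verifying, for $\mathbf{G}=\mathbf{O}_{\min}$ and $\mathbf{B}=\mathbf{Z}_{\min}$, the two d-separations \eqref{eq:cond_indep_A} and \eqref{eq:cond_indep_Y}, after which item 2 drops out of Theorem \ref{theo:compare_adj}. I will lean on three facts. First, Shpitser's criterion in its proper--back-door-graph form: writing $\mathcal{G}^{pbd}$ for the graph obtained from $\mathcal{G}$ by deleting the edges out of $A$ into $\cn(A,Y,\mathcal{G})$, a set $\mathbf{Z}$ is a valid time independent adjustment set relative to $(A,Y)$ if and only if $\mathbf{Z}\cap\forb(A,Y,\mathcal{G})=\emptyset$ and $A\ort_{\mathcal{G}^{pbd}}Y\mid\mathbf{Z}$; since $\mathcal{G}^{pbd}$ is a subgraph of $\mathcal{G}$, every d-separation holding in $\mathcal{G}$ is inherited in $\mathcal{G}^{pbd}$. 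Second, Lemmas E.4 and E.5 of \cite{perkovic}: for every valid adjustment set $\mathbf{B}$ one has $A\ort_{\mathcal{G}}(\mathbf{O}\setminus\mathbf{B})\mid\mathbf{B}$ and $Y\ort_{\mathcal{G}}(\mathbf{B}\setminus\mathbf{O})\mid\mathbf{O},A$. Third, d-separation is a compositional graphoid, so decomposition, contraction and intersection are available.

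For item 1, validity of $\mathbf{O}_{\min}$ holds because $\mathbf{O}_{\min}\subseteq\mathbf{O}$ already gives $\mathbf{O}_{\min}\cap\forb(A,Y,\mathcal{G})=\emptyset$, while its defining separation $A\ort_{\mathcal{G}}(\mathbf{O}\setminus\mathbf{O}_{\min})\mid\mathbf{O}_{\min}$, inherited in $\mathcal{G}^{pbd}$, combines with $A\ort_{\mathcal{G}^{pbd}}Y\mid\mathbf{O}$ (validity of $\mathbf{O}$, Theorem \ref{thep:opt_adj}) through contraction and decomposition to yield $A\ort_{\mathcal{G}^{pbd}}Y\mid\mathbf{O}_{\min}$. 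Minimality is a cardinality argument: if some $\mathbf{Z}\subsetneq\mathbf{O}_{\min}$ were valid, Lemma E.4 applied to $\mathbf{B}=\mathbf{Z}$ would give $A\ort_{\mathcal{G}}(\mathbf{O}\setminus\mathbf{Z})\mid\mathbf{Z}$, exhibiting a subset of $\mathbf{O}$ strictly smaller than $\mathbf{O}_{\min}$ that separates $A$ from the rest of $\mathbf{O}$, contradicting the definition of $\mathbf{O}_{\min}$ (whose uniqueness is the intersection-property argument of Lemma \ref{lemma:unique_minimal}).

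The first separation in item 2 is easy: Lemma E.4 with $\mathbf{B}=\mathbf{Z}_{\min}$ gives $A\ort_{\mathcal{G}}(\mathbf{O}\setminus\mathbf{Z}_{\min})\mid\mathbf{Z}_{\min}$, and since $\mathbf{O}_{\min}\setminus\mathbf{Z}_{\min}\subseteq\mathbf{O}\setminus\mathbf{Z}_{\min}$, decomposition delivers \eqref{eq:cond_indep_A}. I would then prove item 3, since it feeds the second separation. A minimal valid adjustment set is exactly a minimal separator of $A$ and $Y$ in $\mathcal{G}^{pbd}$ (proper subsets automatically avoid $\forb$), so for each $V\in\mathbf{Z}_{\min}$ there is a path from $A$ to $V$ active given $\mathbf{Z}_{\min}\setminus\{V\}$. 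If some $V\in\mathbf{Z}_{\min}\cap(\mathbf{O}\setminus\mathbf{O}_{\min})$, I aim to show $A\ort_{\mathcal{G}}V\mid\mathbf{Z}_{\min}\setminus\{V\}$; together with $A\ort_{\mathcal{G}^{pbd}}Y\mid\mathbf{Z}_{\min}$ and contraction this would make $\mathbf{Z}_{\min}\setminus\{V\}$ valid, contradicting minimality. Granting item 3 one has $\mathbf{Z}_{\min}\cap(\mathbf{O}\setminus\mathbf{O}_{\min})=\emptyset$, hence $\mathbf{Z}_{\min}\setminus\mathbf{O}_{\min}=\mathbf{Z}_{\min}\setminus\mathbf{O}$, so \eqref{eq:cond_indep_Y} reads $Y\ort_{\mathcal{G}}(\mathbf{Z}_{\min}\setminus\mathbf{O})\mid\mathbf{O}_{\min},A$, to be obtained from Lemma E.5's $Y\ort_{\mathcal{G}}(\mathbf{Z}_{\min}\setminus\mathbf{O})\mid\mathbf{O},A$ by deleting the block $\mathbf{D}\equiv\mathbf{O}\setminus\mathbf{O}_{\min}$ from the conditioning set.

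The hard part is precisely these two conditioning-set manipulations, since shrinking a conditioning set is not a graphoid operation and must be argued on paths. For the second separation the sets are nested, and the argument I expect to run is: a path from $Y$ to some $V\in\mathbf{Z}_{\min}\setminus\mathbf{O}$ that is active given $\mathbf{O}_{\min}\cup\{A\}$ but blocked given $\mathbf{O}\cup\{A\}$ must, because adding $\mathbf{D}$ can only block through a non-collider, contain a non-collider $C\in\mathbf{D}$; since $C\in\pa(\cn(A,Y,\mathcal{G}))\setminus\forb$, the edge $C\to M$ into $\cn(A,Y,\mathcal{G})$ together with the causal path through $M$ lets one reroute toward $A$, and the fact that every collider lying on a causal node and all its descendants belong to $\forb$, and hence never to $\mathbf{O}_{\min}$, is used to contradict $A\ort_{\mathcal{G}}C\mid\mathbf{O}_{\min}$. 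The same circle of ideas (an essential $A$-to-$V$ path forced by minimality, rerouted through the $C\to M$ edge) is what I would use for the separation needed in item 3. This path surgery is the main obstacle; everything else is bookkeeping. Once \eqref{eq:cond_indep_A} and \eqref{eq:cond_indep_Y} hold for $\mathbf{G}=\mathbf{O}_{\min}$ and $\mathbf{B}=\mathbf{Z}_{\min}$, and since both sets are valid adjustment sets (item 1 and hypothesis), Theorem \ref{theo:compare_adj} yields the claimed decomposition of $\sigma^2_{a,\mathbf{Z}_{\min}}-\sigma^2_{a,\mathbf{O}_{\min}}$ into two nonnegative terms, together with the stated $\Delta$ and ATE identities, completing item 2.
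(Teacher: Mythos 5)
Your items 1 and the first separation of item 2 are correct, and in places cleaner than the paper's own route: you get validity of $\mathbf{O}_{\min}$ from inheritance of d-separations in the proper back-door graph plus contraction (the paper instead uses the propensity identity $\pi_a(\mathbf{O};P)=\pi_a(\mathbf{O}_{\min};P)$), you supply a minimality argument for $\mathbf{O}_{\min}$ via Lemma E.4 and cardinality that the paper's written proof of part (1) glosses over, and you obtain \eqref{eq:cond_indep_A} from Lemma E.4 plus decomposition where the paper runs a direct path argument from its Property (O). The genuine gap sits exactly in the two steps you flag as ``path surgery,'' and the specific mechanism you propose for them fails. You want to contradict $A\ort_{\mathcal{G}}C\mid\mathbf{O}_{\min}$ for a non-collider $C\in\mathbf{O}\setminus\mathbf{O}_{\min}$ on the offending path by ``rerouting toward $A$'' through an edge $C\to M$ with $M\in\cn(A,Y,\mathcal{G})$. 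But any such reroute makes $M$ a collider on the constructed path ($C\to M\leftarrow\cdots\leftarrow A$, traversing the causal path into $M$ backwards), and since $M$ and all of its descendants lie in $\forb(A,Y,\mathcal{G})$, which is disjoint from both $\mathbf{O}_{\min}$ and $\mathbf{Z}_{\min}$, that collider is never opened: the reroute is blocked and yields no open $A$--$C$ path given $\mathbf{O}_{\min}$. Indeed, in the paper the fact that $\cn(A,Y,\mathcal{G})$ and its descendants avoid $\mathbf{O}_{\min}$ is used in the opposite direction, to constrain the shape of the offending path (forcing edges to point into mediators), not to produce a contradiction. The only ways to manufacture a connection to $A$ are (i) colliders on the offending path that are ancestors of $A$, and (ii) the $A$-to-$W$ paths that minimality of $\mathbf{Z}_{\min}$ guarantees (your Lemma \ref{lemma:a2} fact). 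In the paper's proof, source (i) yields only the weaker intermediate conclusion that the offending path $\eta^{\ast}$ is open given $\mathbf{O}_{\min}$ alone --- not yet a contradiction --- and the actual contradiction is then extracted from source (ii) via the set $\Delta$ of $A$-to-$W^{\ast}$ paths, a case analysis over their four possible shapes, and an inductive argument that chases descendant chains through $\mathbf{Z}_{\min}$ until the set is exhausted. That machinery, which is the bulk of the paper's proof of part (2), is not replaced by anything in your sketch.

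Your reordering also inverts the paper's logical dependence in a way that doubles rather than reduces the burden. The paper derives item 3 in two lines \emph{from} the second separation of item 2: if $O\in\mathbf{Z}_{\min}\cap[\mathbf{O}\setminus\mathbf{O}_{\min}]$, part (2) gives $Y\ort_{\mathcal{G}}O\mid\mathbf{O}_{\min},A$, which is contradicted by Property (O)'s directed $O$-to-$Y$ path avoiding $\mathbf{O}_{\min}\cup\{A\}$. You instead prove item 3 first and then delete the block $\mathbf{D}=\mathbf{O}\setminus\mathbf{O}_{\min}$ from the conditioning set in Lemma E.5; your observation that such a deletion can only be obstructed by a non-collider in $\mathbf{D}$ is correct, but both your item-3 argument (deriving $A\ort_{\mathcal{G}}V\mid\mathbf{Z}_{\min}\setminus\{V\}$ from $V\in\mathbf{D}$, against the active path Lemma \ref{lemma:a2} provides) and your deletion step bottom out in the same unproven surgery, so the hard work is relocated twice rather than avoided. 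As written, the proposal establishes item 1, the first half of item 2, and the correct reduction of the variance formulae to Theorem \ref{theo:compare_adj}, but the central d-separation $Y\ort_{\mathcal{G}}[\mathbf{Z}_{\min}\setminus\mathbf{O}_{\min}]\mid\mathbf{O}_{\min},A$ and item 3 remain unproved.
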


\begin{remark}\label{remark:Omin_valid}
Note that the conclusions one and two of Theorem \ref{theo:optimal_adj_min} are purely graphical. Therefore, invoking Theorem 3.1 of \cite{perkovic}, we conclude that $\mathbf{O}_{min}$ is also the minimal adjustment set that yields the adjusted OLS estimators of treatment effects with smallest variance among all adjusted OLS estimators of treatment effects that adjust for minimal adjustment sets.
\end{remark}

\subsection{Time dependent adjustment sets}

\label{sec:optimal_adj_time}

Suppose that $\mathbf{A}=\left( A_{0},\dots,A_{p}\right) $
is a joint intervention and for a given  time dependent adjustment set
$\mathbf{Z}=(\mathbf{Z}_{0},\dots,\mathbf{Z}_{p})$ in order to estimate a given contrast
$$
\Delta(P;\mathcal{G}) \equiv
\sum_{ \mathbf{a} \in \mathcal{A}}c_{\mathbf{a}}E[Y_{\mathbf{a}}]
$$
one estimates each
interventional mean 
\begin{align*}
E\left[ Y_{\mathbf{a}}\right] &= E_{P}\left\lbrace \left\{ E_{P}\left\{ E_{P}\left[ E_{P}\left[ Y|\mathbf{A}=\mathbf{a},%
\mathbf{Z}\right] |\overline{\mathbf{A}}_{p-1}=\overline{\mathbf{a}}_{p-1},\overline{%
\mathbf{Z}}_{p-1}\right] |\overline{\mathbf{A}}_{p-2}=\overline{\mathbf{a}}_{p-2},%
\overline{\mathbf{Z}}_{p-2}\right\} \cdots \mid {\mathbf{A}_{0}}=\mathbf{a}_{0}, \mathbf{Z}_{0}\right\}\right\rbrace
\\
& \equiv \chi_{\mathbf{a}}(P;\mathcal{G}),
\end{align*}
ignoring  the conditional
indepedencies encoded in the causal graphical model, and making at most
smoothness or complexity assumptions on the iterated conditional means
$$
b_{\overline{\mathbf{a}}_{j}}(\overline{\mathbf{Z}}_{j}; P)\equiv E_{P}\left\{ E_{P}\left\{ E_{P}\left[ E_{P}\left[ Y|\mathbf{A}=\mathbf{a},%
\mathbf{Z}\right] |\overline{\mathbf{A}}_{p-1}=\overline{\mathbf{a}}_{p-1},\overline{%
\mathbf{Z}}_{p-1}\right] |\overline{\mathbf{A}}_{p-2}=\overline{\mathbf{a}}_{p-2},%
\overline{\mathbf{Z}}_{p-2}\right\} \cdots \mid \overline{\mathbf{A}}_{j}=\overline{\mathbf{a}}_{j},%
\overline{\mathbf{Z}}_{j}\right\}.
$$
and/or on the conditional treatment probabilities.
$$
\pi_{a_{j}}(\overline{\mathbf{Z}}_{j};P)\equiv P\left( {A}_{j}={a}_{j}
\mid \overline{\mathbf{A}}_{j-1}=\overline{\mathbf{a}}_{j-1}, \overline{\mathbf{Z}}_{j}\right).
$$
It is well known \citep{robins1995} that estimators $\widehat{%
\chi }_{\mathbf{a},\mathbf{Z}}$ of $\chi _{\mathbf{a}}\left( P;\mathcal{G}%
\right) $ that are regular and asymptotically linear under a model $\mathcal{M}$ that imposes at most smoothness or complexity assumptions on $b_{\overline{\mathbf{a}}_{j}}$ and/or $\pi_{a_{j}}$ have a unique influence
function equal to 
\begin{align}
    \psi_{P,\mathbf{a}}(\mathbf{Z};P)\equiv \frac{I_{\mathbf{a}}(\mathbf{A})}{\lambda_{\overline{\mathbf{a}}_{p}}(\mathbf{Z};P)} \lbrace Y -  \chi_{\mathbf{a}}(P;\mathcal{G})\rbrace - \sum\limits_{k=0}^{p} g_{k}(\overline{\mathbf{A}}_{k}, \overline{\mathbf{Z}}_{k};P),
    \label{eq:inf_func_time_dep}
\end{align}
where
$$
g_{k}(\overline{\mathbf{A}}_{k}, \overline{\mathbf{Z}}_{k};P) = \frac{I_{\overline{\mathbf{a}}_{k-1}}(\overline{\mathbf{A}}_{k-1})}{\lambda_{\overline{\mathbf{a}}_{k-1}}(\overline{\mathbf{Z}}_{k-1};P)} \left\lbrace \frac{I_{a_k}(A_k)}{\pi_{a_k}(\overline{\mathbf{Z}}_{k};P)}-1 \right\rbrace \left\lbrace b_{\overline{\mathbf{a}}_{k}}(\overline{\mathbf{Z}}_{k};P)-\chi_{\mathbf{a}}(P;\mathcal{G})\right\rbrace
$$
with
$$
\lambda_{\overline{\mathbf{a}}_{k-1}}(\overline{\mathbf{Z}}_{k-1};P) \equiv \prod\limits_{j=0}^{k-1} \pi_{a_{j}}(\overline{\mathbf{Z}}_{j};P) \quad \text{and} \quad \frac{I_{\overline{\mathbf{a}}_{-1}}(\overline{\mathbf{A}}_{-1})}{\lambda_{\overline{\mathbf{a}}_{-1}}(\overline{\mathbf{Z}}_{-1};P)}\equiv 1.
$$

Consequently, regular and asymptotically linear 
estimators $%
\widehat{\Delta }_{\mathbf{Z}}\equiv \sum_{\mathbf{a}\in \mathcal{A}}c_{\mathbf{a}}\widehat{\chi }_{\mathbf{a},\mathbf{Z}}$ 
of $\Delta(P;\mathcal{G})$
have a unique influence function
equal to 
$$
\psi _{P,\Delta}\left( \mathbf{Z};\mathcal{G}\right) =\sum_{\mathbf{a}\in\mathcal{A}}c_{\mathbf{a}}\psi
_{P,\mathbf{a}}\left( \mathbf{Z};\mathcal{G}\right).
$$ 
Therefore
\begin{equation*}
\sqrt{n}\left\{ \widehat{\chi }_{\mathbf{a},\mathbf{Z}}-\chi _{\mathbf{a}}\left( P;\mathcal{G}%
\right) \right\} \cw  N\left( 0,\sigma _{\mathbf{a},\mathbf{Z}}^{2}\left(
P\right) \right)
\end{equation*}%
where $\sigma _{\mathbf{a},\mathbf{Z}}^{2}\left( P\right) \equiv var_{P}\left[ \psi
_{P,\mathbf{a}}\left( \mathbf{Z};\mathcal{G}\right) \right] .$
Likewise, $\sqrt{n}%
\left\{ \widehat{\Delta}_{\mathbf{Z}}-\Delta(P;\mathcal{G}) \right\rbrace \cw
N\left( 0,\sigma _{\Delta,\mathbf{Z}}^{2}\right) $ where 
\begin{equation*}
\sigma _{\Delta,\mathbf{Z}}^{2}\left( P\right) \equiv var_{P}\left[ \psi
_{P,\Delta}\left( \mathbf{Z};\mathcal{G}\right) \right] .
\end{equation*}

The following lemmas extend Lemmas \ref{lemma:supplementation} and \ref{lemma:deletion} from time independent adjustment sets to time dependent adjustment sets. Throughout we let
$$
b_{\overline{\mathbf{a}}_{-1}}\left( \overline{\mathbf{G}}%
_{-1},\overline{\mathbf{B}}_{-1};P\right)\equiv \chi_{\mathbf{a}}(P;\mathcal{G}).
$$
\begin{lemma}[Supplementation with time dependent precision variables]
\label{lemma:supplementation_dep} Let $\mathcal{G}$ be a DAG with vertex set $%
\mathbf{V},$ let $\mathbf{A=}\left( A_{0},\dots,A_{p}\right) $ be a
topologically ordered vertex set in $\mathbf{V}$ disjoint with $Y\in \mathbf{%
V.}$ Assume $A_{j},j=0,\dots,p,$ correspond to finite valued random variables.
Suppose 
\[
\mathbf{B=}\left( \mathbf{B}_{0},\dots,\mathbf{B}_{p}\right) \mathbf{\subset
V\backslash }\left\{ \mathbf{A},Y\right\} 
\]%
is a time dependent adjustment set relative to $\left( \mathbf{A},Y\right) $
in $\mathcal{G}$ and suppose $\mathbf{G=}\left( \mathbf{G}_{0},\dots,\mathbf{G}%
_{p}\right) $ is a set disjoint with $\mathbf{B}$ that satisfies 
\begin{equation}
A_{j}\text{ }\mathbf{\perp \!\!\!\perp }_{\mathcal{G}}\text{ }\overline{%
\mathbf{G}}_{j}\text{ }\mathbf{|}\text{ }\overline{\mathbf{B}}_{j},\overline{%
\mathbf{A}}_{j-1}\text{ for }j=0,\dots,p,  \label{eq:ind_good}
\end{equation}%
where $\overline{\mathbf{A}}_{-1}=\emptyset$.
Then $\left( \mathbf{G,B}\right) =\left[ \left( \mathbf{G}_{0},\mathbf{B}%
_{0}\right) ,\left( \mathbf{G}_{1},\mathbf{B}_{1}\right) ,\dots,\left( \mathbf{%
G}_{p},\mathbf{B}_{p}\right) \right] $ is also an time dependent adjustment
set relative to $\left( \mathbf{A},Y\right) $ in $\mathcal{G}$ and for all $%
P\in \mathcal{M}\left( \mathcal{G}\right) $
\begin{align*}
\sigma _{\mathbf{a},\mathbf{B}}^{2}\left( P\right) -\sigma _{\mathbf{a},%
\mathbf{G},\mathbf{B}}^{2}\left( P\right) &=\sum_{k=0}^{p}E_{P}\left[ \frac{%
I_{\overline{\mathbf{a}}_{k-1}}\left( \overline{\mathbf{A}}_{k-1}\right) }{%
\lambda _{\overline{\mathbf{a}}_{k-1}}\left( \overline{\mathbf{B}}%
_{k-1};P\right) ^{2}}\left\{ \frac{1}{\pi _{a_{k}}\left( \overline{\mathbf{B}%
}_{k};P\right) }-1\right\} var_{P}\left[ b_{\overline{\mathbf{a}}_{k}}\left( 
\overline{\mathbf{G}}_{k},\overline{\mathbf{B}}_{k};P\right) |\overline{%
\mathbf{A}}_{k-1}=\overline{\mathbf{a}}_{k-1},\overline{\mathbf{B}}_{k}%
\right] \right] \\
&\geq 0.
\end{align*}
Furthermore, 
\[
\sigma _{\mathbf{\Delta },\mathbf{B}}^{2}\left( P\right) -\sigma _{\mathbf{%
\Delta },\mathbf{G},\mathbf{B}}^{2}\left( P\right) =\sum_{k=0}^{p}var_{P}%
\left[ t_{k}\left( \overline{\mathbf{A}}_{k},\overline{\mathbf{G}}_{k},%
\overline{\mathbf{B}}_{k},P\right) \right] \geq 0,
\]%
where%
\[
t_{k}\left( \overline{\mathbf{A}}_{k},\overline{\mathbf{G}}_{k},\overline{%
\mathbf{B}}_{k},P\right) \equiv \sum_{\mathbf{a\in }\mathcal{A}}c_{\mathbf{a}%
}\frac{I_{\overline{\mathbf{a}}_{k-1}}\left( \overline{\mathbf{A}}%
_{k-1}\right) }{\lambda _{\overline{\mathbf{a}}_{k-1}}\left( \overline{%
\mathbf{B}}_{k-1};P\right) }\left\{ \frac{I_{a_{k}}\left( A_{k}\right) }{\pi
_{a_{k}}\left( \overline{\mathbf{B}}_{k};P\right) }-1\right\} \left\{ b_{%
\overline{\mathbf{a}}_{k}}\left( \overline{\mathbf{G}}_{k},\overline{\mathbf{%
B}}_{k};P\right) -b_{\overline{\mathbf{a}}_{k-1}}\left( \overline{\mathbf{G}}%
_{k-1},\overline{\mathbf{B}}_{k-1};P\right) \right\}.
\]
\end{lemma}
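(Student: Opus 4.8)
The plan is to reduce everything to variances of the influence function \eqref{eq:inf_func_time_dep} and to exploit the one consequence of the sequential independences \eqref{eq:ind_good} that drives the whole argument. Since $b_{\overline{\mathbf{a}}_k}$ and $\pi_{a_k}$ are the only ingredients that distinguish the two adjustment sets, I would first record that \eqref{eq:ind_good} gives the propensity invariance $\pi_{a_k}(\overline{\mathbf{G}}_k,\overline{\mathbf{B}}_k;P)=\pi_{a_k}(\overline{\mathbf{B}}_k;P)$, and hence $\lambda_{\overline{\mathbf{a}}_{k-1}}(\overline{\mathbf{G}}_{k-1},\overline{\mathbf{B}}_{k-1};P)=\lambda_{\overline{\mathbf{a}}_{k-1}}(\overline{\mathbf{B}}_{k-1};P)$ for every $k$: supplementing with $\mathbf{G}$ leaves the treatment weights unchanged. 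Validity of $(\mathbf{G},\mathbf{B})$ then follows at once, because the sequential-regression functional equals the Horvitz--Thompson functional $E_P[I_{\mathbf{a}}(\mathbf{A})\,\lambda_{\overline{\mathbf{a}}_p}(\,\cdot\,;P)^{-1}I_{(-\infty,y]}(Y)]$ evaluated at the set-specific weights, an algebraic identity valid for any covariate sequence; since those weights coincide for $\mathbf{B}$ and $(\mathbf{G},\mathbf{B})$, the two functionals agree with the g-formula that $\mathbf{B}$ already reproduces.

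The technical heart is the conditional-mean identity
\[
E_P\!\left[b_{\overline{\mathbf{a}}_k}(\overline{\mathbf{G}}_k,\overline{\mathbf{B}}_k;P)\,\big|\,\overline{\mathbf{A}}_{k-1}=\overline{\mathbf{a}}_{k-1},\overline{\mathbf{B}}_k\right]=b_{\overline{\mathbf{a}}_k}(\overline{\mathbf{B}}_k;P),\qquad k=0,\dots,p,
\]
which says that inside the $\mathbf{B}$-adjusted world the extra regressors average out. I would establish it by downward induction on $k$, splitting it into (i) a tower-property step, $E_P[b_{\overline{\mathbf{a}}_k}(\overline{\mathbf{G}}_k,\overline{\mathbf{B}}_k;P)\mid \overline{\mathbf{A}}_k=\overline{\mathbf{a}}_k,\overline{\mathbf{B}}_k]=b_{\overline{\mathbf{a}}_k}(\overline{\mathbf{B}}_k;P)$, obtained by inserting conditioning on $\overline{\mathbf{B}}_{k+1}$ and combining the inductive hypothesis with the recursive definition of $b$; and (ii) removing the conditioning on $A_k$ via \eqref{eq:ind_good}, since $b_{\overline{\mathbf{a}}_k}(\overline{\mathbf{G}}_k,\overline{\mathbf{B}}_k;P)$ is a function of $(\overline{\mathbf{G}}_k,\overline{\mathbf{B}}_k)$ and $A_k\ort_{\mathcal{G}}\overline{\mathbf{G}}_k\mid \overline{\mathbf{B}}_k,\overline{\mathbf{A}}_{k-1}$. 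The base case $k=p$ is just the tower property applied to $E_P[Y\mid\mathbf{A}=\mathbf{a},\overline{\mathbf{G}}_p,\overline{\mathbf{B}}_p]$. I expect this induction, with its careful bookkeeping of which treatments are fixed at a value versus conditioned on as variables, to be the main obstacle.

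Next I would express the gap between the two influence functions as a sum of martingale differences. Writing $\psi_{P,\mathbf{a}}$ in its telescoped form
\[
\psi_{P,\mathbf{a}}(\mathbf{Z};\mathcal{G})=\sum_{k=-1}^{p}\frac{I_{\overline{\mathbf{a}}_k}(\overline{\mathbf{A}}_k)}{\lambda_{\overline{\mathbf{a}}_k}(\overline{\mathbf{Z}}_k;P)}\left\{b_{\overline{\mathbf{a}}_{k+1}}(\overline{\mathbf{Z}}_{k+1};P)-b_{\overline{\mathbf{a}}_k}(\overline{\mathbf{Z}}_k;P)\right\},
\]
an algebraic rearrangement of \eqref{eq:inf_func_time_dep} with conventions $b_{\overline{\mathbf{a}}_{-1}}=\chi_{\mathbf{a}}$ and $b_{\overline{\mathbf{a}}_{p+1}}=Y$, and using that the weights agree for the two sets, a discrete summation by parts yields $\psi_{P,\mathbf{a}}(\mathbf{B};\mathcal{G})-\psi_{P,\mathbf{a}}(\mathbf{G},\mathbf{B};\mathcal{G})=\sum_{k=0}^{p}(W_k-W_{k-1})\{b_{\overline{\mathbf{a}}_k}(\overline{\mathbf{G}}_k,\overline{\mathbf{B}}_k;P)-b_{\overline{\mathbf{a}}_k}(\overline{\mathbf{B}}_k;P)\}$, where $W_k=I_{\overline{\mathbf{a}}_k}(\overline{\mathbf{A}}_k)/\lambda_{\overline{\mathbf{a}}_k}(\overline{\mathbf{B}}_k;P)$ and $W_k-W_{k-1}=W_{k-1}\{I_{a_k}(A_k)/\pi_{a_k}(\overline{\mathbf{B}}_k;P)-1\}$. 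Each summand is a martingale difference for the filtration $\mathcal{F}_k=\sigma(\overline{\mathbf{A}}_k,\overline{\mathbf{G}}_k,\overline{\mathbf{B}}_k)$, because conditionally on $\mathcal{F}_k^{-}=\sigma(\overline{\mathbf{A}}_{k-1},\overline{\mathbf{G}}_k,\overline{\mathbf{B}}_k)$ the factor $I_{a_k}(A_k)/\pi_{a_k}(\overline{\mathbf{B}}_k;P)-1$ has mean zero by propensity invariance.

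Finally I would assemble the variance. Orthogonality of martingale differences across $k$ kills the cross terms, and the same mean-zero-factor argument shows each summand is uncorrelated with every telescoped term of $\psi_{P,\mathbf{a}}(\mathbf{G},\mathbf{B};\mathcal{G})$ (two cases: for the later terms one uses that the inner residual is conditionally centered, for the earlier terms that the $A_k$ factor is conditionally centered); hence $\sigma_{\mathbf{a},\mathbf{B}}^2-\sigma_{\mathbf{a},\mathbf{G},\mathbf{B}}^2$ equals the sum of the variances of the summands. Conditioning each variance on $\mathcal{F}_k^{-}$ produces $E_P[(I_{a_k}(A_k)/\pi_{a_k}-1)^2\mid\mathcal{F}_k^{-}]=1/\pi_{a_k}(\overline{\mathbf{B}}_k;P)-1$, after which the conditional-mean identity converts $E_P[\{b_{\overline{\mathbf{a}}_k}(\overline{\mathbf{G}}_k,\overline{\mathbf{B}}_k;P)-b_{\overline{\mathbf{a}}_k}(\overline{\mathbf{B}}_k;P)\}^2\mid \overline{\mathbf{A}}_{k-1}=\overline{\mathbf{a}}_{k-1},\overline{\mathbf{B}}_k]$ into $var_P[b_{\overline{\mathbf{a}}_k}(\overline{\mathbf{G}}_k,\overline{\mathbf{B}}_k;P)\mid\overline{\mathbf{A}}_{k-1}=\overline{\mathbf{a}}_{k-1},\overline{\mathbf{B}}_k]$, giving the displayed formula with every term nonnegative. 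For the contrast $\Delta$, linearity gives $\psi_{P,\Delta}=\sum_{\mathbf{a}}c_{\mathbf{a}}\psi_{P,\mathbf{a}}$; the same telescoping and orthogonal martingale decomposition produce the differences $t_k$ and yield $\sigma_{\Delta,\mathbf{B}}^2-\sigma_{\Delta,\mathbf{G},\mathbf{B}}^2=\sum_{k}var_P[t_k]$, the $ATE$ case being the specialization $c_1=1,\ c_0=-1$.
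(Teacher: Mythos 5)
Your proposal is correct and follows essentially the same route as the paper's proof: the propensity invariance $\pi_{a_{k}}(\overline{\mathbf{G}}_{k},\overline{\mathbf{B}}_{k};P)=\pi_{a_{k}}(\overline{\mathbf{B}}_{k};P)$ deduced from \eqref{eq:ind_good}, the decomposition $\psi_{P,\mathbf{a}}(\mathbf{B};\mathcal{G})=\psi_{P,\mathbf{a}}(\mathbf{G},\mathbf{B};\mathcal{G})+\sum_{k=0}^{p}r_{k}$ into mean-zero terms that are mutually uncorrelated across $k$ (your martingale differences with respect to $\sigma(\overline{\mathbf{A}}_{k-1},\overline{\mathbf{G}}_{k},\overline{\mathbf{B}}_{k})$ are exactly the paper's orthogonal spaces $\Lambda_{k}(P)$), and the same conditional-variance computation yielding the displayed formula. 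The only differences are ones of explicitness, to your credit: you derive the decomposition by telescoping and summation by parts where the paper simply asserts it, you prove directly (via your two-case conditioning argument) the uncorrelatedness of the difference terms with $\psi_{P,\mathbf{a}}(\mathbf{G},\mathbf{B};\mathcal{G})$ where the paper cites \cite{AIDS}, and you supply an inductive proof of the identity $b_{\overline{\mathbf{a}}_{k}}\left(\overline{\mathbf{B}}_{k};P\right)=E_{P}\left[b_{\overline{\mathbf{a}}_{k}}\left(\overline{\mathbf{G}}_{k},\overline{\mathbf{B}}_{k};P\right)\mid\overline{\mathbf{A}}_{k-1}=\overline{\mathbf{a}}_{k-1},\overline{\mathbf{B}}_{k}\right]$, which the paper uses without comment in the third equality of its variance calculation.
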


\begin{lemma}[Deletion of time dependent overadjustment variables]
\label{lemma:deletion_dep} Let $\mathcal{G}$ be a DAG with vertex set $%
\mathbf{V},$ let $\mathbf{A=}\left( A_{0},\dots,A_{p}\right) $ be a
topologically ordered vertex set in $\mathbf{V}$ disjoint with $Y\in \mathbf{%
V.}$ Assume $A_{j},j=0,\dots,p,$ correspond to finite valued random variables.
Suppose $\left( \mathbf{G,B}\right) \equiv \left[ \left( \mathbf{G}_{0},%
\mathbf{B}_{0}\right) ,\left( \mathbf{G}_{1},\mathbf{B}_{1}\right)
,\dots,\left( \mathbf{G}_{p},\mathbf{B}_{p}\right) \right] \mathbf{\subset
V\backslash }\left\{ \mathbf{A},Y\right\} $ is a time dependent adjustment set
relative to $\left( \mathbf{A},Y\right) $ in $\mathcal{G}$ with $\mathbf{G}$
and $\mathbf{B}$\ disjoint and suppose that%
\begin{equation}
Y\text{ }\mathbf{\perp \!\!\!\perp }_{\mathcal{G}}\text{ }\mathbf{B}\text{ }%
\mathbf{|}\text{ }\mathbf{G,A}.  \label{eq:ind_Y}
\end{equation}%
and%
\begin{equation}
\mathbf{G}_{j}\mathbf{\perp \!\!\!\perp }_{\mathcal{G}}\text{ }\overline{%
\mathbf{B}}_{j-1}\text{ }\mathbf{|}\text{ }\overline{\mathbf{G}}_{j-1},%
\overline{\mathbf{A}}_{j-1}\text{ for }j=1,\dots,p.  \label{eq:ind_G}
\end{equation}%
Then $\mathbf{G=}\left( \mathbf{G}_{0},\mathbf{G}_{1},\dots,\mathbf{G}%
_{p}\right) $ is also a time dependent adjustment set relative to $\left( 
\mathbf{A},Y\right) $ in $\mathcal{G}$ and for all $P\in \mathcal{M}\left( 
\mathcal{G}\right) $%
\begin{align*}
&\sigma _{a,\mathbf{G},\mathbf{B}}^{2}\left( P\right) -\sigma _{a,\mathbf{G}%
}^{2}\left( P\right)  
\\&=E_{P}\left[ var_{P}\left[ \left. \frac{I_{\mathbf{a}%
}\left( \mathbf{A}\right) }{\lambda _{\overline{\mathbf{a}}_{p}}\left( 
\mathbf{G,B;}P\right) }\left\{ Y-b_{\overline{\mathbf{a}}_{p}}\left( \mathbf{%
G,B;}P\right) \right\} \right\vert Y,\overline{\mathbf{G}}_{p},\overline{%
\mathbf{A}}_{p}\right] \right]  \\
&+\sum_{k=0}^{p}E_{P}\left[ var_{P}\left[ \left. \frac{I_{\overline{\mathbf{%
a}}_{k-1}}\left( \overline{\mathbf{A}}_{k-1}\right) }{\lambda _{\overline{%
\mathbf{a}}_{k-1}}\left( \overline{\mathbf{G}}_{k-1},\overline{\mathbf{B}}%
_{k-1}\mathbf{;}P\right) }\left\{ b_{\overline{\mathbf{a}}_{k}}\left( 
\overline{\mathbf{G}}_{k},\overline{\mathbf{B}}_{k}\mathbf{;}P\right) -b_{%
\overline{\mathbf{a}}_{k-1}}\left( \overline{\mathbf{G}}_{k-1},\overline{%
\mathbf{B}}_{k-1}\mathbf{;}P\right) \right\} \right\vert \overline{\mathbf{G}%
}_{k},\overline{\mathbf{A}}_{k-1}\right] \right] \geq 0.
\end{align*}
Furthermore, 
\begin{align*}
&\sigma _{\Delta ,\mathbf{G},\mathbf{B}}^{2}\left( P\right) -\sigma _{\Delta ,%
\mathbf{G}}^{2}\left( P\right)  \\&=E_{P}\left[ var_{P}\left[ \sum_{a\in 
\mathcal{A}}c_{\mathbf{a}}\left. \frac{I_{\mathbf{a}}\left( \mathbf{A}%
\right) }{\lambda _{\overline{\mathbf{a}}_{p}}\left( \mathbf{G,B;}P\right) }%
\left\{ Y-b_{\overline{\mathbf{a}}_{p}}\left( \mathbf{G,B;}P\right) \right\}
\right\vert Y,\overline{\mathbf{G}}_{p},\overline{\mathbf{A}}_{p}\right] %
\right]  \\
&+\sum_{k=0}^{p}E_{P}\left[ var_{P}\left[ \sum_{a\in \mathcal{A}}c_{\mathbf{%
a}}\left. \frac{I_{\overline{\mathbf{a}}_{k-1}}\left( \overline{\mathbf{A}}%
_{k-1}\right) }{\lambda _{\overline{\mathbf{a}}_{k-1}}\left( \overline{%
\mathbf{G}}_{k-1},\overline{\mathbf{B}}_{k-1}\mathbf{;}P\right) }\left\{ b_{%
\overline{\mathbf{a}}_{k}}\left( \overline{\mathbf{G}}_{k},\overline{\mathbf{%
B}}_{k}\mathbf{;}P\right) -b_{\overline{\mathbf{a}}_{k-1}}\left( \overline{%
\mathbf{G}}_{k-1},\overline{\mathbf{B}}_{k-1}\mathbf{;}P\right) \right\}
\right\vert \overline{\mathbf{G}}_{k},\overline{\mathbf{A}}_{k-1}\right] %
\right]\geq 0.
\end{align*}
\end{lemma}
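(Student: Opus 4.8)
The plan is to reduce everything to three ingredients: (i) validity of $\mathbf{G}$ as a time dependent adjustment set together with the fact that, under the two hypotheses, the iterated conditional means do not depend on $\mathbf{B}$; (ii) a telescoping martingale-type rewriting of the influence function \eqref{eq:inf_func_time_dep} that makes $\sigma^2_{\mathbf{a},\mathbf{Z}}(P)$ an orthogonal sum of variances; and (iii) a weight-collapse identity that, via the law of total variance, produces the displayed difference term by term. This mirrors the argument behind Lemma \ref{lemma:deletion}, the $p=0$ case.

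\textbf{Step 1: validity and collapse of the regressions.} First I would show by backward induction on $k=p,p-1,\dots,-1$ that the iterated conditional mean computed with $(\mathbf{G},\mathbf{B})$, call it $b^{GB}_{\overline{\mathbf{a}}_k}(\overline{\mathbf{G}}_k,\overline{\mathbf{B}}_k;P)$, equals the one computed with $\mathbf{G}$ alone, $b^{G}_{\overline{\mathbf{a}}_k}(\overline{\mathbf{G}}_k;P)$, and in particular does not depend on $\mathbf{B}$. The base case $k=p$ is $E_P[Y\mid \mathbf{A}=\mathbf{a},\mathbf{G},\mathbf{B}]=E_P[Y\mid \mathbf{A}=\mathbf{a},\mathbf{G}]$, which is exactly \eqref{eq:ind_Y}. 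For the step from $k$ to $k-1$ I take the conditional expectation that defines $b_{\overline{\mathbf{a}}_{k-1}}$; by the inductive hypothesis its integrand is a function of $\overline{\mathbf{G}}_k$ only, and \eqref{eq:ind_G}, which makes $\mathbf{G}_k$ independent of $\overline{\mathbf{B}}_{k-1}$ given $\overline{\mathbf{G}}_{k-1},\overline{\mathbf{A}}_{k-1}$, lets me delete $\overline{\mathbf{B}}_{k-1}$ from the conditioning set and recover $b^{G}_{\overline{\mathbf{a}}_{k-1}}(\overline{\mathbf{G}}_{k-1};P)$. Taking $k=-1$ gives $\chi_{\mathbf{a}}$ identical for the two sets, so since $(\mathbf{G},\mathbf{B})$ is valid (Definition \ref{def:time_dep_adj}), $\mathbf{G}$ is valid too.

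\textbf{Step 2: orthogonal decomposition.} A direct rearrangement of \eqref{eq:inf_func_time_dep} writes the influence function in telescoping form $\psi_{P,\mathbf{a}}(\mathbf{Z})=H_p\{Y-b_{\overline{\mathbf{a}}_p}\}+\sum_{k=0}^{p}H_{k-1}\{b_{\overline{\mathbf{a}}_k}-b_{\overline{\mathbf{a}}_{k-1}}\}$, with $H_k=I_{\overline{\mathbf{a}}_k}(\overline{\mathbf{A}}_k)/\lambda_{\overline{\mathbf{a}}_k}(\overline{\mathbf{Z}}_k;P)$, $H_{-1}=1$ and $b_{\overline{\mathbf{a}}_{-1}}=\chi_{\mathbf{a}}$. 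Because $b_{\overline{\mathbf{a}}_{k-1}}$ is by definition the conditional mean of $b_{\overline{\mathbf{a}}_k}$, each summand has conditional mean zero given the history up to the previous step, so the summands are mutually orthogonal and $\sigma^2_{\mathbf{a},\mathbf{Z}}(P)$ is the sum of their variances. Applying this with $\mathbf{Z}=(\mathbf{G},\mathbf{B})$ and with $\mathbf{Z}=\mathbf{G}$, and writing $D^{GB}_k,D^{G}_k$ for the corresponding summands, Step 1 shows the increments $b_{\overline{\mathbf{a}}_k}-b_{\overline{\mathbf{a}}_{k-1}}$ are common to both and $\overline{\mathbf{G}}_k$-measurable, so the two decompositions differ only through the weights $H_{k-1}$.

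\textbf{Step 3: weight collapse and conclusion; the main obstacle.} The heart of the proof is the identity $D^{G}_k=E_P[D^{GB}_k\mid \overline{\mathbf{G}}_k,\overline{\mathbf{A}}_{k-1}]$ for $k\le p$, and $D^{G}_{p+1}=E_P[D^{GB}_{p+1}\mid Y,\overline{\mathbf{G}}_p,\overline{\mathbf{A}}_p]$. Since the increments factor out, each reduces to the weight-collapse identity $E_P[\lambda_{\overline{\mathbf{a}}_m}(\overline{\mathbf{G}}_m,\overline{\mathbf{B}}_m;P)^{-1}\mid \overline{\mathbf{A}}_m=\overline{\mathbf{a}}_m,\overline{\mathbf{G}}_m]=\lambda_{\overline{\mathbf{a}}_m}(\overline{\mathbf{G}}_m;P)^{-1}$, taken at $m=k-1$ after using \eqref{eq:ind_G} to delete $\mathbf{G}_k$, and at $m=p$ after using \eqref{eq:ind_Y} to delete $Y$. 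I would prove this identity by induction on $m$ in the equivalent tested form $E_P[\lambda_{\overline{\mathbf{a}}_m}(\mathbf{G},\mathbf{B};P)^{-1}I_{\overline{\mathbf{a}}_m}(\overline{\mathbf{A}}_m)g(\overline{\mathbf{G}}_m)]=E_P[\lambda_{\overline{\mathbf{a}}_m}(\mathbf{G};P)^{-1}I_{\overline{\mathbf{a}}_m}(\overline{\mathbf{A}}_m)g(\overline{\mathbf{G}}_m)]$ for all bounded $g$: peeling the last treatment factor integrates to $1$ by the definition of the propensity, and peeling the last covariate block $\mathbf{G}_m$ replaces $g$ by its conditional $\mathbf{G}_m$-average, where \eqref{eq:ind_G} guarantees that this average is the \emph{same} function of $\overline{\mathbf{G}}_{m-1}$ on both sides, closing the induction. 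Finally, since $E_P[D^{GB}_k]=E_P[D^{G}_k]=0$, the law of total variance gives $var_P(D^{GB}_k)-var_P(D^{G}_k)=E_P[var_P(D^{GB}_k\mid \mathcal{F}_k)]$ with $\mathcal{F}_k$ the conditioning field above; summing over $k$ yields the stated difference, manifestly nonnegative, and the $\Delta$ version follows verbatim after replacing each summand by $\sum_{\mathbf{a}}c_{\mathbf{a}}D_{k,\mathbf{a}}$, using that $\mathcal{F}_k$ does not depend on $\mathbf{a}$. I expect the delicate point to be precisely this weight-collapse step: because treatments interleave with the covariate blocks $\mathbf{G}_j,\mathbf{B}_j$ in the topological order, the conditioning sets must be peeled in exactly the right order, and \eqref{eq:ind_G} is what keeps the $\mathbf{G}_j$-integration free of the discarded $\mathbf{B}$ history; establishing orthogonality in the longitudinal setting is routine by comparison.
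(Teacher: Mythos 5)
Your proposal is correct and follows essentially the same route as the paper's proof: the backward induction collapsing $b_{\overline{\mathbf{a}}_k}(\overline{\mathbf{G}}_k,\overline{\mathbf{B}}_k;P)$ to $b_{\overline{\mathbf{a}}_k}(\overline{\mathbf{G}}_k;P)$ via \eqref{eq:ind_Y} and \eqref{eq:ind_G}, the telescoping orthogonal decomposition of \eqref{eq:inf_func_time_dep}, the weight-collapse identity (the paper's equation \eqref{eq:reduccion}), and a term-by-term application of the law of total variance. Your only deviation is proving the weight collapse in tested form against bounded $g(\overline{\mathbf{G}}_m)$ rather than by the paper's direct recursion on conditional expectations, which is the same computation in different clothing.
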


We now have the following corollary to Lemmas \ref{lemma:supplementation_dep} and \ref{lemma:deletion_dep}.

\begin{theorem}
\label{theo:compare_adj_time} Let $\mathcal{G}$ be a DAG with vertex set $%
\mathbf{V},$ let $\mathbf{A=}\left( A_{0},\dots ,A_{p}\right) $ be a
topologically ordered vertex set in $\mathbf{V}$ disjoint with $Y\in \mathbf{%
V.}$ Assume $A_{j},j=0,\dots ,p,$ correspond to finite valued random
variables. Suppose 
\[
\mathbf{B=}\left( \mathbf{B}_{0},\dots ,\mathbf{B}_{p}\right) \mathbf{%
\subset V\backslash }\left\{ \mathbf{A},Y\right\} 
\]%
and 
\[
\mathbf{G=}\left( \mathbf{G}_{0},\dots ,\mathbf{G}_{p}\right) \mathbf{%
\subset V\backslash }\left\{ \mathbf{A},Y\right\} 
\]%
are two time dependent adjustment sets relative to $\left( \mathbf{A}%
,Y\right) $ in $\mathcal{G}$. Suppose that 
\[
A_{j}\text{ }\mathbf{\perp \!\!\!\perp }_{\mathcal{G}}\text{ }\left[ 
\overline{\mathbf{G}}_{j}\mathbf{\backslash }\overline{\mathbf{B}}_{j}\right]
\text{ }\mathbf{|}\text{ }\overline{\mathbf{B}}_{j},\overline{\mathbf{A}}%
_{j-1}\text{ for }j=0,\dots ,p
\]%
\[
Y\text{ }\mathbf{\perp \!\!\!\perp }_{\mathcal{G}}\text{ }\left[ \mathbf{%
B\backslash G}\right] \text{ }\mathbf{|}\text{ }\mathbf{G,A}
\]%
and%
\[
\mathbf{G}_{j}\mathbf{\perp \!\!\!\perp }_{\mathcal{G}}\text{ }\left[ 
\overline{\mathbf{B}}_{j-1}\mathbf{\backslash }\overline{\mathbf{G}}_{j-1}%
\right] \text{ }\mathbf{|}\text{ }\overline{\mathbf{G}}_{j-1},\overline{%
\mathbf{A}}_{j-1}\text{ for }j=1,\dots ,p.
\]%
Then, 
\[
\sigma _{\mathbf{a,B}}^{2}\left( P\right) -\sigma _{\mathbf{a,G}}^{2}\left(
P\right) \geq 0\text{ and } \sigma _{{\Delta ,B}}^{2}\left( P\right)
-\sigma _{{\Delta ,G}}^{2}\left( P\right) \geq 0.
\]%
Specifically 
\begin{align*}
& \sigma _{\mathbf{a,B}}^{2}\left( P\right) -\sigma _{\mathbf{a,G}%
}^{2}\left( P\right) = \\
& \sum_{k=0}^{p}E_{P}\left[ \frac{I_{\overline{\mathbf{a}}_{k-1}}\left( 
\overline{\mathbf{A}}_{k-1}\right) }{\lambda _{\overline{\mathbf{a}}%
_{k-1}}\left( \overline{\mathbf{B}}_{k-1};P\right) ^{2}}\left\{ \frac{1}{\pi
_{a_{k}}\left( \overline{\mathbf{B}}_{k};P\right) }-1\right\} var_{P}\left[
b_{\overline{\mathbf{a}}_{k}}\left( \overline{\mathbf{G}}_{k},\overline{%
\mathbf{B}}_{k};P\right) |\overline{\mathbf{A}}_{k-1}=\overline{\mathbf{a}}%
_{k-1},\overline{\mathbf{B}}_{k}\right] \right] + \\
& E_{P}\left[ var_{P}\left[ \left. \frac{I_{\mathbf{a}}\left( \mathbf{A}%
\right) }{\lambda _{\overline{\mathbf{a}}_{p}}\left( \mathbf{G,B;}P\right) }%
\left\{ Y-b_{\overline{\mathbf{a}}_{p}}\left( \mathbf{G,B;}P\right) \right\}
\right\vert Y,\overline{\mathbf{G}}_{p},\overline{\mathbf{A}}_{p}\right] %
\right] + \\
& \sum_{k=0}^{p}E_{P}\left[ var_{P}\left[ \left. \frac{I_{\overline{\mathbf{a%
}}_{k-1}}\left( \overline{\mathbf{A}}_{k-1}\right) }{\lambda _{\overline{%
\mathbf{a}}_{k-1}}\left( \overline{\mathbf{G}}_{k-1},\overline{\mathbf{B}}%
_{k-1}\mathbf{;}P\right) }\left\{ b_{\overline{\mathbf{a}}_{k}}\left( 
\overline{\mathbf{G}}_{k},\overline{\mathbf{B}}_{k}\mathbf{;}P\right) -b_{%
\overline{\mathbf{a}}_{k-1}}\left( \overline{\mathbf{G}}_{k-1},\overline{%
\mathbf{B}}_{k-1}\mathbf{;}P\right) \right\} \right\vert \overline{\mathbf{G}%
}_{k},\overline{\mathbf{A}}_{k-1}\right] \right]  \\
& \geq 0.
\end{align*}%
and%
\begin{align*}
& \left. \text{ }\sigma _{{\Delta ,B}}^{2}\left( P\right) -\sigma _{%
{\Delta ,G}}^{2}\left( P\right) =\right.  \\
& \sum_{k=0}^{p}var_{P}\left[ t_{k}\left( \overline{\mathbf{A}}_{k},%
\overline{\mathbf{G}}_{k},\overline{\mathbf{B}}_{k},P\right) \right] +E_{P}%
\left[ var_{P}\left[ \sum_{a\in \mathcal{A}}c_{\mathbf{a}}\left. \frac{I_{%
\mathbf{a}}\left( \mathbf{A}\right) }{\lambda _{\overline{\mathbf{a}}%
_{p}}\left( \mathbf{G,B;}P\right) }\left\{ Y-b_{\overline{\mathbf{a}}%
_{p}}\left( \mathbf{G,B;}P\right) \right\} \right\vert Y,\overline{\mathbf{G}%
}_{p},\overline{\mathbf{A}}_{p}\right] \right] + \\
& \sum_{k=0}^{p}E_{P}\left[ var_{P}\left[ \sum_{a\in \mathcal{A}}c_{\mathbf{a%
}}\left. \frac{I_{\overline{\mathbf{a}}_{k-1}}\left( \overline{\mathbf{A}}%
_{k-1}\right) }{\lambda _{\overline{\mathbf{a}}_{k-1}}\left( \overline{%
\mathbf{G}}_{k-1},\overline{\mathbf{B}}_{k-1}\mathbf{;}P\right) }\left\{ b_{%
\overline{\mathbf{a}}_{k}}\left( \overline{\mathbf{G}}_{k},\overline{\mathbf{%
B}}_{k}\mathbf{;}P\right) -b_{\overline{\mathbf{a}}_{k-1}}\left( \overline{%
\mathbf{G}}_{k-1},\overline{\mathbf{B}}_{k-1}\mathbf{;}P\right) \right\}
\right\vert \overline{\mathbf{G}}_{k},\overline{\mathbf{A}}_{k-1}\right] %
\right]  \\
& \geq 0,
\end{align*}%
where $t_{k}\left( \overline{\mathbf{A}}_{k},\overline{\mathbf{G}}_{k},%
\overline{\mathbf{B}}_{k},P\right) $ is defined as in Lemma \ref{lemma:supplementation_dep}.
\end{theorem}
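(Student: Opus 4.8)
The plan is to mimic the telescoping argument used to prove Theorem \ref{theo:compare_adj}, now invoking the time dependent Lemmas \ref{lemma:supplementation_dep} and \ref{lemma:deletion_dep} in place of their time independent counterparts. Writing $\mathbf{G}\cup\mathbf{B}\equiv\left[(\mathbf{G}_{0}\cup\mathbf{B}_{0}),\dots,(\mathbf{G}_{p}\cup\mathbf{B}_{p})\right]$ for the componentwise union and $\mathbf{G}\setminus\mathbf{B}\equiv\left[(\mathbf{G}_{0}\setminus\mathbf{B}_{0}),\dots,(\mathbf{G}_{p}\setminus\mathbf{B}_{p})\right]$ for the componentwise difference, I would decompose
\[
\sigma_{\mathbf{a},\mathbf{B}}^{2}(P)-\sigma_{\mathbf{a},\mathbf{G}}^{2}(P)=\left\{\sigma_{\mathbf{a},\mathbf{B}}^{2}(P)-\sigma_{\mathbf{a},\mathbf{G}\cup\mathbf{B}}^{2}(P)\right\}+\left\{\sigma_{\mathbf{a},\mathbf{G}\cup\mathbf{B}}^{2}(P)-\sigma_{\mathbf{a},\mathbf{G}}^{2}(P)\right\}
\]
and read the first bracket as the effect of supplementing the base $\mathbf{B}$ with the precision increment $\mathbf{G}\setminus\mathbf{B}$, and the second as the effect of deleting from $\mathbf{G}\cup\mathbf{B}$ the overadjustment increment $\mathbf{B}\setminus\mathbf{G}$ to recover $\mathbf{G}$.

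For the first bracket I would apply Lemma \ref{lemma:supplementation_dep} with base $\mathbf{B}$ and supplement $\mathbf{G}\setminus\mathbf{B}$; its conclusion both certifies that $\mathbf{G}\cup\mathbf{B}$ is a valid time dependent adjustment set and supplies exactly the first family of summands of the asserted formula. For the second bracket I would apply Lemma \ref{lemma:deletion_dep} with good set $\mathbf{G}$ and overadjustment set $\mathbf{B}\setminus\mathbf{G}$, which yields the remaining two families of summands. Adding the two brackets produces the displayed identity, and the nonnegativity of each summand, already guaranteed termwise by the two lemmas, gives $\sigma_{\mathbf{a},\mathbf{B}}^{2}(P)-\sigma_{\mathbf{a},\mathbf{G}}^{2}(P)\ge 0$. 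The identity and inequality for $\Delta$ would follow in the same way from the $\Delta$ versions of the two lemmas, with the supplementation step contributing the $\sum_{k}var_{P}[t_{k}(\cdots)]$ term and the deletion step contributing the two residual variance terms.

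The only non-routine step, and the place where care is required, is verifying that the three d-separation hypotheses of the theorem are precisely the hypotheses of the two lemmas under the chosen increments. The lemmas are phrased for an increment disjoint from the base and in terms of the barred increments $\overline{(\mathbf{G}\setminus\mathbf{B})}_{j}$ and $\overline{(\mathbf{B}\setminus\mathbf{G})}_{j-1}$, whereas the theorem is phrased in terms of $\overline{\mathbf{G}}_{j}\setminus\overline{\mathbf{B}}_{j}$ and $\overline{\mathbf{B}}_{j-1}\setminus\overline{\mathbf{G}}_{j-1}$. I would reconcile these using the graphoid axioms: one checks that $\overline{(\mathbf{G}\setminus\mathbf{B})}_{j}$ and $\overline{\mathbf{G}}_{j}\setminus\overline{\mathbf{B}}_{j}$ differ only by vertices that already lie in the conditioning set $\overline{\mathbf{B}}_{j}$, so that by decomposition and weak union the hypothesis \eqref{eq:ind_good} required for supplementation is equivalent to the theorem's first condition; the analogous reconciliation matches \eqref{eq:ind_G} with the theorem's third condition, while the theorem's second condition is verbatim \eqref{eq:ind_Y}. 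The main obstacle is the attendant bookkeeping of time slots: since $\mathbf{G}$ and $\mathbf{B}$ need not be disjoint as sets, I must confirm that $\mathbf{G}\setminus\mathbf{B}$ is a legitimate increment disjoint from $\mathbf{B}$ and that the temporal components of $\mathbf{G}\cup\mathbf{B}$ remain disjoint, so that $\sigma_{\mathbf{a},\mathbf{G}\cup\mathbf{B}}^{2}(P)$ is well defined and both lemmas genuinely apply; this is the step that demands the most attention, the remainder being the direct addition of the two variance expressions.
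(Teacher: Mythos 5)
Your proposal is correct and takes essentially the same route as the paper, which presents Theorem \ref{theo:compare_adj_time} as an immediate corollary of Lemmas \ref{lemma:supplementation_dep} and \ref{lemma:deletion_dep} via exactly the telescoping decomposition $\sigma_{\mathbf{a},\mathbf{B}}^{2}-\sigma_{\mathbf{a},\mathbf{G}\cup\mathbf{B}}^{2}+\sigma_{\mathbf{a},\mathbf{G}\cup\mathbf{B}}^{2}-\sigma_{\mathbf{a},\mathbf{G}}^{2}$ used in the proof of Theorem \ref{theo:compare_adj}, with the same hypothesis-matching you describe. The slot-bookkeeping you flag as delicate is in fact benign: every quantity entering the influence function $\psi_{P,\mathbf{a}}$ (the iterated means $b_{\overline{\mathbf{a}}_{k}}$ and the products $\lambda_{\overline{\mathbf{a}}_{k-1}}$) depends only on the cumulative unions $\overline{\mathbf{G}}_{k}\cup\overline{\mathbf{B}}_{k}$, which coincide under your componentwise constructions of the supplement and the overadjustment increment, so the middle variance term is well defined and identical in both lemma applications.
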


As indicated earlier, optimal adjustment sets always exist for point interventions. In contrast, for joint interventions, even though time dependent adjustment sets always exist, there exist DAGs with  no optimal time dependent adjustment set. Moreover, even when an optimal time independent adjustment set exists, this set is not necessarily uniformly optimal among all adjustment sets. The following example illustrates these two points, as well as the application of Theorem \ref{theo:compare_adj_time}.

\begin{example}
\label{ex:time} For the DAG in Figure \ref{fig:time_dep}, Table \ref{tab:adj}
lists all valid time dependent adjustments sets relative to $\left( \mathbf{A,}%
Y\right) $ for $\mathbf{A}=\left( A_{0},A_{1}\right)$. These can be found applying the criteria in \cite{prob-eval}. The last column of Table \ref{tab:adj} indicates, for every adjustment set, another dominating adjustment set, in the sense that the dominating one results in an NP-$\mathbf{Z}$ estimator that has smaller asymptotic variance for all $P\in\mathcal{M(G)}$. These dominating adjustment sets are found applying Theorem \ref{theo:compare_adj_time}. We note however that  $\mathbf{Z}^{\ast }$ in row 1 is superior to $\mathbf{Z}^{\ast\ast }$ in row 8 for some $P\in\mathcal{M(G)}$ but $\mathbf{Z}^{\ast\ast }$ is superior to $\mathbf{Z}^{\ast}$ for another $P^{\prime}\in\mathcal{M(G)}$. Intuitively, when the association encoded in the red arrow is weak but the associations encoded in the blue arrows are strong, then  $\mathbf{Z}^{\ast\ast }$ in row 8 is preferable to  $\mathbf{Z}^{\ast}$ in row 1. In contrast, when the association encoded in the red arrow is strong but the associations encoded in the blue arrows are weak, then  $\mathbf{Z}^{\ast}$ if preferable to  $\mathbf{Z}^{\ast\ast}$.
For instance, when all variables are binary there exists $P\in\mathcal{M(G)}$ such
that $\sigma _{\mathbf{a,Z}^{\ast }}^{2}\left( P\right) /\sigma _{\mathbf{a,Z%
}^{\ast \ast }}^{2}\left( P\right) =0.675$ and another law $P^{\prime }\in\mathcal{M(G)}$
such that $\sigma _{\mathbf{a,Z}^{\ast }}^{2}\left( P^{\prime}\right) /\sigma _{%
\mathbf{a,Z}^{\ast \ast }}^{2}\left( P^{\prime}\right)=1.08$ for $\mathbf{a}=(1,1)$. See the \texttt{R} scripts available at \url{https://github.com/esmucler/optimal_adjustment}.
We note also that the adjustment set in row 11, namely $\mathbf{Z}_0=\lbrace H \rbrace$ and  $\mathbf{Z}_1=\emptyset$ is the unique time independent adjustment set, and hence optimal among time independent adjustment sets. Nevertheless, it is dominated by the time dependent adjustment set in row 8, thus proving that optimal time independent adjustment sets need not be optimal in the class of all adjustment sets.
\end{example}

\begin{figure}[ht]
\begin{center}
\begin{tikzpicture}[>=stealth, node distance=1.5cm,
pre/.style={->,>=stealth,ultra thick,line width = 1.4pt}]
  \begin{scope}
    \tikzstyle{format} = [circle, inner sep=2.5pt,draw, thick, circle, line width=1.4pt, minimum size=6mm]
    \node[format] (A0) {$A_{0}$};
        \node[format, right of=A0] (R) {$R$};
          \node[right of=R] (0) {};
            \node[format, right of=0] (A1) {$A_1$};
         \node[format, right of=A1] (Y) {$Y$};
          \node[format, above of=0] (H) {$H$};
         \node[format, below of=0] (Q) {$Q$};
                 \draw (A0) edge[pre, black] (R);
               \draw (H) edge[pre, blue] (R);
             \draw (H) edge[pre, red] (A1);
             \draw (A1) edge[pre, black] (Y);
           \draw (Q) edge[pre, black] (Y);
            \draw (R) edge[pre, blue] (Q);
  \end{scope} 
  \end{tikzpicture}
\end{center}
\caption{An example in which no optimal  time dependent adjustment set exists.}
\label{fig:time_dep}
\end{figure}
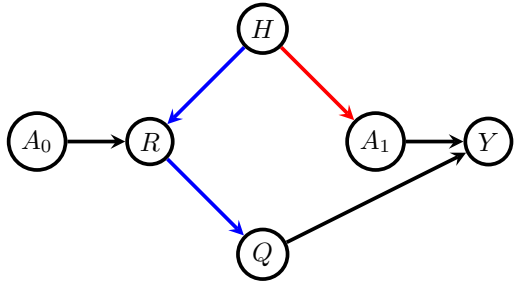

\begin{table}[ht!]
\begin{center}
\begin{tabular}{cccc}
Adjustment set &$\mathbf{Z}_{0}$ & $\mathbf{Z}_{1}$ & Dominating adjustment set\\ \hline
1&$\emptyset$ & $Q$ & - \\ 
2&$\emptyset$ & $R$ & 1\\ 
3&$\emptyset$ & $H$ & 1\\ 
4&$\emptyset$ & $\lbrace Q,R \rbrace$& 1 \\ 
5&$\emptyset$ & $\lbrace Q,H \rbrace$& 1 \\ 
6&$\emptyset$ & $\lbrace R,H \rbrace$& 1 \\ 
7&$\emptyset$ & $\lbrace Q,R,H \rbrace$ & 1 \\ 
8&$H$ & $Q$ & -\\ 
9&$H$ & $R$ & 8 \\ 
10&$H$ & $\lbrace R,Q \rbrace$ & 8 \\ 
11&$H$ & $\emptyset$ & 8 \\ 
& 
\end{tabular}%
\end{center}
\caption{List of all possible time dependent adjustment sets for the DAG in
Figure \protect\ref{fig:time_dep}.}
\label{tab:adj}
\end{table}

\FloatBarrier

An interesting open question is to characterize the class of DAGs for which
there exists an optimal time dependent adjustment set, and for DAGs like the
one in Example \ref{ex:time} for which no optimal adjustment set exists, to
characterize the class of adjustment sets such that those not in the class
are inferior to at least one member of the class.

\subsection{Nonexistence of uniformly optimal covariate adjustment sets in non-parametric causal graphical models with latent variables}
\label{sec:latent}

Consider now the situation in which some vertices of the DAG are not
observable, but some adjustment sets are observable. A
natural question is whether one can find an optimal adjustment set among the
observable ones. Without restricting the topology of the DAG, the answer is
negative as the following example illustrates. For linear causal graphical models and treatments effects estimated by OLS, \cite{perkovic} showed that it is possible that no uniformly optimal adjustment set exists among observable adjustment sets. In the following example we show the same negative result holds for non-linear causal graphical models and NP-$\mathbf{O}$ estimators.

An interesting open problem
is the characterization of settings in which, $\mathbf{O}(\mathbf{A},Y,\mathcal{G})$
is not observed but an optimal observable adjustment set exists.

\begin{example}
Suppose that in the DAG in Figure \ref{fig:no_opt_lat}, $U$ is the only
unobserved variable. Then, $\mathbf{Z}^{\ast} =\emptyset $, $%
{\mathbf{Z}^{\ast\ast}}\mathbf{=}\left\{ Z_{1},Z_{2}\right\} $  and  $\mathbf{Z}^{\ast\ast\ast}=\left\{ Z_{1}\right\} $ are all observable 
adjustment sets for $(A,Y)$ relative to the DAG. Using Lemma \ref{lemma:deletion}, it is easy to show that $\mathbf{Z}^{\ast}$ is uniformly better than $\mathbf{Z}^{\ast\ast\ast}$.
However, $\mathbf{Z}^{\ast}$ is better than $\mathbf{Z}^{\ast\ast}$ if the associations encoded in the blue edges are strong and the
associations encoded in the red edges are weak, but $\mathbf{Z}^{\ast\ast}$ is better than $\mathbf{Z}^{\ast}$ if the blue edges are weak and red ones are strong. In fact, when all variables are binary there exists $P\in\mathcal{M(G)}$ such that $\sigma^{2}_{1, \mathbf{Z}^{\ast}}(P)/\sigma^{2}_{1, \mathbf{Z}^{\ast\ast}}(P)= 0.04$
and another $P^{\prime}\in\mathcal{M(G)}$ such that $\sigma^{2}_{1, \mathbf{Z}^{\ast}}(P^{\prime})/\sigma^{2}_{1, \mathbf{Z}^{\ast\ast}}(P^{\prime})= 1.44$. See the \texttt{R} scripts available at \url{https://github.com/esmucler/optimal_adjustment}.
\end{example}

\begin{figure}[ht]
\begin{center}
\begin{tikzpicture}[>=stealth, node distance=1.5cm,
pre/.style={->,>=stealth,ultra thick,line width = 1.4pt}]
  \begin{scope}
    \tikzstyle{format} = [circle, inner sep=2.5pt,draw, thick, circle, line width=1.4pt, minimum size=6mm]
    \node[format] (A) {$A$};
        \node[right of=A] (O) {};
        \node[format, right of=O] (Y) {$Y$};
               \node[format, above of=A] (Z1) {$Z_1$};
             \node[format ,above of=Y] (U) {$U$};
            \node[format, right of=Z1] (Z2) {$Z_2$};
                 \draw (Z1) edge[pre, blue] (A);
                \draw (Z1) edge[pre, blue] (Z2);
                        \draw (U) edge[pre, red] (Y);
               \draw (U) edge[pre, red] (Z2);
                              \draw (A) edge[pre, black] (Y);
  \end{scope} 
  \end{tikzpicture}
\end{center}
\caption{An example with a latent variable $U$ and observable covariate adjustment sets but with no optimal adjustment set.}
\label{fig:no_opt_lat}
\end{figure}
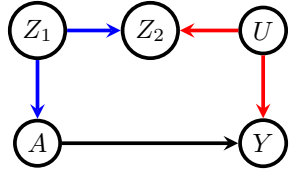

\section{Estimation of point intervention causal effects exploiting the assumptions of the Bayesian Network}
\label{sec:efficient_est}

For a point intervention $\mathbf{A}=A$, NP-$\mathbf{O}$ estimators
of the individual interventional means and their contrasts, even though efficient among NP-$\mathbf{Z}$ estimators, ignore the conditional
independence assumptions encoded in the causal graphical model about the data
generating law $P$. These assumptions may carry information about the
parameters of interest. For instance, consider the Bayesian Network
represented by the DAG $\mathcal{G}$ of Figure \ref{fig:adj}. Under model $%
\mathcal{M}\left( \mathcal{G}\right) $, the components $O_{1}$ and $O_{2}$
of the adjustment set $\mathbf{O}=\left\lbrace  O_{1},O_{2}\right\rbrace $ are marginally
independent. This independence carries information about $\chi _{a}\left( P;%
\mathcal{G}\right) =E_{P}\left[ E_{p}\left[ Y|A=a,O_{1},O_{2}\right] \right] 
$ because the joint distribution of $O_{1}, O_{2}$ is not ancillary for $%
\chi _{a}\left( P;\mathcal{G}\right) .$ Specifically, 
\begin{eqnarray*}
E_{P}\left[ E_{p}\left[ Y|A=a,O_{1},O_{2}\right] \right] &=&\int \int \int
yp\left( y|a,o_{1},o_{2}\right) p\left( o_{1},o_{2}\right) dydo_{1}do_{2} \\
&=&\int \int \int y p\left( y|a,o_{1},o_{2}\right) p\left( o_{1}\right)
p\left( o_{2}\right) dy do_{1}do_{2}
\end{eqnarray*}%
and the last equality is true only under $\mathcal{M}\left( \mathcal{G}%
\right) .$ 

\begin{figure}[ht]
\begin{center}
\begin{tikzpicture}[>=stealth, node distance=1.5cm,
pre/.style={->,>=stealth,ultra thick,line width = 1.4pt}]
  \begin{scope}
    \tikzstyle{format} = [circle, inner sep=2.5pt,draw, thick, circle, line width=1.4pt, minimum size=6mm]
    \node[format] (1) {$O_{1}$};
        \node[below of=1] (0) {};
    \node[format, below of=0] (2) {$O_{2}$};
    \node[format, right of=0] (3) {$A$};
        \node[format, right of=3] (4) {$Y$};
                 \draw (1) edge[pre, black] (3);
                 \draw (2) edge[pre, black] (3);
                 \draw (3) edge[pre, black] (4);
                  \draw (1) edge[pre, black] (4);
                 \draw (2) edge[pre, black] (4);
  \end{scope} 
  \end{tikzpicture}
\end{center}
\caption{A DAG where the NP-$\mathbf{O}$ estimator is inefficient.}
\label{fig:adj}
\end{figure}
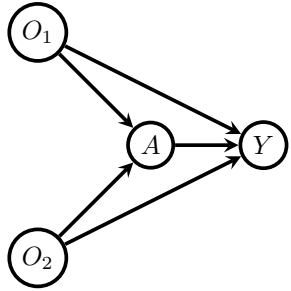

Applying Algorithm \ref{algo:main} of Section \ref{sec:semiparam_effi}, it is easy show that the semiparametric Cramer-Rao bound, defined in Section \ref{sec:semiparam}, under the Bayesian
Network of Figure \ref{fig:adj} is equal to the variance of the random variable
\begin{equation*}
\chi _{P,a,eff}^{1}\left( A,Y,\mathbf{O};\mathcal{G}\right) =\psi
_{P,a}\left( \mathbf{Z};\mathcal{G}\right) -\Delta _{P}\left( \mathbf{O}%
\right) \text{ }
\end{equation*}%
where $\mathbf{O}=\left( O_{1},O_{2}\right)$, $\psi _{P,a}\left( \mathbf{O};%
\mathcal{G}\right) $ is the influence function of the NP-$\mathbf{O}$ estimator and 
\begin{equation*}
\Delta _{P}\left( \mathbf{O}\right) \equiv b_{a}\left( \mathbf{O};P\right)
-E_{P}\left[ b_{a}\left( \mathbf{O};P\right) |O_{1}\right] -E_{P}\left[
b_{a}\left( \mathbf{O};P\right) |O_{2}\right] +E_{P}\left[ b_{a}\left( \mathbf{O}%
;P\right) \right]
\end{equation*}%
with \medskip $b_{a}\left( \mathbf{O};P\right) \equiv E_{P}\left[ Y|A=a,%
\mathbf{O}\right] .$ Furthermore, we show in Lemma \ref{lemma:ARE} in the Appendix that if $P_{\alpha}\in\mathcal{M(G)}$ is such that the following hold
\begin{enumerate}
    \item $b_{a}\left( \mathbf{O};P_{\alpha}\right)
=O_{1}+O_{2}+\alpha O_{1}O_{2}$,
    \item  $%
E_{P_{\alpha}}\left( O_{1}\right) =E_{P_{\alpha}}\left( O_{2}\right) =0$,
\item $%
E_{P_{\alpha}}\left( O_{1}^{2}\right) =E_{P_{\alpha}}\left( O_{2}^{2}\right) =1$,
\item There exists a fixed $C>0$ independent of $\alpha$ such that $
var_{P_{\alpha}}\left(Y \mid A=a, \mathbf{O} \right)\leq C$ and $\pi_{a}(\mathbf{O}_{min};P_{\alpha}) \geq 1/C$,
\end{enumerate}
then
$$
\Delta _{P_{\alpha}}\left( \mathbf{O}\right)  = \alpha O_{1}O_{2}
$$
and
\begin{equation*}
\frac{var_{P_{\alpha}}\left[\psi _{P_{\alpha},a}\left( \mathbf{O};\mathcal{G}\right) \right] 
}{var_{P_{\alpha}}\left[ \chi _{P,a,eff}^{1}\left( \mathbf{V};\mathcal{G}\right)
\right] }\underset{\left\vert \alpha \right\vert \rightarrow \infty }{%
\rightarrow }\infty.
\end{equation*}
This illustrates the point that the NP-$\mathbf{O}$ estimator may
ignore a substantial fraction of the information about $\chi _{a}\left( P;%
\mathcal{G}\right) $ encoded in the Bayesian Network.

Independencies among variables in the adjustment set are not the only
carriers of information about $\chi _{a}\left( P;\mathcal{G}\right) $ in a
Bayesian Network. For instance, consider the model represented by the DAG in
Figure \ref{fig:chain} in which $A$ is randomized but a variable $M$ that
mediates all the effect of $A$ on $Y$ is measured. Then%
\begin{eqnarray*}
\chi _{a}\left( P;\mathcal{G}\right) &=&E_{p}\left[ Y|A=a\right] \\
&=&\int \int yp\left( y,m|a\right) dydm \\
&=&\int \int yp\left( y|m\right) p\left( m|a\right) dydm
\end{eqnarray*}%
and the last equality holds due to the Markov chain structure encoded in the
model. In this example, the empirical mean of $Y$ given $A=a$, can be viewed as the NP-$\mathbf{O}$ estimators where $\mathbf{O}=\emptyset$. However this estimator does not attain the semiparametric Cramer-Rao bound, because it does not exploit the Markov chain structure encoded in the graph.
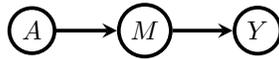
\begin{figure}[ht]
\begin{center}
\begin{tikzpicture}[>=stealth, node distance=1.5cm,
pre/.style={->,>=stealth,ultra thick,line width = 1.4pt}]
  \begin{scope}
    \tikzstyle{format} = [circle, inner sep=2.5pt,draw, thick, circle, line width=1.4pt, minimum size=6mm]
    \node[format] (1) {$A$};
    \node[format, right of=1] (2) {$M$};
    \node[format, right of=2] (3) {$Y$};
                 \draw (1) edge[pre, black] (2);
                 \draw (2) edge[pre, black] (3);
  \end{scope} 
  \end{tikzpicture}
\end{center}
\caption{A DAG where the NP-$\mathbf{O}$ estimator is inefficient.}
\label{fig:chain}
\end{figure}

As a third example, consider the Bayesian Network represented by the DAG in
Figure \ref{fig:front}. 
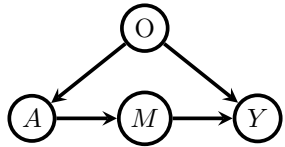
\begin{figure}[ht]
\begin{center}
\begin{tikzpicture}[>=stealth, node distance=1.5cm,
pre/.style={->,>=stealth,ultra thick,line width = 1.4pt}]
  \begin{scope}
    \tikzstyle{format} = [circle, inner sep=2.5pt,draw, thick, circle, line width=1.4pt, minimum size=6mm]
    \node[format] (1) {$A$};
    \node[format, right of=1] (2) {$M$};
    \node[format, right of=2] (3) {$Y$};
    \node[format, above of=2, yshift=-3mm] (4) {O};
                 \draw (1) edge[pre, black] (2);
                 \draw (2) edge[pre, black] (3);
                 \draw (4) edge[pre, black] (1);
                  \draw (4) edge[pre, black] (3);
  \end{scope} 
  \end{tikzpicture}
\end{center}
\caption{The front-door graph.}
\label{fig:front}
\end{figure}
Under this model $O$ is the unique, and hence optimal, covariate adjustment set. Nevertheless, under the model, $
\chi _{a}\left( P;\mathcal{G}\right) =E_{P}\left[ E_{p}\left[ Y|A=a,O\right] %
\right] $ is also equal to the so-called front-door functional 
\begin{equation}
\beta \left( P\right)
\equiv \int y\left\{ \int p\left( m|a\right) \left[ \sum_{a^{\prime
}}p\left( y|m,a^{\prime }\right) p\left( a^{\prime }\right) \right]
dm\right\} dy.
\label{eq:frontdoor}
\end{equation}
See \cite{pearl-causality}. Under regularity conditions,
the non-parametric estimator of $\beta \left( P\right) $, based on estimating the right hand side of \eqref{eq:frontdoor} replacing all densities by smooth non-parametric estimators of them, provides, under regularity conditions, yet
another regular and asymptotically linear estimator of $\chi _{a}\left( P;%
\mathcal{G}\right)$. In fact, invoking Algorithm \ref{algo:main}, 
in Example \ref{ex:front} we argue that neither estimator attains the semiparametric Cramer-Rao bound under the model. The one-step estimation technique described in Section \ref{sec:semiparam} can be used to obtain a regular and asymptotically linear estimator of $\chi _{a}\left( P;\mathcal{G}\right)$ which, under regularity conditions, attains the semiparametric Cramer-Rao Bound under $\mathcal{M}(G)$.

In Section \ref{sec:semiparam} we provide a sound and complete graphical
algorithm that, given a DAG $\mathcal{G}$, decides whether or not under all
laws $P$ of $\mathcal{M}\left( \mathcal{G}\right) $, the NP-$\mathbf{O}$
estimator is semiparametric efficient under the Bayesian Network $\mathcal{M}%
\left( \mathcal{G}\right) $. Furthermore, when the answer is negative, the
algorithm returns a formula for computing an estimator that under regularity
conditions attains the semiparametric Cramer-Rao bound.

Because estimation of causal effects under a DAG $\mathcal{G}$ is only meaningful 
when in $\mathcal{G}$ there exists at least one causal
path between $A$ and $Y$, from now on we will consider only inference about $\chi _{a}\left( P;\mathcal{G}\right) $ under Bayesian Networks $\mathcal{M}\left( \mathcal{G}%
\right) $ represented by such DAGs. 

\subsection{Review of semiparametric efficiency theory}
\label{sec:semiparam}

The problem we are concerned with in this section is formalized as follows. We are
interested in finding an estimator of the functionals $\chi _{a}\left( P;%
\mathcal{G}\right) \equiv E_{P}\left[ E_{p}\left[ Y|A=a,\pa_{\mathcal{G}%
}\left( A\right) \right] \right] $ and 
$\Delta(P;\mathcal{G}) \equiv
\sum_{ \mathbf{a} \in \mathcal{A}}c_{\mathbf{a}}E[Y_{\mathbf{a}}]$,
with the smallest possible variance among
all estimators that are regular and asymptotically linear under any $P\in 
\mathcal{M}\left( \mathcal{G}\right) .$ When not all the variables in $%
\mathcal{G}$ are discrete, model $\mathcal{M}\left( \mathcal{G}\right) $ is
a semiparametric model in the sense that it cannot be parameterized by a
Euclidean parameter. The theory of semiparametric efficient estimation %
\citep{van1998asymptotic, bickel} studies the generic problem of efficient
estimation of a finite dimensional functional, such as $\chi_{a}\left(P;\mathcal{G}\right) $, under a semiparametric model, such as $\mathcal{M(G)}$.
In what follows we review the key elements of this theory, as they apply to inference about an arbitrary  parameter $\gamma(P)$ under model $\mathcal{M(G)}$ for the law of a vector $\mathbf{V}=(V_{1},\dots,V_{s})$. In the next section we apply this theory to $\chi_{a}\left( P;\mathcal{G}\right)$ and $\Delta(P;\mathcal{G})$.

Influence functions of regular and asymptotically linear estimators of a (smooth) functional $\gamma(P)$ can be derived from well known results in
semiparametric theory. Specifically, for any law $P$ in $\mathcal{M}\left( 
\mathcal{G}\right) $ define the tangent space $\Lambda \equiv $ $\Lambda
\left( P\right) $ at $P$ of model $\mathcal{M}\left( \mathcal{G}\right) $ as
the $L_{2}\left( P\right) -$closed linear span of scores at $t=0$ for
regular one-dimensional parametric submodels $t\in \lbrack 0,\varepsilon
)\rightarrow P_{t}$ with $P_{t=0}=P$ \citep{van1998asymptotic}. In Lemma \ref{lemma:orth_decomp} of the Appendix we show that $\Lambda \equiv
\oplus _{j=1}^{s}\Lambda _{j}$ where 
\begin{equation*}
\Lambda _{j}\equiv \left\{ G\equiv g\left( V_{j},\text{pa}_{\mathcal{G}%
}\left( V_{j}\right) \right) \in L_{2}\left( P\right) :E_{P}\left[ G|\text{pa%
}_{\mathcal{G}}\left( V_{j}\right) \right] =0\right\} .
\end{equation*}%
and $\oplus $ stands for the sum of $L_{2}(P)-$orthogonal spaces. Thus, unless $\mathcal{G}$ is a complete DAG, $\Lambda $ is a
strict subset of $L_{2}^{0}\left( P\right) $, where 
\begin{equation*}
L_{2}^{0}\left( P\right) \equiv \left\{ g\in L_{2}\left( P\right) :\int
g dP=0\right\}.
\end{equation*}%

A result from semiparametric theory connects the influence functions of regular asymptotically linear estimators of certain parameters $\gamma(P)$ with the so-called influence functions of the parameters. A parameter $\gamma(P)$, more precisely the map $P^{\prime }\in \mathcal{M}\left( 
\mathcal{G}\right) \rightarrow \gamma(P^{\prime})$, is pathwise differentiable at $P$ if there exists a random variable $\varphi_{P}(\mathbf{V})$ such that 
$E_{P}\left[ \varphi _{P}\left( \mathbf{V};%
\mathcal{G}\right) ^{2}\right] <\infty,\: E_{P}\left[ \varphi _{P}\left( 
\mathbf{V}\right) \right] =0$ and such that for any regular
one-dimensional parametric submodel $t\in \lbrack 0,\varepsilon )\rightarrow
P_{t}$ with $P_{t=0}=P$ and score at $t=0$ denoted as $S,$ it holds that $%
d\gamma(P_{t}) /dt|_{t=0}=E_{P}\left[ \varphi
_{P}\left( \mathbf{V}\right) S\right]$. The random variable $\varphi_{P}(\mathbf{V})$ is called an influence function of the parameter $\gamma(P)$.
Unless $\mathcal{G}$
is complete there exists infinitely many influence functions, because if $\varphi _{P}$ is an influence function so is $\varphi _{P}+T$ for any mean zero $T$ uncorrelated with the elements of $\Lambda$. 
The aforementioned result connecting influence functions of estimators with influence functions of parameters establishes that if $\widehat{\gamma}$ is an
asymptotically linear estimator of $\gamma(P) $
at $P$ with influence function $\varphi_{P}$, then $\widehat{\gamma}$  is regular at $P$ in model $\mathcal{M}\left( 
\mathcal{G}\right) $ if and only if $\gamma(P)$ is pathwise differentiable at $P$ and  $\varphi_{P}$ is an influence function of $\gamma(P)$. See Theorem 2.2 of \cite{newey1990}.

The projection $\Pi \left[ B|\Lambda \right] $ of any $B\in $ $L_{2}\left(
P\right) $ into the tangent space $\Lambda $ at $P$ is defined as the unique
element of $\Lambda $ such that $B-\Pi \left[ B|\Lambda \right] $ is
uncorrelated under $P$ with any element of $\Lambda .$ The projection $\varphi
_{P,eff}\equiv \Pi \left[ \varphi _{P}(\mathbf{V})|\Lambda \right] $ of any
influence function $\varphi _{P}$ of $\gamma(P) $
is itself an influence function. $\varphi
_{P,eff}$ is called \textit{%
the efficient influence function} of $\gamma(P) $
at $P$ in model $\mathcal{M}\left( \mathcal{G}\right) .$ It follows from
Pythagoran Theorem, that the variance $\Omega _{eff}\equiv E_{P}\left[
\left( \varphi_{P,eff}\right) ^{2}\right]$ of $\varphi _{P,eff}(\mathbf{V})$ is less than or equal to the variance $%
E_{P}\left[ \varphi^{2}_{P}(\mathbf{V})\right]$ of any influence function  $\varphi _{P}(\mathbf{V})$.  Consequently, $%
\Omega _{eff}$ is a lower bound for the variance of the limiting mean zero
normal distribution of regular asymptotically linear estimators of $\gamma(P)$. $\Omega _{eff}$ is called the
semiparametric variance bound (also called the semiparametric Cramer-Rao bound) for $\gamma(P)$ at 
$P$ in model $\mathcal{M}\left( \mathcal{G}\right)$. 

Notice that by the linearity of the differentiation operation, if  $\chi _{P,a}^{1}(\mathbf{V};\mathcal{G})$ denotes an influence function for $\chi_{a}\left( P;\mathcal{G}\right)$ then 
$$
\Delta^{1}_{P}(\mathbf{V};\mathcal{G})= \sum\limits_{a\in\mathcal{A}} c_{a}\chi _{P,a}^{1}(\mathbf{V};\mathcal{G})
$$
is an influence function for $\Delta(P;\mathcal{G})$. Consequently, if  $\Delta^{1}_{P,eff}(\mathbf{V};\mathcal{G})$ and $\chi _{P,a,eff}^{1}(\mathbf{V};\mathcal{G})$ denote the efficient influence functions of $\Delta(P;\mathcal{G})$ and $\chi_{a}\left( P;\mathcal{G}\right)$, we have
$$
\Delta^{1}_{P,eff}(\mathbf{V};\mathcal{G})= \sum\limits_{a\in\mathcal{A}} c_{a}\chi _{P,a,eff}^{1}(\mathbf{V};\mathcal{G}).
$$

In the next section we will derive an expression for $\chi _{P,a,eff}^{1}(\mathbf{V};\mathcal{G})$ and indicate how it can be used to construct an efficient estimator of $\chi_{a}\left( P;\mathcal{G}\right)$. These efficient estimators can then be combined to obtain an efficient estimator of  $\Delta(P;\mathcal{G})$.

\subsection{Semiparametric efficient estimation of $\chi_{a}\left( P;\mathcal{G}\right)$}
\label{sec:semiparam_effi}

The next theorem provides an expression for $\chi _{P,a,eff}^{1}$.
Let 
\begin{equation*}
J_{P,a,\mathcal{G}}\equiv \frac{I_{a}\left( A\right) Y}{P\left( A=a|\text{pa}%
_{\mathcal{G}}\left( A\right) \right) },
\end{equation*}%
\begin{equation*}
\indir\left( A,Y,\mathcal{G}\right) \equiv \left\{ V_{j}\in \mathbf{V}%
:V_{j}\in \an_{\mathcal{G}}\left( A\right) \backslash \left\{ A\right\} 
\text{ and all causal paths between }V_{j}\text{ and }Y\text{ intersect }%
A\right\}
\end{equation*}%
and%
\begin{equation*}
\irrel\left( A,Y,\mathcal{G}\right) \equiv \indir\left( A,Y,\mathcal{G}%
\right) \cup \an_{\mathcal{G}}\left( Y\right) ^{c}.
\end{equation*}
Note that $\indir\left( A,Y,\mathcal{G}\right)$ is comprised by the nodes in $\mathbf{V}$ that, conditional on their parents, are instrumental variables for the causal effect of $A$ on $Y$.

\begin{theorem}
\label{lemma:eff_if_expression} Let $\mathcal{M}\left( \mathcal{G}\right) $
be the Bayesian Network represented by DAG $\mathcal{G}$ with vertex set $%
\mathbf{V.}$ Assume $Y$ and $A$ are single disjoint vertices. Then, the
efficient influence function of $\chi _{a}\left( P;\mathcal{G}\right) $ at $%
P $ under $\mathcal{M}\left( \mathcal{G}\right) $ is equal to 
\begin{equation}
\chi _{P,a,eff}^{1}\left( \mathbf{V};\mathcal{G}\right) =\sum_{j:V_{j}\notin %
\left[ \irrel\left( A,Y,\mathcal{G}\right) \cup \left\{ A\right\} \right]}\left\{ E_{P}%
\left[ J_{P,a,\mathcal{G}}|V_{j},\pa_{\mathcal{G}}\left( V_{j}\right) \right]
-E_{P}\left[ J_{P,a,\mathcal{G}}|\pa_{\mathcal{G}}\left( V_{j}\right) \right]
\right\}  \label{eq:effic_score}.
\end{equation}%
Furthermore, $\chi _{P,a,eff}^{1}\left( \mathbf{V};\mathcal{G}\right) $ 
depends on $\mathbf{V}$ only through $\mathbf{V}_{\text{marg}}\equiv 
\mathbf{V\backslash }\irrel\left( A,Y,\mathcal{G}\right) .$
\end{theorem}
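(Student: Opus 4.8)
The plan is to realize $\chi_{P,a,eff}^{1}$ as the orthogonal projection onto the tangent space $\Lambda$ of a conveniently chosen influence function of $\chi_a(P;\mathcal{G})$, and then to show that the $\Lambda_j$-component of that projection vanishes for every $V_j\in\irrel(A,Y,\mathcal{G})\cup\{A\}$. As starting influence function I would take the NP-$\pa_{\mathcal{G}}(A)$ function $\psi_{P,a}(\pa_{\mathcal{G}}(A);\mathcal{G})$ of \eqref{eq:inf_fc_sing}, obtained with $\mathbf{Z}=\pa_{\mathcal{G}}(A)$. Since this is the unique influence function of $\chi_a(P;\mathcal{G})$ in the nonparametric supermodel in which only smoothness assumptions on $b_{\mathbf{a}},\pi_{\mathbf{a}}$ are made, and since any influence function of a supermodel is an influence function of a submodel (the pathwise-derivative condition then need hold for fewer scores), it is an influence function of $\chi_a(P;\mathcal{G})$ in $\mathcal{M}(\mathcal{G})$. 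Hence $\chi_{P,a,eff}^{1}=\Pi\left[\psi_{P,a}(\pa_{\mathcal{G}}(A);\mathcal{G})\mid\Lambda\right]$. Using Lemma \ref{lemma:orth_decomp} to write $\Lambda=\oplus_{j=1}^{s}\Lambda_j$ together with the elementary identity $\Pi[B\mid\Lambda_j]=E_P[B\mid V_j,\pa_{\mathcal{G}}(V_j)]-E_P[B\mid\pa_{\mathcal{G}}(V_j)]$ (immediate from the definition of $\Lambda_j$), the projection decomposes into a sum of $\Lambda_j$-components.

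I would then convert these components into the stated form in terms of $J_{P,a,\mathcal{G}}$. Writing $\psi_{P,a}(\pa_{\mathcal{G}}(A);\mathcal{G})=J_{P,a,\mathcal{G}}-\chi_a(P;\mathcal{G})-b_a(\pa_{\mathcal{G}}(A);P)\left\{I_a(A)/\pi_a(\pa_{\mathcal{G}}(A);P)-1\right\}$, one checks that the last summand lies in $\Lambda_A$ and in fact equals $\Pi[J_{P,a,\mathcal{G}}\mid\Lambda_A]$, while the constant $\chi_a(P;\mathcal{G})$ projects to zero. Therefore the $\Lambda_A$-components cancel, leaving $\chi_{P,a,eff}^{1}=\sum_{j:V_j\neq A}\left\{E_P[J_{P,a,\mathcal{G}}\mid V_j,\pa_{\mathcal{G}}(V_j)]-E_P[J_{P,a,\mathcal{G}}\mid\pa_{\mathcal{G}}(V_j)]\right\}$, where the identity $E_P[J_{P,a,\mathcal{G}}\mid\pa_{\mathcal{G}}(A)]=b_a(\pa_{\mathcal{G}}(A);P)$ from \eqref{eq:countermeanbinary} is used in the bookkeeping. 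It remains to prove that each such component is zero when $V_j\in\irrel(A,Y,\mathcal{G})$.

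For $V_j\in\an_{\mathcal{G}}(Y)^{c}$ this is the easy case: because $A\in\an_{\mathcal{G}}(Y)$, a non-ancestor of $Y$ cannot be an ancestor of $A$, so $\{Y,A\}\cup\pa_{\mathcal{G}}(A)\subseteq\nd_{\mathcal{G}}(V_j)$; the Local Markov Property gives $V_j\ort\nd_{\mathcal{G}}(V_j)\mid\pa_{\mathcal{G}}(V_j)$ under $P$, whence $J_{P,a,\mathcal{G}}\ort V_j\mid\pa_{\mathcal{G}}(V_j)$ and the component vanishes. The main obstacle is the instrumental case $V_j\in\indir(A,Y,\mathcal{G})$, where $V_j\in\an_{\mathcal{G}}(Y)$ and the local-Markov shortcut is unavailable; indeed full independence $J_{P,a,\mathcal{G}}\ort V_j\mid\pa_{\mathcal{G}}(V_j)$ generally fails because $V_j$ affects $A$. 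Here I would prove only mean-independence, by iterated expectations: conditioning on a set that makes $\pa_{\mathcal{G}}(A)$ measurable and invoking the self-normalizing identity $E_P[I_a(A)/\pi_a(\pa_{\mathcal{G}}(A);P)\mid\pa_{\mathcal{G}}(A)]=1$ neutralizes the influence of $V_j$ on $A$, collapsing the inner conditional expectation to an outcome regression of the form $E_P[Y\mid A=a,\ldots]$; the hypothesis that every causal path from $V_j$ to $Y$ meets $A$ then forces this regression, hence $E_P[J_{P,a,\mathcal{G}}\mid V_j,\pa_{\mathcal{G}}(V_j)]$, to be free of $V_j$. Executing this cancellation in full generality is the delicate step: one must choose the intermediate conditioning set so that colliders opened by conditioning on $A$ do not reintroduce dependence, as can occur when an instrument has a descendant lying on a confounded path to $Y$. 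I would isolate this as a separate lemma establishing $E_P[J_{P,a,\mathcal{G}}\mid V_j,\pa_{\mathcal{G}}(V_j)]=E_P[J_{P,a,\mathcal{G}}\mid\pa_{\mathcal{G}}(V_j)]$ for conditional instruments.

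Finally, for the furthermore claim I would note that every surviving index has $V_j\in\an_{\mathcal{G}}(Y)$, so every parent of $V_j$ is again in $\an_{\mathcal{G}}(Y)$ and no coordinate of $\pa_{\mathcal{G}}(V_j)$ lies in $\an_{\mathcal{G}}(Y)^{c}$. Thus the only members of $\irrel(A,Y,\mathcal{G})=\indir(A,Y,\mathcal{G})\cup\an_{\mathcal{G}}(Y)^{c}$ that can appear in $\pa_{\mathcal{G}}(V_j)$ are conditional instruments, and a conditional version of the instrumental lemma above shows that $E_P[J_{P,a,\mathcal{G}}\mid V_j,\pa_{\mathcal{G}}(V_j)]$ does not depend on those coordinates. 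Consequently each surviving term, and hence $\chi_{P,a,eff}^{1}$, is a function of $\mathbf{V}_{\text{marg}}=\mathbf{V}\setminus\irrel(A,Y,\mathcal{G})$ alone.
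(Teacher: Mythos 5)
Your architecture coincides with the paper's own proof: the same starting influence function $\psi_{P,a}(\pa_{\mathcal{G}}(A);\mathcal{G})$ justified as the unique influence function of the nonparametric supermodel, the same tangent-space decomposition $\Lambda=\oplus_{j}\Lambda_{j}$ from Lemma \ref{lemma:orth_decomp}, and the same coordinatewise projection formula. Your reduction to $J_{P,a,\mathcal{G}}$ is correct and in fact slicker than the paper's: observing that $b_{a}(\pa_{\mathcal{G}}(A);P)\left\{I_{a}(A)/\pi_{a}(\pa_{\mathcal{G}}(A);P)-1\right\}$ lies in $\Lambda_{A}$ and equals $\Pi\left[J_{P,a,\mathcal{G}}\mid\Lambda_{A}\right]$ disposes of all indices $j$ with $V_{j}\neq A$ at once by orthogonality of the $\Lambda_{j}$, where the paper instead computes conditional expectations separately for $V_{j}\in\de_{\mathcal{G}}^{c}(A)$ and $V_{j}\in\de_{\mathcal{G}}(A)$ to establish its identity \eqref{eq:solo_J}. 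Your $\an_{\mathcal{G}}(Y)^{c}$ case is also essentially the paper's argument. The problem is that the heart of the theorem --- the vanishing of the terms with $V_{j}\in\indir(A,Y,\mathcal{G})$ --- is not proved: you describe the intended mechanism and correctly diagnose the danger (conditioning on $A$ opens colliders), but you never exhibit the intermediate conditioning set, deferring exactly the hard content to an unproved ``separate lemma''.

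That lemma is the real work. In the paper it is Proposition \ref{prop:remove_directed_through_A}, and the missing idea is the fork set $\mathbf{F}(A,Y,\mathcal{G})$ of Definition \ref{def:dir}, the nodes that are the unique fork on some path between $A$ and $Y$. One conditions on $\left(\pa_{\mathcal{G}}(A),\mathbf{F},V_{j},\pa_{\mathcal{G}}(V_{j})\right)$; since $\mathbf{F}\cup\{V_{j}\}\cup\pa_{\mathcal{G}}(V_{j})$ consists of non-descendants of $A$, the Local Markov property collapses the propensity ratio and yields $E_{P}\left[J_{P,a,\mathcal{G}}\mid V_{j},\pa_{\mathcal{G}}(V_{j})\right]=E_{P}\left\{E_{P}\left[Y\mid A=a,\pa_{\mathcal{G}}(A),\mathbf{F},V_{j},\pa_{\mathcal{G}}(V_{j})\right]\mid V_{j},\pa_{\mathcal{G}}(V_{j})\right\}$; then the d-separation Lemma \ref{lemma:indep_dir_indir} --- $V\ort_{\mathcal{G}}W\mid A,\mathbf{F}$ for $V\in\dir(A,Y,\mathcal{G})$ and $W\in\indir(A,Y,\mathcal{G})$, itself proved by a nontrivial case analysis over directed paths, single-fork paths, and collider-containing paths --- strips the inner regression down to $E_{P}\left[Y\mid A=a,\mathbf{F}\right]$; and finally $V_{j}\ort\mathbf{F}\mid\pa_{\mathcal{G}}(V_{j})$ (no fork can be a descendant of an $\indir$ node) removes the residual $V_{j}$-dependence. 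Without producing some such certified blocking set, your ``mean-independence by iterated expectations'' is a restatement of the goal, not a proof. A smaller, related gap: for the ``furthermore'' claim you invoke a conditional version of the same missing lemma to handle instruments appearing in $\pa_{\mathcal{G}}(V_{j})$, whereas the paper proves the purely graphical closure fact that every child other than $A$ of a node in $\irrel(A,Y,\mathcal{G})$ is again in $\irrel(A,Y,\mathcal{G})$; this implies that no irrelevant node --- in particular, contrary to your premise, no $\indir$ node --- can be a parent of a surviving $V_{j}$, so the case you plan to treat with extra machinery is vacuous.
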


Theorem \ref{lemma:eff_if_expression} establishes that the efficient influence function of
$\chi _{P,a,eff}^{1}\left( \mathbf{V};\mathcal{G}\right) $ does not depend on the variables in $\irrel(A,Y,\mathcal{G})$. Our next results will establish that the variables in $\irrel(A,Y,\mathcal{G})$ can be marginalized from the DAG without incurring in any loss of information about the parameter. Recall that for any DAG $\mathcal{G}$ with vertex set $\mathbf{V}$ and a subset of nodes $\mathbf{V}_{marg}$, $\mathcal{M}\left( \mathcal{G},\mathbf{V}_{\text{marg}}\right)$ denotes the marginal DAG model. See Section \ref{sec:def}.

\begin{definition}
Let $\mathcal{G}$ be a DAG with vertex set $\mathbf{V}$ and $\mathbf{V}_{\text{%
marg}}\subset \mathbf{V.}$ For any $P\in \mathcal{M}\left( \mathcal{G}%
\right) $ let $P_{\text{marg}}$ denote the marginal distribution of $ 
\mathbf{V}_{\text{marg}}$ under $P$. Let $\mathcal{G}^{\prime }$ be
a DAG with vertex set $\mathbf{V}_{\text{marg}}$. Let $A$, $Y$ be two distinct nodes such that $\lbrace A,Y\rbrace\subset\mathbf{V}_{marg}$.  We say that $\left( 
\mathbf{V}_{\text{marg }},\mathcal{G}^{\prime }\right) $ is sufficient for
efficient estimation of $\chi _{a}\left( P;\mathcal{G}\right) $ relative to $%
\left( \mathbf{V},\mathcal{G}\right) $ if for all $P\in \mathcal{M}\left( 
\mathcal{G}\right) ,$ the following conditions hold

\begin{enumerate}
\item $\mathcal{M}\left( \mathcal{G},\mathbf{V}_{\text{marg}}\right) =%
\mathcal{M}\left( \mathcal{G}^{\prime }\right) ,$

\item $\chi _{a}\left( P;\mathcal{G}\right) =\chi _{a}\left( P_{\text{marg}};%
\mathcal{G}^{\prime }\right) $,

\item $\mathbf{O}\left( A,Y,\mathcal{G}\right) =\mathbf{O}\left( A,Y,%
\mathcal{G}^{\prime }\right) $,

\item $\psi _{P,a}\left[ \mathbf{O}\left( A,Y,\mathcal{G}\right) ;\mathcal{G}%
\right] =\psi _{P_{\text{marg}},a}\left[ \mathbf{O}\left( A,Y,\mathcal{G}%
^{\prime }\right) ;\mathcal{G}^{\prime }\right] $ and

\item $\chi _{P,a,eff}^{1}\left( \mathbf{V};\mathcal{G}\right) =\chi _{P_{%
\text{marg}},a,eff}^{1}\left( \mathbf{V}_{\text{marg}};\mathcal{G}^{\prime
}\right) $
\end{enumerate}
\end{definition}

If we find $\left( \mathbf{V}_{\text{marg }},\mathcal{G}^{\prime }\right) $
that is sufficient for estimation of $\chi _{a}\left( P;\mathcal{G}\right) $
relative to $\left( \mathbf{V},\mathcal{G}\right) $, then we do not incur in
any loss of information about $\chi _{a}\left( P;\mathcal{G}\right) $ if we
ignore the variables in $\mathbf{V\backslash V}_{\text{marg}}$ and assume
that $\mathbf{V}_{\text{marg}}$ follows a Bayesian Network $\mathcal{M}%
\left( \mathcal{G}^{\prime }\right) $ for the DAG $\mathcal{G}^{\prime }$.
Furthermore, since $\mathcal{G}^{\prime }$ preserves the optimal adjustment
set then the NP-$\mathbf{O}$ estimator of $\chi _{a}\left( P;\mathcal{G}%
\right) $ is the same as the NP-$\mathbf{O}$ estimator of $\chi _{a}\left(
P_{\text{marg}};\mathcal{G}^{\prime }\right) .$ Since by condition 5) of the
preceding definition the efficiency bound for $\chi _{a}\left( P;\mathcal{G}%
\right) $ under $\mathcal{M}\left( \mathcal{G}\right) $ is the same as the
efficiency bound for $\chi _{a}\left( P_{\text{marg}};\mathcal{G}^{\prime
}\right) $ under $\mathcal{M}\left( \mathcal{G}^{\prime }\right) $, then for
studying the loss of efficiency incurred by using the NP-$\mathbf{O}$
estimator we can pretend that the available variables are $\mathbf{V}_{\text{%
marg}}$ and that the problem is to estimate $\chi _{a}\left( P_{\text{marg}};%
\mathcal{G}^{\prime }\right) $ under the Bayesian Network $\mathcal{M}\left( 
\mathcal{G}^{\prime }\right) .$

The next lemma implies that $\mathbf{V}_{\text{marg}}=\mathbf{V}\backslash $ 
$\irrel\left( A,Y,\mathcal{G}\right) $ and $\mathcal{G}^{\prime }$ equal to
the output of Algorithm \ref{algo:prun_indir} below satisfy the preceding
definition. 

\begin{lemma}
\label{lemma:correct_algo_prun_indir} Let $\mathcal{G}$ and $\mathcal{G}%
^{\prime }$ be the input and output DAGs of Algorithm \ref{algo:prun_indir}.
Let $\mathbf{V}$ and $\mathbf{V}_{\text{marg}}$ be the vertex sets of $%
\mathcal{G}$ and $\mathcal{G}^{\prime }$ respectively. Then $\left( \mathbf{V%
}_{\text{marg}},\mathcal{G}^{\prime }\right) $ is sufficient for efficient
estimation of $\chi _{a}\left( P;\mathcal{G}\right) $ relative to $\left( 
\mathbf{V},\mathcal{G}\right) .$
\end{lemma}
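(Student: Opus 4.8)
The plan is to verify, one at a time, the five defining conditions of sufficiency for $\mathbf{V}_{\text{marg}}=\mathbf{V}\setminus\irrel(A,Y,\mathcal{G})$ and $\mathcal{G}'$ the output of Algorithm~\ref{algo:prun_indir}, after first laying the graph-theoretic groundwork that makes the marginalization ``clean''. Writing $\irrel(A,Y,\mathcal{G})=\indir(A,Y,\mathcal{G})\cup\an_{\mathcal{G}}(Y)^{c}$, I would record three structural facts. (i) The set $\an_{\mathcal{G}}(Y)^{c}$ is closed under taking children: if $V$ is not an ancestor of $Y$ and $V\rightarrow W$, then $W$ cannot be an ancestor of $Y$, for otherwise $V\rightarrow W\rightarrow\cdots\rightarrow Y$ would make $V$ one; hence its members can be deleted as sinks of the current graph in reverse topological order. (ii) Using the standing assumption that $A\in\an_{\mathcal{G}}(Y)$, every indirect node is an ancestor of $Y$, and once the non-ancestors of $Y$ are gone every child $W$ of an indirect node $V$ lies in $\{A\}\cup\indir(A,Y,\mathcal{G})$: since $W\in\an_{\mathcal{G}}(Y)$ the path $V\rightarrow W\rightarrow\cdots\rightarrow Y$ is causal and must meet $A$, forcing $W=A$ or $W\in\an_{\mathcal{G}}(A)\setminus\{A\}$ with every causal $W$-to-$Y$ path meeting $A$ (else $V$ would inherit a by-pass). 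Consequently the topologically last remaining indirect node always has $A$ as its unique child and can be exogenized, and this property is preserved because exogenizing such a node only adds edges into $A$. (iii) A short check gives $\{A,Y\}\cup\cn(A,Y,\mathcal{G})\cup\mathbf{O}\subseteq\mathbf{V}_{\text{marg}}$ and $\mathbf{O}\cap\indir(A,Y,\mathcal{G})=\emptyset$, because a node in $\cn$ is a proper descendant of $A$ whereas an indirect node is an ancestor of $A$, and a parent of a $\cn$ node inherits a causal path to $Y$ that avoids $A$.

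With this removal order in hand, condition~1 follows from elementary marginalization (a sink contributes a factor integrating to one, so the margin over the induced subgraph is its Bayesian network) together with the exogenization result of \cite{evans-marginal} (marginalizing over the vertex set minus a single-child vertex $U$ yields the Bayesian network of $\tau(\mathcal{G},U)$). Composing these elementary steps along the order produced above gives $\mathcal{M}(\mathcal{G},\mathbf{V}_{\text{marg}})=\mathcal{M}(\mathcal{G}')$.

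Conditions~2, 3 and 4 are then bookkeeping on top of condition~1. Since sink deletion adds no edges and exogenization of an indirect node adds only edges into $A$, no node of $\cn(A,Y,\mathcal{G})$ acquires a new parent (its only exogenized candidate parent would be an indirect node, whose unique child is $A\notin\cn$); hence $\cn$, $\forb$ and $\pa_{\mathcal{G}}(\cn)\setminus\forb$ are unchanged, which is exactly $\mathbf{O}(A,Y,\mathcal{G})=\mathbf{O}(A,Y,\mathcal{G}')$, i.e.\ condition~3. Because $(\mathbf{O},A,Y)\subseteq\mathbf{V}_{\text{marg}}$, their joint law under $P$ and $P_{\text{marg}}$ coincides, so $\pi_{a}(\mathbf{O})$, $b_{a}(\mathbf{O})$ and hence $\psi_{P,a}[\mathbf{O};\mathcal{G}]$ are identical in the two problems (condition~4), and $\chi_{a}(P;\mathcal{G})=E_{P}[E_{P}[Y\mid A=a,\mathbf{O}]]=\chi_{a}(P_{\text{marg}};\mathcal{G}')$ by the optimal-adjustment identity of Theorem~\ref{thep:opt_adj} (condition~2).

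The main obstacle is condition~5, the invariance of the efficient influence function. Applying Theorem~\ref{lemma:eff_if_expression} to both graphs, and using that after pruning $\irrel(A,Y,\mathcal{G}')=\emptyset$ (no non-ancestor of $Y$ survives, and adding edges into $A$ cannot turn a retained non-instrument into an instrument), the two sums \eqref{eq:effic_score} range over the same index set $\mathbf{V}_{\text{marg}}\setminus\{A\}$; the difficulty is that exogenization alters parent sets, in particular $\pa_{\mathcal{G}'}(A)$, so the summands are not visibly equal term by term. Rather than matching summands, I would argue abstractly via the uniqueness of the efficient influence function as the unique influence function lying in the tangent space. By Theorem~\ref{lemma:eff_if_expression}, $\chi^{1}_{P,a,eff}(\mathbf{V};\mathcal{G})$ is a function of $\mathbf{V}_{\text{marg}}$ alone; regarded as such, I would show it is an influence function for $\chi_{a}$ under $\mathcal{M}(\mathcal{G}')$ — its pathwise derivative matches because $\chi_{a}(P;\mathcal{G})=\chi_{a}(P_{\text{marg}};\mathcal{G}')$ and, by condition~1, every regular one-dimensional submodel of $\mathcal{M}(\mathcal{G}')$ lifts to one of $\mathcal{M}(\mathcal{G})$ perturbing only the $\mathbf{V}_{\text{marg}}$-marginal — and that it belongs to the tangent space $\Lambda(P_{\text{marg}};\mathcal{G}')=\oplus_{j}\Lambda'_{j}$. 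Establishing this membership is the crux: it amounts to showing that the factorization of the $\mathbf{V}_{\text{marg}}$-marginal induced by sink deletion and exogenization is governed precisely by the parent structure of $\mathcal{G}'$, so that each surviving term of \eqref{eq:effic_score} attaches to the correct factor $\Lambda'_{j}$; for this I would relate $\Lambda(P_{\text{marg}};\mathcal{G}')$ to the image of $\Lambda(P;\mathcal{G})$ under $E_{P}[\,\cdot\mid\mathbf{V}_{\text{marg}}]$. Once membership and the influence-function property are in place, uniqueness of the efficient influence function forces $\chi^{1}_{P,a,eff}(\mathbf{V};\mathcal{G})=\chi^{1}_{P_{\text{marg}},a,eff}(\mathbf{V}_{\text{marg}};\mathcal{G}')$, completing condition~5.
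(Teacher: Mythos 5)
Your proposal is correct and follows essentially the same route as the paper: condition~1 via the ancestral-margin result and the single-child exogenization lemmas of \cite{evans-marginal} composed along the reverse topological order of $\indir(A,Y,\mathcal{G})$, conditions~2--4 via the same graphical bookkeeping (pruning creates no new causal paths between $A$ and $Y$ and neither adds nor removes parents of $\cn(A,Y,\mathcal{G})$, so $\mathbf{O}$, $b_a$, $\pi_a$ and hence $\psi_{P,a}$ are preserved), and your abstract uniqueness argument for condition~5 is precisely the paper's Proposition \ref{lemma:marg-eff}. The ``crux'' you flag (membership of $\chi^{1}_{P,a,eff}$ in the marginal tangent space) is closed in the paper by observing that $\chi^{1}_{P,a,eff}$, being $\mathbf{V}_{\text{marg}}$-measurable, is orthogonal to every conditional score $S_{\mathbf{V}^{c}\mid \mathbf{V}_{\text{marg}}}$, so under the decomposition $\Lambda =\Lambda^{\prime }\oplus \overline{\mathrm{span}}\left\{ S_{\mathbf{V}^{c}\mid \mathbf{V}_{\text{marg}}}\right\} $ it lies in $\Lambda ^{\prime }$ --- an argument equivalent to your proposed conditional-expectation route.
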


\begin{algorithm}[ht!]
	\label{algo:prun_indir}
	\SetKwInOut{Input}{input}\SetKwInOut{Output}{output}
  \SetAlgoLined\DontPrintSemicolon
    \SetKwFunction{prune}{prune}
      \SetKwProg{proc}{procedure}{}{}
	\Input{DAG $\mathcal{G}$ with nodes $\mathbf{V}$ and two distinct nodes $A,Y \in \mathbf{V}$}
	\Output{A new DAG $\mathcal{G}^{\prime}$  with vertex set $\mathbf{V}_{marg} =\mathbf{V} \setminus  \text{irrel}(A,Y, \mathcal{G})$ such that $(\mathbf{V}_{marg},\mathcal{G}^{\prime})$ is sufficient for efficient estimation of $\chi_{a}(P;\mathcal{G})$ relative to $(\mathbf{V},\mathcal{G})$.}
			\proc{\prune{$A, Y,\mathcal{G}$}}
	{
				$\mathcal{G}^{\prime}=\mathcal{G}_{\mathbf{V}\setminus\an_{\mathcal{G}}^{c}(Y)}$ \\
								$I_{1},\dots,I_{L}= \texttt{topological\_sort}\left(\text{indir}(A,Y,\mathcal{G}^{\prime}), \mathcal{G}^{\prime}\right)$\\
		\For{ $j=L,L-1,\dots,1$}{
		$\mathcal{G}^{\prime}=\tau(\mathcal{G}^{\prime}, I_{j})	$}	
				\Return{$\mathcal{G}^{\prime}$;}
	} 

		\caption{{\bf DAG pruning procedure to remove irrelevant nodes} }
	
\end{algorithm}

\FloatBarrier

The output $\mathcal{G}^{\prime }$ of Algorithm \ref%
{algo:prun_indir} is obtained as the result of first deleting the edges and
vertices in $\an_{\mathcal{G}}\left( Y\right) ^{c}$ and subequently
removing, sequentially by a latent projection operation, each node in $\indir%
\left( A,Y,\mathcal{G}\right)$.
For the definition of the latent projection operation  $\tau(\mathcal{G}, V)$ see Section \ref{sec:def}.  Algorithm \ref{algo:prun_indir} assumes the availability of a subroutine \texttt{topological\_sort} to topologically sort a set of nodes relative to a DAG $\mathcal{G}$. One such subroutine is Kahn's algorithm \citep{kahntopological}, which is known to have worst case complexity $\mathcal{O}(\vert \mathbf{V}\vert + \vert \mathbf{E} \vert)$.  

Lemma \ref{lemma:correct_algo_prun_indir} is proven in the Appendix by invoking the following important result.

\begin{proposition}
\label{lemma:marg-eff} Let $\mathcal{M}$ be a semiparametric model for the law
of a random vector $\mathbf{V.}$ Let $\mathbf{V}^{\prime }$ be a subvector
of $\ \mathbf{V.}$ Let $\mathcal{M}^{\prime }$ be the model for the law of $%
\ \mathbf{V}^{\prime }$ induced by model $\mathcal{M}$, that is, $\mathcal{M}%
^{\prime }$ is the collection of laws for $\mathbf{V}^{\prime }$ such that
for every $P^{\prime }\in \mathcal{M}^{\prime }$ there exists a law $P$ for $%
\mathbf{V}$ with $P^{\prime }$ being the marginal of $P$ over $\mathbf{V}%
^{\prime }.$ Let $\chi \left( P\right) $ be a regular parameter in model $%
\mathcal{M}$ with efficient influence function at $P\in \mathcal{M}$ equal
to $\chi _{P,eff}^{1}.$ Suppose $\chi _{P,eff}^{1}$ depends on $\mathbf{V}$
only through $\mathbf{V}^{\prime }.$ Let $P^{\prime }$ be the marginal law
of $P$ over$\ \mathbf{V}^{\prime }$. Suppose $\chi \left( P\right) $ depends
on $P$ only through $P^{\prime }.$ Define $\nu \left( P^{\prime }\right)
\equiv \chi \left( P\right) .$ Let $\nu _{P^{\prime},eff}^{1}$ be the
efficient influence function of $\nu \left( P^{\prime }\right) $ in model $%
\mathcal{M}^{\prime }$ at $P^{\prime }\in \mathcal{M}^{\prime }.$ Then,
given $P^{\prime }\in \mathcal{M}^{\prime }$ it holds that $\chi
_{P,eff}^{1}=\nu _{P^{\prime},eff}^{1}$ for every $P\in \mathcal{M}$ with
marginal law $P^{\prime }$.
\end{proposition}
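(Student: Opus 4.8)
The plan is to exploit the two defining properties of an efficient influence function — that it is an influence function and that it lies in the tangent space — together with the way tangent spaces transform under marginalization. Fix an arbitrary $P\in\mathcal{M}$ with marginal $P^{\prime}$ over $\mathbf{V}^{\prime}$, and write $\Lambda_{\mathcal{M}}(P)$ and $\Lambda_{\mathcal{M}^{\prime}}(P^{\prime})$ for the tangent spaces of the two models. The crux is the tangent-space marginalization identity
\[
\Lambda_{\mathcal{M}^{\prime}}(P^{\prime})=\overline{\left\{E_{P}\left[S\mid\mathbf{V}^{\prime}\right]:S\in\Lambda_{\mathcal{M}}(P)\right\}},
\]
with the $L_{2}(P^{\prime})$-closure on the right. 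I would establish this as a preliminary lemma (or invoke it from standard semiparametric theory), its content being the two facts that (i) marginalizing a regular one-dimensional submodel $t\mapsto P_{t}$ of $\mathcal{M}$ yields a regular submodel $t\mapsto P_{t}^{\prime}$ of $\mathcal{M}^{\prime}$ whose score is the conditional expectation $E_{P}[S\mid\mathbf{V}^{\prime}]$ of the score $S$ of $P_{t}$, and (ii) conversely, because $\mathcal{M}^{\prime}$ is by construction the family of marginals of laws in $\mathcal{M}$, every regular submodel of $\mathcal{M}^{\prime}$ arises, up to closure, as such a projection.

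With this in hand the argument is short. First I would show $\chi_{P,eff}^{1}\in\Lambda_{\mathcal{M}^{\prime}}(P^{\prime})$: since $\chi_{P,eff}^{1}$ is the efficient influence function in $\mathcal{M}$ it lies in $\Lambda_{\mathcal{M}}(P)$, and by hypothesis it is a function of $\mathbf{V}^{\prime}$ alone, so $\chi_{P,eff}^{1}=E_{P}[\chi_{P,eff}^{1}\mid\mathbf{V}^{\prime}]$, which belongs to the right-hand side of the identity and hence to $\Lambda_{\mathcal{M}^{\prime}}(P^{\prime})$. Second I would show that $\chi_{P,eff}^{1}$ computes the derivative of $\nu$ along every direction of $\Lambda_{\mathcal{M}^{\prime}}(P^{\prime})$. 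Given $S^{\prime}\in\Lambda_{\mathcal{M}^{\prime}}(P^{\prime})$, by the identity write $S^{\prime}=E_{P}[S\mid\mathbf{V}^{\prime}]$ for some $S\in\Lambda_{\mathcal{M}}(P)$, realized by a submodel $t\mapsto P_{t}$ of $\mathcal{M}$ whose marginal submodel $t\mapsto P_{t}^{\prime}$ has score $S^{\prime}$ by fact (i). Because $\nu(P_{t}^{\prime})=\chi(P_{t})$ by definition of $\nu$, and because $\chi_{P,eff}^{1}$ is an influence function of $\chi$ that is $\sigma(\mathbf{V}^{\prime})$-measurable,
\[
\frac{d}{dt}\nu(P_{t}^{\prime})\Big|_{t=0}=\frac{d}{dt}\chi(P_{t})\Big|_{t=0}=E_{P}\!\left[\chi_{P,eff}^{1}S\right]=E_{P}\!\left[\chi_{P,eff}^{1}E_{P}[S\mid\mathbf{V}^{\prime}]\right]=E_{P^{\prime}}\!\left[\chi_{P,eff}^{1}S^{\prime}\right].
\]
Since $\nu$ is pathwise differentiable at $P^{\prime}$ with efficient influence function $\nu_{P^{\prime},eff}^{1}$, the same derivative equals $E_{P^{\prime}}[\nu_{P^{\prime},eff}^{1}S^{\prime}]$, so $E_{P^{\prime}}[(\chi_{P,eff}^{1}-\nu_{P^{\prime},eff}^{1})S^{\prime}]=0$ for all $S^{\prime}\in\Lambda_{\mathcal{M}^{\prime}}(P^{\prime})$.

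Finally I would combine the two facts. The difference $D\equiv\chi_{P,eff}^{1}-\nu_{P^{\prime},eff}^{1}$ lies in $\Lambda_{\mathcal{M}^{\prime}}(P^{\prime})$, because both terms do, and is orthogonal to every element of $\Lambda_{\mathcal{M}^{\prime}}(P^{\prime})$; taking the inner product of $D$ with itself gives $E_{P^{\prime}}[D^{2}]=0$, hence $\chi_{P,eff}^{1}=\nu_{P^{\prime},eff}^{1}$. As $P$ was an arbitrary element of $\mathcal{M}$ with marginal $P^{\prime}$ and the right-hand side depends only on $P^{\prime}$, this proves the claim for every such $P$. The main obstacle is the lifting direction (ii) of the tangent-space identity: one must argue that a regular parametric submodel of the induced marginal model $\mathcal{M}^{\prime}$ can always be realized, up to $L_{2}$-closure of scores, as the marginal of a regular parametric submodel of $\mathcal{M}$. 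This is the only genuinely delicate, measure-theoretic step; the remaining manipulations are elementary consequences of pathwise differentiability and the projection characterization of the efficient influence function reviewed in Section \ref{sec:semiparam}.
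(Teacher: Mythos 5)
Your proof is correct and is essentially the paper's own argument in different packaging: your tangent-space identity $\Lambda_{\mathcal{M}^{\prime}}(P^{\prime})=\overline{\left\{E_{P}\left[S\mid \mathbf{V}^{\prime}\right]:S\in \Lambda_{\mathcal{M}}(P)\right\}}$ is exactly the paper's decomposition $\Lambda=\Lambda^{\prime}\oplus \overline{\mathrm{span}}\left\{S_{\mathbf{V}^{c}\mid \mathbf{V}^{\prime}}\right\}$ (the marginal score of a submodel of $\mathcal{M}$ being $E_{P}[S\mid\mathbf{V}^{\prime}]$), your tower-property computation of $\frac{d}{dt}\nu(P_{t}^{\prime})\vert_{t=0}$ reproduces the paper's verification that $\chi_{P,eff}^{1}$ is uncorrelated with conditional scores and hence an influence function for $\nu$, and your self-orthogonality argument for $D=\chi_{P,eff}^{1}-\nu_{P^{\prime},eff}^{1}$ is equivalent to the paper's closing appeal to the fact that an influence function lying in the tangent space is the efficient one. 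The only difference is one of candor: the lifting direction you flag as the delicate step is precisely what the paper asserts without proof when it writes $\Lambda=\Lambda^{\prime}\oplus\cdots$, so your treatment is, if anything, more careful on the one genuinely nontrivial point.
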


In light of the Lemma \ref{lemma:marg-eff}, from now on without loss of generality we
will assume that $\irrel\left( A,Y,\mathcal{G}\right) =\emptyset .$ This
assumption implies that we can partition the node set $\mathbf{V}$\textbf{\ }%
of $\mathcal{G}$ as $\mathbf{M}\cup \mathbf{W\cup }\left\{ A,Y\right\} $
where the vertices in $\mathbf{M}$ intersect at least one causal path
between $A$ and $Y$, that is, $\mathbf{M}$ is the set of mediators in the causal
pathways between $A$ and $Y,$ and $\mathbf{W}$ are non-descendants of $A.$
We can therefore sort topologically $\mathbf{V}$ as $\left(
W_{1},\dots,W_{J},A,M_{1},\dots,M_{K},Y\right) .$  The set $\mathbf{O}\left( A,Y,\mathcal{G}\right) \equiv \mathbf{O}\equiv \left(
O_{1},\dots,O_{T}\right)$, where $\left(
O_{1},\dots,O_{T}\right)$ is sorted topologically,  is included in $\mathbf{W}$. Throughout $T=0$ if $\mathbf{O}\left( A,Y,\mathcal{G}\right) =\emptyset$.

The following lemma establishes further identities that are invoked in Theorem \ref{theo:eff_simple} below to derive yet another expression for $\chi _{P,a,eff}^{1}\left( \mathbf{V};\mathcal{G}\right)$.
Let
$$
T_{P,a,\mathcal{G}}\equiv \frac{I_{a}\left( A\right) Y}{\pi_{a}(\mathbf{O};P)}.
$$
\begin{lemma}\label{lemma:eff_simple}
Let $\mathcal{M}\left( \mathcal{G}\right) $
be the Bayesian Network represented by DAG $\mathcal{G}$ with vertex set $%
\mathbf{V.}$ Assume $Y$ and $A$ are single disjoint vertices. 
Assume $\irrel(A,Y,\mathcal{G})=\emptyset$. Then
\begin{enumerate}
    \item If $J\geq 1$ then for all $j\in\lbrace 1,\dots, J\rbrace$
    $$
    E_{P}\left[ J_{P,a,\mathcal{G}}|W_{j},\pa_{\mathcal{%
G}}\left( W_{j}\right) \right] =  E_{P}\left[ b_{a}(\mathbf{O};P)|W_{j},\pa_{\mathcal{%
G}}\left( W_{j}\right) \right].
    $$
    \item If $K\geq 1$ then for all $k\in\lbrace 1,\dots, K\rbrace$
    $$E_{P}\left[ J_{P,a,\mathcal{G}}|M_{k},\pa_{\mathcal{%
G}}\left( M_{k}\right) \right] =  E_{P}\left[ T_{P,a,\mathcal{G}}|M_{k},\pa_{\mathcal{%
G}}\left( M_{k}\right) \right].
$$
  \item 
    $$E_{P}\left[ J_{P,a,\mathcal{G}}|Y,\pa_{\mathcal{%
G}}\left( Y\right) \right] =  E_{P}\left[ T_{P,a,\mathcal{G}}|Y,\pa_{\mathcal{%
G}}\left( Y\right) \right].
$$
\end{enumerate}
\end{lemma}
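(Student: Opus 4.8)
The plan is to prove all three identities by the same two‑stage device: first reduce a conditional expectation of $J_{P,a,\mathcal{G}}$ to something involving only $\mathbf{O}$ and the conditioning variables, using the Local Markov Property to eliminate the propensity weight, and then invoke a d-separation to collapse the remaining dependence. Two structural facts drive everything. First, $A \ort_{\mathcal{G}} \nd_{\mathcal{G}}(A)\mid \pa_{\mathcal{G}}(A)$ implies that for any set $\mathbf{S}$ of non-descendants of $A$ with $\pa_{\mathcal{G}}(A)\subseteq\mathbf{S}$ we have $P(A=a\mid\mathbf{S})=\pi^{A}_{a}:=P(A=a\mid\pa_{\mathcal{G}}(A))$, hence $E_{P}[J_{P,a,\mathcal{G}}\mid\mathbf{S}]=E_{P}[Y\mid A=a,\mathbf{S}]$. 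Second, every non-descendant of $A$ that is a parent of a node in $\cn(A,Y,\mathcal{G})$ already lies in $\mathbf{O}$: it is in $\pa_{\mathcal{G}}(\cn(A,Y,\mathcal{G}))$ and, being a non-descendant of $A$, cannot be in $\forb(A,Y,\mathcal{G})=\de_{\mathcal{G}}(\cn(A,Y,\mathcal{G}))\cup\{A\}\subseteq\de_{\mathcal{G}}(A)$. In particular $\pa_{\mathcal{G}}(M_{k})\cap\mathbf{W}\subseteq\mathbf{O}$ and $\pa_{\mathcal{G}}(Y)\cap\mathbf{W}\subseteq\mathbf{O}$, so in Parts 2 and 3 the only information the conditioning set carries beyond $(A,\mathbf{O})$ sits among the strict descendants of $A$.

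For Part 1, I would set $\mathbf{C}=\{W_{j}\}\cup\pa_{\mathcal{G}}(W_{j})$ and $\mathbf{D}=\mathbf{C}\cup\pa_{\mathcal{G}}(A)\cup\mathbf{O}$, all non-descendants of $A$. Conditioning on $\mathbf{D}$ and averaging, the first fact gives $E_{P}[J_{P,a,\mathcal{G}}\mid\mathbf{C}]=E_{P}\big[E_{P}[Y\mid A=a,\mathbf{D}]\mid\mathbf{C}\big]$. I would then use $Y\ort_{\mathcal{G}}(\mathbf{D}\setminus\mathbf{O})\mid A,\mathbf{O}$, a consequence of $Y\ort_{\mathcal{G}}(\mathbf{W}\setminus\mathbf{O})\mid A,\mathbf{O}$, to replace $E_{P}[Y\mid A=a,\mathbf{D}]$ by $b_{a}(\mathbf{O};P)$, yielding $E_{P}[J_{P,a,\mathcal{G}}\mid\mathbf{C}]=E_{P}[b_{a}(\mathbf{O};P)\mid\mathbf{C}]$.

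For Parts 2 and 3, I would read the target as a weight swap: on $A=a$ the integrands of $J_{P,a,\mathcal{G}}$ and $T_{P,a,\mathcal{G}}$ share the numerator $I_{a}(A)Y$ and differ only in $1/\pi^{A}_{a}$ versus $1/\pi_{a}(\mathbf{O};P)$. Conditioning additionally on $(A,\mathbf{O},Y)$ pins the numerator, so it suffices to show $E_{P}[1/\pi^{A}_{a}\mid A=a,\mathbf{O},\mathbf{F}]=1/\pi_{a}(\mathbf{O};P)$, where $\mathbf{F}=\{M_{k},\pa_{\mathcal{G}}(M_{k})\}$ in Part 2 and $\mathbf{F}=\{Y,\pa_{\mathcal{G}}(Y)\}$ in Part 3. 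The Bayes computation $E_{P}[1/\pi^{A}_{a}\mid A=a,\mathbf{O}]=\sum_{p}\pi^{A}_{a}(p)^{-1}P(\pa_{\mathcal{G}}(A)=p\mid A=a,\mathbf{O})=1/\pi_{a}(\mathbf{O};P)$, which uses $A\ort_{\mathcal{G}}\mathbf{O}\mid\pa_{\mathcal{G}}(A)$, gives this without the extra conditioning; the further conditioning on $\mathbf{F}$ and $Y$ is harmless precisely when $(\pa_{\mathcal{G}}(A)\setminus\mathbf{O})\ort_{\mathcal{G}}(\de_{\mathcal{G}}(A)\setminus\{A\})\mid A,\mathbf{O}$, since by the structural fact the non-$\mathbf{O}$ part of $\mathbf{F}\cup\{Y\}$ lies among the strict descendants of $A$. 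Averaging back then gives $E_{P}[J_{P,a,\mathcal{G}}\mid\mathbf{F}]=E_{P}[T_{P,a,\mathcal{G}}\mid\mathbf{F}]$.

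The weight cancellation and the Bayes identity are routine; the main obstacle is the two d-separations $Y\ort_{\mathcal{G}}(\mathbf{W}\setminus\mathbf{O})\mid A,\mathbf{O}$ and $(\pa_{\mathcal{G}}(A)\setminus\mathbf{O})\ort_{\mathcal{G}}(\de_{\mathcal{G}}(A)\setminus\{A\})\mid A,\mathbf{O}$, which assert that after conditioning on $\mathbf{O}$ and $A$ the non-optimal non-descendants of $A$ carry no residual association with the outcome or with the strict descendants of $A$. I expect to establish these by a path/collider analysis: a path out of $\mathbf{W}\setminus\mathbf{O}$ toward a descendant either traverses $A$ as a non-collider (blocked), hits a collider at $A$ forcing a return to another parent of $A$, or reaches a parent of a causal node; the latter two are blocked because such parents lie in $\mathbf{O}$, while $\irrel(A,Y,\mathcal{G})=\emptyset$ (so $\indir(A,Y,\mathcal{G})=\emptyset$) rules out the instrumental configurations that would otherwise leave such a path open. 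Where convenient I would instead obtain $Y\ort_{\mathcal{G}}(\mathbf{W}\setminus\mathbf{O})\mid A,\mathbf{O}$ by exhibiting a valid adjustment set containing $\mathbf{W}\setminus\mathbf{O}$ and invoking Lemma E.5 of \cite{perkovic}, which is condition \eqref{eq:cond_indep_Y} with $\mathbf{G}=\mathbf{O}$.
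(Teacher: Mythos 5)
Your proposal is correct and mirrors the paper's own proof: Part 1 is the same enlarge-the-conditioning-set argument, combining the Local Markov property for $A$ with the d-separation $Y\ort_{\mathcal{G}}\left[\lbrace W_{j}\rbrace\cup\pa_{\mathcal{G}}(W_{j})\cup\pa_{\mathcal{G}}(A)\right]\setminus\mathbf{O}\mid\mathbf{O},A$, while Parts 2--3 are exactly the paper's weight swap, which conditions the factor $1/\pi_{a}(\pa_{\mathcal{G}}(A);P)$ on $(A=a,Y,\mathbf{O},M_{k},\pa_{\mathcal{G}}(M_{k}))$, reduces via $(Y,\mathbf{M})\ort_{\mathcal{G}}\pa_{\mathcal{G}}(A)\setminus\mathbf{O}\mid\mathbf{O}\cup\lbrace A\rbrace$ (your equivalent formulation in terms of $\de_{\mathcal{G}}(A)\setminus\lbrace A\rbrace$, using $\pa_{\mathcal{G}}(M_{k})\subset\mathbf{M}\cup\lbrace A\rbrace\cup\mathbf{O}$), and closes with the Bayes identity of Lemma \ref{lemma:inv_pi}. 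The only cosmetic differences are that the paper writes the limit of the swap as $\pi_{a}(\mathbf{O}_{\min};P)$, which equals your $\pi_{a}(\mathbf{O};P)$ by the definition of $\mathbf{O}_{\min}$, and that the paper, like you, asserts the two key d-separations without a detailed path argument, so your sketched collider analysis and the fallback via condition \eqref{eq:cond_indep_Y} with $\mathbf{G}=\mathbf{O}$ match its level of rigor.
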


In what follows we use the conventions
$$
\sum\limits_{k=1}^{0} \cdot \equiv 0, \quad \sum\limits_{j=1}^{0} \cdot \equiv 0 \quad ,\sum\limits_{j=2}^{1} \cdot \equiv 0.
$$

\begin{theorem}\label{theo:eff_simple}
Under the assumptions of Lemma \ref{lemma:eff_simple} the
efficient influence function of $\chi _{a}\left( P;\mathcal{G}\right) $ at $%
P $ under $\mathcal{M}\left( \mathcal{G}\right) $ is equal to 
\begin{eqnarray}
\chi _{P,a,eff}^{1}\left( \mathbf{V};\mathcal{G}\right) &=&E_{P}\left[
T_{P,a,\mathcal{G}}|Y,\pa_{\mathcal{G}}\left( Y\right) \right] -E_{P}\left[
T_{P,a,\mathcal{G}}|\pa_{\mathcal{G}}\left( Y\right) \right]
\label{eq:formula_eff} \\
&&+\sum_{k=1}^{K}\left\{ E_{P}\left[ T_{P,a,\mathcal{G}}|M_{k},\pa_{\mathcal{%
G}}\left( M_{k}\right) \right] -E_{P}\left[ T_{P,a,\mathcal{G}}|\pa_{%
\mathcal{G}}\left( M_{k}\right) \right] \right\}  \notag \\
&&+\sum_{j=1}^{J}\left\{ E_{P}\left[ b_{a}(\mathbf{O};P)|W_{j},\pa_{\mathcal{%
G}}\left( W_{j}\right) \right] -E_{P}\left[ b_{a}(\mathbf{O};P)|\pa_{%
\mathcal{G}}\left( W_{j}\right) \right] \right\}  \notag 
\end{eqnarray}
where 
$$
E_{P}\left[ b_{a}(\mathbf{O};P)|\pa_{\mathcal{G}}\left(
W_{1}\right) \right] = \chi _{a}\left( P;\mathcal{G}\right).
$$
\end{theorem}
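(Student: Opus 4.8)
The plan is to assemble the result directly from Theorem~\ref{lemma:eff_if_expression} and the conditional-expectation identities of Lemma~\ref{lemma:eff_simple}, treating the statement as essentially a bookkeeping exercise once those two results are in hand. Since we assume $\irrel(A,Y,\mathcal{G})=\emptyset$, the index set $\{j:V_j\notin[\irrel(A,Y,\mathcal{G})\cup\{A\}]\}$ in \eqref{eq:effic_score} reduces to all vertices other than $A$, which under the topological order $(W_1,\dots,W_J,A,M_1,\dots,M_K,Y)$ are precisely $W_1,\dots,W_J,M_1,\dots,M_K,Y$. First I would rewrite \eqref{eq:effic_score} by splitting this sum into three groups --- the $W_j$ terms, the $M_k$ terms, and the single $Y$ term --- each summand having the form $E_P[J_{P,a,\mathcal{G}}|V_j,\pa_{\mathcal{G}}(V_j)]-E_P[J_{P,a,\mathcal{G}}|\pa_{\mathcal{G}}(V_j)]$.

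Next I would perform the substitution dictated by Lemma~\ref{lemma:eff_simple}. Its parts (1)--(3) replace the ``vertex-and-parents'' summand $E_P[J_{P,a,\mathcal{G}}|V_j,\pa_{\mathcal{G}}(V_j)]$ by $E_P[b_a(\mathbf{O};P)|W_j,\pa_{\mathcal{G}}(W_j)]$ for each $W_j$, and by the corresponding $E_P[T_{P,a,\mathcal{G}}|\cdot]$ expression for each $M_k$ and for $Y$. To obtain the matching replacements for the ``parents-only'' summands $E_P[J_{P,a,\mathcal{G}}|\pa_{\mathcal{G}}(V_j)]$, I would invoke the tower property: because $\pa_{\mathcal{G}}(V_j)$ is a sub-$\sigma$-field of $\sigma(V_j,\pa_{\mathcal{G}}(V_j))$, taking $E_P[\,\cdot\mid\pa_{\mathcal{G}}(V_j)]$ on both sides of each identity in Lemma~\ref{lemma:eff_simple} gives $E_P[J_{P,a,\mathcal{G}}|\pa_{\mathcal{G}}(W_j)]=E_P[b_a(\mathbf{O};P)|\pa_{\mathcal{G}}(W_j)]$, and analogously $E_P[J_{P,a,\mathcal{G}}|\pa_{\mathcal{G}}(M_k)]=E_P[T_{P,a,\mathcal{G}}|\pa_{\mathcal{G}}(M_k)]$ and $E_P[J_{P,a,\mathcal{G}}|\pa_{\mathcal{G}}(Y)]=E_P[T_{P,a,\mathcal{G}}|\pa_{\mathcal{G}}(Y)]$. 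Substituting these six replacements into the grouped sum reproduces \eqref{eq:formula_eff} verbatim.

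The only remaining point is to identify the boundary term $E_P[b_a(\mathbf{O};P)|\pa_{\mathcal{G}}(W_1)]$. Because $W_1$ is first in the topological order it has no parents, so $\pa_{\mathcal{G}}(W_1)=\emptyset$ and $E_P[b_a(\mathbf{O};P)|\pa_{\mathcal{G}}(W_1)]=E_P[b_a(\mathbf{O};P)]$. Since $\mathbf{O}$ is a valid time independent adjustment set by Theorem~\ref{thep:opt_adj}, we have $\chi_a(P;\mathcal{G})=E_P[E_P[Y|A=a,\mathbf{O}]]=E_P[b_a(\mathbf{O};P)]$, which gives the claimed identity; when $J=0$ this term is vacuous by the stated summation conventions and no verification is needed.

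I do not anticipate a genuine obstacle, since the substantive analytic content is carried out in Lemma~\ref{lemma:eff_simple}. The only places requiring minor care are the tower-property passage from the ``vertex-and-parents'' identities to the ``parents-only'' identities, and a check that the conventions $\sum_{k=1}^{0}\cdot\equiv0$ and $\sum_{j=1}^{0}\cdot\equiv0$ correctly absorb the degenerate cases $K=0$ (no mediators) and $J=0$ (no non-descendants of $A$), in which the respective sums simply drop out of \eqref{eq:formula_eff}.
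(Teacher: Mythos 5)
Your proposal is correct and follows essentially the same route as the paper's proof: apply Theorem~\ref{lemma:eff_if_expression} with $\irrel(A,Y,\mathcal{G})=\emptyset$ to reduce the sum to the vertices $(W_1,\dots,W_J,M_1,\dots,M_K,Y)$, then substitute the identities of Lemma~\ref{lemma:eff_simple} term by term. Your explicit tower-property passage for the parents-only terms and your verification of the boundary identity $E_P\left[ b_a(\mathbf{O};P)\mid \pa_{\mathcal{G}}(W_1)\right]=\chi_a(P;\mathcal{G})$ (via $\pa_{\mathcal{G}}(W_1)=\emptyset$ and the validity of $\mathbf{O}$ as an adjustment set for a point intervention) merely spell out steps the paper leaves implicit.
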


The expression for $\chi _{P,a,eff}^{1}\left( \mathbf{V};\mathcal{G}\right)$ in Theorem \ref{theo:eff_simple} can be used to
compute the following one-step estimator $\widehat{\chi }_{one-step}$ \citep{van1998asymptotic},
$$
\widehat{\chi }_{one-step}\equiv \widehat{\chi} _{a}\left( {P};\mathcal{G}%
\right)  + \mathbb{P}_{n}\left[ \widehat{\chi}_{P,a,eff}^{1}\left( \mathbf{V};%
\mathcal{G}\right) \right]
$$
where if $J=0$
$$
\widehat{\chi} _{a}\left( {P};\mathcal{G}%
\right)= \frac{\mathbb{P}_{n}\left[ Y I_{a}(A) \right]]}{\mathbb{P}_{n}\left[ I_{a}(A) \right]]}
$$
and
$$
\mathbb{P}_{n}\left[ \widehat{\chi}_{P,a,eff}^{1}\left( \mathbf{V};%
\mathcal{G}\right)  \right] = \sum_{k=1}^{K}\mathbb{P}_{n}\left\{ \widehat{E}\left[ T_{P,a,\mathcal{%
G}}|M_{k},\pa_{\mathcal{G}}\left( M_{k}\right) \right] -\widehat{E}\left[
T_{P,a,\mathcal{G}}|\pa_{\mathcal{G}}\left( M_{k}\right) \right] \right\} 
$$
and if $J\geq 1$
\begin{eqnarray*}
\widehat{\chi }_{one-step} &\equiv & \widehat{\chi} _{a}\left( {P};\mathcal{G}%
\right)+\mathbb{P}_{n}\left[ \widehat{\chi}_{P,a,eff}^{1}\left( \mathbf{V};%
\mathcal{G}\right) \right] \\
&=&\mathbb{P}_{n}\left\{ \widehat{E}\left[ T_{P,a,\mathcal{G}}|Y,\pa_{%
\mathcal{G}}\left( Y\right) \right] -\widehat{E}\left[ T_{P,a,\mathcal{G}%
}|\pa_{\mathcal{G}}\left( Y\right) \right] \right\} \\
&&+\sum_{k=1}^{K}\mathbb{P}_{n}\left\{ \widehat{E}\left[ T_{P,a,\mathcal{%
G}}|M_{k},\pa_{\mathcal{G}}\left( M_{k}\right) \right] -\widehat{E}\left[
T_{P,a,\mathcal{G}}|\pa_{\mathcal{G}}\left( M_{k}\right) \right] \right\} \\
&&+\sum_{j=2}^{J}\mathbb{P}_{n}\left[ \left\{ \widehat{E}\left[ T_{P,a,%
\mathcal{G}}|W_{j},\pa_{\mathcal{G}}\left( W_{j}\right) \right] -\widehat{E}%
\left[ b_{a}(\mathbf{O};P)|\pa_{\mathcal{G}}\left( W_{j}\right) \right]
\right\} \right] \\
&&+\mathbb{P}_{n}\left[ \widehat{E}\left[ b_{a}(\mathbf{O};P)|W_{1},\pa_{%
\mathcal{G}}\left( W_{1}\right) \right] \right]
\end{eqnarray*}%
and where $\widehat{E}\left( \cdot |\cdot \right) $ are non-parametric
regression estimators of the relevant conditional expectations and $\mathbb{P%
}_{n}$ is the empirical mean operator. Under regularity conditions, which
include restrictions on some measure (for example, the metric entropy) of the complexity of the ambient function space of the conditional expectations
appearing in the expression for $\chi _{P,a,eff}^{1}\left( \mathbf{V};%
\mathcal{G}\right) ,$ and for particular choices of the non-parametric
estimators $\widehat{E}\left( \cdot |\cdot \right) $ of these conditional
expectations, the one-step estimator $\widehat{\chi }_{one-step}$ is regular
and asymptotically linear with influence function equal to
$\chi _{P,a,eff}^{1}\left( \mathbf{V};\mathcal{G}\right) $ \citep{van1998asymptotic}
and therefore it attains the semiparametric variance bound
for $\chi _{a}\left( P;\mathcal{G}\right) $ under model $\mathcal{M}\left( 
\mathcal{G}\right)$. 

It turns out that for special configurations of $\mathcal{G}$, the formula $%
\left( \ref{eq:formula_eff}\right) $ simplifies in that either
\begin{itemize}
    \item[(i)] \begin{equation}
\chi _{P,a,eff}^{1}\left( \mathbf{V};\mathcal{G}\right) =\psi _{P,a}\left[ 
\mathbf{O}\left( A,Y,\mathcal{G}\right) ;\mathcal{G}\right] \text{ for all }%
P\in \mathcal{M}\left( \mathcal{G}\right)  \label{eq:main_id}
\end{equation}
or
\item[(ii)]  some of the terms in the formula vanish for all $P\in \mathcal{M}%
\left( \mathcal{G}\right) $.
\end{itemize}
Case (i) implies that the NP-$\mathbf{O}$
estimator of $\chi _{a}\left( P;\mathcal{G}\right) $ attains the
semiparametric variance bound for $\chi _{a}\left( P;\mathcal{G}\right) $
under $\mathcal{M}\left( \mathcal{G}\right) .$ For such DAG configurations
there is no loss of efficiency in ignoring the observations on the variables 
$\mathbf{V}\backslash \left[ \mathbf{O}\cup \left\{ Y,A\right\} \right] $.
Case (ii) is important even if $\left( \ref{eq:main_id}\right) $ fails
because when case (ii) holds not only is the calculation of the one-step
estimator simplified but also such estimator attains the semiparametric
variance bound under weaker regularity conditions, in that complexity
restrictions are required only on the conditional expectations that appear
in the non-vanishing terms. Algorithm \ref{algo:main} below is sound and complete for the inquiry of whether or
not case (i) holds. In addition, when case (i) does not hold, the algorithm returns a
simplified formula for $\chi _{P,a,eff}^{1}\left( \mathbf{V};\mathcal{G}%
\right) $ for certain DAG configurations. Under some DAG configurations such simplifications imply that some variables in the DAG do not appear in the expression for   $\chi _{P,a,eff}^{1}$. This is important, because such variables are neither needed for consistent nor for efficient estimation of $\chi _{a}\left( P;\mathcal{G}\right)$.

\begin{algorithm}[ht!]
\footnotesize
	\label{algo:main}
	\SetKwInOut{Input}{input}\SetKwInOut{Output}{output}
  \SetAlgoLined\DontPrintSemicolon\LinesNumbered
    \SetKwFunction{checkEfficient}{checkEfficient}
      \SetKwProg{proc}{procedure}{}{}
	\Input{DAG $\mathcal{G}$ with vertex set $\mathbf{V}$ and two distinct nodes $A,Y \in \mathbf{V}$ such that $A\in \an_{\mathcal{G}}(Y)$}
	\Output{An answer to the inquiry of whether $
\chi _{P,a,eff}^{1}\left( \mathbf{V};%
\mathcal{G}\right) =\psi _{P,a}\left( \mathbf{O};\mathcal{G}\right)  \text{ for all } P\in\mathcal{M}(\mathcal{G})
$ and a, possibly, simplified
formula for $\chi _{P,a,eff}^{1}$ if the answer to the inquiry is negative.}
			\proc{\checkEfficient{$A, Y,\mathcal{G}$}}
	{
	\tcc{Operations performed to compute the formula for the efficient influence function should be understood as symbolic operations. The symbol \textup{\&\&} stands for the short-circuit AND operator.}
				$\mathcal{G}$=\texttt{prune}$(A,Y, \mathcal{G})$ \\
				$\left(\mathbf{W}, A, \mathbf{M},Y\right)=\left(W_{1},\dots,W_{J},A,M_{1},\dots,M_{K},Y\right)=$\texttt{topological\_sort}$(\mathbf{V},\mathcal{G})$\\
				 \tcc{$J=0$ if $\mathbf{W}=\emptyset$ and $K=0$ if $\mathbf{M}=\emptyset$}
				$M_{K+1}=Y$\\
				$\mathbf{O}=\mathbf{O}(A,Y,\mathcal{G})$\\
				$O_{1},\dots,O_{T}=$\texttt{topological\_sort}$(\mathbf{O})$\\
\texttt{efficient\_nondesc}$=$\texttt{False}\\
\texttt{efficient\_desc}$=$\texttt{False}\\
				\uIf{$\mathbf{O}\setminus \lbrace O_{T}\rbrace\subset \pa_{\mathcal{G}}(O_{T})$ and $J> 1$}{$j=J-1$\\				
	\lWhile{$\pa_{\mathcal{G}}(W_{j+1})\setminus \lbrace W_{j}\rbrace \subset \pa_{\mathcal{G}}(W_{j})$ and $j\geq 2$ }{$j=j-1$}
\uIf{$j\geq 2$}{
\texttt{offenders\_nondesc}=$\lbrace j \rbrace \cup$\texttt{get\_offenders\_nondesc}$(\mathcal{G}, \mathbf{W}, \mathbf{O}, j-1)$\\
$
\chi _{P,a,eff}^{1,non-desc}  =b_{a}(\mathbf{O};P)-\chi_{a}(P;\mathcal{G})
+
\sum_{h\in\texttt{offenders\_nondesc}}\left\{ E_{P}\left[ b_{a}\left( 
\mathbf{O;}P\right) |\pa_{\mathcal{G}}\left( W_{h}\right) ,W_{h}\right]
-E_{P}\left[ b_{a}\left( \mathbf{O;}P\right) |\pa_{\mathcal{G}}\left(
W_{h+1}\right) \right] \right\}
$
}
\Else{$
\chi _{P,a,eff}^{1,non-desc}=b_{a}(\mathbf{O};P)-\chi_{a}(P;\mathcal{G})$\\
\texttt{efficient\_nondesc}$=$\texttt{True}}
	
	}
				\ElseIf{$J > 1$}{
					\texttt{offenders\_nondesc}=\texttt{get\_offenders\_nondesc}$(\mathcal{G}, \mathbf{W}, \mathbf{O}, J)$\\			
			$
\chi _{P,a,eff}^{1,non-desc}  =E_{P}\left[b_{a}(\mathbf{O};P) \mid W_{J}, \pa_{\mathcal{G}}(W_{J})\right]-\chi_{a}(P;\mathcal{G})
+
\sum_{h\in\texttt{offenders\_nondesc}}\left\{ E_{P}\left[ b_{a}\left( 
\mathbf{O;}P\right) |\pa_{\mathcal{G}}\left( W_{h}\right) ,W_{h}\right]
-E_{P}\left[ b_{a}\left( \mathbf{O;}P\right) |\pa_{\mathcal{G}}\left(
W_{h+1}\right) \right] \right\}
$	}
\ElseIf{$J=1$}{
$
\chi _{P,a,eff}^{1,non-desc}=b_{a}(\mathbf{O};P)-\chi_{a}(P;\mathcal{G})$\\
\texttt{efficient\_nondesc}$=$\texttt{True}
}
\ElseIf{$J=0$}{
$
\chi _{P,a,eff}^{1,non-desc}=0$\\
\texttt{efficient\_nondesc}$=$\texttt{True}
}
\uIf{$A\cup \mathbf{O}_{min}\subset \pa_{\mathcal{G}}(Y)$ and $K\geq 1$}{
$\chi_{P,a,eff}^{1,desc}=I_{a}(A)Y\pi^{-1}_{a}(\mathbf{O}_{min};P)$\\
$k=K+1$\\
\lWhile{$k\geq 2$ \textup{\&}\textup{\&} $\pa_{\mathcal{G}}(M_{k})\subset \pa_{\mathcal{G}}(M_{k-1})\cup \lbrace M_{k-1}\rbrace$ }{$k=k-1$}
\uIf{$k\geq 2$}{
\texttt{offenders\_desc}=$\lbrace k  \rbrace \cup$\texttt{get\_offenders\_desc}$(\mathcal{G}, \mathbf{M}, \mathbf{O},\mathbf{O}_{min}, k-1)$\\	
$\chi_{P,a,eff}^{1,desc}=\chi_{P,a,eff}^{1,desc}  + \sum_{h\in\texttt{offenders\_desc}} \lbrace E_{P}\left[T_{P,a,\mathcal{G}} \mid \pa_{\mathcal{G}}(M_{h-1}), M_{h-1} \right]
-E_{P}\left[T_{P,a,\mathcal{G}} \mid \pa_{\mathcal{G}}(M_{h})\right]\rbrace 
$\\
\uIf{$\lbrace A \rbrace \cup \mathbf{O}=\pa_{\mathcal{G}}(M_{1})$}{
$\chi_{P,a,eff}^{1,desc}= \chi_{P,a,eff}^{1,desc}- I_{a}(A) b_{a}(\mathbf{O};P) \pi_{a}^{-1}(\mathbf{O}_{min};P)$}
\Else{$\chi_{P,a,eff}^{1,desc}= \chi_{P,a,eff}^{1,desc}- E_{P}\left[T_{P,a,\mathcal{G}}\mid \pa_{\mathcal{G}}(M_{1}) \right]$}
}\Else{$\chi_{P,a,eff}^{1,desc}= \chi_{P,a,eff}^{1,desc}- I_{a}(A) b_{a}(\mathbf{O};P) \pi_{a}^{-1}(\mathbf{O}_{min};P)$\\
\texttt{efficient\_desc=True}
}

}
\ElseIf{$K\geq 1$}{
\texttt{offenders\_desc}=$\lbrace K+1\rbrace \cup $\texttt{get\_offenders\_desc}$(\mathcal{G}, \mathbf{M}, \mathbf{O},\mathbf{O}_{min}, K)$\\
$\chi_{P,a,eff}^{1,desc}= \sum_{h\in\texttt{offenders\_desc}} \lbrace E_{P}\left[T_{P,a,\mathcal{G}} \mid \pa_{\mathcal{G}}(M_{h-1}), M_{h-1} \right]
-E_{P}\left[T_{P,a,\mathcal{G}} \mid \pa_{\mathcal{G}}(M_{h})\right]\rbrace 
$\\
\uIf{$\lbrace A \rbrace \cup \mathbf{O}=\pa_{\mathcal{G}}(M_{1})$}{
$\chi_{P,a,eff}^{1,desc}= \chi_{P,a,eff}^{1,desc}- I_{a}(A) b_{a}(\mathbf{O};P) \pi_{a}^{-1}(\mathbf{O}_{min};P)$}
\Else{$\chi_{P,a,eff}^{1,desc}= \chi_{P,a,eff}^{1,desc}- E_{P}\left[T_{P,a,\mathcal{G}}\mid \pa_{\mathcal{G}}(M_{1}) \right]$}
}		
\Else{
$\chi_{P,a,eff}^{1,desc}= I_{a}(A) \pi_{a}^{-1}(\mathbf{O}_{min};P) (Y-b_{a}(\mathbf{O};P))$\\
\texttt{efficient\_desc=True}
}

		$\chi_{P,a,eff}^{1}=\chi_{P,a,eff}^{1,non-desc}+\chi_{P,a,eff}^{1,desc}$\\
	
	\texttt{efficient} = 	\texttt{efficient\_desc} \textup{\&} \texttt{efficient\_nondesc}\\
		\Return{\textup{\texttt{efficient}}, $\chi_{P,a,eff}^{1}$}
		\caption{{\bf An algorithm that is sound and complete for checking if $
\chi _{P,a,eff}^{1}\left( \mathbf{V};%
\mathcal{G}\right) =\psi _{P,a}\left( \mathbf{O};\mathcal{G}\right)  \text{ for all} \quad P\in\mathcal{M}(\mathcal{G})$ and sound for finding a simplified formula for $\chi^{1}_{P,a,eff}$.} }}	
\end{algorithm}

\begin{algorithm}[ht!]
	\label{algo:offenders_desc}
	\SetKwInOut{Input}{input}\SetKwInOut{Output}{output}
  \SetAlgoLined\DontPrintSemicolon
    \SetKwFunction{offdesc}{get\_offenders\_desc}
      \SetKwProg{proc}{procedure}{}{}
	\Input{DAG $\mathcal{G}$, mediator nodes $\mathbf{M}$, optimal adjustment set $\mathbf{O}$, optimal minimal adjustment set $\mathbf{O}_{min}$ and integer init.}
	\Output{The set of nodes in $\mathbf{M}$ that don't satisfy at least one of \eqref{eq:inclusion_1}, \eqref{eq:inclusion_2} or \eqref{eq:d_sep_mediators}}
			\proc{\offdesc{$\mathcal{G},\mathbf{M},\mathbf{O}, \mathbf{O}_{min}, \textup{init}$}}
	{
	  \texttt{offender\_desc}$=\emptyset$\\
	  \For{$i=\textup{init},\dots, 2$}
	  {\If{ $\lbrace A\rbrace \cup \mathbf{O}_{min} \not\subset \pa_{\mathcal{G}}(M_{1})$ or $\pa_{\mathcal{G}}(M_{i})\not\subset \pa_{\mathcal{G}}(M_{i-1})\cup \lbrace M_{i-1}\rbrace$ or $Y \not\ort_{\mathcal{G}} \pa_{\mathcal{G}}(M_{i-1})\cup \lbrace M_{i-1} \setminus \pa_{\mathcal{G}}(M_{i}) \mid \pa_{\mathcal{G}}(M_{i}) $}
	  {	  \texttt{offender\_desc}$=$\texttt{offender\_desc}$\cup\lbrace i \rbrace$ }}
	  \Return{\textup{\texttt{offender\_desc}}}
	} 
		\caption{{\bf Subroutine to find all mediator nodes that don't satisfy at least one of \eqref{eq:inclusion_1}, \eqref{eq:inclusion_2} or \eqref{eq:d_sep_mediators}.} }
\end{algorithm}

\begin{algorithm}[ht!]
	\label{algo:offenders_nondesc}
	\SetKwInOut{Input}{input}\SetKwInOut{Output}{output}
  \SetAlgoLined\DontPrintSemicolon
    \SetKwFunction{offnondesc}{get\_offenders\_nondesc}
      \SetKwProg{proc}{procedure}{}{}
	\Input{DAG $\mathcal{G}$, non-mediator nodes $\mathbf{W}$, optimal adjustment set $\mathbf{O}$ and integer init.}
	\Output{The set of nodes in $\mathbf{W}$ that don't satisfy \eqref{eq_main_independence}}
			\proc{\offnondesc{$\mathcal{G},\mathbf{W},\mathbf{O}, \textup{init}$}}
	{
	  \texttt{offender\_nondesc}$=\emptyset$\\
	  \For{$i=\textup{init},\dots, 1$}
	  {\If{ $\mathbf{O}\setminus \mathbf{I}_{j} \not\ort_{\mathcal{G}} \left[\pa_{\mathcal{G}}(W_{i})\cup \lbrace W_{i} \rbrace\right] \bigtriangleup \pa_{\mathcal{G}}(W_{i+1})\mid \mathbf{I}_{j} $}
	  {	  \texttt{offender\_nondesc}$=$\texttt{offender\_nondesc}$\cup\lbrace i \rbrace$ }}
	  \Return{\textup{\texttt{offender\_nondesc}}}
	} 
		\caption{{\bf Subroutine to find all non-mediator nodes that don't satisfy \eqref{eq_main_independence}}.} 
\end{algorithm}

\FloatBarrier

We will now describe the rationale behind the steps of the algorithm. 
If $J=0$ let 
\begin{align*}
\chi _{P,a,eff}^{1,non-desc}\left( \mathbf{V};\mathcal{G}\right) \equiv 0
\end{align*}
and if $J\geq 1$ let
\begin{align*}
\chi _{P,a,eff}^{1,non-desc}\left( \mathbf{V};\mathcal{G}\right) &\equiv \sum_{j=2}^{J}\left\{ E_{P}\left[ b_{a}(\mathbf{O};P)|W_{j},\pa_{\mathcal{%
G}}\left( W_{j}\right) \right] -E_{P}\left[ b_{a}(\mathbf{O};P)|\pa_{%
\mathcal{G}}\left( W_{j}\right) \right] \right\}  \\
&+E_{P}\left[ b_{a}(\mathbf{O};P)|W_{1},\pa_{\mathcal{G}}\left(
W_{1}\right) \right] -\chi _{a}\left( P;\mathcal{G}\right).
\end{align*}
Furthermore let
\begin{align*}
\chi _{P,a,eff}^{1,desc}\left( \mathbf{V};\mathcal{G}\right) &\equiv E_{P}\left[
T_{P,a,\mathcal{G}}|Y,\pa_{\mathcal{G}}\left( Y\right) \right] -E_{P}\left[
T_{P,a,\mathcal{G}}|\pa_{\mathcal{G}}\left( Y\right) \right]
\\
&+\sum_{k=1}^{K}\left\{ E_{P}\left[ T_{P,a,\mathcal{G}}|M_{k},\pa_{\mathcal{%
G}}\left( M_{k}\right) \right] -E_{P}\left[ T_{P,a,\mathcal{G}}|\pa_{%
\mathcal{G}}\left( M_{k}\right) \right] \right\} .
\end{align*}
By Theorem \ref{theo:eff_simple}, 
$$
\chi _{P,a,eff}^{1}\left( \mathbf{V};\mathcal{G}\right) = \chi _{P,a,eff}^{1,non-desc}\left( \mathbf{V};\mathcal{G}\right) + \chi _{P,a,eff}^{1,desc}\left( \mathbf{V};\mathcal{G}\right).
$$

The algorithm starts by searching for possible deletions and/or simplifications of the terms in the expression for $\chi^{1,non-desc}_{P,a,eff}$ when $J\geq 1$. 
If $J=1$ then $W_{1}$ is necessarily equal to $O_{T}=O_{1}$, because as explained below $W_{J}$ is always equal to $O_{T}$. Then, since $\mathbf{O}=\lbrace O_{1}\rbrace$, 
$$
E_{P}\left[ b_{a}(\mathbf{O};P)|W_{1},\pa_{\mathcal{G}}\left(
W_{1}\right) \right] = b_{a}(\mathbf{O};P)
$$
and consequently
$$
\chi _{P,a,eff}^{1,non-desc}\left( \mathbf{V};\mathcal{G}\right) =  b_{a}(\mathbf{O};P) - \chi_{a}(P;\mathcal{G}).
$$
For $J> 1$ define for each $j\in \left\{ 1,\dots,J-1\right\} $%
\begin{equation*}
\mathbf{I}_{j}\equiv\left[ \pa_{\mathcal{G}}\left( W_{j}\right) \cup \left\{
W_{j}\right\} \right] \cap \pa_{\mathcal{G}}\left( W_{j+1}\right) .
\end{equation*}%
If 
\begin{equation}
\mathbf{O}\backslash \mathbf{I}_{j}\perp \!\!\!\perp _{\mathcal{G}}\left. %
\left[ \left[ \pa_{\mathcal{G}}\left( W_{j}\right) \cup \left\{
W_{j}\right\} \right] \Delta \pa_{\mathcal{G}}\left( W_{j+1}\right) \right]
\right\vert \mathbf{I}_{j}  \label{eq_main_independence}
\end{equation}%
then%
\begin{eqnarray*}
E_{P}\left[ b_{a}\left( \mathbf{O;}P\right) |\pa_{\mathcal{G}}\left(
W_{j+1}\right) \right]  &=&E_{P}\left[ b_{a}\left( \mathbf{O;}P\right) |\pa_{%
\mathcal{G}}\left( W_{j+1}\right) \backslash \left[ \pa_{\mathcal{G}}\left(
W_{j}\right) \cup \left\{ W_{j}\right\} \right] ,\mathbf{I}_{j}\right]  \\
&=&E_{P}\left[ b_{a}\left( \mathbf{O;}P\right) |\mathbf{I}_{j}\right] 
\end{eqnarray*}%
and 
\begin{eqnarray*}
E_{P}\left[ b_{a}\left( \mathbf{O;}P\right) |\pa_{\mathcal{G}}\left(
W_{j}\right) ,W_{j}\right]  &=&E_{P}\left[ b_{a}\left( \mathbf{O;}P\right) |%
\left[ \pa_{\mathcal{G}}\left( W_{j}\right) \cup \left\{ W_{j}\right\} %
\right] \backslash \pa_{\mathcal{G}}\left( W_{j+1}\right) ,\mathbf{I}_{j}%
\right]  \\
&=&E_{P}\left[ b_{a}\left( \mathbf{O;}P\right) |\mathbf{I}_{j}\right].
\end{eqnarray*}%
Thus \eqref{eq_main_independence} is a graphical criterion for checking if the differences
\begin{equation}
E_{P}\left[ b_{a}\left( \mathbf{O;}P\right) |\pa_{\mathcal{G}}\left(
W_{j}\right) ,W_{j}\right] -E_{P}\left[ b_{a}\left( \mathbf{O;}P\right) |\pa%
_{\mathcal{G}}\left( W_{j+1}\right) \right]   \label{eq_differences}
\end{equation}%
cancel out from the expression for $\chi _{P,a,eff}^{1}\left( \mathbf{V};%
\mathcal{G}\right) $ for all $P\in \mathcal{M}\left( \mathcal{G}\right)$.

There is one important instance in which
the graphical criterion \eqref{eq_main_independence} can be significantly simplified. Specifically,
first note that $W_{J}=O_{T}$. This holds because, since $\irrel(A,Y,\mathcal{G})=\emptyset
$, there exists a
directed path from $W_{J}$ to $Y$ that does not intersect $A.$ Let $W$
be a child of $W_{J}$ in that path. Then $W$ cannot be in the set $\left\{
W_{1},\dots,W_{J}\right\} $ because $W_{J}$ is the last element in the
topolocally ordered sequence $W_{1},\dots,W_{J}$ of non-descendants of $A.$
Then $W\in \mathbf{M}\cup \left\{ Y\right\} $ which implies that $%
W_{J}\in \mathbf{O}$ and, since $\left( O_{1},\dots,O_{T}\right) $ is ordered
topologically, we conclude that $W_{J}=O_{T}.$ 
Suppose now that 
\begin{equation}
\mathbf{O\backslash }O_{T}\subset \pa_{\mathcal{G}}\left( W_{J}\right) .
\label{eq:padres_O_T}
\end{equation}%
Lemma \ref{lemma:parents_nondesc} in the Appendix establishes that, under $\left( \ref%
{eq:padres_O_T}\right) ,$ the criterion $\left( \ref{eq_main_independence}%
\right) $ holds for $j=J-1$ if and only if $\pa_{\mathcal{G}}\left(
W_{J}\right) \backslash \left\{ W_{J-1}\right\} \subset \pa_{\mathcal{G}%
}\left( W_{J-1}\right) .$ Furthermore, the lemma also establishes that
if for some $1<j^{\ast }\leq J-1$ 
\begin{equation}
\pa_{\mathcal{G}}\left( W_{j+1}\right)
\subset \pa_{\mathcal{G}}\left( W_{j}\right)  \cup  \left\{ W_{j}\right\} \label{eq_inclusion_padres}
\end{equation}
is valid for $j\in \left\{ j^{\ast },\dots,J-1\right\} ,$ then $\left( \ref%
{eq_main_independence}\right) $ holds for $j\in \left\{ j^{\ast
},\dots,J-1\right\} $, and in addition, $\left( \ref{eq_main_independence}%
\right) $ and $\left( \ref{eq_inclusion_padres}\right) $ are equivalent for $%
j=j^{\ast }-1.$ Note that whereas $\left( \ref{eq_main_independence}\right) $
requires checking d-separations, $\left( \ref{eq_inclusion_padres}\right) $
requires simply checking the inclusion of sets. Interestingly, we show in
the proof of Theorem \ref{theo:main} that the validity of $\left( \ref{eq:padres_O_T}%
\right) $ and of $\left( \ref{eq_inclusion_padres}\right) $ for all $j\in
\left\{ 1,\dots,J-1\right\} $ is a necessary condition for \eqref{eq:main_id}. 

Aside from the implications for term
cancellations, note that when $\left( \ref{eq:padres_O_T}\right) $ holds
\begin{equation*}
E_{P}\left[ b_{a}\left( \mathbf{O;}P\right) |W_{J},\pa_{\mathcal{G}}\left( W_{J}\right) %
\right] =b_{a}\left( \mathbf{O;}P\right).
\end{equation*}

Steps 9-26 of Algorithm \ref{algo:main} implement the preceding checks. Specifically,
step 9 inquires if both $J> 1$ and $\left( \ref{eq:padres_O_T}\right) $ hold. If $J> 1$ but \eqref{eq:padres_O_T} does not hold, then the algorithm goes on to inquire for each $j\in \left\{
1,\dots,J-1\right\} $, if $\left( \ref{eq_main_independence}\right) $ holds (see Algorithm \ref{algo:offenders_nondesc})
and it stores the formula 
\begin{eqnarray*}
\chi _{P,a,eff}^{1,non-desc}\left( \mathbf{V};\mathcal{G}\right)  &=&E_{P}%
\left[ b_{a}\left( \mathbf{O;}P\right) |W_{J},\pa_{\mathcal{G}}\left(
W_{J}\right) \right] -\chi _{a}\left( P;\mathcal{G}\right)  \\
&&+\underset{\left( \ref{eq_main_independence}\right) \text{ does not hold}}{%
\sum_{j\in \left\{ 1,2,\dots,J-1\right\} :}}\left\{ E_{P}\left[ b_{a}\left( 
\mathbf{O;}P\right) |\pa_{\mathcal{G}}\left( W_{j}\right) ,W_{j}\right]
-E_{P}\left[ b_{a}\left( \mathbf{O;}P\right) |\pa_{\mathcal{G}}\left(
W_{j+1}\right) \right] \right\} .
\end{eqnarray*}%
If both $J>  1$ and $\left( \ref{eq:padres_O_T}\right) $ hold, then iteratively in reverse
order from $j=J-1,$ the algorithm inquires if $\left( \ref{eq_inclusion_padres}\right) $
holds until the first $j$, if any, such that the inclusion \eqref{eq_inclusion_padres} fails. If such $j,
$ say $j=j^{\ast }$ exists, $j^{\ast }$ is necessarily greater than $1$
because of the topological order of $\mathbf{W}$ and the fact that $\irrel(A,Y,\mathcal{G})=\emptyset$. Then the algorithm
inquires for each $j\in \left\{ 1,\dots,j^{\ast }-1\right\} $ if $\left( \ref%
{eq_main_independence}\right) $ holds and it stores the formula%
\begin{eqnarray}
\chi _{P,a,eff}^{1,non-desc}\left( \mathbf{V};\mathcal{G}\right) 
&=&b_{a}\left( \mathbf{O;}P\right) -\chi _{a}\left( P;\mathcal{G}\right)
+\left\{ E_{P}\left[ b_{a}\left( \mathbf{O;}P\right) |\pa_{\mathcal{G}%
}\left( W_{j^{\ast }}\right) ,W_{j^{\ast }}\right] -E_{P}\left[ b_{a}\left( 
\mathbf{O;}P\right) |\pa_{\mathcal{G}}\left( W_{j^{\ast }+1}\right) \right]
\right\} \nonumber \\
&&+\underset{\left( \ref{eq_main_independence}\right) \text{ does not hold}}{%
\sum_{j\in \left\{ 1,2,\dots,j^{\ast }-1\right\} :}}\left\{ E_{P}\left[
b_{a}\left( \mathbf{O;}P\right) |\pa_{\mathcal{G}}\left( W_{j}\right) ,W_{j}%
\right] -E_{P}\left[ b_{a}\left( \mathbf{O;}P\right) |\pa_{\mathcal{G}%
}\left( W_{j+1}\right) \right] \right\} .
\label{eq:chi1_non}
\end{eqnarray}
If $\left( \ref{eq_inclusion_padres}\right) $ holds for all $j\in\left\{
1,\dots,J-1\right\} $ for $J>1$ or if $J=1$ then the algorithm stores the formula%
\begin{equation}
\chi _{P,a,eff}^{1,non-desc}\left( \mathbf{V};\mathcal{G}\right)
=b_{a}\left( \mathbf{O;}P\right) -\chi _{a}\left( P;\mathcal{G}\right) .
\label{eq:formula_arriba_reducida}
\end{equation}
Otherwise if $J=0$ it stores $\chi _{P,a,eff}^{1,non-desc}\left( \mathbf{V};\mathcal{G}\right)=0$.

Importantly the expression \eqref{eq:chi1_non} for $\chi _{P,a,eff}^{1,non-desc}\left( \mathbf{V};\mathcal{G}\right)$ does not depend on the variables $\lbrace W_{j^{\ast}+1},\dots, W_{J} \rbrace \setminus \mathbf{O}$. Since the expression for $\chi _{P,a,eff}^{1,desc}\left( \mathbf{V};\mathcal{G}\right)$ does not depend on these variables then we conclude that  $\lbrace W_{j^{\ast}+1},\dots, W_{J} \rbrace \setminus \mathbf{O}$ do not enter into the formula for $\chi _{P,a,eff}^{1}\left( \mathbf{V};\mathcal{G}\right)$ and consequently do not provide information about the parameter $\chi_{a}(P;\mathcal{G})$. We emphasize that this is important from a practical standpoint because if the algorithm returns expression \eqref{eq:chi1_non}, then the investigator does not need to measure these variables. A similar comment applies if the algorithm returns expression \eqref{eq:formula_arriba_reducida}. 

\begin{remark}\label{remark:ex}
In Lemma \ref{lemma:parents_nondesc} of the Appendix we show that for $J>1$, whenever \eqref{eq:padres_O_T} holds and \eqref{eq_inclusion_padres} holds for all $j\in\lbrace 1, \dots, J-1 \rbrace$ then 
\begin{equation}
W_{j}\in \pa_{\mathcal{G}}(W_{j+1}) \text{ for all } j\in\lbrace 1, \dots, J-1 \rbrace.
\label{eq:chain_padres}
\end{equation}
Consequently, for $J>1$, \eqref{eq:chain_padres} is necessary for \eqref{eq:formula_arriba_reducida} to hold.
\end{remark}

\begin{figure}[ht]
\begin{center}
\begin{tikzpicture}[>=stealth, node distance=1.5cm,
pre/.style={->,>=stealth,ultra thick,line width = 1.4pt}]
  \begin{scope}
    \tikzstyle{format} = [circle, inner sep=2.5pt,draw, thick, circle, line width=1.4pt, minimum size=6mm]
    \node[format] (W1) {$W_1$};
    \node[format, right of=W1] (W2) {$W_2$};
    \node[format, right of=W2] (W3) {$W_3$};
    \node[format, right of=W3] (W4) {$W_4$};
    \node[format, right of=W4] (O) {$O$};
    \node[format, below of=W3] (A) {$A$};
    \node[format, below of=A] (Y) {$Y$};
                 \draw (W1) edge[pre, black] (W2);
                 \draw (W2) edge[pre, black] (W3);
                 \draw (W3) edge[pre, black] (W4);
                 \draw (W4) edge[pre, black] (O);
                 \draw (W1) edge[pre, orange] (A);
                 \draw (W2) edge[pre, orange] (A);
                 \draw (W3) edge[pre, orange] (A);
                 \draw (W4) edge[pre, orange] (A);
                 \draw (O) edge[pre, orange] (A);
                 \draw (O) edge[pre, black] (Y);
                 \draw (A) edge[pre, black] (Y);
                 \draw (W2) edge[pre, black, out=45,in=135] (O);
  \end{scope} 
  \end{tikzpicture}
\end{center}
\caption{A DAG where the NP-$\mathbf{O}$ estimator is inefficient.}
\label{fig:ex_nondesc}
\end{figure}
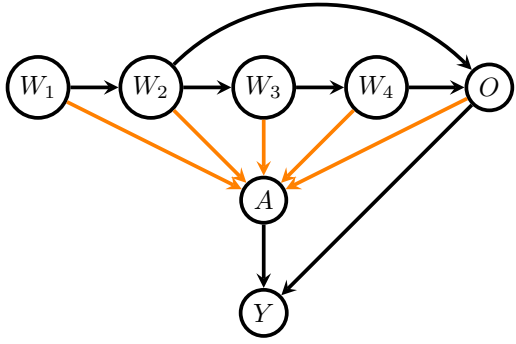

\begin{example}\label{ex:algorithm_nondesc}
Consider the DAG in Figure \ref{fig:ex_nondesc}. In this DAG, $\mathbf{O}=\mathbf{O}_{min}=\lbrace O_{T}\rbrace=\lbrace O\rbrace\equiv\lbrace W_{5}\rbrace$ with $T=1$. Therefore condition \eqref{eq:padres_O_T} holds trivially. However, condition \eqref{eq_inclusion_padres} with $j=4$ fails, because $W_2$ is a parent of $O$ but not of $W_4$. The algorithm now goes on to check condition \eqref{eq_main_independence} for each $j=1,2,3,4$. The following table lists the results.
\begin{table}[h!]
\label{tab:ex:algorithm_nondesc}
\centering
\begin{tabular}{ccccc}
 $j$ & $\mathbf{I}_{j} $ & $\left[\pa_{\mathcal{G}}(W_{j})\cup \lbrace W_{j}\rbrace \bigtriangleup \pa_{\mathcal{G}}(W_{j+1})\right]$ & $\mathbf{O} \setminus \mathbf{I}_{j}$ & \eqref{eq_main_independence}  \\ \hline 
 $1$& $W_1$ & $\emptyset$  & $O$  & holds \\
 $2$& $W_{2}$ & $W_{1}$ & $O$ & holds\\
 $3$&  $W_{3}$&  $W_{2}$& $O$ & fails \\
 $4$&  $W_{4}$&  $\lbrace W_{2}, W_{3}\rbrace$& $O$ & fails \\
\end{tabular}
\end{table}
\FloatBarrier
The algorithm then stores the formula
\begin{align*}
\chi^{1,non-desc}_{P,a,eff}(\mathbf{V};\mathcal{G})&= b_{a}(O;P)-\chi_{a}(P;\mathcal{G}) +
E_{P}\left[b_{a}(O;P)\mid W_{4}, \pa_{\mathcal{G}}(W_{4}) \right]-E_{P}\left[b_{a}(O;P)\mid \pa_{\mathcal{G}}(O) \right]  \\
&+E_{P}\left[b_{a}(O;P)\mid W_{3}, \pa_{\mathcal{G}}(W_{3}) \right] - E_{P}\left[b_{a}(O;P)\mid W_{3} \right] 
\\
&=b_{a}(O;P)-\chi_{a}(P;\mathcal{G})+E_{P}\left[b_{a}(O;P)\mid W_{3}, W_{4} \right]-E_{P}\left[b_{a}(O;P)\mid W_{2}, W_{4} \right] 
\\
&+ E_{P}\left[b_{a}(O;P)\mid W_{2}, W_{3} \right] - E_{P}\left[b_{a}(O;P)\mid W_{3}\right].
\end{align*}
This example illustrates the following interesting points. 

For $j=2$ the d-separation \eqref{eq_main_independence} holds and consequently the term \eqref{eq_differences} vanishes from the expression for $\chi^{1,non-desc}_{P,a,eff}$. However, $W_{2}$ appears in the expression for $\chi^{1,non-desc}_{P,a,eff}$ and therefore it appears also in the expression for the efficient influence  function  $\chi^{1}_{P,a,eff}$. Thus, $W_{2}$ provides information about $\chi_{a}(P;\mathcal{G})$ even though the term \eqref{eq_differences} vanishes for $j=2$. In contrast, for $j=1$ term  \eqref{eq_differences} vanishes and $W_{1}$ does not enter into the expression for $\chi^{1,non-desc}_{P,a,eff}$. This illustrates the point that once condition \eqref{eq_inclusion_padres} fails, the check of the d-separation condition \eqref{eq_main_independence} is useful for detecting term cancellations but not for deciding if the corresponding node is informative about the parameter $\chi_{a}(P;\mathcal{G})$. On the other hand, the next example illustrates the point made earlier that whenever condition \eqref{eq_inclusion_padres} holds for a given $j$, say $j=j^{\ast}$, and for all subsequent $j$, that is, for all $j=j^{\ast}+1,\dots, J-1$, then $W_{j}$ does not appear in the expression for $\chi^{1,non-desc}_{P,a,eff}$ and therefore is not informative. 

Another interesting point illustrated by this example is that the composition of the set $\pa_{\mathcal{G}}(A)$ does not affect the expression for $\chi^{1,non-desc}_{P,a,eff}$. That is, all or a subset of the orange edges could have been absent in the DAG and nevertheless the expression for $\chi^{1,non-desc}_{P,a,eff}$ would have remained the same. However, which elements of $\mathbf{W}$ are members of the set $\pa_{\mathcal{G}}(A)$ does affect the composition of the minimal optimal adjustment set $\mathbf{O}_{min}$. For instance in the DAG of Figure \ref{fig:ex_nondesc}, $\mathbf{O}_{min}=\mathbf{O}$. Instead, if all the orange arrows had been absent, then $\mathbf{O}_{min}$ would have been empty.

We will analyse the expression for $\chi^{1,desc}_{P,a,eff}$ in Example \ref{ex:nondesc2_cont}.
\end{example}

\begin{figure}[ht!]
\begin{center}
\begin{tikzpicture}[>=stealth, node distance=1.5cm,
pre/.style={->,>=stealth,ultra thick,line width = 1.4pt}]
  \begin{scope}
    \tikzstyle{format} = [circle, inner sep=2.5pt,draw, thick, circle, line width=1.4pt, minimum size=6mm]
    \node[format] (O1) {$O_1$};
    \node[format, right of= O1] (B1) {$B_1$};
    \node[format, right of= B1] (B2) {$B_2$};
    \node[format, right of= B2] (B3) {$B_3$};
    \node[format, right of= B3] (B4) {$B_4$};
    \node[format, right of= B4] (O2) {$O_2$};
    \node[format, right of= O2] (B5) {$B_5$};
    \node[format, right of= B5] (O3) {$O_3$};
      \coordinate (C) at ($(B3)!0.5!(B4)$);
    \node (A) [format, below of= C] {$A$};
    \node[format, below of= A] (M1) {$M_1$};
    \node[format, below of= M1] (M2) {$M_2$};
    \node[format, below of= M2] (M3) {$M_3$};
    \node[format, below of= M3] (Y) {$Y$};
    
    \draw (O1) edge[pre, black] (B1);
    \draw (B1) edge[pre, black] (B2);
    \draw (B2) edge[pre, black] (B3);
    \draw (B3) edge[pre, black] (B4);
    \draw (B4) edge[pre, black] (O2);
    \draw (O2) edge[pre, black] (B5);
    \draw (B5) edge[pre, black] (O3);
    
    \draw (O1) edge[pre, blue, out=25, in=130] (B2);
    \draw (O1) edge[pre, blue, out=30, in=130] (B3);
    \draw (O1) edge[pre, blue, out=35, in=130] (B4);
    \draw (O1) edge[pre, blue, out=40, in=130] (O2);
    \draw (O1) edge[pre, blue, out=45, in=130] (B5);
    \draw (O1) edge[pre, blue, out=50, in=130] (O3);
    \draw (O2) edge[pre, blue, out=35, in=145] (O3);
    
    \draw (B1) edge[pre, red, out=330, in=220] (B3);
    \draw (B1) edge[pre, red, out=330, in=210] (B4);
    
    \draw (B2) edge[pre, orange] (A);
    \draw (B4) edge[pre, orange] (A);
    
    \draw (O2) edge[pre, black] (M1);
    \draw (O2) edge[pre, black] (M2);
    \draw (O2) edge[pre, black] (M3);
    \draw (O2) edge[pre, black] (Y);
    
    \draw (O1) edge[pre, black] (M1);
    \draw (O1) edge[pre, black] (M2);
    \draw (O1) edge[pre, black] (M3);
    \draw (O1) edge[pre, black] (Y);
    
    \draw (O3) edge[pre, black] (M1);
    \draw (O3) edge[pre, magenta] (M2);
    
    \draw (A) edge[pre, black] (M1);
    \draw (A) edge[pre, black, out=200, in=120] (M2);
    \draw (A) edge[pre, black, out=200, in=110] (M3);
    \draw (A) edge[pre, black, out=200, in=150] (Y);
    
    \draw (M1) edge[pre, black] (M2);
    \draw (M2) edge[pre, black] (M3);
    \draw (M3) edge[pre, black] (Y);
    
    \draw (M1) edge[pre, green, out=360, in=360] (M3);
  \end{scope} 
  \end{tikzpicture}
\end{center}
\caption{A DAG where the NP-$\mathbf{O}$ estimator is efficient.}
\label{fig:ex_nondesc_large}
\end{figure}
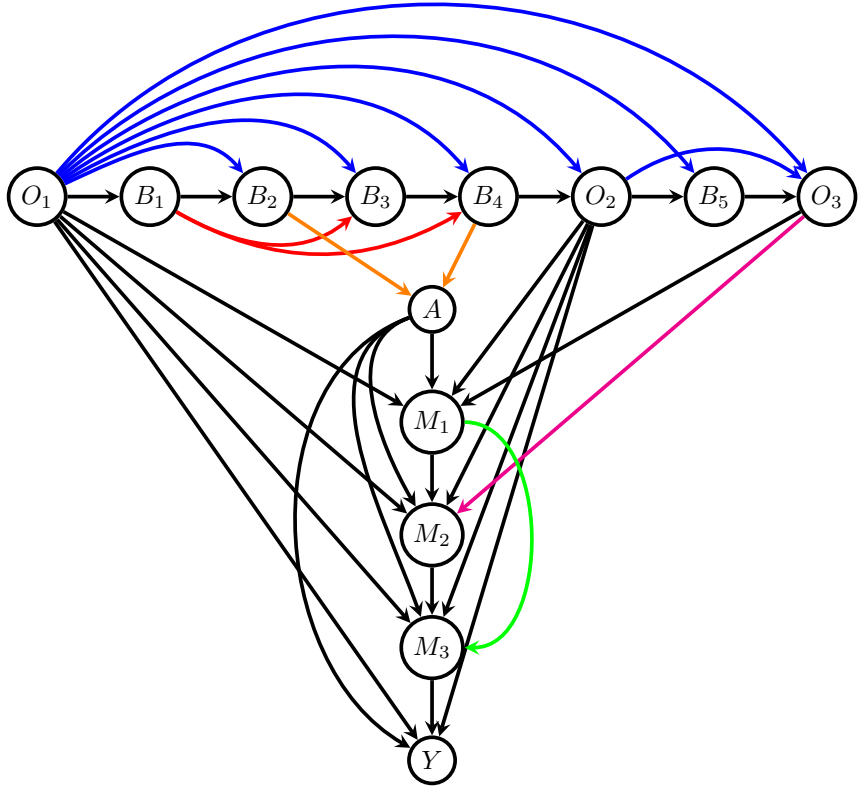
\FloatBarrier
\begin{example}\label{ex:algorithm_nondesc2}
Consider the DAG in Figure \ref{fig:ex_nondesc_large}. In this DAG, $J=8$, $T=3$, 
$$
\mathbf{W}=\left( W_{1},W_{2},W_{3},W_{4},W_{5},W_{6},W_{7}, W_{8}\right)=\left( O_{1}, B_{1}, B_{2}, B_{3},B_{4}, O_{2}, B_{5}, O_{3}\right),
$$
$\mathbf{O}=\lbrace O_1, O_2, O_3\rbrace$ and  $\mathbf{O}_{min}=\lbrace O_1, O_2\rbrace$. Condition \eqref{eq:padres_O_T} holds and \eqref{eq_inclusion_padres} holds for $j=1,\dots, J-1$. Therefore the algorithm stores the formula $\chi _{P,a,eff}^{1,non-desc}\left( \mathbf{V};\mathcal{G}\right)
=b_{a}\left( \mathbf{O;}P\right) -\chi _{a}\left( P;\mathcal{G}\right)
$. Note that the blue and black arrows are necessary for the condition \eqref{eq_inclusion_padres} to hold for all $j$. 
Specifically, because \eqref{eq:padres_O_T} holds, then for \eqref{eq_inclusion_padres} to hold for all $j$, each $O$ in $\mathbf{O}_{min}$ must be a parent of all the $W_{j}$s that follow it in the topological order. On the other hand, also because \eqref{eq:padres_O_T} holds, for \eqref{eq_main_independence} to hold for all $j$, each $W_j \in \mathbf{W}\setminus \mathbf{O}$ must satisfy that if it is a parent of a node $W_{j^{\prime}}$ then it must also be a parent of all nodes $W_{j+1},\dots, W_{j^{\prime}-1}$. For instance $B_{1} \in\mathbf{W}\setminus \mathbf{O} $ is a parent of $B_{4}$ and is also a parent of $B_{2}, B_{3}$.  Note also that while the requirement that each $O$ is a parent of all the subsequent $W_{j}$s in the topological order is necessary for \eqref{eq_inclusion_padres} to hold it is not necessary that each $W_{j}\in\mathbf{W}\setminus\mathbf{O}$ be a parent of \textit{all} the subsequent nodes in the topological order. For instance, $B_{1}$ is not a parent of $O_{2}$.

One again we emphasize that edges from $\mathbf{W}$ to $A$, in orange in the DAG of Figure \ref{fig:ex_nondesc_large}, are irrelevant for finding simplifications for $\chi^{1,non-desc}_{P,a,eff}$. However, they are relevant for determining which $O_{j}$s are members of $\mathbf{O}_{min}$. As we will see next the composition of $\mathbf{O}_{min}$ is important for determining possible simplifications of $\chi^{1,desc}_{P,a,eff}$.
\end{example}

Having checked for possible simplifications of the expression of $\chi^{1,non-desc}_{P,a,eff}$, Algorithm  \ref{algo:main} goes on to check for possible simplifications of $\chi _{P,a,eff}^{1,desc}\left( \mathbf{V};\mathcal{G}\right)$.
For simplicity in what follows we define $M_{K+1}\equiv Y$.

Suppose first that $K=0$. Then, the definition of $\mathbf{O}$ and the assumption that $A\in\an_{\mathcal{G}}(Y)$ imply that
$\lbrace A\rbrace\cup \mathbf{O} = \pa_{\mathcal{G}}(Y)$. Then $E_{P}\left[T_{P,a,\mathcal{G}} \mid Y, \pa_{\mathcal{G}}(Y)\right]=T_{P,a,\mathcal{G}}$ and 
$E_{P}\left[T_{P,a,\mathcal{G}}\mid\pa_{\mathcal{G}}(Y)\right]=I_{a}(A)b_{a}(\mathbf{O};P)\pi^{-1}_{a}(\mathbf{O};P)$. Consequently
$$
\chi^{1,desc}_{P,a,eff}(\mathbf{V};\mathcal{G})=\frac{I_{a}(A)}{\pi_{a}(\mathbf{O}_{min};P) }\left\{ Y-b_{a}\left( \mathbf{O;}%
P\right) \right\}.
$$

Suppose next that $K\geq 1$. If for some $k\in \left\{
2,\dots,K+1\right\} ,$ it holds that 
\begin{equation}
\left\{ A\right\} \cup \mathbf{O}_{\min }\subset \pa_{\mathcal{G}}\left(
M_{k}\right) \label{eq:inclusion_1}
\end{equation}%
then
\begin{align}
E_{P}\left[ T_{P,a,\mathcal{G}}|M_{k},\text{pa}_{\mathcal{G}}\left(
M_{k}\right) \right] =
\frac{I_{a}(A)}{\pi_{a}(\mathbf{O}_{min};P)} E_{P}\left[ Y|M_{k},\text{pa}_{\mathcal{G}}\left(
M_{k}\right) \right].
\label{eq:IPW}
\end{align}
Note that for $k=K+1,$ $\left( \ref{eq:IPW}\right) $ is equal to
\begin{equation}
E_{P}\left[ T_{P,a,\mathcal{G}}|Y,\text{pa}_{\mathcal{G}}\left( Y\right) %
\right] =\frac{I_{a}(A)Y}{\pi_{a}(\mathbf{O}_{min};P)}.
\label{eq:ipw_1}
\end{equation}
Note that if \eqref{eq:inclusion_1} does not hold for $k=K+1$ then there exists $P^{\ast}\in\mathcal{M(G)}$ such that \eqref{eq:ipw_1} does not hold, because the definition of $\mathbf{O}_{min}$ implies that there exists $P^{\ast}\in\mathcal{M(G)}$ such that the right hand side of \eqref{eq:ipw_1} is a non-trivial function of $A$ and $\mathbf{O}_{min}$. Note also that the influence function of the NP-$\mathbf{O}$ estimator includes the term on the right hand side of \eqref{eq:ipw_1}. Because such term cannot appear in the expression for $\chi^{1}_{P^{\ast},a,eff}$ since $Y$ does not appear in any of the remaining terms in the expression for $\chi^{1}_{P^{\ast},a,eff}$ then for such $P^{\ast}$, $\psi_{P^{\ast},a}\left( \mathbf{O};\mathcal{G} \right) \neq \chi_{P^{\ast},a,eff}^{1}(\mathbf{V};P^{\ast})$ and consequently the 
asymptotic variance of the NP-$\mathbf{O}$ estimator does not achieve the semiparametric Cramer-Rao bound at $P^{\ast}$.

Now, suppose that, for some $k\in \left\{ 2,\dots,K+1\right\} ,$ in addition
to $\left( \ref{eq:inclusion_1}\right) $ it holds that
\begin{equation}
\pa_{\mathcal{G}}\left( M_{k}\right) \subset \pa_{\mathcal{G}}\left(
M_{k-1}\right) \cup \left\{ M_{k-1}\right\}  \label{eq:inclusion_2}
\end{equation}
and 
\begin{equation}
Y\ort_{\mathcal{G}}\left[ M_{k-1},\text{pa}_{\mathcal{G}}\left(
M_{k-1}\right) \right] \backslash \text{pa}_{\mathcal{G}}\left( M_{k}\right)
|\text{pa}_{\mathcal{G}}\left( M_{k}\right)  . \label{eq:d_sep_mediators}
\end{equation}%
Then, for such $k$ 
\begin{eqnarray*}
E_{P}\left[ T_{P,a,\mathcal{G}}|M_{k-1},\text{pa}_{\mathcal{G}}\left(
M_{k-1}\right) \right]  &=&\frac{I_{a}(A)}{\pi_{a}(\mathbf{O}_{min};P)} E_{P}\left[ Y|M_{k-1},\text{pa}_{\mathcal{G}}\left(
M_{k-1}\right) \right]  \\
&=&\frac{I_{a}(A)}{\pi_{a}(\mathbf{O}_{min};P)}E_{P}\left[ \left.
Y\right\vert \text{pa}_{\mathcal{G}}\left( M_{k}\right) ,\left[ M_{k-1},%
\text{pa}_{\mathcal{G}}\left( M_{k-1}\right) \right] \backslash \text{pa}_{%
\mathcal{G}}\left( M_{k}\right) \right]  \\
&=&\frac{I_{a}(A)}{\pi_{a}(\mathbf{O}_{min};P)}E_{P}\left[ \left.
Y\right\vert \text{pa}_{\mathcal{G}}\left( M_{k}\right) \right]  \\
&=&E_{P}\left[ T_{P,a,\mathcal{G}}|\text{pa}_{\mathcal{G}}\left(
M_{k}\right) \right] 
\end{eqnarray*}%
where the first equality follows  from \eqref{eq:IPW}, the second from \eqref{eq:inclusion_2} , the third from \eqref{eq:d_sep_mediators} and the fourth from \eqref{eq:inclusion_1}. We
therefore arrive at the conclusion that $\left( \ref{eq:inclusion_1}\right)
,\left( \ref{eq:inclusion_2}\right) $ and $\left( \ref{eq:d_sep_mediators}%
\right) $ imply that the difference 
\begin{equation}
E_{P}\left[ T_{P,a,\mathcal{G}}|M_{k-1},\text{pa}_{\mathcal{G}}\left(
M_{k-1}\right) \right] -E_{P}\left[ T_{P,a,\mathcal{G}}|\text{pa}_{\mathcal{G%
}}\left( M_{k}\right) \right] 
\label{eq:term_med}
\end{equation}%
vanishes from the expression for $\chi _{P,a,eff}^{1,desc}\left(\mathbf{V};\mathcal{G}\right)$ for all $P\in\mathcal{M(G)}$.

In analogy to the examination of the terms in the expression for  $\chi _{P,a,eff}^{1,non-desc}\left(\mathbf{V};\mathcal{G}\right)$
there exists an instance in which the d-separation criterion $\left( \ref%
{eq:d_sep_mediators}\right) $ can be simplified to a condition just involving
set inclusions. Specifically, in Lemma \ref{lemma:indep_med} of the Appendix we show
that if there exists
some $k^{\ast }\in \left\{ 2,\dots,K+1\right\} $ such that $\left( \ref{eq:inclusion_2}%
\right) $ holds for all $k\in \left\{ k^{\ast },k^{\ast }+1,\dots,K+1\right\} $
then, $\left( \ref{eq:d_sep_mediators}\right) $ holds for $k\in \left\{
k^{\ast },k^{\ast }+1,\dots,K+1\right\}$. In particular, this implies that if 
\eqref{eq:inclusion_1} holds for $k=K+1$ and \eqref{eq:inclusion_2} holds for all $k\in\lbrace k^{\ast},\dots,K+1\rbrace$ then the term \eqref{eq:term_med} vanishes for all $k\in\lbrace k^{\ast},\dots,K+1\rbrace$ and for all $P\in\mathcal{M(G)}$.
Furthermore, if such $k^{\ast}$ is strictly greater than 2, condition \eqref{eq:inclusion_1} holds for $k=K+1$, and condition \eqref{eq:inclusion_2} fails for $k=k^{\ast}-1$, then in parts 2) and 3) of Lemma \ref{lemma:main_Z} in the Appendix we show that \eqref{eq:term_med} not only does not vanish for all $P\in\mathcal{M(G)}$ but also there exists $P^{\ast}\in\mathcal{M(G)}$ such that \eqref{eq:term_med} is a non-constant function of $M_{k-1}$. Since $M_{k-1}$ does not appear in any of the remaining non-vanishing terms of $\chi_{P^{\ast},a,eff}^{1,desc}$ nor it appears in the expression for $\chi_{P^{\ast},a,eff}^{1,non-desc}$, then we conclude that for such $P^{\ast}\in\mathcal{M(G)}$, $\chi_{P^{\ast},a,eff}^{1}$ depends on $M_{k-1}$.
Because the influence function $\psi_{P^{\ast},a}\left( \mathbf{O};\mathcal{G} \right)$ of the NP-$\mathbf{O}$ estimator does not depend on $M_{k-1}$ this immediately implies that $\psi_{P^{\ast},a}\left( \mathbf{O};\mathcal{G} \right) \neq \chi_{P^{\ast},a,eff}^{1}(\mathbf{V};P^{\ast})$ and consequently the 
asymptotic variance of the NP-$\mathbf{O}$ estimator does not achieve the semiparametric Cramer-Rao bound at $P^{\ast}$. We therefore have the following important result.

\begin{proposition}\label{prop:pa_M}
If $K\geq 1$, condition \eqref{eq:inclusion_1} for $k=K+1$ and condition \eqref{eq:inclusion_2} for $k\in\lbrace 2,\dots,K+1 \rbrace$ are necessary for \eqref{eq:main_id} to hold for all $P\in\mathcal{M(G)}$.
\end{proposition}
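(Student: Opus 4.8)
The plan is to prove the contrapositive of each of the two necessity assertions: if \eqref{eq:inclusion_1} fails at $k=K+1$, or if \eqref{eq:inclusion_2} fails at some $k\in\{2,\dots,K+1\}$, I will exhibit a law $P^\ast\in\mathcal{M}(\mathcal{G})$ at which $\chi^1_{P^\ast,a,eff}(\mathbf{V};\mathcal{G})\neq\psi_{P^\ast,a}(\mathbf{O};\mathcal{G})$, so that \eqref{eq:main_id} is violated. Both steps rest on the same device. Comparing the expression for $\chi^1_{P,a,eff}$ in Theorem \ref{theo:eff_simple} with the NP-$\mathbf{O}$ influence function in \eqref{eq:inf_fc_sing}, one isolates a single variable that, in the relevant configuration, survives in exactly one summand of $\chi^1_{P,a,eff}$ while entering $\psi_{P,a}(\mathbf{O};\mathcal{G})$ in an incompatible way; since no remaining term can cancel that dependence, a discrepancy in the offending summand at some $P^\ast$ immediately yields $\chi^1_{P^\ast,a,eff}\neq\psi_{P^\ast,a}$.

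For \eqref{eq:inclusion_1} at $k=K+1$ (so $M_{K+1}=Y$), I would use that $Y$ enters $\chi^1_{P,a,eff}$ only through $E_P[T_{P,a,\mathcal{G}}\mid Y,\pa_{\mathcal{G}}(Y)]$ and enters $\psi_{P,a}(\mathbf{O};\mathcal{G})$ only through $I_a(A)Y/\pi_a(\mathbf{O}_{\min};P)$. When $\{A\}\cup\mathbf{O}_{\min}\subset\pa_{\mathcal{G}}(Y)$ the weight $I_a(A)/\pi_a(\mathbf{O}_{\min};P)$ is $(Y,\pa_{\mathcal{G}}(Y))$-measurable and these two terms coincide, which is exactly \eqref{eq:ipw_1}; when the inclusion fails, the conditional expectation reduces to $Y\,E_P[\,I_a(A)/\pi_a(\mathbf{O}_{\min};P)\mid Y,\pa_{\mathcal{G}}(Y)\,]$. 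The minimality of $\mathbf{O}_{\min}$ then furnishes a $P^\ast$ at which $I_a(A)/\pi_a(\mathbf{O}_{\min};P^\ast)$ is a genuinely non-degenerate function of the component(s) of $\{A\}\cup\mathbf{O}_{\min}$ lying outside $\pa_{\mathcal{G}}(Y)$, so that \eqref{eq:ipw_1} fails at $P^\ast$. Matching the unique $Y$-carrying summands of $\chi^1_{P^\ast,a,eff}$ and $\psi_{P^\ast,a}$, all remaining terms being $Y$-free, shows the two influence functions differ.

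For \eqref{eq:inclusion_2} at $k\in\{2,\dots,K+1\}$ I would first assume \eqref{eq:inclusion_1} at $k=K+1$ holds, as otherwise the previous step already refutes \eqref{eq:main_id}. Suppose \eqref{eq:inclusion_2} fails at some index and let $k^\ast$ be the smallest index for which \eqref{eq:inclusion_2} holds throughout the suffix $\{k^\ast,\dots,K+1\}$; then $k^\ast>2$ and \eqref{eq:inclusion_2} fails at $k=k^\ast-1$ (the boundary case of failure at the very top, $k=K+1$, being handled identically with $M_{K+1}=Y$ playing the role of the surviving term). By Lemma \ref{lemma:indep_med} the d-separations \eqref{eq:d_sep_mediators} hold on the suffix, so, together with \eqref{eq:inclusion_1} at $k=K+1$ propagated downward, the terms \eqref{eq:term_med} vanish for all $k\in\{k^\ast,\dots,K+1\}$ and only the term \eqref{eq:term_med} at $k=k^\ast-1$, which involves $M_{k^\ast-2}$, survives. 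Parts 2) and 3) of Lemma \ref{lemma:main_Z} then provide a $P^\ast$ at which this surviving term is a non-constant function of $M_{k^\ast-2}$. Because the topological order $(W_1,\dots,W_J,A,M_1,\dots,M_K,Y)$ guarantees that $M_{k^\ast-2}$ occurs in none of the other surviving descendant terms, in none of the non-descendant terms making up $\chi^{1,non-desc}_{P^\ast,a,eff}$, and nowhere in $\psi_{P^\ast,a}(\mathbf{O};\mathcal{G})$, the efficient influence function depends on $M_{k^\ast-2}$ while the NP-$\mathbf{O}$ influence function does not, so \eqref{eq:main_id} fails at $P^\ast$. Combining the two steps gives the proposition.

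The functional-isolation bookkeeping — verifying that $Y$, respectively $M_{k^\ast-2}$, is confined to a single surviving summand — is routine once one reads it off \eqref{eq:formula_eff} and the topological sort. The genuine obstacle is the construction of the witnessing law $P^\ast$: one must produce an actual member of $\mathcal{M}(\mathcal{G})$ that respects every Local Markov constraint yet makes the relevant conditional expectation genuinely non-trivial (non-degenerate in $A$ and $\mathbf{O}_{\min}$, or non-constant in $M_{k^\ast-2}$). This is precisely what I would delegate to Lemma \ref{lemma:main_Z}, together with the reduction of the d-separation \eqref{eq:d_sep_mediators} to the set inclusion \eqref{eq:inclusion_2} supplied by Lemma \ref{lemma:indep_med}; specifying the conditional laws so that the discrepancy is realized without disturbing the Markov structure is where the real work lies.
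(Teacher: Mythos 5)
Your proposal is correct and takes essentially the same route as the paper's own argument (the discussion preceding Proposition \ref{prop:pa_M}, mirrored in the proof of Theorem \ref{theo:main}): you argue the contrapositive, isolate the unique $Y$-carrying term via \eqref{eq:ipw_1} and the minimality of $\mathbf{O}_{\min}$ when \eqref{eq:inclusion_1} fails at $k=K+1$, and, when \eqref{eq:inclusion_2} first fails at $k^{\ast}-1$, cancel the suffix terms using Lemma \ref{lemma:indep_med} together with the downward propagation of \eqref{eq:inclusion_1}, delegating the witness law $P^{\ast}$ to parts 1)--3) of Lemma \ref{lemma:main_Z} exactly as the paper does. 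Your index bookkeeping (the surviving term at $k=k^{\ast}-1$ depending non-constantly on $M_{k^{\ast}-2}$, confined there by the topological order) and the boundary case $k^{\ast}-1=K+1$ handled by part 2) of Lemma \ref{lemma:main_Z} both match the paper's treatment, so there is nothing to fix.
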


\begin{remark}\label{remark:chain_med}
In Lemma \ref{lemma:indep_med} we show that if $K\geq 1$, whenever condition \eqref{eq:inclusion_2}
holds for $k\in\lbrace 2,\dots,K+1\rbrace$ then $A\in\pa_{\mathcal{G}}(M_1)$ and $M_{k}\in \pa_{\mathcal{G}}(M_{k+1})$ for $k\in\lbrace 2,\dots,K+1\rbrace$. Consequently, by Proposition \ref{prop:pa_M}, if $K\geq 1$, $A\in\pa_{\mathcal{G}}(M_1)$ and $M_{k}\in \pa_{\mathcal{G}}(M_{k+1})$ for $k\in\lbrace 2,\dots,K+1\rbrace$ are necessary for \eqref{eq:main_id} to hold.
\end{remark}

Aside from the examination of term cancellations, we note that if 
\begin{equation}
\pa_{\mathcal{G}}\left( M_{1}\right) =\left\{ A\right\} \cup \mathbf{O}\text{
}  \label{eq:padres_M1}
\end{equation}%
holds, then 
\begin{equation*}
E_{P}\left[ T_{P,a,\mathcal{G}}|\text{pa}_{\mathcal{G}}\left( M_{1}\right) %
\right] =\frac{I_{a}(A)}{\pi_{a}(\mathbf{O}_{min};P) }b_{a}\left( \mathbf{%
O;}P\right) .
\end{equation*}

Steps 27-50 of Algorithm \ref{algo:main} implement the preceding checks. 
Specifically, step 27 inquires if both $K\geq1$ and \eqref{eq:inclusion_1} hold for $k=K+1$.
If $K\geq1$ but \eqref{eq:inclusion_1} does not hold for $k=K+1$, the algorithm goes on to inquire for each $k\in\lbrace 2,\dots, K\rbrace$ if $\left( \ref{eq:inclusion_1}\right)
,\left( \ref{eq:inclusion_2}\right) $ and $\left( \ref{eq:d_sep_mediators}%
\right) $ hold and subsequently if \eqref{eq:padres_M1} holds. It then stores the formula 
\begin{eqnarray*}
\chi _{P,a,eff}^{1,desc}\left( \mathbf{V};\mathcal{G}\right)  &=&E_{P}\left[
T_{P,a,\mathcal{G}}|Y,\pa_{\mathcal{G}}\left( Y\right) \right] -E_{P}\left[
T_{P,a,\mathcal{G}}|\pa_{\mathcal{G}}\left( Y\right) \right] +E_{P}\left[
T_{P,a,\mathcal{G}}|\pa_{\mathcal{G}}\left( M_{K}\right) ,M_{K}\right] -\chi
_{P,a,eff}^{1,M_{1}}\left( \mathbf{V};\mathcal{G}\right)  \\
&&+\sum\limits_{k\in \texttt{offenders\_desc}(K)}\left\{ E_{P}\left[ T_{P,a,\mathcal{G}}|\pa_{%
\mathcal{G}}\left( M_{k-1}\right) ,M_{k-1}\right] -E_{P}\left[ T_{P,a,%
\mathcal{G}}|\pa_{\mathcal{G}}\left( M_{k}\right) \right] \right\} 
\end{eqnarray*}%
where
$$
\chi _{P,a,eff}^{1,M_{1}}\left( \mathbf{V};\mathcal{G}\right) \equiv
\begin{cases}\frac{I_{a}(A)}{%
\pi_{a}\left( \mathbf{O}_{\min ;P}\right) }b_{a}\left( \mathbf{O;}P\right) &\text{if \eqref{eq:padres_M1} holds} \\
E_{P}\left[
T_{P,a,\mathcal{G}}|\text{pa}_{\mathcal{G}}\left( M_{1}\right) \right] &\text{if \eqref{eq:padres_M1} does not hold}
\end{cases}
$$
and for any $h \in \lbrace 2,\dots, K \rbrace$
$$
\texttt{offenders\_desc}(h)\equiv\left\lbrace k \in \lbrace 2,\dots,h\rbrace: \text{at least one of }\eqref{eq:inclusion_1}, \: \eqref{eq:inclusion_2} \text{ or } \eqref{eq:d_sep_mediators} \text{ does not hold} \right\rbrace.
$$
See Algorithm \ref{algo:offenders_desc}.
If $K\geq 1$ and \eqref{eq:inclusion_1} holds for $k=K+1$ then iteratively in reverse order from $k=K+1$ the algorithm inquires if $\eqref{eq:inclusion_2}$ holds until the first $k\geq 2$, if any, in which the condition fails. If such $k$, say $k=k^{\ast }$ exists and $k^{\ast}>2$, then it inquires for each $k\in \left\{ 2,\dots,k^{\ast
}-1\right\} $ if $\left( \ref{eq:inclusion_1}\right) ,\left( \ref%
{eq:inclusion_2}\right) $ and $\left( \ref{eq:d_sep_mediators}\right) $ hold, 
and subsequently if \eqref{eq:padres_M1} holds. It then stores the formula%
\begin{eqnarray}
\chi _{P,a,eff}^{1,desc}\left( \mathbf{V};\mathcal{G}\right)  &=&\frac{I_{a}(A)Y}{%
\pi_{a}(\mathbf{O}_{min};P) }-\chi _{P,a,eff}^{1,M_{1}}\left( 
\mathbf{V};\mathcal{G}\right) \nonumber\\
&&+ E_{P}\left[ T_{P,a,\mathcal{G}}|\pa_{%
\mathcal{G}}\left( M_{k^{\ast }-1}\right) ,M_{k^{\ast }-1}\right] -E_{P}%
\left[ T_{P,a,\mathcal{G}}|\pa_{\mathcal{G}}\left( M_{k^{\ast }}\right) %
\right] 
\nonumber
\\
&&+\sum\limits_{k\in \texttt{offenders\_desc}(k^{\ast}-1)}\left\{ E_{P}\left[ T_{P,a,\mathcal{%
G}}|\pa_{\mathcal{G}}\left( M_{k-1}\right) ,M_{k-1}\right] -E_{P}\left[
T_{P,a,\mathcal{G}}|\pa_{\mathcal{G}}\left( M_{k}\right) \right] \right\}.
\label{eq:chi1_desc}
\end{eqnarray}%
Notice that in a similar fashion as for the expression \eqref{eq:chi1_non} for $\chi _{P,a,eff}^{1,non-desc}\left( \mathbf{V};\mathcal{G}\right)$, the expression \eqref{eq:chi1_desc} does not depend on the variables $M_{k^{\ast}},\dots, M_{K}$. Since the expression for $\chi _{P,a,eff}^{1,non-desc}$ does not depend on these variables, we conclude that $M_{k^{\ast}},\dots, M_{K}$ do not enter into the formula for $\chi _{P,a,eff}^{1}\left( \mathbf{V};\mathcal{G}\right)$ and consequently do not provide information about the parameter $\chi_{a}(P;\mathcal{G})$.

If $k^{\ast }=2$, then it stores 
\begin{equation}
\chi _{P,a,eff}^{1,desc}\left( \mathbf{V};\mathcal{G}\right) =\frac{I_{a}(A)Y}{%
\pi_{a}(\mathbf{O}_{min};P) }-\chi _{P,a,eff}^{1,M_{1}}\left( 
\mathbf{V};\mathcal{G}\right) +\left\{ E_{P}\left[ T_{P,a,\mathcal{G}}|\pa_{%
\mathcal{G}}\left( M_{k^{\ast }-1}\right) ,M_{k^{\ast }-1}\right] -E_{P}%
\left[ T_{P,a,\mathcal{G}}|\pa_{\mathcal{G}}\left( M_{k^{\ast }}\right) %
\right] \right\} .
\label{eq:chi2_desc}
\end{equation}%
If no such $k^{\ast}$ exists condition \eqref{eq:padres_M1} automatically holds. Then the algorithm stores the formula
\begin{equation}
\chi _{P,a,eff}^{1,desc}\left( \mathbf{V};\mathcal{G}\right) =\frac{I_{a}(A)}{%
\pi_{a}(\mathbf{O}_{min};P) }\left\{ Y-b_{a}\left( \mathbf{O;}%
P\right) \right\}.
\label{eq:chi3_desc}
\end{equation}%
If $K=0$ then the algorithm also stores the formula in \eqref{eq:chi3_desc}.

Finally, the algorithm exits returning the formula 
\begin{equation*}
\chi _{P,a,eff}^{1}\left( \mathbf{V};\mathcal{G}\right) =\chi
_{P,a,eff}^{1,non-desc}\left( \mathbf{V};\mathcal{G}\right) +\chi
_{P,a,eff}^{1,desc}\left( \mathbf{V};\mathcal{G}\right) 
\end{equation*}
and an answer to the inquiry of whether \eqref{eq:main_id} holds.

Notice that for $J>1$ and $K>0$, the answer to such inquiry is positive when the following holds
\begin{itemize}
    \item[(i)] Equation $\left( \ref{eq:padres_O_T}\right) $,
    \item[(ii)] Equation $\left( \ref{eq_inclusion_padres}\right)$ for all $\left\{ 1,\dots,J-1\right\}$,
    \item[(iii)] Equation $\left( \ref{eq:inclusion_1}\right)$ for $k=K+1$,
    \item[(iv)] Equation $\left( \ref{eq:inclusion_2}\right)$ for $k\in \left\{
2,\dots,K+1\right\} $.
\end{itemize}
This is because under (i) and (ii) the algorithm stores $
\chi _{P,a,eff}^{1,non-desc}\left( \mathbf{V};\mathcal{G}\right)
=b_{a}\left( \mathbf{O;}P\right) -\chi _{a}\left( P;\mathcal{G}\right)
$ for all $P\in\mathcal{M(G)}$ and under (iii) and (iv), \eqref{eq:chi3_desc} holds for all $P\in\mathcal{M(G)}$.
Likewise, the answer is also positive in the following situations: (a) if $J=0$ or $J=1$, $K>0$, and (iii) and (iv) hold, (b) if $J>1$, $K=0$ and (i) and (ii) hold and (c) if $J\in\lbrace 0,1 \rbrace$ and $K=0$.

In the Appendix we show that these conditions are not only sufficient but also necessary for \eqref{eq:main_id} to hold as the following theorem establishes.
\begin{theorem}[Soundness and completeness of Algorithm \ref{algo:main}]
\label{theo:main}
Algorithm \ref{algo:main} exits returning \textup{\texttt{efficient=True}} if and only if 
\begin{equation}
\chi _{P,a,eff}^{1}\left( \mathbf{V};%
\mathcal{G}\right) =\psi _{P,a}\left( \mathbf{O};\mathcal{G}\right) \quad \text{for all} \quad P\in\mathcal{M}(\mathcal{G}).
\label{eq:adj_eff_cond}
\end{equation}
\end{theorem}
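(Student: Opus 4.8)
The plan is to reduce the global identity \eqref{eq:adj_eff_cond} to two \emph{separate} block identities and then show that the flags \texttt{efficient\_nondesc} and \texttt{efficient\_desc} decide these two identities exactly. By Theorem \ref{theo:eff_simple} we have $\chi^1_{P,a,eff} = \chi^{1,non-desc}_{P,a,eff} + \chi^{1,desc}_{P,a,eff}$, and I would decompose $\psi_{P,a}(\mathbf{O};\mathcal{G})$ correspondingly into $\psi_{nd}\equiv b_a(\mathbf{O};P)-\chi_a(P;\mathcal{G})$ and $\psi_d\equiv \pi_a(\mathbf{O};P)^{-1}I_a(A)\{Y-b_a(\mathbf{O};P)\}$. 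Since $A\ort_{\mathcal{G}}[\mathbf{O}\setminus\mathbf{O}_{\min}]\mid\mathbf{O}_{\min}$ gives $\pi_a(\mathbf{O};P)=\pi_a(\mathbf{O}_{\min};P)$, the summand $\psi_d$ coincides with the descendant target $\pi_a(\mathbf{O}_{\min};P)^{-1}I_a(A)\{Y-b_a(\mathbf{O};P)\}$ used by the algorithm. The goal is thus to prove that, for all $P$, the identity $\chi^1_{P,a,eff}=\psi$ is equivalent to the conjunction of $\chi^{1,non-desc}_{P,a,eff}=\psi_{nd}$ and $\chi^{1,desc}_{P,a,eff}=\psi_d$.

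First I would prove this separability claim under the standing positivity assumption $\pi_a(\mathbf{O};P)\in(0,1)$. The forward direction is immediate. For the converse, rewrite $\chi^1_{P,a,eff}=\psi$ as $\chi^{1,non-desc}_{P,a,eff}-\psi_{nd}=-(\chi^{1,desc}_{P,a,eff}-\psi_d)$, where the left side is a function of $\mathbf{W}$ alone. Applying $E_P[\,\cdot\mid\mathbf{W},A]$ annihilates $\chi^{1,desc}_{P,a,eff}$, because each of its summands lies in a tangent space $\Lambda_j$ with $V_j\in\mathbf{M}\cup\{Y\}$ and hence, by the local Markov property, has conditional mean zero given all topologically preceding variables, in particular $E_P[\,\cdot\mid\overline{\mathbf{V}}_{\mathrm{prec}(V_j)}]=0$ with $(\mathbf{W},A)\subseteq\overline{\mathbf{V}}_{\mathrm{prec}(V_j)}$. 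This leaves $\chi^{1,non-desc}_{P,a,eff}-\psi_{nd}=E_P[\psi_d\mid\mathbf{W},A]$; evaluating on the event $\{A\neq a\}$, which has positive probability since $P(A=a\mid\mathbf{W})=\pi_a(\mathbf{O};P)<1$, forces the $\mathbf{W}$-measurable left side to vanish, and the descendant identity then follows from the original equation. Degenerate laws violating positivity can be excluded since they make $\psi$ ill-defined and do not affect the purely graphical characterization.

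Next I would verify that \texttt{efficient\_nondesc=True} characterizes $\chi^{1,non-desc}_{P,a,eff}=\psi_{nd}$ for all $P$, and \texttt{efficient\_desc=True} characterizes $\chi^{1,desc}_{P,a,eff}=\psi_d$ for all $P$. Soundness is the term-cancellation computation already recorded in the text preceding the theorem: under \eqref{eq:padres_O_T} together with \eqref{eq_inclusion_padres} for all $j$ one has $E_P[b_a(\mathbf{O};P)\mid W_J,\pa_{\mathcal{G}}(W_J)]=b_a(\mathbf{O};P)$ and every difference \eqref{eq_differences} vanishes via \eqref{eq_main_independence}, collapsing $\chi^{1,non-desc}_{P,a,eff}$ to $\psi_{nd}$ (the cases $J\in\{0,1\}$ being immediate); symmetrically, \eqref{eq:inclusion_1} at $k=K+1$ and \eqref{eq:inclusion_2} for all $k\in\{2,\dots,K+1\}$ collapse $\chi^{1,desc}_{P,a,eff}$ to $\psi_d$ through \eqref{eq:IPW} and \eqref{eq:ipw_1}. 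For completeness I would invoke Lemma \ref{lemma:parents_nondesc} to reduce the d-separation test \eqref{eq_main_independence} to the set inclusions \eqref{eq_inclusion_padres} in the non-descendant block, and Proposition \ref{prop:pa_M} (itself proved via Lemmas \ref{lemma:indep_med} and \ref{lemma:main_Z}) in the descendant block: when the governing inclusion first fails at some index $j^\ast$ (resp.\ $k^\ast$), one constructs $P^\ast\in\mathcal{M}(\mathcal{G})$ for which a surviving term is a non-constant function of a variable that appears in no other term of $\chi^1_{P^\ast,a,eff}$ and not in $\psi$, so the identity must fail at $P^\ast$.

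The main obstacle will be precisely these completeness constructions. As Example \ref{ex:algorithm_nondesc} shows, a vanishing difference term does \emph{not} entail that the associated variable is absent from the efficient influence function; hence one cannot argue variable-by-variable and must instead track, across \emph{all} surviving terms simultaneously, which variables can enter $\chi^1_{P,a,eff}$, and then engineer $P^\ast$ (choosing $b_a(\mathbf{O};P^\ast)$ and the relevant conditional laws) so that the offending term genuinely depends on an otherwise-absent variable and no cross-term cancellation is possible. Assembling the pieces, the flag \texttt{efficient}, which the algorithm sets to the conjunction of \texttt{efficient\_nondesc} and \texttt{efficient\_desc}, returns \texttt{True} exactly when both block identities hold for all $P$, which by the separability claim is equivalent to \eqref{eq:adj_eff_cond}.
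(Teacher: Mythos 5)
Your separability device is valid and is genuinely different from how the paper organizes the argument. Conditioning the putative identity on $(\mathbf{W},A)$ does annihilate $\chi^{1,desc}_{P,a,eff}-\psi_d$, where $\psi_d\equiv\pi_a(\mathbf{O}_{min};P)^{-1}I_a(A)\{Y-b_a(\mathbf{O};P)\}$: each summand of $\chi^{1,desc}_{P,a,eff}$ lies in $\Lambda_{M_k}$ or $\Lambda_Y$ and hence, by the Local Markov Property, has zero conditional mean given all topological predecessors, which include $\mathbf{W}\cup\{A\}$; evaluating the $\mathbf{W}$-measurable remainder on $\{A\neq a\}$ then forces it to vanish under positivity. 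The paper never isolates this lemma; its completeness proof instead runs a case analysis in which cross-block interference is handled ad hoc (for instance, when the non-descendant conditions fail it splits further according to whether the descendant conditions hold, using that $h_{\mathcal{G}}=\psi_d$ in one subcase and invoking Lemma \ref{lemma:main_Z} in the other). Your soundness direction and your completeness constructions for the indexed inclusion failures coincide with the paper's: Lemma \ref{lemma:parents_nondesc} part 4 together with Lemma \ref{lemma:non_desc} when \eqref{eq_inclusion_padres} first fails at some $j^{\ast}$, and Lemma \ref{lemma:main_Z} parts 1--3 (with Lemma \ref{lemma:indep_med}) when \eqref{eq:inclusion_1} fails at $k=K+1$ or \eqref{eq:inclusion_2} first fails at some $k^{\ast}$.

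There is, however, one concrete gap: the branch in which $J>1$ and \eqref{eq:padres_O_T} fails, i.e.\ $\mathbf{O}\setminus\{O_T\}\not\subset\pa_{\mathcal{G}}(O_T)$. The algorithm then leaves \texttt{efficient\_nondesc} at \texttt{False} (the \textbf{elif} branch at step 17 never sets it), so completeness requires exhibiting a law at which the non-descendant block identity fails; yet none of the machinery you cite reaches this case, since Lemma \ref{lemma:parents_nondesc} (your reduction of the d-separation \eqref{eq_main_independence} to the inclusions \eqref{eq_inclusion_padres}) and Lemma \ref{lemma:non_desc} both take \eqref{eq:padres_O_T} as a standing hypothesis, and Proposition \ref{prop:pa_M} concerns only the descendant block. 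The paper closes this case with a separate bespoke argument that your plan must incorporate: because $\irrel(A,Y,\mathcal{G})=\emptyset$ forces $W_J=O_T$, and $W_J$ enters $\chi^{1,non-desc}_{P,a,eff}$ only through the single term $E_P\left[b_a(\mathbf{O};P)\mid W_J,\pa_{\mathcal{G}}(W_J)\right]$, the non-descendant block always has the form $g_1\left(W_J,\pa_{\mathcal{G}}(W_J)\right)+g_2\left(\mathbf{W}\setminus\{W_J\}\right)$; choosing via Lemma \ref{lemma:unrestricted} a law $P^{\ast}$ with $b_a(\mathbf{O};P^{\ast})=O_1\times\cdots\times O_T$, which cannot be additively separated in that way when some $O_i\notin\{W_J\}\cup\pa_{\mathcal{G}}(W_J)$, shows the block identity fails at $P^{\ast}$. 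With this case added, your separability lemma plus the two per-block characterizations do yield the theorem.
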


\begin{example}[Continuation of Example \eqref{ex:algorithm_nondesc}]
\label{ex:nondesc_cont}

In the DAG of Figure \ref{fig:ex_nondesc}, $\mathbf{M}=\emptyset$ and hence $K=0$. The algorithm then stores the formula in \eqref{eq:chi3_desc}
and finally returns
\begin{align*}
\chi _{P,a,eff}^{1}\left( \mathbf{V};\mathcal{G}\right)&=
b_{a}(O;P)-\chi_{a}(P;\mathcal{G})+E_{P}\left[b_{a}(O;P)\mid W_{3}, W_{4} \right]-E_{P}\left[b_{a}(O;P)\mid W_{2}, W_{4} \right] 
\\
&+ E_{P}\left[b_{a}(O;P)\mid W_{2}, W_{3} \right] - E_{P}\left[b_{a}(O;P)\mid W_{3}\right]
+ \frac{I_{a}(A)}{%
\pi_{a}(\mathbf{O}_{min};P) }\left\{ Y-b_{a}\left( \mathbf{O;}%
P\right) \right\}
\end{align*}
and a negative answer to the inquiry of whether \eqref{eq:adj_eff_cond} holds.
\end{example}

\begin{example}[Continuation of Example \eqref{ex:algorithm_nondesc2}]
\label{ex:nondesc2_cont}
In the DAG of Figure \ref{fig:ex_nondesc_large}, $\mathbf{M}=\lbrace M_{1},M_{2},M_{3}\rbrace$ and $K=3$. Condition \eqref{eq:inclusion_1} holds for $M_{4}=Y$ and \eqref{eq:inclusion_2} holds for $k=2,3,4$. Consequently the algorithm stores the formula in \eqref{eq:chi3_desc} and finally returns 
$$
\chi _{P,a,eff}^{1}\left( \mathbf{V};\mathcal{G}\right)=b_{a}\left( \mathbf{O;}P\right) -\chi _{a}\left( P;\mathcal{G}\right) + \frac{I_{a}(A)}{%
\pi_{a}(\mathbf{O}_{min};P) }\left\{ Y-b_{a}\left( \mathbf{O;}%
P\right) \right\}
$$
and a positive answer to the inquiry of whether \eqref{eq:adj_eff_cond} holds. Note that the black edges connecting $O_{1}$ and $O_{2}$ to the mediators and $Y$ and the black edges from $A$ to the mediators and $Y$ are necessary for the conditions \eqref{eq:inclusion_1} and \eqref{eq:inclusion_2} to hold for all $k\in\lbrace 2,\dots, K+1\rbrace$. The edges connecting each mediator to the next element in the topological order and connecting $A$ to $M_1$ are also needed as indicated in Remark \ref{remark:chain_med}. The edge connecting $O_{3}$ with $M_1$ is also necessary. The grey edge between $O_{3}$ and $M_{2}$ may or may not be present, without affecting the validity of expression \eqref{eq:chi3_desc} for all $P\in\mathcal{M(G)}$. However, had an edge between $O_{3}$ and $M_{3}$ existed then the presence of the purple edge between $O_{3}$ and $M_{2}$ would have been  necessary for the validity of expression \eqref{eq:chi3_desc} for all $P\in\mathcal{M(G)}$. In fact this illustrates the point that for expression \eqref{eq:chi3_desc}  to be valid for all $P\in\mathcal{M(G)}$ it is necessary that whenever a node in $\mathbf{O}\setminus \mathbf{O}_{min}$ is a parent of $M_{k}$ then it must be a parent of $M_{k^{\prime}}$ for every $k^{\prime}<k$. Likewise, the presence of the green edge connecting $M_{1}$ with $M_{3}$ is not necessary for  the validity of expression \eqref{eq:chi3_desc} for all $P\in\mathcal{M(G)}$. We note that a necessary condition for \eqref{eq:chi3_desc}  to be valid for all $P\in\mathcal{M(G)}$ is that whenever $M_{k}$ is a parent of $M_{k^{\prime}}$ then it is also a parent of $M_{k^{\prime\prime}}$ for all $k^{\prime\prime}\in\lbrace k^{\prime}+1,\dots, K  \rbrace$.
\end{example}

\begin{figure}[ht]
\begin{center}
\begin{tikzpicture}[>=stealth, node distance=1.5cm,
pre/.style={->,>=stealth,ultra thick,line width = 1.4pt}]
  \begin{scope}
    \tikzstyle{format} = [circle, inner sep=2.5pt,draw, thick, circle, line width=1.4pt, minimum size=6mm]
    \node[format] (A) {$A$};
    \node[format, right of=A] (M1) {$M_1$};
    \node[format, right of=M1] (M2) {$M_2$};
    \node[format, right of=M2] (M3) {$M_3$};
    \node[format, right of=M3] (Y) {$Y$};
    \node[format, above of=M1] (O) {$O$};
                 \draw (A) edge[pre, black] (M1);
                 \draw (M1) edge[pre, black] (M2);
                 \draw (M2) edge[pre, black] (M3);
                 \draw (M3) edge[pre, black] (Y);
                 \draw (O) edge[pre, black] (A);
                 \draw (O) edge[pre, black] (M1);
                 \draw (O) edge[pre, black] (M2);
                 \draw (M1) edge[pre, red, out=330, in=220] (Y);
  \end{scope} 
  \end{tikzpicture}
\end{center}
\caption{A DAG where the NP-$\mathbf{O}$ estimator is inefficient.}
\label{fig:ex_desc}
\end{figure}
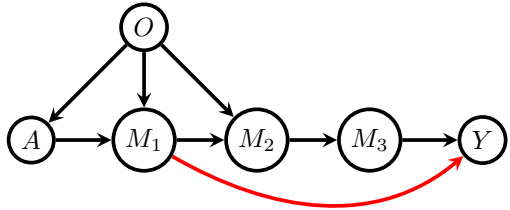

\begin{example}\label{ex:desc}
Consider the DAG in Figure \ref{fig:ex_desc}. In this case $\mathbf{O}=\mathbf{O}_{min}=\lbrace O \rbrace\equiv\lbrace W_{1} \rbrace$, $J=T=1$, $\mathbf{M}=\lbrace M_{1},M_{2},M_{3}\rbrace$ and $K=3$. 
Because $J=1$ the algorithm stores the formula \eqref{eq:formula_arriba_reducida}.
In addition, it is easy to check that, conditions \eqref{eq:inclusion_1}, \eqref{eq:inclusion_2} and \eqref{eq:d_sep_mediators}  hold for $k=2$, hence
\begin{equation}
E_{P}\left[T_{P,a,\mathcal{G}}\mid M_1, \pa_{\mathcal{G}}(M_{1}) \right]- E_{P}\left[T_{P,a,\mathcal{G}}\mid \pa_{\mathcal{G}}(M_{2}) \right]
\label{eq:term_vanish}
\end{equation}
vanishes for all $P\in\mathcal{M(G)}$. 
On the other hand, condition \eqref{eq:inclusion_1} fails for $k=4$, condition \eqref{eq:d_sep_mediators} fails for $k=3$ and $\lbrace A,\mathbf{O}\rbrace=\pa_{\mathcal{G}}(M_{1})$. Hence  the algorithm stores
\begin{align*}
\chi^{1,non-desc}_{P,a,eff}(\mathbf{V};\mathcal{G})&= E_{P}\left[T_{P,a,\mathcal{G}}\mid Y, \pa_{\mathcal{G}}(Y) \right]- E_{P}\left[T_{P,a,\mathcal{G}}\mid \pa_{\mathcal{G}}(Y) \right] + E_{P}\left[T_{P,a,\mathcal{G}}\mid M_3, \pa_{\mathcal{G}}(M_{3}) \right] 
\\
&-  E_{P}\left[T_{P,a,\mathcal{G}}\mid \pa_{\mathcal{G}}(M_{3}) \right] +  E_{P}\left[T_{P,a,\mathcal{G}}\mid M_2, \pa_{\mathcal{G}}(M_2) \right] - \frac{I_{a}(A)b_{a}(O;P)}{\pi_{a}(O;P)}.
\end{align*}
Notice that even though \eqref{eq:term_vanish} vanishes, $\chi^{1,non-desc}_{P,a,eff}(\mathbf{V};\mathcal{G})$ depends on $M_{1}$ because
$$
E_{P}\left[T_{P,a,\mathcal{G}}\mid Y, \pa_{\mathcal{G}}(Y) \right]= Y E_{P}\left[\frac{I_{a}(A)b_{a}(O;P)}{\pi_{a}(O;P)} \mid  M_{1}, M_{3}\right].
$$
\end{example}

\begin{example}\label{ex:front}
Consider now the DAG in Figure \ref{fig:front}. In this example, $\mathbf{O}=\mathbf{O}_{min}=\lbrace O \rbrace\equiv \lbrace W_{1} \rbrace$, $J=T=1$, $\mathbf{M}=\lbrace M\rbrace$ and $K=1$. Condition \eqref{eq:inclusion_1} fails, because $A$ is not a parent of $Y$. It is easy to check that the algorithm returns the following formula
\begin{align*}
\chi^{1}_{P,a,eff}(\mathbf{V};\mathcal{G}) &= b_{a}(O;P) - \chi_{a}(\mathcal{G};P)+ E_{P}\left[ T_{P,a,\mathcal{G}}\mid Y, M\right] - E_{P}\left[ T_{P,a,\mathcal{G}}\mid M\right] +  E_{P}\left[ T_{P,a,\mathcal{G}}\mid A, M\right] - E_{P}\left[ T_{P,a,\mathcal{G}}\mid A\right]
\\
&=  b_{a}(O;P) - \chi_{a}(\mathcal{G};P) + Y E_{P}\left[ \frac{I_{a}(A)}{\pi_{a}(O;P)}\mid Y, M\right] - E_{P}\left[ \frac{I_{a}(A)Y}{\pi_{a}(O;P)}\mid M\right]
\\
&+ I_{a}(A) E_{P}\left[ \frac{Y}{\pi_{a}(O;P)}\mid A, M\right] -I_{a}(A) E_{P}\left[ \frac{Y}{\pi_{a}(O;P)}\mid A\right].
\end{align*}
Note that this expression depends on both $O$ and $M$. 
This shows that the  NP-$\mathbf{O}$ estimator cannot be globally efficient in model $\mathcal{M(G)}$ because the NP-$\mathbf{O}$ estimator does not depend on $M$. It also demonstrates the  point announced earlier, that the non-parametric estimator of the front-door formula \eqref{eq:frontdoor} is also not globally efficient, because this estimator does not depend on the variable $O$.
\end{example}

\subsection{A connection between identification and efficient NP-$\mathbf{O}$ estimation}

Theorem \ref{theo:main} has the following interesting corollary. 
\begin{theorem}\label{theo:id}
Suppose that for a given DAG $\mathcal{G}$, $\mathbf{O}_{min}$ is not empty. 
Let $\mathbf{M}=\cn(A,Y,\mathcal{G})\setminus\lbrace Y\rbrace$.
If there exists an identifying formula for $\chi_{a}(P;\mathcal{G})$ that depends only on  $A, Y$ and the mediators $\mathbf{M}$ then the NP-$\mathbf{O}$ estimator of $\chi_{a}(P;\mathcal{G})$ is not globally efficient under the Bayesian Network $\mathcal{M(G)}$.
\end{theorem}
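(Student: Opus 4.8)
The plan is to prove the contrapositive: assuming the NP-$\mathbf{O}$ estimator \emph{is} globally efficient, I will show that no identifying formula for $\chi_a(P;\mathcal{G})$ can depend only on $A$, $Y$ and the mediators $\mathbf{M}=\cn(A,Y,\mathcal{G})\setminus\{Y\}$. The engine on the efficiency side is the completeness half of Theorem \ref{theo:main}: global efficiency of NP-$\mathbf{O}$ is equivalent to \eqref{eq:main_id}, which in turn forces the structural necessary conditions isolated in Proposition \ref{prop:pa_M} and Remark \ref{remark:chain_med}. The engine on the identification side is the completeness of the ID algorithm \citep{id-orig,shpitser-id}, which I will use to certify non-identifiability from a confounding structure extracted from those necessary conditions.

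First I would extract the structural consequence of efficiency. Suppose NP-$\mathbf{O}$ is globally efficient, so that \eqref{eq:main_id} holds. If $K\geq 1$ (at least one mediator), Proposition \ref{prop:pa_M} gives that \eqref{eq:inclusion_1} holds for $k=K+1$, i.e. $\{A\}\cup\mathbf{O}_{min}\subset\pa_{\mathcal{G}}(Y)$; if $K=0$ then, as recorded in the derivation of Algorithm \ref{algo:main}, one always has $\pa_{\mathcal{G}}(Y)=\{A\}\cup\mathbf{O}\supseteq\{A\}\cup\mathbf{O}_{min}$ and, since $A\in\an_{\mathcal{G}}(Y)$ with no intermediate causal node, $A\to Y$ directly. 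Because $\mathbf{O}_{min}\neq\emptyset$ by hypothesis, in either case there is a vertex $O^{\ast}\in\mathbf{O}_{min}$ with $O^{\ast}\in\pa_{\mathcal{G}}(Y)$ and moreover $A\in\pa_{\mathcal{G}}(Y)$. Note $O^{\ast}\in\mathbf{O}_{min}\subset\mathbf{O}\subset\mathbf{W}$ is a non-descendant of $A$, hence $O^{\ast}\notin\{A,Y\}\cup\mathbf{M}$.

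The crux is to show that such an $O^{\ast}$ obstructs identification of $\chi_a$ from the margin of $\mathbf{V}'=\{A,Y\}\cup\mathbf{M}$. Here I would use that minimality of $\mathbf{O}_{min}$ yields a path from $A$ to $O^{\ast}$ that is active given $\mathbf{O}_{min}\setminus\{O^{\ast}\}$: were $A$ d-separated from $O^{\ast}$ given the remaining elements, $O^{\ast}$ could be dropped, contradicting minimality. Since $O^{\ast}$ is a non-descendant of $A$, this is a back-door path (arrowhead into $A$), and choosing it shortest one checks it cannot traverse any mediator: a mediator $M_i\in\de_{\mathcal{G}}(A)$ can be reached from the non-descendant side only through a collider, which cannot be activated because the conditioning set $\mathbf{O}_{min}\setminus\{O^{\ast}\}\subset\mathbf{W}$ contains no descendant of $M_i$. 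Concatenating this latent-interior back-door path with $O^{\ast}\to Y$ exhibits a confounding route between $A$ and $Y$ whose interior lies entirely outside $\mathbf{V}'$; in the latent projection of $\mathcal{G}$ onto $\mathbf{V}'$ this induces confounding between $A$ and $Y$ which, together with the directed edge $A\to Y$, is the canonical bow-arc obstruction. By completeness of the ID algorithm, $E[Y_a]$ is then not a functional of $P_{\mathbf{V}'}$ alone, so no identifying formula depending only on $A$, $Y$ and $\mathbf{M}$ exists, establishing the contrapositive.

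I expect the main obstacle to be precisely this last identification step. The delicate point is to guarantee that the extracted confounding route collapses, after marginalizing to $\mathbf{V}'$, into a genuine bidirected connection $A\leftrightarrow Y$ (equivalently, a hedge for $Q[\{Y\}]$ that the ID algorithm flags as \texttt{FAIL}); the active back-door path may carry colliders activated by $\mathbf{O}_{min}\setminus\{O^{\ast}\}$, so one must argue at the level of the latent-projection ADMG and its districts rather than relying on a single non-collider path. A fully self-contained alternative that sidesteps the ID machinery is to exhibit two laws $P,\widetilde{P}\in\mathcal{M}(\mathcal{G})$ with identical $\mathbf{V}'$-margins but $\chi_a(P;\mathcal{G})\neq\chi_a(\widetilde{P};\mathcal{G})$, constructed by trading off the $O^{\ast}\to Y$ mechanism and the direct $A\to Y$ mechanism while holding the observed margin of $(A,Y,\mathbf{M})$ fixed; the bow-arc structure isolated above is exactly what makes such a compensating perturbation possible, and either route completes the proof.
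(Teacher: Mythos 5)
Your overall architecture coincides with the paper's proof: assume global efficiency, invoke the completeness half of Theorem \ref{theo:main} (through Proposition \ref{prop:pa_M} and Remark \ref{remark:chain_med}, all stated after the reduction to $\irrel(A,Y,\mathcal{G})=\emptyset$ via Lemma \ref{lemma:correct_algo_prun_indir}, a reduction you should state explicitly), pass to the latent projection onto $\{A,Y\}\cup\mathbf{M}$, and conclude non-identifiability from completeness of the ID algorithm. However, there is a genuine gap exactly where you flag one, and as written your argument does not close it. The path from $A$ to $O^{\ast}$ that minimality of $\mathbf{O}_{min}$ supplies is open only \emph{given} $\mathbf{O}_{min}\setminus\{O^{\ast}\}$, so it may carry colliders whose activation depends on that conditioning set. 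In the latent projection nothing is conditioned on: a bidirected edge $A\leftrightarrow Y$ requires a path with arrowheads at both endpoints whose interior vertices are all latent \emph{non-colliders} (a single latent fork), and a d-connecting path that relies on colliders activated by latent vertices does not survive marginalization as a bidirected connection. Your ``canonical bow-arc obstruction'' is therefore asserted rather than established, and the two escape routes you name (arguing at the level of districts, or constructing two laws with identical $(A,Y,\mathbf{M})$-margin) are gestured at but not carried out; the second in particular is a substantial construction, not a remark.

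The paper sidesteps this difficulty by using the full strength of the necessary conditions from Theorem \ref{theo:main}: not only $\{A\}\cup\mathbf{O}_{min}\subset\pa_{\mathcal{G}}(Y)$, but, chaining \eqref{eq:inclusion_2} downward from $Y$, every vertex of $\mathbf{O}_{min}$ is a parent of \emph{every} mediator as well, with $A\in\pa_{\mathcal{G}}(M_1)$ and $M_k\in\pa_{\mathcal{G}}(M_{k+1})$; each $O\in\mathbf{O}_{min}$ is then a latent common parent of every pair from $\{M_1,\dots,M_K,Y\}$, which makes $A,M_1,\dots,M_K,Y$ a single district. Your leaner bow-arc plan is in fact salvageable, but the repair uses precisely the fact you discard, namely that \emph{all} of $\mathbf{O}_{min}$ (not just $O^{\ast}$) lies in $\pa_{\mathcal{G}}(Y)$: take your active path from $A$ to $O^{\ast}$; its first edge points into $A$ and its interior avoids $\de_{\mathcal{G}}(A)$ (your argument for this part is sound, since any activated collider must be an ancestor of $\mathbf{O}_{min}\subset\de_{\mathcal{G}}^{c}(A)$, hence interior vertices are non-descendants of $A$ and automatically latent). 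If the path is collider-free, concatenate with $O^{\ast}\rightarrow Y$. Otherwise let $C$ be the collider nearest $A$: the subpath from $A$ to $C$ is collider-free with arrowheads at both ends, $C$ has an all-latent directed path to some $O^{\prime}\in\mathbf{O}_{min}\setminus\{O^{\ast}\}$ (or $C\in\mathbf{O}_{min}$ itself), and since $O^{\prime}\in\pa_{\mathcal{G}}(Y)$, the concatenation $A\leftarrow\cdots\rightarrow C\rightarrow\cdots\rightarrow O^{\prime}\rightarrow Y$ has all-latent non-collider interior and arrowheads at both ends, yielding $A\leftrightarrow Y$ in the projection; together with $A\in\pa_{\mathcal{G}}(Y)$ this is the bow arc, and non-identifiability follows as you intend. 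Without this rerouting step, or the paper's district argument, the pivotal identification step of your proof is incomplete.
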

\begin{proof}
We prove the result by contradiction. 
By Lemma \ref{lemma:correct_algo_prun_indir}, without loss of generality, we can assume that $\irrel(A,Y,\mathbf{G})=\emptyset$.
Suppose that the NP-$\mathbf{O}$ estimator of $\chi_{a}(P;\mathcal{G})$ is globally efficient under $\mathcal{M(G)}$. Then Theorem \ref{theo:main} implies that every vertex $\mathbf{O}_{min}$ must be a parent of $Y$ and of every vertex in $\mathbf{M}$. Furthermore, $A$ must be a parent of $M_{1}$ and by Remark \ref{remark:chain_med}, each $M_{k}$ must be a parent of $M_{k+1}$. 
Let $\mathcal{G}\left[\lbrace A,Y\rbrace \cup \mathbf{M} \right]$ be the latent projection of  $\mathcal{G}$ \citep{admg} onto the vertex set $\lbrace A, Y\rbrace \cup \mathbf{M}$. Because $\mathbf{O}_{min}$ is not empty, then, in $\mathcal{G}\left[\lbrace A,Y\rbrace \cup \mathbf{M} \right]$, the nodes $A, M_{1},M_{2},\dots, M_{K},Y$ are all in the same district. The completeness of the ID algorithm, see \cite{id-orig} and \cite{shpitser-id}, implies that $\chi_{a}(P;\mathcal{G})$ is not identified when only $\lbrace A, Y\rbrace \cup \mathbf{M}$ are observed.
\end{proof}

Interestingly, Theorem \ref{theo:id} implies that in the front-door DAG in Figure \ref{fig:front}, the NP-$\mathbf{O}$ estimator is not efficient.
\section{Discussion}\label{sec:disc}

The results in this paper raise a number of open problems, several of which we are currently investigating.
\begin{enumerate}
    \item The derivation of a graphical criterion to characterize the class of all time dependent adjustment sets, like adjustment sets in row 1 and 8 in Example \ref{ex:time}, that dominate the rest even if they don't dominate each other.
    \item The characterization of DAGs under which an optimal time dependent adjustment set exists for joint interventions. 
    \item  The characterization of the subset of DAGs such that an optimal time dependent adjustment set exists for joint interventions, and for which the optimal time dependent adjustment set is time independent. 
    \item The characterization of DAGs such that among the adjustment sets of minimal size, there exists an optimal one.
    \item For DAGs for which an optimal time dependent adjustment set exists, the derivation of a sound and complete algorithm to answer the inquiry of whether the non-parametric optimally adjusted estimator is globally efficient under the Bayesian Network.
    \item For DAGs with latent variables such that observable adjustment sets exist, the characterization of the subset of DAGs for which an optimal adjustment set exists among the observable adjustment sets. 
    \item For DAGs with latent variables, the derivation of a general expression for the efficient influence function of $\chi_{a}(P;\mathcal{G})$ and a non-parametric globally efficient estimator.
    \item For DAGs with latent variables for which an optimal observable adjustment set exists, the derivation of a sound and complete algorithm to answer the inquiry of whether the non-parametric optimally adjusted estimator is globally efficient under the marginal of Bayesian Network for the observable variables.
\end{enumerate}

\section*{Acknowledgments}

Ezequiel Smucler was partially supported by Grant 20020170100330BA from
Universidad de Buenos Aires and PICT-201-0377 from ANPYCT, Argentina.

\newpage

\section{Appendix}

\subsection{Main proofs}

\subsubsection{Proofs of results in Section \protect\ref{sec:optimal_adj}}

\begin{proof}[Proof of Lemma \protect\ref{lemma:supplementation}]
Let 
$$
\chi_{\mathbf{a}}(P;\mathcal{G}) = E_{P}\left[\frac{I_{\mathbf{a}}(\mathbf{A})Y}{\pi_{\mathbf{a}}(\pa_{\mathcal{G}}(\mathbf{A};\mathcal{G})} \right].
$$
We first show that $(\mathbf{G,B})$ is an adjustment set. Since $\A \perp
\!\!\!\perp_{\mathcal{G}} \mathbf{G} \mid \mathbf{B}$ we have 
\begin{equation}
\pi_{\mathbf{a}}\left( \mathbf{G,B};P\right) = \pi_{\mathbf{a}}(\mathbf{B};P).
\label{eq:pi_eq}
\end{equation}
Then, for all $P\in\mathcal{M}(\mathcal{G})$ 
\begin{align*}
E_{P}\left\lbrace E_{P}\left[ Y \mid \A=\mathbf{a}, \mathbf{G},\mathbf{B} \right]
\right\rbrace&= E_{P}\left[ \frac{I_{\mathbf{a}}(\A)Y}{\pi_{\mathbf{a}}(\mathbf{G,B};P)}\right]
\\&= E_{P}\left[ \frac{I_{\mathbf{a}}(\mathbf{a})Y}{\pi_{\mathbf{a}}(\mathbf{B};P)}\right] \\
& =E_{P}\left\lbrace E_{P}\left[ Y \mid \A=\mathbf{a}, \mathbf{B} \right] \right\rbrace
\\
&= \chi_{\mathbf{a}}(P;\mathcal{G}),
\end{align*}
where the last equality holds because $\mathbf{B}$ is by assumption an
adjustment set. This shows that $(\mathbf{G,B})$ is an adjustment set.

Now, 
\begin{eqnarray*}
\psi _{P,\mathbf{a}}\left( \mathbf{B};\mathcal{G}\right)  &=&\frac{I_{%
\mathbf{a}}(\mathbf{A})Y}{\pi _{\mathbf{a}}(\mathbf{B};P)}-\left[ \frac{I_{%
\mathbf{a}}(\mathbf{A})}{\pi _{\mathbf{a}}(\mathbf{B};P)}-1\right] b_{%
\mathbf{a}}(\mathbf{B};P)-\chi_{\mathbf{a}} (P;\mathcal{G}) \\
&=&\frac{I_{\mathbf{a}}(\mathbf{A})Y}{\pi _{\mathbf{a}}(\mathbf{G},\mathbf{B%
};P)}-\left[ \frac{I_{\mathbf{a}}(\mathbf{A})}{\pi _{\mathbf{a}}(\mathbf{G},%
\mathbf{B};P)}-1\right] b_{\mathbf{a}}(\mathbf{G},\mathbf{B};P)-\chi_{\mathbf{a}} (P;%
\mathcal{G})\\
&+&\left[ \frac{I_{\mathbf{a}}(\mathbf{A})}{\pi _{\mathbf{a}}(%
\mathbf{G},\mathbf{B};P)}-1\right] \left\{ b_{\mathbf{a}}(\mathbf{G},\mathbf{%
B};P)-b_{\mathbf{a}}(\mathbf{B};P)\right\}  \\
&=&\psi _{P,\mathbf{a}}\left( \mathbf{G},\mathbf{B};\mathcal{G}\right) +%
\left[ \frac{I_{\mathbf{a}}(\mathbf{A})}{\pi _{\mathbf{a}}(\mathbf{G},%
\mathbf{B};P)}-1\right] \left[ b_{\mathbf{a}}(\mathbf{G},\mathbf{B};P)-b_{%
\mathbf{a}}(\mathbf{B};P)\right] 
\end{eqnarray*}%
where the second equality follows from $\left( \ref{eq:pi_eq}\right) .$
Next, noting that 
\begin{equation}
E_{P}\left\{ \psi _{P,\mathbf{a}}\left[ \mathbf{G,B};\mathcal{G}\right] g(%
\mathbf{A},\mathbf{G},\mathbf{B})\right\} =0\text{ for any }g\text{ such
that }E_{P}\left[ g(\mathbf{A},\mathbf{G},\mathbf{B})|\mathbf{G},\mathbf{B}%
\right] =0  
\label{eq:uncorr}
\end{equation}%
and that 
$$
E_{P}\left\{ \left. \left[ \frac{I_{\mathbf{a}}(\mathbf{A})}{\pi _{%
\mathbf{a}}(\mathbf{G},\mathbf{B};P)}-1\right] \left[ b_{\mathbf{a}}(\mathbf{%
G},\mathbf{B};P)-b_{\mathbf{a}}(\mathbf{B};P)\right] \right\vert \mathbf{G},%
\mathbf{B}\right\} =0
$$
we conclude that 
\begin{eqnarray*}
\sigma _{\mathbf{a},\mathbf{B}}^{2}\left( P\right)  &\equiv &var_{P}\left[ \psi _{P,%
\mathbf{a}}\left( \mathbf{B};\mathcal{G}\right) \right]  \\
&=&var_{P}\left[ \psi _{P,\mathbf{a}}\left( \mathbf{G},\mathbf{B};\mathcal{G}%
\right) \right] +var_{P}\left[ \left\{ \frac{I_{\mathbf{a}}(\mathbf{A})}{\pi
_{\mathbf{a}}(\mathbf{G},\mathbf{B};P)}-1\right\} \left\{ b_{\mathbf{a}}(%
\mathbf{G},\mathbf{B};P)-b_{\mathbf{a}}(\mathbf{B};P)\right\} \right]  \\
&\equiv &\sigma _{\mathbf{a},\mathbf{G},\mathbf{B}}^{2}\left( P\right) +var_{P}\left[
\left\{ \frac{I_{\mathbf{a}}(\mathbf{A})}{\pi _{\mathbf{a}}(\mathbf{G},%
\mathbf{B};P)}-1\right\} \left\{ b_{\mathbf{a}}(\mathbf{G},\mathbf{B};P)-b_{%
\mathbf{a}}(\mathbf{B};P)\right\} \right] .
\end{eqnarray*}%
Now 
\begin{align}
& \left. var_{P}\left[ \left\{ \frac{I_{\mathbf{a}}(\mathbf{A})}{\pi _{%
\mathbf{a}}(\mathbf{G},\mathbf{B};P)}-1\right\} \left\{ b_{\mathbf{a}}(%
\mathbf{G},\mathbf{B};P)-b_{\mathbf{a}}(\mathbf{B};P)\right\} \right]
=\right.   \nonumber \\
& E_{P}\left\{ \left[ b_{\mathbf{a}}(\mathbf{G,B};P)-b_{\mathbf{a}}(\mathbf{B%
};P)\right] ^{2}var_{P}\left[ \frac{I_{\mathbf{a}}(\mathbf{A})}{\pi _{%
\mathbf{a}}(\mathbf{G,B};P)}-1\mid \mathbf{G,B}\right] \right\} =  \nonumber
\\
& E_{P}\left\{ \left[ b_{\mathbf{a}}(\mathbf{G,B};P)-b_{\mathbf{a}}(\mathbf{B%
};P)\right] ^{2}\left[ \frac{1}{\pi _{\mathbf{a}}(\mathbf{G,B};P)}-1\right]
\right\} =  \nonumber \\
& E_{P}\left\{ \left[ b_{\mathbf{a}}(\mathbf{G,B};P)-b_{\mathbf{a}}(\mathbf{B%
};P)\right] ^{2}\left[ \frac{1}{\pi _{\mathbf{a}}(\mathbf{B};P)}-1\right]
\right\} =  \nonumber \\
& E_{P}\left\{ var_{P}(b_{\mathbf{a}}(\mathbf{G,B};P)\mid \mathbf{B})\left[ 
\frac{1}{\pi _{\mathbf{a}}(\mathbf{B};P)}-1\right] \right\} ,
\label{eq:var_Qa}
\end{align}%
where the last equality follows from 
\begin{align*}
b_{\mathbf{a}}(\mathbf{B},P)&=E_{P}\left( Y\mid \mathbf{A}=\mathbf{a},\mathbf{%
B}\right) \\
&=E_{P}\left[ E_{P}\left( Y\mid \mathbf{A}=\mathbf{a},\mathbf{G},%
\mathbf{B}\right) \mid \mathbf{A}=\mathbf{a},\mathbf{B}\right] 
\\
&=E_{P}\left[
b_{\mathbf{a}}(\mathbf{G,B})\mid \mathbf{A}=\mathbf{a},\mathbf{B}\right]
\\
&=E_{P}\left[ b_{\mathbf{a}}(\mathbf{G,B})\mid \mathbf{B}\right] ,
\end{align*}
since $\mathbf{A}\perp \!\!\!\perp _{\mathcal{G}}\mathbf{G}\mid \mathbf{B}$.
Next, recall that $\mathbf{c}\equiv \left( c_{\mathbf{a}}\right) _{\mathbf{a}%
\in \mathcal{\mathbf{A}}}$ $,\mathbf{Q\equiv }\left[ Q_{a}\right] _{\mathbf{a%
}\in \mathcal{\mathbf{A}}}$ where 
$$
Q_{\mathbf{a}}\equiv \left\{ \frac{I_{%
\mathbf{a}}(\mathbf{A})}{\pi _{\mathbf{a}}(\mathbf{G},\mathbf{B};P)}%
-1\right\} \left\{ b_{\mathbf{a}}(\mathbf{G},\mathbf{B};P)-b_{\mathbf{a}}(%
\mathbf{B};P)\right\} .
$$ 
For any $\mathbf{Z,}$ define $\mathbf{\psi }%
_{P}\left( \mathbf{Z};\mathcal{G}\right) \equiv \left( \psi _{P,\mathbf{a}%
}\left( \mathbf{Z};\mathcal{G}\right) \right) _{\mathbf{a}\in \mathcal{%
\mathbf{A}}}$. Then, writing $\sum_{\mathbf{a}\in \mathcal{\mathbf{A}}}c_{%
\mathbf{a}}\psi _{P,\mathbf{a}}\left( \mathbf{Z};\mathcal{G}\right) =\mathbf{%
c}^{T}\mathbf{\psi }_{P}\left( \mathbf{Z};\mathcal{G}\right) $ and noticing
that $E_{P}\left[ \mathbf{Q|G},\mathbf{B}\right] =\mathbf{0\,,}$ it follows
from $\left( \ref{eq:uncorr}\right) $ that 
\begin{eqnarray*}
\sigma _{\Delta ,\mathbf{B}}^{2}\left( P\right)  &=&var_{P}\left[ \mathbf{c}%
^{T}\mathbf{\psi }_{P}\left( \mathbf{B};\mathcal{G}\right) \right]  \\
&=&var_{P}\left[ \mathbf{c}^{T}\mathbf{\psi }_{P}\left( \mathbf{G,B};%
\mathcal{G}\right) \right] +var_{P}\left[ \mathbf{c}^{T}\mathbf{Q}%
\right]  \\
&=&\sigma _{\Delta ,\mathbf{G},\mathbf{B}}^{2}\left( P\right) +\mathbf{c}%
^{T}var_{P}\left( \mathbf{Q}\right) \mathbf{c}
\end{eqnarray*}%
The expression for $var_{P}\left( Q_{\mathbf{a}}\right) $ was derived in $%
\left( \ref{eq:var_Qa}\right) .$ On the other hand if $\mathbf{a}\not=%
\mathbf{a}\prime $%
\begin{eqnarray*}
cov_{P}\left( Q_{\mathbf{a}},Q_{\mathbf{a}^{\prime }}\right)  &=&E_{P}\left[
\left\{ \frac{I_{\mathbf{a}}(\mathbf{A})}{\pi _{\mathbf{a}}(\mathbf{B};P)}%
-1\right\} \left\{ \frac{I_{\mathbf{a}^{\prime }}(\mathbf{A})}{\pi _{\mathbf{%
a}^{\prime }}(\mathbf{B};P)}-1\right\} \left\{ b_{\mathbf{a}}(\mathbf{G},%
\mathbf{B};P)-b_{\mathbf{a}}(\mathbf{B};P)\right\} \left\{ b_{\mathbf{a}%
^{\prime }}(\mathbf{G},\mathbf{B};P)-b_{\mathbf{a}^{\prime }}(\mathbf{B}%
;P)\right\} \right]  \\
&=&E_{P}\left[ \left\{ \frac{I_{\mathbf{a}}(\mathbf{A})}{\pi _{\mathbf{a}}(%
\mathbf{B};P)}-1\right\} \left\{ \frac{I_{\mathbf{a}^{\prime }}(\mathbf{A})}{%
\pi _{\mathbf{a}^{\prime }}(\mathbf{B};P)}-1\right\} cov_{P}\left[ b_{%
\mathbf{a}}(\mathbf{G},\mathbf{B};P),b_{\mathbf{a}^{\prime }}(\mathbf{G},%
\mathbf{B};P)|\mathbf{B,A}\right] \right]  \\
&=&E_{P}\left[ \left\{ \frac{I_{\mathbf{a}}(\mathbf{A})}{\pi _{\mathbf{a}}(%
\mathbf{B};P)}-1\right\} \left\{ \frac{I_{\mathbf{a}^{\prime }}(\mathbf{A})}{%
\pi _{\mathbf{a}^{\prime }}(\mathbf{B};P)}-1\right\} cov_{P}\left[ b_{%
\mathbf{a}}(\mathbf{G},\mathbf{B};P),b_{\mathbf{a}^{\prime }}(\mathbf{G},%
\mathbf{B};P)|\mathbf{B}\right] \right]  \\
&=&E_{P}\left[ \frac{cov_{P}\left[ I_{\mathbf{a}}(\mathbf{A}),I_{\mathbf{a}%
^{\prime }}(\mathbf{A})|\mathbf{B}\right] }{\pi _{\mathbf{a}}(\mathbf{B}%
;P)\pi _{\mathbf{a}^{\prime }}(\mathbf{B};P)}cov_{P}\left[ b_{\mathbf{a}}(%
\mathbf{G},\mathbf{B};P),b_{\mathbf{a}^{\prime }}(\mathbf{G},\mathbf{B};P)|%
\mathbf{B}\right] \right]  \\
&=&-E_{P}\left[ cov_{P}\left[ b_{\mathbf{a}}(\mathbf{G},\mathbf{B};P),b_{%
\mathbf{a}^{\prime }}(\mathbf{G},\mathbf{B};P)|\mathbf{B}\right] \right].
\end{eqnarray*}%
This concludes the proof Lemma \ref{lemma:supplementation}
\end{proof}

\medskip

\begin{proof}[Proof of Lemma \protect\ref{lemma:deletion}]
We first show that $\mathbf{G}$ is an adjustment set. For any $P\in \mathcal{%
M}(\mathcal{G})$ the assumption $Y\perp \!\!\!\perp _{\mathcal{G}}\mathbf{B}%
\mid \mathbf{A},\mathbf{G}$ implies 
\begin{eqnarray}
b_{\mathbf{a}}(\mathbf{G,B};P) &\equiv &E_{P}\left( Y\mid \mathbf{A}=\mathbf{%
a},\mathbf{G},\mathbf{B}\right)   \nonumber \\
&=&E_{P}\left( Y\mid \mathbf{A}=\mathbf{a},\mathbf{G}\right)   \nonumber \\
&\equiv &b_{\mathbf{a}}(\mathbf{G};P)  \label{eq:b_a}
\end{eqnarray}%
and consequently that  
\[
E_{P}\left[ b_{\mathbf{a}}(\mathbf{G};P)\right] =E_{P}\left[ b_{\mathbf{a}}(%
\mathbf{G,B};P)\right] =\chi _{\mathbf{a}}(P;\mathcal{G}),
\]%
where the second equality follows from the assumption that $\left( \mathbf{%
G,B}\right) $ is an adjustment set. This shows that $\mathbf{G}$ is an
adjustment set.

Next, write 
\[
var_{P}\left[ \psi _{P,\mathbf{a}}(\mathbf{G,B};\mathcal{G})\right] =E_{P}%
\left[ var_{P}(\psi _{P,\mathbf{a}}(\mathbf{G,B};\mathcal{G})\mid \mathbf{A}%
,Y,\mathbf{G})\right] +var_{P}\left[ E_{P}(\psi _{P,\mathbf{a}}(\mathbf{G,B};%
\mathcal{G})\mid \mathbf{A},Y,\mathbf{G})\right] .
\]%
Now 
\begin{align}
E_{P}\left[ \psi _{P,\mathbf{a}}(\mathbf{G,B};\mathcal{G})\mid \mathbf{A},Y,%
\mathbf{G}\right] & =E_{P}\left[ \left. \frac{I_{\mathbf{a}}(\mathbf{A})}{%
\pi _{\mathbf{a}}(\mathbf{G,B};P)}\left\{ Y-b_{\mathbf{a}}(\mathbf{G}%
;P)\right\} +\left\{ b_{\mathbf{a}}(\mathbf{G};P)-\chi _{\mathbf{a}}(P;%
\mathcal{G})\right\} \right\vert \mathbf{A},Y,\mathbf{G}\right]   \nonumber
\\
& =I_{\mathbf{a}}(\mathbf{A})\left\{ Y-b_{\mathbf{a}}(\mathbf{G};P)\right\}
E_{P}\left[ \left. \frac{1}{\pi _{\mathbf{a}}(\mathbf{G,B};P)}\right\vert 
\mathbf{A}=\mathbf{a},Y,\mathbf{G}\right] +\left\{ b_{\mathbf{a}}(\mathbf{G}%
;P)-\chi _{\mathbf{a}}(P;\mathcal{G})\right\}   \nonumber \\
& =I_{\mathbf{a}}(\mathbf{A})\left\{ Y-b_{\mathbf{a}}(\mathbf{G};P)\right\}
E_{P}\left[ \left. \frac{1}{\pi _{\mathbf{a}}(\mathbf{G,B};P)}\right\vert 
\mathbf{A}=\mathbf{a},\mathbf{G}\right] +\left\{ b_{\mathbf{a}}(\mathbf{G}%
;P)-\chi _{\mathbf{a}}(P;\mathcal{G})\right\}  \nonumber \\
& =\frac{I_{\mathbf{a}}(\mathbf{A})}{\pi _{\mathbf{a}}\left( \mathbf{G}%
;P\right) }\left\{ Y-b_{\mathbf{a}}(\mathbf{G};P)\right\} +\left\{ b_{%
\mathbf{a}}(\mathbf{G};P)-\chi _{\mathbf{a}}(P;\mathcal{G})\right\}  \nonumber\\
& =\psi _{P,\mathbf{a}}(\mathbf{G};\mathcal{G}).  \nonumber
\end{align}%
where the first equality follows from $\left( \ref{eq:b_a}\right) ,$ the
third follows from $Y\perp \!\!\!\perp _{\mathcal{G}}\mathbf{B}\mid \mathbf{A%
},\mathbf{G}$ and the fourth by invoking Lemma \ref{lemma:inv_pi} in Section \ref{sec:aux_res}. On the
other hand, 
\begin{align*}
var_{P}\left[ \psi _{P,\mathbf{a}}(\mathbf{G,B};\mathcal{G})\mid \mathbf{A}%
,Y,\mathbf{G}\right] & =var_{P}\left\{ \frac{I_{\mathbf{a}}(\mathbf{A})}{\pi
_{\mathbf{a}}(\mathbf{G,B};P)}\left\{ Y-b_{\mathbf{a}}(\mathbf{G};P)\right\}
+\left\{ b_{\mathbf{a}}(\mathbf{G};P)-\chi _{\mathbf{a}}(P;\mathcal{G}%
)\right\} \mid \mathbf{A},Y,\mathbf{G}\right\}  \\
& =I_{\mathbf{a}}(\mathbf{A})\left( Y-b_{\mathbf{a}}(\mathbf{G};P)\right)
^{2}var_{P}\left\{ \frac{1}{\pi _{\mathbf{a}}(\mathbf{G,B};P)}\mid \mathbf{A}%
=\mathbf{a},\mathbf{G}\right\} 
\end{align*}%
where the first equality follows from $\left( \ref{eq:b_a}\right) $\ and the
second follows from $Y\perp \!\!\!\perp _{\mathcal{G}}\mathbf{B}\mid \mathbf{%
A},\mathbf{G.}$ Thus 
\[
E_{P}\left\{ var_{P}\left[ \psi _{P,\mathbf{a}}(\mathbf{G,B};\mathcal{G}%
)\mid \mathbf{A},Y,\mathbf{G}\right] \right\} =E_{P}\left\{ \pi _{\mathbf{a}%
}(\mathbf{G};P)var_{P}(Y\mid \mathbf{A}=\mathbf{a},\mathbf{G})var_{P}\left[ 
\frac{1}{\pi _{\mathbf{a}}(\mathbf{G,B};P)}\mid \mathbf{A=a},\mathbf{G}%
\right] \right\} 
\]%
We therefore have 
\begin{eqnarray*}
\sigma _{\mathbf{a},\mathbf{G,B}}^{2}\left( P\right)  &\equiv &var_{P}\left[ \psi _{P,%
\mathbf{a}}(\mathbf{G,B};\mathcal{G})\right]  \\
&=&var_{P}\left[ \psi _{P,\mathbf{a}}(\mathbf{G};\mathcal{G})\right]
+E_{P}\left\{ \pi _{\mathbf{a}}(\mathbf{G};P)var_{P}(Y\mid \mathbf{A}=%
\mathbf{a},\mathbf{G})var_{P}\left[ \frac{1}{\pi _{\mathbf{a}}(\mathbf{G,B}%
;P)}\mid \mathbf{A},\mathbf{G}\right] \right\}  \\
&=&\sigma _{\mathbf{a},\mathbf{G}}^{2}\left( P\right) +E_{P}\left\{ \pi _{\mathbf{a}}(%
\mathbf{G};P)var_{P}(Y\mid \mathbf{A}=\mathbf{a},\mathbf{G})var_{P}\left[ 
\frac{1}{\pi _{\mathbf{a}}(\mathbf{G,B};P)}\mid \mathbf{A},\mathbf{G}\right]
\right\} .
\end{eqnarray*}%
Next, 
\begin{eqnarray*}
\sigma _{\Delta ,\mathbf{G,B}}^{2}\left( P\right)  &=&var_{P}\left[ E_{P}%
\left[ \mathbf{c}^{T}\mathbf{\psi }_{P}\left( \mathbf{G,B};\mathcal{G}%
\right) |\mathbf{A},Y,\mathbf{G}\right] \right] +E_{P}\left[ var_{P}\left[ 
\mathbf{c}^{T}\mathbf{\psi }_{P}\left( \mathbf{G,B};\mathcal{G}\right) |%
\mathbf{A},Y,\mathbf{G}\right] \right]  \\
&=&var_{P}\left[ \mathbf{c}^{T}\mathbf{\psi }_{P}\left( \mathbf{G};\mathcal{G%
}\right) \right] +\mathbf{c}^{T}E_{P}\left[ var_{P}\left[ \mathbf{\psi }%
_{P}\left( \mathbf{G,B};\mathcal{G}\right) |\mathbf{A},Y,\mathbf{G}\right] %
\right] \mathbf{c} \\
&=&\sigma _{\Delta ,\mathbf{G}}^{2}\left( P\right) +\mathbf{c}^{T}E_{P}\left[
var_{P}\left[ \mathbf{\psi }_{P}\left( \mathbf{G,B};\mathcal{G}\right) |%
\mathbf{A},Y,\mathbf{G}\right] \right] \mathbf{c}.
\end{eqnarray*}%
But by $\left( \ref{eq:b_a}\right) $ we have%
\begin{eqnarray*}
&&cov_{P}\left[ \psi _{\mathbf{a},P}\left( \mathbf{G,B};\mathcal{G}\right)
,\psi _{\mathbf{a}^{\prime },P}\left( \mathbf{G,B};\mathcal{G}\right) |%
\mathbf{A},Y,\mathbf{G}\right]  \\
&=&cov_{P}\left[ \frac{I_{\mathbf{a}}(\mathbf{A})}{\pi _{\mathbf{a}}(\mathbf{%
G,B};P)}\left\{ Y-b_{\mathbf{a}}(\mathbf{G};P)\right\} ,\frac{I_{\mathbf{a}%
^{\prime }}(\mathbf{A})}{\pi _{\mathbf{a}^{\prime }}(\mathbf{G,B};P)}\left\{
Y-b_{\mathbf{a}^{\prime }}(\mathbf{G};P)\right\} |\mathbf{A},Y,\mathbf{G}%
\right]  \\
&=&I_{\mathbf{a}}(\mathbf{A})I_{\mathbf{a}^{\prime }}(\mathbf{A})\left\{
Y-b_{\mathbf{a}}(\mathbf{G};P)\right\} \left\{ Y-b_{\mathbf{a}^{\prime }}(%
\mathbf{G};P)\right\} cov_{P}\left[ \frac{1}{\pi _{\mathbf{a}}(\mathbf{G,B}%
;P)},\frac{1}{\pi _{\mathbf{a}^{\prime }}(\mathbf{G,B};P)}|\mathbf{A},Y,%
\mathbf{G}\right]  \\
&=&0
\end{eqnarray*}%
because $I_{\mathbf{a}}(\mathbf{A})I_{\mathbf{a}^{\prime }}(\mathbf{A})=0.$
Consequently, 
\[
\mathbf{c}^{T}E_{P}\left[ var_{P}\left[ \mathbf{\psi }_{P}\left( \mathbf{G,B}%
;\mathcal{G}\right) |\mathbf{A},Y,\mathbf{G}\right] \right] \mathbf{c=}%
\sum_{\mathbf{a}\in \mathcal{A}}c_{\mathbf{a}}^{2}E_{P}\left\{ \pi _{\mathbf{a}}(%
\mathbf{G};P)var_{P}(Y\mid \mathbf{A}=\mathbf{a},\mathbf{G})var_{P}\left[ 
\frac{1}{\pi _{\mathbf{a}}(\mathbf{G,B};P)}\mid \mathbf{A=a},\mathbf{G}%
\right] \right\}.
\]%
In particular, for $\mathbf{a}=a=1$ and $\mathbf{a}=a=0$ we have%
\begin{eqnarray*}
\sigma _{ATE,\mathbf{G,B}}^{2}\left( P\right) -\sigma _{ATE,\mathbf{G}%
}^{2}\left( P\right)  &=&E_{P}\left\{ \pi _{a=0}(\mathbf{G};P)var_{P}(Y\mid
A=0,\mathbf{G})var_{P}\left[ \frac{1}{\pi _{a=0}(\mathbf{G,B};P)}\mid A=0,%
\mathbf{G}\right] \right\}  \\
&&+E_{P}\left\{ \pi _{a=1}(\mathbf{G};P)var_{P}(Y\mid A=1,\mathbf{G})var_{P}%
\left[ \frac{1}{\pi _{a=0}(\mathbf{G,B};P)}\mid A=1,\mathbf{G}\right]
\right\}.
\end{eqnarray*}%
This concludes the proof of Lemma \ref{lemma:deletion}.
\end{proof}
\medskip
\begin{lemma}
\label{lemma:unique_minimal} Given a DAG\ $\mathcal{G}$ and disjoint
vertex sets $\mathbf{A}$ and $\mathbf{B}$ there exists a unique subset
$\mathbf{C}$ of $\mathbf{B}$ such that $\mathbf{A\perp \!\!\!\perp }_{%
\mathcal{G}}\mathbf{B}\setminus \mathbf{C}\mid \mathbf{C}$ and such that no strict subset $\mathbf{C}^{\prime}$ of $\mathbf{C}$ satisfies $\mathbf{A\perp \!\!\!\perp }_{%
\mathcal{G}} \mathbf{B}\setminus\mathbf{C}^{\prime}\mid \mathbf{C}^{\prime}$.
\end{lemma}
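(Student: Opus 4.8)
The plan is to recast the statement as an assertion about the lattice of conditioning sets. Let $\mathcal{C}$ denote the family of all $\mathbf{C}\subseteq\mathbf{B}$ for which $\mathbf{A}\ort_{\mathcal{G}}\mathbf{B}\setminus\mathbf{C}\mid\mathbf{C}$. This family is non-empty because $\mathbf{B}\in\mathcal{C}$: with $\mathbf{C}=\mathbf{B}$ the separating statement reads $\mathbf{A}\ort_{\mathcal{G}}\emptyset\mid\mathbf{B}$, which holds vacuously. The subset described in the lemma is exactly a $\subseteq$-minimal element of $\mathcal{C}$, so I would prove the lemma by establishing that $\mathcal{C}$ is closed under intersection. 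Once that is done, finiteness of $\mathbf{B}$ guarantees that $\mathbf{C}^{\ast}\equiv\bigcap_{\mathbf{C}\in\mathcal{C}}\mathbf{C}$ is a finite intersection of members of $\mathcal{C}$ and hence itself lies in $\mathcal{C}$; being contained in every member of $\mathcal{C}$, it is the unique minimum, and therefore the unique minimal element of $\mathcal{C}$. Indeed, any minimal $\mathbf{C}$ must contain $\mathbf{C}^{\ast}$ and, by minimality, must equal it, which yields both existence and uniqueness.

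The heart of the argument is the closure claim: if $\mathbf{C}_{1},\mathbf{C}_{2}\in\mathcal{C}$ then $\mathbf{C}_{1}\cap\mathbf{C}_{2}\in\mathcal{C}$. I would partition $\mathbf{B}$ into the pairwise disjoint sets $\mathbf{D}=\mathbf{C}_{1}\cap\mathbf{C}_{2}$, $\mathbf{E}_{1}=\mathbf{C}_{1}\setminus\mathbf{C}_{2}$, $\mathbf{E}_{2}=\mathbf{C}_{2}\setminus\mathbf{C}_{1}$ and $\mathbf{F}=\mathbf{B}\setminus(\mathbf{C}_{1}\cup\mathbf{C}_{2})$, so that the two hypotheses become $\mathbf{A}\ort_{\mathcal{G}}\mathbf{E}_{2}\cup\mathbf{F}\mid\mathbf{D}\cup\mathbf{E}_{1}$ and $\mathbf{A}\ort_{\mathcal{G}}\mathbf{E}_{1}\cup\mathbf{F}\mid\mathbf{D}\cup\mathbf{E}_{2}$, while the goal is $\mathbf{A}\ort_{\mathcal{G}}\mathbf{E}_{1}\cup\mathbf{E}_{2}\cup\mathbf{F}\mid\mathbf{D}$. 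Applying weak union to the first hypothesis gives $\mathbf{A}\ort_{\mathcal{G}}\mathbf{E}_{2}\mid\mathbf{D}\cup\mathbf{E}_{1}\cup\mathbf{F}$ and $\mathbf{A}\ort_{\mathcal{G}}\mathbf{F}\mid\mathbf{D}\cup\mathbf{E}_{1}\cup\mathbf{E}_{2}$, and to the second gives $\mathbf{A}\ort_{\mathcal{G}}\mathbf{E}_{1}\mid\mathbf{D}\cup\mathbf{E}_{2}\cup\mathbf{F}$. A first use of the intersection property (with conditioning core $\mathbf{D}\cup\mathbf{F}$) combines $\mathbf{A}\ort_{\mathcal{G}}\mathbf{E}_{2}\mid\mathbf{D}\cup\mathbf{F}\cup\mathbf{E}_{1}$ and $\mathbf{A}\ort_{\mathcal{G}}\mathbf{E}_{1}\mid\mathbf{D}\cup\mathbf{F}\cup\mathbf{E}_{2}$ into $\mathbf{A}\ort_{\mathcal{G}}\mathbf{E}_{1}\cup\mathbf{E}_{2}\mid\mathbf{D}\cup\mathbf{F}$. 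A second use of the intersection property (with core $\mathbf{D}$) then combines this with $\mathbf{A}\ort_{\mathcal{G}}\mathbf{F}\mid\mathbf{D}\cup\mathbf{E}_{1}\cup\mathbf{E}_{2}$ to produce exactly $\mathbf{A}\ort_{\mathcal{G}}\mathbf{E}_{1}\cup\mathbf{E}_{2}\cup\mathbf{F}\mid\mathbf{D}$, which is the desired separation for $\mathbf{D}=\mathbf{C}_{1}\cap\mathbf{C}_{2}$.

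The only subtle point, and the step I expect to require explicit justification rather than routine manipulation, is the invocation of the intersection property. For arbitrary probabilistic conditional independence this property requires strict positivity of the density, but here the separating relation is $\ort_{\mathcal{G}}$, namely d-separation, which is a full graphoid and satisfies symmetry, decomposition, weak union, contraction and intersection unconditionally as purely combinatorial properties of the graph \citep{lauritzen-book}. I would therefore state explicitly that every manipulation above is applied to the graphical relation $\ort_{\mathcal{G}}$ and cite the graphoid axioms for d-separation, so that the positivity caveat attached to the probabilistic version is irrelevant. With the closure claim in hand, the existence and uniqueness conclusions described in the first paragraph follow immediately, completing the proof.
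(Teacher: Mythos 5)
Your proposal is correct and takes essentially the same approach as the paper: both arguments reduce to showing that the intersection of two qualifying conditioning sets again qualifies, working on the identical four-block partition ($\mathbf{D},\mathbf{E}_{1},\mathbf{E}_{2},\mathbf{F}$ in your notation; $\mathbf{I},\mathbf{W}_{1},\mathbf{W}_{2},\mathbf{R}$ in the paper's) via the graphoid axioms of d-separation, with the crucial observation---which you correctly flag---that the intersection axiom holds for $\ort_{\mathcal{G}}$ unconditionally, without the positivity caveat of probabilistic conditional independence. The differences are cosmetic: you use weak union followed by two applications of intersection where the paper uses weak union, decomposition, intersection and contraction, and you package the conclusion as ``$\mathcal{C}$ is closed under intersection, hence has a unique minimum'' (which also delivers existence explicitly), whereas the paper argues uniqueness by contradiction from two putative distinct minimal sets.
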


\begin{proof}
The result is a consequence of the fact that d-separation
is a graphoid. See \cite{graphoid}. Suppose there were two distinct minimal
sets, say $\mathbf{C}_{1}$ and $\mathbf{C}_{2}.$ Let $\mathbf{I}=%
\mathbf{C}_{1}\cap $ $\mathbf{C}_{2},$ $\mathbf{W}_{1}=\mathbf{C}_{1}\mathbf{%
\backslash I}$ and $\mathbf{W}_{2}\mathbf{=C}_{2}\mathbf{\backslash I}$ and $%
\mathbf{R=B\backslash }\left( \mathbf{C}_{1}\mathbf{\cup C}_{2}\right) 
\mathbf{.}$ Then $\mathbf{A\perp \!\!\!\perp }_{\mathcal{G}}\mathbf{%
B\backslash C}_{1}\mathbf{|C}_{1}$ is equivalent to 
\begin{equation*}
\mathbf{A\perp \!\!\!\perp }_{\mathcal{G}}\left( \mathbf{R},\mathbf{\mathbf{W%
}_{2}}\right) \mathbf{|\mathbf{W}_{1},I}
\end{equation*}%
and $\mathbf{A\perp \!\!\!\perp }_{\mathcal{G}}\mathbf{B\backslash C}_{2}%
\mathbf{|C}_{2}$ is equivalent to 
\begin{equation*}
\mathbf{A\perp \!\!\!\perp }_{\mathcal{G}}\left( \mathbf{R,\mathbf{W}_{1}}%
\right) \mathbf{|\mathbf{W}_{2},I}.
\end{equation*}%
The weak union axiom implies that 
\begin{equation}
\mathbf{A\perp \!\!\!\perp }_{\mathcal{G}}\mathbf{R|}\left( \mathbf{\mathbf{W%
}_{1},\mathbf{W}_{2}}\right) \mathbf{,I}  \label{eq:union}
\end{equation}
The decomposition axiom implies that 
\begin{equation}
\mathbf{A\perp \!\!\!\perp }_{\mathcal{G}}\mathbf{\mathbf{W}_{2}|\mathbf{W}%
_{1},I}\text{ and }\mathbf{A\perp \!\!\!\perp }_{\mathcal{G}}\mathbf{\mathbf{%
W}_{1}|\mathbf{W}_{2},I}.  \label{eq:intersection}
\end{equation}%
Next, it follows from $\left( \ref{eq:intersection}\right) $ and the
intersection axiom that 
\begin{equation}
\mathbf{A\perp \!\!\!\perp }_{\mathcal{G}}\left( \mathbf{\mathbf{W}_{1},%
\mathbf{W}_{2}}\right) \mathbf{|I}  \label{eq:marginal}
\end{equation}
Finally, from $\left( \ref{eq:union}\right) $ and $\left( \ref{eq:marginal}%
\right) $, the contraction axiom implies that 
\begin{equation*}
\mathbf{A\perp \!\!\!\perp }_{\mathcal{G}}\left( \mathbf{R,\mathbf{W}_{1},%
\mathbf{W}_{2}}\right) \mathbf{|I}\text{ }
\end{equation*}%
or equivalently, 
\begin{equation*}
\mathbf{A\perp \!\!\!\perp }_{\mathcal{G}}\mathbf{B\backslash I|I}.
\end{equation*}%
If $\mathbf{C}_{1}$ and $\mathbf{C}_{2}$ are distinct then $\mathbf{I}$ is
a strict subset of $\mathbf{C}_{1}$ and $\mathbf{C}_{2}$ which cannot happen
because $\mathbf{C}_{1}$ and $\mathbf{C}_{2}$ were minimal sets $\mathbf{C}%
^{\prime }$ with the property that $\mathbf{A\perp \!\!\!\perp }_{\mathcal{G}%
}\mathbf{B\backslash C}^{\prime }\mathbf{|C}^{\prime }.$
\end{proof}

\medskip
\begin{proof}[Proof of Theorem \ref{theo:optimal_adj_min}]

\textbf{Proof of part (1).} We will prove that $\mathbf{O}_{min}$ is an
adjustment set. Note that $A\perp \!\!\!\perp _{\mathcal{G}}\mathbf{O}%
\setminus \mathbf{O}_{min}\mid \mathbf{O}_{min}$, implies that $\pi _{a}(%
\mathbf{O},P)=\pi _{a}(\mathbf{O}_{min},P)$. Then, for all $P\in \mathcal{%
M(G)}$ 
\[
E_{P}\left\{ E_{P}\left[ Y\mid A=a,\mathbf{O}_{min}\right] \right\} =E_{P}%
\left[ \frac{I_{a}(A)Y}{\pi _{a}(\mathbf{O}_{min},P)}\right] =E_{P}\left[ 
\frac{I_{a}(A)Y}{\pi _{a}(\mathbf{O},P)}\right] =\chi _{a}(P;\mathcal{G})
\]%
where the last equality follows because, since $A$ is a point intervention, $%
\mathbf{O}$ is an adjustment set. 

\textbf{Proof of part (2).} In our proof of part (2) we will invoke at
several places the following property.

\textbf{Property (O):} For any $O\in \mathbf{O}$ there exists a directed path
from $O$ to $Y\,\ $such that for any adjustment set $\mathbf{Z}$ relative to 
$\left( A,Y\right) $ in $\mathcal{G}\mathbf{,}$ the path does not intersect
the nodes in $\mathbf{Z}$ other than, at most, at the node $O.$ 

The proof of Property (O) is immediate because by definition of $\mathbf{O,}$ $%
O$ is the parent of a node in cn$\left( A,Y;\mathcal{G}\right) $.
If such node is $Y$ then the assertion holds trivially for the path $%
O\rightarrow Y.$ Otherwise, for any node $M$ in cn$\left( A,Y;\mathcal{G}%
\right) \backslash \left\{ Y\right\} $ there exists a directed path from $M$
to $Y$ that intersects solely nodes in cn$\left( A,Y;\mathcal{G}\right) .$
The assertion then holds for such path because for any adjustment set $%
\mathbf{Z}$ it holds that $\mathbf{Z\cap }$cn$\left( A,Y;\mathcal{G}\right)
=\emptyset .$

Turn now to the proof of  
\[
A\text{ }\mathbf{\perp \!\!\!\perp }_{\mathcal{G}}\text{ }\left[ \mathbf{O}%
_{\min }\mathbf{\backslash Z}_{\min }\right] \text{ }\mathbf{|}\text{ }%
\mathbf{Z}_{\min }.
\]%
Suppose there existed $O\in \mathbf{O}_{\min }\mathbf{\backslash Z}_{\min }$
such that $O$ is not d-separated from $A$ given $\mathbf{Z}_{\min }$ in $%
\mathcal{G}$. Let $\alpha $ denote the path between $A$ and $O$ that is open
given $\mathbf{Z}_{\min }.$ By Property (O) there exists a directed path, say $%
\varphi ,$ between $O$ and $Y$ that is open given $\mathbf{Z}_{\min }.$
Then, the path obtained by concatenating $\alpha $ with $\varphi $ is a
non-causal path between $A$ and $Y$ that is open given $\mathbf{Z}_{\min },$
which is impossible because $\mathbf{Z}_{\min }$ is an adjustment set.

Turn now to the proof of 
\begin{equation}
Y\perp \!\!\!\perp _{\mathcal{G}}[\mathbf{Z}_{min}\setminus \mathbf{O}%
_{min}]\mid \mathbf{O}_{min},A.  \label{eq:indep_O_min}
\end{equation}%
We will show it by contradiction.

Assume there exists $X^{\ast }\in \mathbf{Z}_{min}\backslash \mathbf{O}_{min}
$ such that $Y\not\perp \!\!\!\perp _{\mathcal{G}}X^{\ast }\mid \mathbf{O}%
_{min},A$. By $X^{\ast }\in \mathbf{Z}_{min},$ Lemma \ref{lemma:a3} in Section \ref{sec:aux_opt}, implies
that 
\begin{equation}
X^{\ast }\not\in \text{de}_{\mathcal{G}}(A). \label{eq:X_no_desc}
\end{equation}%
Also, by \cite{shpitser-adjustment}, we have 
\begin{equation}
X^{\ast }\not\in \text{forb}\left( A,Y;\mathcal{G}\right) .
\label{eq:no_forbid}
\end{equation}

Let $\eta ^{\ast }$ be the path between $X^{\ast }$ and $Y$ that is open
when we condition on $\left( \mathbf{O}_{min},A\right) $. We will first show
that $\eta ^{\ast }$ must intersect a vertex in $\mathbf{O\backslash O}_{min}
$. So, if $\mathbf{O\backslash O}_{min}=\emptyset ,$ this result already
shows $\left( \ref{eq:indep_O_min}\right) .$

To show that $\eta ^{\ast }$ must intersect a vertex in $\mathbf{O\backslash
O}_{min}$ we first note that $\eta ^{\ast }$ must be of the form 
\[
X^{\ast }-\circ ...\circ \rightarrow Y.
\]%
The justification for why the last edge in $\eta ^{\ast }$ must point into $Y
$ is as follows. Suppose the edge pointed out of $Y$. Then, since by $\left( %
\ref{eq:no_forbid}\right) $ $X^{\ast }$ cannot be a descendant of $Y,$ the
path $\eta ^{\ast }$ would have to intersect a vertex that would be both a
descendant of $Y$ and a collider in $\eta ^{\ast },$ and either such vertex
or any of its descendants would have to be in the conditioning set $\mathbf{O%
}_{min}\cup \left\{ A\right\} $ so as to yield the path $\eta ^{\ast }$
open. But this is impossible because neither $A$ can be a descendant of $Y$
nor can any element of $\mathbf{O}_{min}$, by the very definition of $%
\mathbf{O}_{min}.$

Next we note that, by definition of $\mathbf{O,}$ in the edge $\circ
\rightarrow Y$ the vertex $\circ $ is in the set $\mathbf{O\cup M}$ where 
$$
\mathbf{M}\mathbf{\equiv }\cn\left( A,Y;\mathcal{G}\right) \backslash
\left\{ Y\right\}.
$$
If the vertex is in $\mathbf{O}$ then it must
be in $\mathbf{O\backslash O}_{min}$ because the path $\eta ^{\ast }$ is
open when conditioning on $\left( \mathbf{O}_{min},A\right) ,$ therefore
proving the assertion that $\eta ^{\ast }$ intersects $\mathbf{O\backslash O}%
_{min}.$
If the vertex is in $\mathbf{M,}$ then the next edge in the path
must be of the form 
\[
X^{\ast }-\circ ...\circ \rightarrow M_{k}\rightarrow Y
\]%
for some $M_{k}\in \mathbf{M.}$ The justification for why the edge $\circ
\rightarrow M_{k}$ points into $M_{k}$ is along the same lines as before.
Specifically, if the edge pointed out of $M_{k}$ then, by virtue of $X^{\ast }$
not being a descendant of $M_{k},$ then the path $\eta ^{\ast }$ would have
to intersect a vertex that would be both a descendant of $M_{k}$ and a
collider in $\eta ^{\ast },$ and either such vertex or any of its
descendants would have to be in the conditioning set $\mathbf{O}_{min}\cup
\left\{ A\right\} $. But this is impossible because neither $A$ can be a
descendant of $M_{k}$ nor can any element of $\mathbf{O}_{min}$, by the very
definition of $\mathbf{O}_{min}.$

By the same argument as above, in the edge $\circ \rightarrow M_{k}$ the
vertex $\circ $ is in the set $\mathbf{O\cup M.}$ If the vertex is in $%
\mathbf{O}$ then it must be in $\mathbf{O\backslash O}_{min}$ because the
path $\eta ^{\ast }$ is open when conditioning on $\left( \mathbf{O}%
_{min},A\right) ,$ therefore proving the assertion that $\eta ^{\ast }$
intersects $\mathbf{O\backslash O}_{min}.$ If the vertex is a, say $M_{j},$
in $\mathbf{M}$ then reasoning as above, the path $\eta ^{\ast }$ must be of
the form $X^{\ast }-\circ ...\circ \rightarrow M_{j}\rightarrow
M_{k}\rightarrow Y.$ Continuing in the same fashion, we arrive at the
conclusion that either any of the vertices $\circ $ are in $\mathbf{%
O\backslash O}_{min}$ or otherwise, the path is of the form $X^{\ast
}\rightarrow M_{r}\rightarrow M_{l}\rightarrow ...\rightarrow
M_{j}\rightarrow M_{k}\rightarrow Y.$ In the latter case, $X^{\ast }\in 
\mathbf{O\backslash O}_{min}$ which therefore concludes the proof that the
path $\eta ^{\ast }$ intersects $\mathbf{O\backslash O}_{min}.$

Let $O^{\ast }\in \mathbf{O}\setminus \mathbf{O}_{min}$ be the element of $%
\mathbf{O}\setminus \mathbf{O}_{min}$ that is closest to $Y$ in the path $%
\eta ^{\ast },$ that is, such that the subpath of $\eta ^{\ast }$ between $%
O^{\ast }$ and $Y$ does not intersect any other vertex of $\mathbf{O}%
\setminus \mathbf{O}_{min}$.

Let $D_{1}^{\ast },\dots ,D_{k}^{\ast }$ be the colliders on $\eta ^{\ast }$%
, with $D_{1}^{\ast }$ the one closest to $X^{\ast }$ in $\eta ^{\ast }$, $%
D_{2}^{\ast }$ the one second closest to $X^{\ast }$ and so on. For each $j$
there exists a descendant of $D_{j}^{\ast }$ that is an element of $\left( 
\mathbf{O}_{min},A\right) $. Furthermore, if there exists a directed path
between $D_{j}^{\ast }$ and $A$, this path necessarily has to intersect an
element of $\mathbf{O}_{min}$ for suppose this was not the case. Then, take $%
j^{\ast }$ to be the largest $j$ such that there exists a directed path
between $D_{j}^{\ast }$ and $A$ that does not intersect $\mathbf{O}_{min}.$
Then the path $A\leftarrow ...\leftarrow D_{j^{\ast }}^{\ast }\leftarrow
...-O^{\ast }$ is open given $\mathbf{O}_{min}$, which contradicts $O^{\ast
}\in \mathbf{O\backslash O}_{min}$. We therefore conclude that $\eta ^{\ast }
$ is open by conditioning just on $\mathbf{O}_{min}$.

From the nodes in $\mathbf{Z}_{min}\backslash \mathbf{O}_{min}$ that
intersect $\eta ^{\ast },~$let $W^{\ast }$ be the closest one to $O^{\ast }$
in the path $\eta ^{\ast },$ possibly $W^{\ast }=O^{\ast }$. Consider now
the subpath $\alpha ^{\ast }$ of $\eta ^{\ast }$ between $W^{\ast }$ and $Y.$
Because $\eta ^{\ast }$ is open by conditioning on $\mathbf{O}_{min}$, so is 
$\alpha ^{\ast }.$ The path $\alpha ^{\ast }$ has one of the following two
forms 
\begin{equation}
W^{\ast }\rightarrow \circ -...\rightarrow \underset{\equiv C_{1}^{\ast }}{%
\underbrace{\circ }}\leftarrow ...\rightarrow \underset{\equiv C_{2}^{\ast }}%
{\underbrace{\circ }}\leftarrow ....\rightarrow \underset{\equiv C_{r}^{\ast
}}{\underbrace{\circ }}\leftarrow ...-O^{\ast }\rightarrow
M_{u_{1}}\rightarrow M_{u_{2}}...\rightarrow M_{u_{t}}\rightarrow Y
\label{path:eta1}
\end{equation}%
or 
\begin{equation}
W^{\ast }\leftarrow \circ -...\rightarrow \underset{\equiv C_{1}^{\ast }}{%
\underbrace{\circ }}\leftarrow ...\rightarrow \underset{\equiv C_{2}^{\ast }}%
{\underbrace{\circ }}\leftarrow ....\rightarrow \underset{\equiv C_{r}^{\ast
}}{\underbrace{\circ }}\leftarrow ... -O^{\ast }\rightarrow
M_{u_{1}}\rightarrow M_{u_{2}}...\rightarrow M_{u_{t}}\rightarrow Y.
\label{path:eta2}
\end{equation}%
where the set of colliders $\left\{ C_{1}^{\ast },...,C_{r}^{\ast }\right\}
\ $is included in $\left\{ D_{1}^{\ast },...,D_{k}^{\ast }\right\} $ and can
possibly be empty, and the set 
\[
\left\{ M_{u_{1}},M_{u_{2}},...,M_{u_{t}}\right\} \ 
\]%
is included in $\mathbf{M}$ and can also possibly be empty.

Next, let 
\[
\Delta \equiv \left\{ \delta :\delta \text{ is a path between }W^{\ast }%
\text{ and }A\text{ that is open given }\mathbf{Z}_{min}\backslash \{W^{\ast
}\}\right\} .
\]%
Lemma \ref{lemma:a2} in Section \ref{sec:aux_opt} implies that $\Delta $ is not empty. Any path $\delta $
in $\Delta $ has one of the following forms:

a) $\delta $ is a directed path from $W^{\ast }$ to $A:$ 
\[
W^{\ast }\rightarrow \circ \rightarrow \circ ...\circ \rightarrow A
\]

b) $\delta $ has one and only one fork: 
\[
W^{\ast }\leftarrow \circ ...\circ \leftarrow \circ \rightarrow \circ
...\circ \rightarrow A.
\]

c) $\delta $ has at least one collider and the first edge points out of $%
W^{\ast }:$ 
\[
W^{\ast }\rightarrow \circ -...\rightarrow \underset{\equiv H_{1}^{\ast }}{%
\underbrace{\circ }}\leftarrow ...\rightarrow \underset{\equiv H_{2}^{\ast }}%
{\underbrace{\circ }}\leftarrow ....\rightarrow \underset{\equiv H_{s}^{\ast
}}{\underbrace{\circ }}\leftarrow ... -A.
\]

d) $\delta $ has at least one collider and the first edge points into $%
W^{\ast }:$ 
\[
W^{\ast }\leftarrow \circ -...\rightarrow \underset{\equiv H_{1}^{\ast }}{%
\underbrace{\circ }}\leftarrow ...\rightarrow \underset{\equiv H_{2}^{\ast }}%
{\underbrace{\circ }}\leftarrow ....\rightarrow \underset{\equiv H_{s}^{\ast
}}{\underbrace{\circ }}\leftarrow ... -A.
\]%
Moreover, we can assume without loss of generality that $W^{\ast}$ appears only once in the path $\delta$.
Note that $\delta \in \Delta $ cannot be a directed path from $A\,\ $to $%
W^{\ast }$ because $W^{\ast }\in \mathbf{Z}_{min}$ and by Lemma \ref%
{lemma:a3} in Section \ref{sec:aux_opt} we have that $W^{\ast }\not\in $de$_{\mathcal{G}}(A)$.

We will show that neither of the forms $\left( \ref{path:eta1}\right) $ or $%
\left( \ref{path:eta2}\right) $ for the path $\alpha ^{\ast }$ are possible
by showing that if $\alpha ^{\ast }$ was of one such form then it would
imply that $\Delta $ is empty.

Henceforth, assume that $\alpha ^{\ast }$ takes one of the forms $\left( \ref%
{path:eta1}\right) $ or $\left( \ref{path:eta2}\right) .$ Below we will show
the following claims.

\textbf{Claim (i)}. $\forall $ $\delta \in \Delta $ with form (a) or (b), $%
\delta $ is open given $\mathbf{O}_{min}.$

\textbf{Claim (ii).} If $\exists \delta \in \Delta $ with form (b) or (d)
then the path $\alpha ^{\ast }$ cannot be of the form $\left( \ref{path:eta2}%
\right) $.

\textbf{Claim (iii).} Every $\delta \in \Delta $ of the form (c) or (d) is
blocked given $\mathbf{O}_{min}.$

\textbf{Proof of Claim (i)}. Let $\delta $ have form (a) or (b). Then no
node in $\mathbf{O}_{min}\cap \mathbf{Z}_{min}$ intersects $\delta $, for if
it did, the path would be blocked by $\mathbf{Z}_{min}\backslash \{W^{\ast
}\}$. On the other hand, suppose the path $\delta $ intersected a node $%
O^{\ast \ast }$ in $\mathbf{O}_{min}\backslash \mathbf{Z}_{min}$. Let $\xi $
be the subpath of $\delta $ between $O^{\ast \ast }$ and $A.$ The path $\xi $
is open given $\mathbf{Z}_{min}.$ By Property (O) there exists a directed path, say $\varphi$, from $O^{\ast\ast}$ to $Y$ that does not intersect $\mathbf{Z}_{min}$. Then, the path obtained by concatenating $\xi$ with $\varphi$ is a non-causal path between $A$ and $Y$ that is open given $\mathbf{Z}_{min}$. This contradicts the assumption that $\mathbf{Z}_{min}$ is an adjustment set. This concludes
the proof of Claim (i).

\textbf{Proof of Claim (ii).} Suppose that there exists a path $\delta \in
\Delta $ with form (b) or (d). We will prove by contradiction that there
cannot be any path $\alpha ^{\ast }$ of the form $\left( \ref{path:eta2}%
\right) $ that is open when by conditioning on $\mathbf{O}_{min}$. Suppose
there existed one such path $\alpha ^{\ast }.$ Suppose first that there are
no colliders in $\alpha ^{\ast }$, that is, there exist no nodes $%
C_{j}^{\ast }$. The path $\alpha ^{\ast }$ does not intersect any element of 
$\mathbf{Z}_{min}\backslash \mathbf{O}_{min}$ other than at the node $%
W^{\ast }$ because, by definition, $W^{\ast }$ was chosen to be the closest
element in $\mathbf{Z}_{min}\backslash \mathbf{O}_{min}$ to $O^{\ast }.$ On
the other hand, since the path $\alpha ^{\ast }$ is open by conditioning on $%
\mathbf{O}_{min},$ then $\alpha ^{\ast }$ cannot intersect any element of $%
\mathbf{O}_{min}.$ Then, $\alpha ^{\ast }$ is open given $\mathbf{Z}%
_{min}\backslash W^{\ast }.$ Take now the path $\delta \in \Delta $ with
form (b) or (d) and concatenate it with the path $\alpha ^{\ast }.$ The
concatenated path is a non-causal path between $A$ and $Y$ which is open
given $\mathbf{Z}_{min}$ because $W^{\ast }$ is a collider in the path and $%
W^{\ast }\in \mathbf{Z}_{min}$. This is impossible because $\mathbf{Z}_{min}$
is an adjustment set. We therefore conclude if a path $\alpha ^{\ast }$
exists, then the set of colliders $\left\{ C_{1}^{\ast },...,C_{k}^{\ast
}\right\} $ is not empty. Furthermore, at least one of the colliders is not
an ancestor of any node in $\mathbf{Z}_{min},$ for if all $C_{1}^{\ast
},...,C_{k}^{\ast }$ were ancestors of some node in $\mathbf{Z}_{min},$ then
again the path $\alpha ^{\ast }$ would be open given $\mathbf{Z}%
_{min}\backslash W^{\ast }$ and consequently, the concatenated path between
a path $\delta $ of the form (b) or (d) with the path $\alpha ^{\ast }$
would be a non-causal path between $A$ and $Y$ that is open given $\mathbf{Z}%
_{min},$ contradicting the assumption that $\mathbf{Z}_{min}$ is an
adjustment set. Take the smallest $j,$ say $j^{\prime },$ such that the
collider $C_{j}^{\ast }$ is not an ancestor of $\mathbf{Z}_{min}.$ Because
the path $\alpha ^{\ast }$ is open by conditioning on $\mathbf{O}_{min}$,
then there exists $O^{\ast \ast }\in \mathbf{O}_{min}\backslash \mathbf{Z}%
_{min}$ such that $C_{j^{\prime }}^{\ast }$ is an ancestor of $O^{\ast \ast }
$ so that either there exists a directed path, say $\lambda ,$ from $%
C_{j^{\prime }}^{\ast }$ to $O^{\ast \ast }$ or $C_{j^{\prime }}^{\ast
}=O^{\ast \ast }$. Now, by Property (O)
there exists a directed path, say $\varphi ,$ from $O^{\ast \ast }$
to $Y$ that is open given $\mathbf{Z}_{min}.$ Now, consider the path that
concatenates a path $\delta \in \Delta $ with form (b) or (d), with the
subpath of $\alpha ^{\ast }$ between $W^{\ast }$ and $C_{j^{\prime }}^{\ast }
$, next concatenates with $\lambda $ if $C_{j^{\prime }}^{\ast }\not=O^{\ast
\ast }$ and finally concatenates with $\varphi $. Such path is a non-causal
path between $A$ and $Y$ that is open given $\mathbf{Z}_{min}$ which is
impossible because $\mathbf{Z}_{min}$ is an adjustment set. This concludes
the proof of the Claim (ii)

\textbf{Proof of Claim (iii).} Suppose that $\delta \in \Delta $ is of the
form (c) and that $\delta $ is open given $\mathbf{O}_{min}$. Then
concatenating $\delta $ with $\alpha ^{\ast }$ we obtain a non-causal path
between $A$ and $Y$ that is open given $\mathbf{O}_{min}$ because $W^{\ast }$
is not a collider on this path. This contradicts the fact that $\mathbf{O}_{min}$ is an adjustment set.

Suppose now that $\delta \in \Delta $ is of the form (d) and is open given $%
\mathbf{O}_{min}.$ By Claim (ii), the path $\alpha ^{\ast }$ has to be of
the form $\left( \ref{path:eta1}\right) $. Then concatenating $\delta $ with 
$\alpha ^{\ast }$ we once again obtain a path between $A$
and $Y$ that is open given $\mathbf{O}_{min}$ arriving at a contradiction.
This concludes the proof of Claim (iii). 

We will now argue that $\Delta $ must be empty by showing that Claims (i),
(ii) and (iii) imply that if $\delta \in \Delta ,$ then $\delta $ cannot
take any of the forms (a), (b), (c) or (d).

\textbf{(I) Proof that }$\delta \in \Delta $\textbf{\ cannot take the form
(a). }Suppose there exists $\delta \in \Delta $ with the form (a). Then,
invoking Claim (i), we conclude that the path $\gamma $ between $O^{\ast }$
and $A$ formed by concatenating the path $\delta $ between $W^{\ast }$ and $A$ and the
subpath of $\alpha^{\ast }$ between $O^{\ast }$ and $W^{\ast }$ is a path
between $O^{\ast }$ and $A$ that is open given $\mathbf{O}_{min}.$ This is
impossible because the existence of such path $\gamma $ contradicts the
assertion that $O^{\ast }\in \mathbf{O}\setminus \mathbf{O}_{min}.$

\textbf{(II) Proof that }$\delta \in \Delta $\textbf{\ cannot take the form
(b).} Suppose there exists $\delta \in \Delta $ with the form (b). Then
invoking Claim (ii), the path $\alpha ^{\ast }$ has to be of the form $%
\left( \ref{path:eta1}\right) $. By Claim (i), $\delta $ is open given $%
\mathbf{O}_{min}$. On the other hand, $\alpha ^{\ast }$ is open given $%
\mathbf{O}_{min}.$ Concatenating $\delta $ with $\alpha ^{\ast }$ we form a path,
say $\pi $, that is open given $\mathbf{O}_{min}$, since $W^{\ast }$ is not
a collider on $\pi $. This is impossible because the existence of such path $%
\pi $ contradicts the fact that $O^{\ast }\in \mathbf{O}\setminus \mathbf{O}%
_{min}.$

\textbf{(III) Proof that }$\delta \in \Delta $\textbf{\ can take neither the
form (c) nor the form (d). }Suppose that there exists a $\delta \in \Delta $
of the form (c) or (d). By Claim (iii), $\delta $ is blocked by conditioning
on $\mathbf{O}_{min}.$ Furthermore, by definition of $\Delta ,$ the path is
open when conditioning on $\mathbf{Z}_{min}\backslash W^{\ast }.$ Then, one
of the following happens:

(III.a) the path $\delta $ intersects a node $O^{\ast \ast }\in \mathbf{O}%
_{min}\backslash \mathbf{Z}_{min}$ that is not a collider in the path, or

(III.b) the property (III.a) does not hold and there exists a non-empty
subset, say $\mathcal{H}\equiv \left\{ H_{j_{1}}^{\ast },\dots,H_{j_{l}}^{\ast
}\right\} ,$ of the collider set $\left\{ H_{1}^{\ast },...,H_{s}^{\ast
}\right\} $ such that each $H_{j_{u}}^{\ast }$ is an ancestor in $\mathcal{G}
$ of a node in $\mathbf{Z}_{min}\backslash W^{\ast }$ but is not an ancestor
of a node in $\mathbf{O}_{min}.$

We will show by contradiction that both (III.a) and (III.b) are impossible. 

Suppose first that (III.a) holds. Let $\phi $ be the subpath in $\delta $
between $O^{\ast \ast }$ and $A.$ The path $\phi $ is open given $\mathbf{Z}%
_{min}$ because the path $\delta $ is open given $\mathbf{Z}_{min}\backslash
W^{\ast }.$ Let $\nu $ a directed path between $O^{\ast \ast }$ and $Y$ that
does not intersect $\mathbf{Z}_{min}$, which exists by Property
(O). The path between $A$ and $Y$ obtained by concatenating $\nu $ with $\phi 
$ is a non-causal path between $A$ and $Y$ that is open given $\mathbf{Z}_{min}$. This is
impossible because $\mathbf{Z}_{min}$ is an adjustment set.

Suppose next that (III.b) holds. Let $\mathcal{H}=\left\{ H_{j_{1}}^{\ast
},...,H_{j_{l}}^{\ast }\right\} $ be the maximal subset of the collider set $%
\left\{ H_{1}^{\ast },...,H_{s}^{\ast }\right\} $ such that each $%
H_{j_{u}}^{\ast }$ is an ancestor in $\mathcal{G}$ of a node in $\mathbf{Z}%
_{min}\backslash W^{\ast }$ but is not an ancestor of a node in $\mathbf{O}%
_{min}.$ Assume without loss of generality that $j_{1}<j_{2}<\dots<j_{l}$ so
that $H_{j_{l}}^{\ast }$ is the closest node in $\mathcal{H}$ to $A$ in the
path $\delta .$ Then, the subpath of $\delta ,$ say $\zeta _{j_{l}},$
between $H_{j_{l}}^{\ast }$ and $A$ is open given $\mathbf{O}_{min}.$ 

We will show next that if (III.b) holds then 
\begin{equation}
\text{for each }u\text{ in }\left\{ 1,...,l\right\} \text{ there exists }%
O_{j_{u}}^{\ast }\in \mathbf{O\backslash O}_{min}\text{ such that }%
H_{j_{u}}^{\ast }\not\ort_{\mathcal{G}}O_{j_{u}}^{\ast }|\mathbf{O}%
_{min}.  \label{claim:dificil}
\end{equation}%
However, $\left( \ref{claim:dificil}\right) $ leads to a contradiction. To
see this, let $\nu _{j_{l}}^{\ast }$ be a directed path between $%
O_{j_{l}}^{\ast }$ and $Y$ that does not intersect $\mathbf{O}_{min}$, which
exists by Property (O). Let $\gamma _{j_{l}}^{\ast }\,\ $denote the path
between $H_{j_{l}}^{\ast }$ and $O_{j_{l}}^{\ast }$ which is open by
conditioning on $\mathbf{O}_{min}$ (which exists by $\left( \ref%
{claim:dificil}\right) ).$ Then, the path obtained by concatenating the
paths $\nu _{j_{l}}^{\ast }$ with $\gamma _{j_{l}}^{\ast }\,$\ and with $%
\zeta _{j_{l}}$ is a non-causal path between $A$ and $Y$ that is open by
conditioning on $\mathbf{O}_{min}.$ This is impossible because $\mathbf{O}%
_{min}$ is an adjustment set. The proof of part (2) of the theorem is then
finished if we show that (III.b) implies \eqref{claim:dificil}. We will show this by induction in $u.$ Suppose first that $u=1.$
By definition of the set $\mathcal{H}$, either $H_{j_{1}}^{\ast }\equiv
Z_{u=1,1}^{\ast }\in \mathbf{Z}_{min}\backslash \left\{ \mathbf{O}%
_{min},W^{\ast }\right\} $ or there exists a node $Z_{u=1,1}^{\ast }\in 
\mathbf{Z}_{min}\backslash \left\{ \mathbf{O}_{min},W^{\ast }\right\} $ such
that there exists a directed path from $H_{j_{1}}^{\ast }$ to $%
Z_{u=1,1}^{\ast }$ that does not intersect any other element of $\mathbf{Z}%
_{min}\backslash \left\{ \mathbf{O}_{min},W^{\ast }\right\} .$ Now, because $%
Z_{u=1,1}^{\ast }\in \mathbf{Z}_{min}$ and $\mathbf{Z}_{min}$ is a minimal
adjustment set, then 
\[
\text{there exists a non-causal path }\theta _{1}\text{ between }A\text{ and }%
Y\text{ such that }\theta _{1}\text{ is open by conditioning on }\mathbf{Z}%
_{min}\backslash Z_{u=1,1}^{\ast }
\]%
and 
\[
\text{the path }\theta _{1}\text{ is closed by conditioning on }\mathbf{Z}%
_{min}.
\]%
The path $\theta _{1}$ must then intersect $Z_{u=1,1}^{\ast }$ and $%
Z_{u=1,1}^{\ast }$ must be a non-collider vertex in the path. Now, define 
\[
\tau _{1}=\text{ subpath of }\theta _{1}\text{ between }A\text{ and }%
Z_{u=1,1}^{\ast }
\]%
and 
\[
\kappa _{1}=\text{ subpath of }\theta _{1}\text{ between }Z_{u=1,1}^{\ast }%
\text{ and }Y.
\]%
Because $Z_{u=1,1}^{\ast }$ is a non-collider in the path $\theta _{1},$
then in at least one of the subpaths $\tau _{1}$ or $\kappa _{1},$ the edge
with vertex $Z_{u=1,1}^{\ast }$ is pointing out of $Z_{u=1,1}^{\ast }.$
Furthermore, because $\theta _{1}$ is open by conditioning on $\mathbf{Z}%
_{min}\backslash Z_{u=1,1}^{\ast },$ so are $\tau _{1}$ and $\kappa _{1}.$
We will show next that either 
\begin{equation}
\left( \ref{claim:dificil}\right) \text{ holds for }u=1\text{ or }\exists 
\text{ a vertex }Z_{u=1,2}^{\ast }\text{ in }\mathbf{Z}_{min}\backslash %
\left[ \mathbf{O}_{min}\cup \left\{ Z_{u=1,1}^{\ast }\right\} \right] \text{
such that }Z_{u=1,2}^{\ast }\text{ is a descendant of }Z_{u=1,1}^{\ast }
\label{claim:2}
\end{equation}%

Suppose first that the edge with vertex $Z_{u=1,1}^{\ast }$ in $\tau _{1}$
points out of $Z_{u=1,1}^{\ast }$. We will now show that $\tau _{1}$ cannot
be a directed path from $Z_{u=1,1}^{\ast }$ to $A.$ Suppose $\tau _{1}$ was
a directed path. Then $\tau _{1}$ cannot intersect any vertex of $\mathbf{O}%
_{min}$, because $H_{j_{1}}^{\ast })$, and hence $Z_{u=1,1}^{\ast }$, is not ancestor of any vertex in $ \mathbf{O}_{min}.$ We therefore conclude that if $\tau _{1}$ is a directed path
between $Z_{u=1,1}^{\ast }$ and $A,$ then it must be open by conditioning on 
$\mathbf{O}_{min}.$ Now, let $\lambda $ be the subpath of $\delta ^{\ast }$
between $W^{\ast }$ and $H_{j_{1}}^{\ast }.$ By definition of $%
H_{j_{1}}^{\ast },\lambda$ is open given $\mathbf{O}_{min}.$ Let $\rho 
$ be the directed path between $H_{j_{1}}^{\ast }$ and $Z_{u=1,1}^{\ast }$
if $H_{j_{1}}^{\ast }\not=$ $Z_{u=1,1}^{\ast },$ otherwise let $\rho $
denote the degenerate path consisting of just the vertex $H_{j_{1}}^{\ast }.$
Let 
\[
\beta =\text{ the path between }A\text{ and }Y\text{ obtained by
concatenating }\tau _{1}\text{ with }\rho \text{ with }\lambda \text{ with }%
\alpha^{\ast}.
\]%
Because all the paths $\tau _{1},$ $\rho ,$ $\lambda $ and $\alpha^{\ast} $ are
open given $\mathbf{O}_{min}$ and because none of the vertices $W^{\ast
},H_{j_{1}}^{\ast }$ and $Z_{u=1,1}^{\ast }$ are in $\mathbf{O}_{min},$ and
none are colliders in the path $\beta ,$ then the path $\beta $ is open
given $\mathbf{O}_{min}.$ This is impossible because $\mathbf{O}_{min}$ is
an adjustment set. We therefore conclude that $\tau _{1}$ cannot be a
directed path between $Z_{u=1,1}^{\ast }$ and $A.\,$Therefore, $\tau _{1}$
must intersect a collider. Any collider in the path $\tau _{1}$ must be an
ancestor of a node in the set $\mathbf{Z}_{min}\backslash \left\{
Z_{u=1,1}^{\ast }\right\} $ because $\tau _{1}$ is open given $\mathbf{Z}%
_{min}\backslash \left\{ Z_{u=1,1}^{\ast }\right\} .$ Furthermore, the
collider in $\tau _{1}$ that is closest to $Z_{u=1,1}^{\ast }$ cannot be an
ancestor of any element of $\mathbf{O}_{min},$ because if it was, then $%
Z_{u=1,1}^{\ast }$ and consequently $H_{j_{1}}^{\ast }$ would be an ancestor
of a vertex in $\mathbf{O}_{min},$ which is not possible by the definition
of the set $\mathcal{H}.$ We therefore conclude that there exists a vertex,
say $Z_{u=1,2}^{\ast },$ in $\mathbf{Z}_{min}\backslash \left[ \mathbf{O}%
_{min}\cup \left\{ Z_{u=1,1}^{\ast }\right\} \right] $ such that $%
Z_{u=1,2}^{\ast }$ is a descendant of $Z_{u=1,1}^{\ast },$ thus showing $%
\left( \ref{claim:2}\right) .$ 

Next suppose that the edge with vertex $Z_{u=1,1}^{\ast }$ in $\kappa _{1}$
points out of $Z_{u=1,1}^{\ast }.$ If there exists a directed path between $%
Z_{u=1,1}^{\ast }$ and $Y,$ then this path necessarily has to intersect an
element $O_{j_{1}}^{\ast }\in \mathbf{O.}$ The vertex $O_{j_{1}}^{\ast }$%
\textbf{\ }$\ $cannot be in $\mathbf{O}_{min}$ because if it were, then $%
H_{j_{1}}^{\ast }$ would be an ancestor of an element of $\mathbf{O}_{min},$
which is impossible by the definition of the set $\mathcal{H}$. Then, if
there exists a directed path between $Z_{u=1,1}^{\ast }$ and $Y,$ the
assertion $\left( \ref{claim:2}\right) $ holds. Now, suppose that there
exists no directed path between $Z_{u=1,1}^{\ast }$ and $Y.$ Then, the path $%
\kappa _{1}$ must intersect a collider. Because $\kappa _{1}$ is open given $%
\mathbf{Z}_{min}\backslash \left\{ Z_{u=1,1}^{\ast }\right\} $ and because $%
Z_{u=1,1}^{\ast }$ cannot be the ancestor of any vertex in $\mathbf{O}_{min},
$ then we reason exactly as before, and conclude that there exists a $%
Z_{u=1,2}^{\ast },$ in $\mathbf{Z}_{min}\backslash \left[ \mathbf{O}%
_{min}\cup \left\{ Z_{u=1,1}^{\ast }\right\} \right] $ such that $%
Z_{u=1,2}^{\ast }$ is a descendant of $%
Z_{u=1,1}^{\ast }$, thus proving $%
\left( \ref{claim:2}\right) .$ 

Next, suppose $%
\left( \ref{claim:2}\right)$ holds because there exists  
a vertex $Z_{u=1,2}^{\ast }\text{ in }\mathbf{Z}_{min}\backslash %
\left[ \mathbf{O}_{min}\cup \left\{ Z_{u=1,1}^{\ast }\right\} \right]$ 
such that $Z_{u=1,2}^{\ast }$ is a descendant of $Z_{u=1,1}^{\ast }$. We can now reason exactly as we did for $%
Z_{u=1,1}^{\ast }$ and conclude that 
\begin{align}
&\left( \ref{claim:dificil}\right) \text{ holds for }u=1\text{ or }\exists 
\text{ a vertex }Z_{u=1,3}^{\ast }\text{ in }\mathbf{Z}_{min}\backslash %
\left[ \mathbf{O}_{min}\cup \left\{ Z_{u=1,1}^{\ast },Z_{u=1,2}^{\ast
}\right\} \right] \text{ such that }Z_{u=1,3}^{\ast }
\nonumber
\\
&\text{ is a descendant
of }Z_{u=1,2}^{\ast }  \label{claim:3}
\end{align}%
Continuing in this fashion until depleting the set of vertices in $\mathbf{Z}%
_{min}$ we arrive at the conclussion that $\left( \ref{claim:dificil}\right) 
$ holds for $u=1.$

Suppose now that $\left( \ref{claim:dificil}\right) $ holds for $u=1,...,t-1$ with $t\leq l.$ We will show that it holds for $u=t.$ Let $%
Z_{u=t,1}^{\ast }\in \mathbf{Z}_{min}\backslash \mathbf{O}_{min}$ be a
descendant of $H_{j_{t}}^{\ast }$ which exists by the definition of $%
\mathcal{H}$. Let $\theta _{t}$ be a path that is open given $\mathbf{Z}%
_{min}\backslash Z_{u=t,1}^{\ast }$ but closed given $\mathbf{Z}_{min}.$
Reasoning as before, the path $\theta _{t}$ must intersect $Z_{u=t,1}^{\ast }
$ and $Z_{u=t,1}^{\ast }$ cannot be a collider in the path. Then,
partitioning $\theta _{t}$ as $\left( \tau _{t},\kappa _{t}\right) $ where 
\[
\tau _{t}=\text{ subpath of }\theta _{t}\text{ between }A\text{ and }%
Z_{u=t,1}^{\ast }
\]%
and 
\[
\kappa _{t}=\text{ subpath of }\theta _{t}\text{ between }Z_{u=t,1}^{\ast }%
\text{ and }Y
\]%
we know that in at least one of $\tau _{t}$ or $\kappa _{t}$ the edge with
one endpoint equal to $Z_{u=t,1}^{\ast }$ must point out of $Z_{u=t,1}^{\ast
}.$ Furthermore, both $\tau _{t}$ and $\kappa _{t}$ are open given $\mathbf{Z%
}_{min}\backslash Z_{u=t,1}^{\ast }.$ We will show that 
\begin{equation}
\left( \ref{claim:dificil}\right) \text{ holds for }u=t\text{ or }\exists 
\text{ a vertex }Z_{u=t,2}^{\ast }\text{ in }\mathbf{Z}_{min}\backslash %
\left[ \mathbf{O}_{min}\cup \left\{ Z_{u=t,1}^{\ast }\right\} \right] \text{
such that }Z_{u=t,2}^{\ast }\text{ is a descendant of }Z_{u=t,1}^{\ast }
\label{claim:4}
\end{equation}

Suppose the edge with one endpoint equal to $Z_{u=t,1}^{\ast }$ in $\tau _{t}
$ points out of $Z_{u=t,1}^{\ast }.$ We will show that $\tau _{t}$ cannot be
a directed path from $Z_{u=t,1}^{\ast }$ to $A$. As we reasoned for $\tau
_{1}$ above, if $\tau _{t}$ was directed it could not intersect any element
of $\mathbf{O}_{min}$, for if it did, then such element of  $\mathbf{O}_{min}$ would be a descendant of $H^{\ast}_{j_t}$ which is impossible by the definition of the set $\mathcal{H}$. So, if a directed
path between $Z_{u=t,1}^{\ast }$ and $A$ exists, then it must be open given $%
\mathbf{O}_{min}.$ Now, by the inductive hypothesis, we know that there
exists $O_{j_{t-1}}^{\ast }\in \mathbf{O}\backslash \mathbf{O}_{min}$ such
that $O_{j_{t-1}}^{\ast }$ is a descendant of $H_{j_{t-1}}^{\ast }.$
Because, by definition of $\mathcal{H},$ $H_{j_{t-1}}^{\ast }$ cannot be an
ancestor of any vertex in $\mathbf{O}_{min},$ then we conclude that there
exists a directed path, say $\sigma ,$ from $H_{j_{t-1}}^{\ast }$ and $%
O_{j_{t-1}}^{\ast }$ that is open by conditioning on $\mathbf{O}_{min}.$ Let 
$\lambda _{j_{t-1}}$ be the subpath of $\delta$ between $%
H_{j_{t-1}}^{\ast }$ and $H_{j_{t}}^{\ast }.$ The path $\lambda _{j_{t-1}}$
is open by conditioning on $\mathbf{O}_{min}$ because we have assumed that $%
\delta$ does not intersect any node of $\mathbf{O}_{min}$ that is a
non-collider in the path, and by the definition of $H_{j_{t-1}}^{\ast }$ and 
$H_{j_{t}}^{\ast },$ if in the path $\lambda _{j_{t-1}}$ there are
colliders, each of these colliders must be ancestors of $\mathbf{O}_{min}.$
Let $\rho _{j_{t-1}}$ be the directed path between $H_{j_{t}}^{\ast }$ and $%
Z_{u=t,1}^{\ast }$ if $H_{j_{t}}^{\ast }\not=$ $Z_{u=t,1}^{\ast },$
otherwise let $\rho _{j_{t-1}}$ denote the degenerate path consisting of
just the vertex $H_{j_{t}}^{\ast }.$ Note that $\rho _{j_{t-1}}$ is open
given $\mathbf{O}_{min}$ because $H_{j_{t}}^{\ast }$ is not an ancestor of
any vertex in $\mathbf{O}_{min}.$ Let 
\[
\beta _{j_{t}}=\text{ the path between }A\text{ and }O_{j_{t-1}}^{\ast }%
\text{ obtained by concatenating }\tau _{t}\text{ with }\rho _{j_{t-1}}\text{
with }\lambda _{j_{t-1}}\text{ with }\sigma 
\]%
Because all the paths $\tau _{t},$ $\rho _{j_{t-1}},\lambda _{j_{t-1}}$ and $%
\sigma $ are open given $\mathbf{O}_{min}$ and because none of the vertices $%
H_{j_{t-1}}^{\ast },H_{j_{t}}^{\ast }$ and $Z_{u=t,1}^{\ast }$ are in $%
\mathbf{O}_{min},$ and none are colliders in the path $\beta _{j_{t}},$ then
the path $\beta _{j_{t}}$ is open given $\mathbf{O}_{min}.$ This is
impossible because by definition of $\mathbf{O}_{min},$ $O_{j_{t-1}}^{\ast }$
is d-separated from $A$ given $\mathbf{O}_{min}.$ We therefore conclude that 
$\tau _{t}$ cannot be a directed path between $Z_{u=t,1}^{\ast }$ and $A.\,$%
Therefore, $\tau _{t}$ must intersect a collider. Any collider in the path $%
\tau _{t}$ must be an ancestor of a node in the set $\mathbf{Z}%
_{min}\backslash \left\{ Z_{u=t,1}^{\ast }\right\} $ because $\tau _{t}$ is
open given $\mathbf{Z}_{min}\backslash \left\{ Z_{u=t,1}^{\ast }\right\} .$
Furthermore, the collider in $\tau _{t}$ that is closest to $Z_{u=t,1}^{\ast
}$ cannot be an ancestor of any element of $\mathbf{O}_{min},$ because if it
was, then $Z_{u=t,1}^{\ast }$ and consequently $H_{j_{t}}^{\ast }$ would be
an ancestor of a vertex in $\mathbf{O}_{min},$ which is not possible by the
definition of the set $\mathcal{H}.$ We therefore conclude that there exists
a vertex, say $Z_{u=t,2}^{\ast },$ in $\mathbf{Z}_{min}\backslash \left[ 
\mathbf{O}_{min}\cup \left\{ Z_{u=t,1}^{\ast }\right\} \right] $ such that $%
Z_{u=t,2}^{\ast }$ is a descendant of $Z_{u=t,1}^{\ast },$ thus showing $%
\left( \ref{claim:4}\right) $ holds if the edge with one endpoint equal to $%
Z_{u=t,1}^{\ast }$ in $\tau _{t}$ points out of $Z_{u=t,1}^{\ast }.$

Suppose next that the edge with one endpoint equal to $Z_{u=t,1}^{\ast }$ in 
$\kappa _{t}$ points out of $Z_{u=t,1}^{\ast }$. If there exists a directed
path between $Z_{u=t,1}^{\ast }$ and $Y,$ then this path necessarily has to
intersect an element $O_{j_{t}}^{\ast }\in \mathbf{O.}$ The vertex $%
O_{j_{t}}^{\ast }$ cannot be in $\mathbf{O}_{min}$ because if
it were, then $H_{j_{t}}^{\ast }$ would be an ancestor of an element of $%
\mathbf{O}_{min},$ which is impossible by the definition of the set $%
\mathcal{H}$. Then, if there exists a directed path between $Z_{u=t,1}^{\ast
}$ and $Y,$ the assertion $\left( \ref{claim:4}\right) $ holds. Now, suppose
that there exists no directed path between $Z_{u=t,1}^{\ast }$ and $Y.$
Then, the path $\kappa _{t}$ must intersect a collider. Because $\kappa _{t}$
is open given $\mathbf{Z}_{min}\backslash \left\{ Z_{u=t,1}^{\ast }\right\} $
and because $Z_{u=t,1}^{\ast }$ cannot be the ancestor of any vertex in $%
\mathbf{O}_{min},$ then we reason exactly as before, and conclude that there
exists a $Z_{u=t,2}^{\ast },$ in $\mathbf{Z}_{min}\backslash \left[ \mathbf{O%
}_{min}\cup \left\{ Z_{u=t,1}^{\ast }\right\} \right] $ such that $%
Z_{u=t,2}^{\ast }$ is a descendant of $%
Z_{u=t,1}^{\ast }$.

Next, because $Z_{u=t,2}^{\ast }$ is in $\mathbf{Z}_{min}\backslash \left[ 
\mathbf{O}_{min}\cup \left\{ Z_{u=t,1}^{\ast }\right\} \right] $ and is a
descendant of $Z_{u=t,1}^{\ast },$ we can reason exactly as we did for $%
Z_{u=t,1}^{\ast }$ and conclude that 
\begin{align*}
&\left( \ref{claim:dificil}\right) \text{ holds for }u=t\text{ or }\exists 
\text{ a vertex }Z_{u=t,3}^{\ast }\text{ in }\mathbf{Z}_{min}\backslash %
\left[ \mathbf{O}_{min}\cup \left\{ Z_{u=t,1}^{\ast },Z_{u=t,2}^{\ast
}\right\} \right] \text{ such that }Z_{u=t,3}^{\ast }
\\&\text{ is a descendant
of }Z_{u=t,2}^{\ast }.
\end{align*}
Continuing in this fashion until depleting the set of vertices in $\mathbf{Z}%
_{min}$ we arrive at the conclussion that $\left( \ref{claim:dificil}\right) 
$ holds for $u=t.${} This concludes the proof of the part (2).

\medskip

\textbf{Proof of part (3). }Suppose there existed a minimal adjustment set $%
\mathbf{Z}_{min}$ that contained a vertex $O\in \mathbf{O}\backslash \mathbf{%
O}_{min}.$ Then $O\in \mathbf{O}$ and $O\in \mathbf{Z}_{min}\backslash 
\mathbf{O}_{min}.$ Part (2) of this Theorem then implies $Y\perp \!\!\!\perp
_{\mathcal{G}}O\mid \mathbf{O}_{min},A.$ This is impossible because by
Property (O) there exists a directed path from $O$ to $Y$ that does not
intersect $\mathbf{O}_{min}.$ The path also does not intersect $A.$
Consequently, by virtue of being a directed path, the path is open given $%
\mathbf{O}_{min}$ and $A.$ This concludes the proof of \ the Theorem.
\end{proof}

\medskip

\begin{proof}[Proof of Lemma \ref{lemma:supplementation_dep}]
First note that for $k\in \left\{ 0,\dots,p\right\} ,$ 
\begin{eqnarray}
\pi _{a_{k}}\left( \overline{\mathbf{G}}_{k},\overline{\mathbf{B}}%
_{k};P\right)  &\equiv &P\left( A_{k}=a_{k}|\overline{\mathbf{A}}_{k-1}=%
\overline{\mathbf{a}}_{k-1},\overline{\mathbf{G}}_{k},\overline{\mathbf{B}}%
_{k}\right)   \label{eq:pis} \\
&=&P\left( A_{k}=a_{k}|\overline{\mathbf{A}}_{k-1}=\overline{\mathbf{a}}%
_{k-1},\overline{\mathbf{B}}_{k}\right)   \nonumber \\
&\equiv &\pi _{a_{k}}\left( \overline{\mathbf{B}}_{k};P\right)   \nonumber
\end{eqnarray}%
where the second equality follows from $\left( \ref{eq:ind_good}\right)$.
Consequently, 
\begin{eqnarray*}
\chi _{\mathbf{a}}\left( P;\mathcal{G}\right)  &=&E_{P}\left[ \frac{I_{
\mathbf{a}}\left( \mathbf{A}\right) Y}{\prod\limits_{k=0}^{p}\pi
_{a_{k}}\left( \overline{\mathbf{B}}_{k};P\right) }\right]  \\
&=&E_{P}\left[ \frac{I_{\mathbf{a}}\left( \mathbf{A}\right) Y}{%
\prod\limits_{k=0}^{p}\pi _{a_{k}}\left( \overline{\mathbf{G}}_{k},\overline{%
\mathbf{B}}_{k};P\right) }\right].
\end{eqnarray*}%
The first equality is true because $\mathbf{B}$ is a time dependent
adjustment set. The second equality, which follows from $\left( \ref{eq:pis}%
\right) $, proves that $\left( \mathbf{G},\mathbf{B}\right) $ is also an
adjustment set. 

Next, for $k=0,\dots,p$, let 
\[
\Lambda _{k}\left( P\right) \equiv \left\{ q_{k}\left( \overline{\mathbf{A}}%
_{k},\overline{\mathbf{G}}_{k},\overline{\mathbf{B}}_{k}\right) :E_{P}\left[
q_{k}\left( \overline{\mathbf{A}}_{k},\overline{\mathbf{G}}_{k},\overline{%
\mathbf{B}}_{k}\right) |\overline{\mathbf{A}}_{k-1},\overline{\mathbf{G}}%
_{k},\overline{\mathbf{B}}_{k}\right] =0\right\}.
\]%
Note that for any $0\leq k\not=k^{\prime }\leq p,$ the elements of $\Lambda
_{k}\left( P\right) $ are uncorrelated under $P$ with those of $\Lambda _{k^{\prime
}}\left( P\right) .$ Note also that for any function $s_{k}\left( \overline{%
\mathbf{G}}_{k},\overline{\mathbf{B}}_{k}\right) $ and any $P\in \mathcal{M}%
\left( \mathcal{G}\right) ,$ the function 
\[
r_{k}\left( \overline{\mathbf{A}}_{k},\overline{\mathbf{G}}_{k},\overline{%
\mathbf{B}}_{k};s_{k},P\right) \equiv \frac{I_{\overline{\mathbf{a}}%
_{k-1}}\left( \overline{\mathbf{A}}_{k-1}\right) }{\lambda _{\overline{%
\mathbf{a}}_{k-1}}\left( \overline{\mathbf{B}}_{k-1};P\right) }\left\{ \frac{%
I_{a_{k}}\left( A_{k}\right) }{\pi _{a_{k}}\left( \overline{\mathbf{B}}%
_{k};P\right) }-1\right\} s_{k}\left( \overline{\mathbf{G}}_{k},\overline{%
\mathbf{B}}_{k}\right) 
\]%
belongs to $\Lambda _{k}\left( P\right) $ because 
\begin{align*}
&E_{P}\left[ r_{k}\left( \overline{\mathbf{A}}_{k},\overline{\mathbf{G}}_{k},%
\overline{\mathbf{B}}_{k};s_{k},P\right) |\overline{\mathbf{A}}_{k-1},%
\overline{\mathbf{G}}_{k},\overline{\mathbf{B}}_{k}\right]  
\\
&=\frac{I_{%
\overline{\mathbf{a}}_{k-1}}\left( \overline{\mathbf{A}}_{k-1}\right) }{%
\lambda _{\overline{\mathbf{a}}_{k-1}}\left( \overline{\mathbf{B}}%
_{k-1};P\right) }s_{k}\left( \overline{\mathbf{G}}_{k},\overline{\mathbf{B}}%
_{k}\right) E_{P}\left[ \left. \frac{I_{a_{k}}\left( A_{k}\right) }{\pi
_{a_{k}}\left( \overline{\mathbf{B}}_{k};P\right) }-1\right\vert \overline{%
\mathbf{A}}_{k-1},\overline{\mathbf{G}}_{k},\overline{\mathbf{B}}_{k}\right] 
\\
&=\frac{I_{\overline{\mathbf{a}}_{k-1}}\left( \overline{\mathbf{A}}%
_{k-1}\right) }{\lambda _{\overline{\mathbf{a}}_{k-1}}\left( \overline{%
\mathbf{B}}_{k-1};P\right) }s_{k}\left( \overline{\mathbf{G}}_{k},\overline{%
\mathbf{B}}_{k}\right) E_{P}\left[ \frac{E_{P}\left[ I_{a_{k}}\left(
A_{k}\right) |\overline{\mathbf{A}}_{k-1}=\overline{\mathbf{a}}_{k-1},%
\overline{\mathbf{G}}_{k},\overline{\mathbf{B}}_{k}\right] }{\pi
_{a_{k}}\left( \overline{\mathbf{B}}_{k};P\right) }-1\right]  \\
&=0,
\end{align*}%
where the last equality follows by $\left( \ref{eq:ind_good}\right) .$ Next,
write%
\[
\psi _{P,\mathbf{a}}\left( \mathbf{B};\mathcal{G}\right) =\psi _{P,\mathbf{a}%
}\left( \mathbf{G},\mathbf{B};\mathcal{G}\right) +\sum_{k=0}^{p}r_{k}\left( 
\overline{\mathbf{A}}_{k},\overline{\mathbf{G}}_{k},\overline{\mathbf{B}}%
_{k};s_{\mathbf{a,}k}^{\ast },P\right) 
\]%
where%
\[
s_{\mathbf{a,}k}^{\ast }\left( \overline{\mathbf{G}}_{k},\overline{\mathbf{B}%
}_{k}\right) \equiv b_{\overline{\mathbf{a}}_{k}}\left( \overline{\mathbf{G}}%
_{k},\overline{\mathbf{B}}_{k};P\right) -b_{\overline{\mathbf{a}}%
_{k}}\left( \overline{\mathbf{B}}%
_{k};P\right).
\]%
Noting that $\psi _{P,\mathbf{a}}\left( \mathbf{G},\mathbf{B};\mathcal{G}%
\right) $ is uncorrelated under $P$ with the elements of $\Lambda _{k}\left(
P\right) $ for all $0\leq k\leq p$ \citep{AIDS}, we conclude that 
\[
var_{P}\left[ \psi _{P,\mathbf{a}}\left( \mathbf{B};\mathcal{G}\right) %
\right] =var_{P}\left[ \psi _{P,\mathbf{a}}\left( \mathbf{G},\mathbf{B};%
\mathcal{G}\right) \right] +\sum_{k=0}^{p}var_{P}\left[ r_{k}\left( 
\overline{\mathbf{A}}_{k},\overline{\mathbf{G}}_{k},\overline{\mathbf{B}}%
_{k};s_{\mathbf{a,}k}^{\ast },P\right) \right].
\]%
Finally%
\begin{align*}
&var_{P}\left[ r_{k}\left( \overline{\mathbf{A}}_{k},\overline{\mathbf{G}}%
_{k},\overline{\mathbf{B}}_{k};s_{\mathbf{a,}k}^{\ast },P\right) \right] 
\\&=E_{P}\left[ \frac{I_{\overline{\mathbf{a}}_{k-1}}\left( \overline{\mathbf{%
A}}_{k-1}\right) }{\lambda _{\overline{\mathbf{a}}_{k-1}}\left( \overline{%
\mathbf{B}}_{k-1};P\right) ^{2}}\left\{ \frac{I_{a_{k}}\left( A_{k}\right) }{%
\pi _{a_{k}}\left( \overline{\mathbf{B}}_{k};P\right) }-1\right\}
^{2}\left\{ b_{\overline{\mathbf{a}}_{k}}\left( \overline{\mathbf{G}}_{k},%
\overline{\mathbf{B}}_{k};P\right) -b_{\overline{\mathbf{a}}_{k}}\left( 
\overline{\mathbf{B}}_{k};P\right) \right\}^2 \right]  \\
&=E_{P}\left[ \frac{I_{\overline{\mathbf{a}}_{k-1}}\left( \overline{\mathbf{%
A}}_{k-1}\right) }{\lambda _{\overline{\mathbf{a}}_{k-1}}\left( \overline{%
\mathbf{B}}_{k-1};P\right) ^{2}}\left\{ \frac{1}{\pi _{a_{k}}\left( 
\overline{\mathbf{B}}_{k};P\right) }-1\right\} \left\{ b_{\overline{\mathbf{a%
}}_{k}}\left( \overline{\mathbf{G}}_{k},\overline{\mathbf{B}}_{k};P\right)
-b_{\overline{\mathbf{a}}_{k}}\left( \overline{\mathbf{B}}_{k};P\right)
\right\}^2 \right]  \\
&=E_{P}\left[ \frac{I_{\overline{\mathbf{a}}_{k-1}}\left( \overline{\mathbf{%
A}}_{k-1}\right) }{\lambda _{\overline{\mathbf{a}}_{k-1}}\left( \overline{%
\mathbf{B}}_{k-1};P\right) ^{2}}\left\{ \frac{1}{\pi _{a_{k}}\left( 
\overline{\mathbf{B}}_{k};P\right) }-1\right\} \left\{ b_{\overline{\mathbf{a%
}}_{k}}\left( \overline{\mathbf{G}}_{k},\overline{\mathbf{B}}_{k};P\right)
-E_{P}\left[ b_{\overline{\mathbf{a}}_{k}}\left( \overline{\mathbf{G}}_{k},%
\overline{\mathbf{B}}_{k};P\right) |\overline{\mathbf{A}}_{k-1}=\overline{%
\mathbf{a}}_{k-1},\overline{\mathbf{B}}_{k}\right] \right\} ^{2}\right]  \\
&=E_{P}\left[ \frac{I_{\overline{\mathbf{a}}_{k-1}}\left( \overline{\mathbf{%
A}}_{k-1}\right) }{\lambda _{\overline{\mathbf{a}}_{k-1}}\left( \overline{%
\mathbf{B}}_{k-1};P\right) ^{2}}\left\{ \frac{1}{\pi _{a_{k}}\left( 
\overline{\mathbf{B}}_{k};P\right) }-1\right\} var_{P}\left[ b_{\overline{%
\mathbf{a}}_{k}}\left( \overline{\mathbf{G}}_{k},\overline{\mathbf{B}}%
_{k};P\right) |\overline{\mathbf{A}}_{k-1}=\overline{\mathbf{a}}_{k-1},%
\overline{\mathbf{B}}_{k}\right] \right].
\end{align*}
Next, noticing that $\mathbf{c}^{T}\mathbf{\psi }_{P}\left( \mathbf{B};%
\mathcal{G}\right) =\mathbf{c}^{T}\mathbf{\psi }_{P}\left( \mathbf{G},%
\mathbf{B};\mathcal{G}\right) +\sum_{k=0}^{p}t_{k}\left( \overline{\mathbf{A}%
}_{k},\overline{\mathbf{G}}_{k},\overline{\mathbf{B}}_{k},P\right) $ and
that $t_{k}\left( \overline{\mathbf{A}}_{k},\overline{\mathbf{G}}_{k},%
\overline{\mathbf{B}}_{k},P\right) \in \Lambda _{k}\left( P\right) $ we
obtain 
\begin{eqnarray*}
\sigma _{\mathbf{\Delta },\mathbf{B}}^{2}\left( P\right)  &=&var_{P}\left[ 
\mathbf{c}^{T}\mathbf{\psi }_{P}\left( \mathbf{B};\mathcal{G}\right) \right] 
\\
&=&var_{P}\left[ \mathbf{c}^{T}\mathbf{\psi }_{P}\left( \mathbf{G},\mathbf{B}%
;\mathcal{G}\right) \right] +\sum_{k=0}^{p}var_{P}\left[ t_{k}\left( 
\overline{\mathbf{A}}_{k},\overline{\mathbf{G}}_{k},\overline{\mathbf{B}}%
_{k},P\right) \right]  \\
&=&\sigma _{\mathbf{\Delta },\mathbf{G},\mathbf{B}}^{2}\left( P\right)
+\sum_{k=0}^{p}var_{P}\left[ t_{k}\left( \overline{\mathbf{A}}_{k},\overline{%
\mathbf{G}}_{k},\overline{\mathbf{B}}_{k},P\right) \right].
\end{eqnarray*}%
This concludes the proof of Lemma \ref{lemma:supplementation_dep}.
\end{proof}

\medskip

\begin{proof}[Proof of Lemma \protect\ref{lemma:deletion_dep}]
First we show by reverse induction in $k$ that for all $k\in \left\{
0,1,\dots,p\right\} $ it holds that 
\begin{equation}
b_{\overline{\mathbf{a}}_{k}}\left( \overline{\mathbf{B}}_{k},\overline{%
\mathbf{G}}_{k}\mathbf{;}P\right) =b_{\overline{\mathbf{a}}_{k}}\left( 
\overline{\mathbf{G}}_{k}\mathbf{;}P\right)   \label{eq:TD_1}.
\end{equation}%
This result immediately implies that $\mathbf{G=}\left( \mathbf{G}_{0},%
\mathbf{G}_{1},\dots,\mathbf{G}_{p}\right) $ is a time dependent adjustment
set because, 
\begin{eqnarray*}
\chi_{\mathbf{a}} \left( P;\mathcal{G}\right)  &\equiv &E_{P}\left[ b_{\overline{\mathbf{a%
}}_{0}}\left( \overline{\mathbf{B}}_{0},\overline{\mathbf{G}}_{0}\mathbf{;}%
P\right) \right]  \\
&=&E_{P}\left[ b_{\overline{\mathbf{a}}_{0}}\left( \overline{\mathbf{G}}_{0}%
\mathbf{;}P\right) \right],
\end{eqnarray*}%
where the first equality follows from the assumption that $\left( \mathbf{G,B%
}\right) $ is a time dependent adjustment set and the second follows from $%
\left( \ref{eq:TD_1}\right) $ applied to $k=0.$ We show that $\left( \ref%
{eq:TD_1}\right) $ holds for $k\in \left\{ 0,1,\dots,p\right\} $ by reverse
induction in $k.$ First note that 
\begin{eqnarray*}
b_{\overline{\mathbf{a}}_{p}}\left( \overline{\mathbf{B}}_{p},\overline{%
\mathbf{G}}_{p}\mathbf{;}P\right)  &\equiv &E_{P}\left[ Y|\mathbf{B},\mathbf{%
G},\overline{\mathbf{A}}_{p}=\overline{\mathbf{a}}_{p}\right]  \\
&=&E_{P}\left[ Y|\mathbf{G},\overline{\mathbf{A}}_{p}=\overline{\mathbf{a}}%
_{p}\right]  \\
&\equiv &b_{\overline{\mathbf{a}}_{p}}\left( \overline{\mathbf{G}}_{p}%
\mathbf{;}P\right) 
\end{eqnarray*}%
where the second equality follows by $\left( \ref{eq:ind_Y}\right) .$ Then $%
\left( \ref{eq:TD_1}\right) $ holds for $k=p.\,$\ Next, assume that $\left( %
\ref{eq:TD_1}\right) $ holds for $k\in\left\{ k^{\ast }+1,\dots,p\right\} $ for
some $k^{\ast }\geq 0.$ We will show that it holds for $k=k^{\ast }.$ This
follows from%
\begin{eqnarray*}
b_{\overline{\mathbf{a}}_{k^{\ast }}}\left( \overline{\mathbf{B}}_{k^{\ast
}},\overline{\mathbf{G}}_{k^{\ast }}\mathbf{;}P\right)  &\equiv &E_{P}\left[
b_{\overline{\mathbf{a}}_{k^{\ast }}}\left( \overline{\mathbf{B}}_{k^{\ast
}+1},\overline{\mathbf{G}}_{k^{\ast }+1}\mathbf{;}P\right) |\overline{%
\mathbf{B}}_{k^{\ast }},\overline{\mathbf{G}}_{k^{\ast }},\overline{\mathbf{A%
}}_{k^{\ast }}=\overline{\mathbf{a}}_{k^{\ast }}\right]  \\
&=&E_{P}\left[ b_{\overline{\mathbf{a}}_{k^{\ast }}}\left( \overline{\mathbf{%
G}}_{k^{\ast }+1}\mathbf{;}P\right) |\overline{\mathbf{B}}_{k^{\ast }},%
\overline{\mathbf{G}}_{k^{\ast }},\overline{\mathbf{A}}_{k^{\ast }}=%
\overline{\mathbf{a}}_{k^{\ast }}\right]  \\
&=&E_{P}\left[ b_{\overline{\mathbf{a}}_{k^{\ast }}}\left( \overline{\mathbf{%
G}}_{k^{\ast }+1}\mathbf{;}P\right) |\overline{\mathbf{G}}_{k^{\ast }},%
\overline{\mathbf{A}}_{k^{\ast }}=\overline{\mathbf{a}}_{k^{\ast }}\right] 
\\
&\equiv &b_{\overline{\mathbf{a}}_{k^{\ast }}}\left( \overline{\mathbf{G}}%
_{k^{\ast }}\mathbf{;}P\right),
\end{eqnarray*}%
where the second equality is by the inductive hypothesis and the third is by 
$\left( \ref{eq:ind_G}\right) $ applied to $j=k^{\ast }+1.$ Next we show
that for any $k\in \left\{ 0,\dots,p\right\} $%
\begin{equation}
E_{P}\left[ \left. \frac{1}{\lambda _{\overline{\mathbf{a}}_{k}}\left( 
\overline{\mathbf{G}}_{k},\overline{\mathbf{B}}_{k}\mathbf{;}P\right) }%
\right\vert \overline{\mathbf{G}}_{k}\mathbf{,}\overline{\mathbf{A}}_{k}%
\mathbf{=}\overline{\mathbf{a}}_{k}\right] =\frac{1}{\lambda _{\overline{%
\mathbf{a}}_{k}}\left( \overline{\mathbf{G}}_{k}\mathbf{;}P\right) }
\label{eq:reduccion}.
\end{equation}%
To do so we write for $k\in \left\{ 1,\dots,p\right\} $%
\begin{eqnarray*}
&&E_{P}\left[ \left. \frac{1}{\lambda _{\overline{\mathbf{a}}_{k}}\left( 
\overline{\mathbf{G}}_{k},\overline{\mathbf{B}}_{k}\mathbf{;}P\right) }%
\right\vert \overline{\mathbf{G}}_{k}\mathbf{,}\overline{\mathbf{A}}_{k}%
\mathbf{=}\overline{\mathbf{a}}_{k}\right] \pi _{a_{k}}\left( \overline{%
\mathbf{G}}_{k}\mathbf{;}P\right)  \\
&=&E_{P}\left[ \left. \frac{1}{\lambda _{\overline{\mathbf{a}}_{k-1}}\left( 
\overline{\mathbf{G}}_{k-1},\overline{\mathbf{B}}_{k-1}\mathbf{;}P\right) }%
\frac{1}{\pi _{a_{k}}\left( \overline{\mathbf{G}}_{k},\overline{\mathbf{B}}%
_{k}\mathbf{;}P\right) }\right\vert \overline{\mathbf{G}}_{k}\mathbf{,}%
\overline{\mathbf{A}}_{k-1}\mathbf{=}\overline{\mathbf{a}}_{k-1},{A}%
_{k}{=a}_{k}\right] \pi _{a_{k}}\left( \overline{\mathbf{G}}_{k}%
\mathbf{;}P\right)  \\
&=&E_{P}\left[ \left. \frac{1}{\lambda _{\overline{\mathbf{a}}_{k-1}}\left( 
\overline{\mathbf{G}}_{k-1},\overline{\mathbf{B}}_{k-1}\mathbf{;}P\right) }%
\frac{{A}_{k}}{\pi _{a_{k}}\left( \overline{\mathbf{G}}_{k},\overline{%
\mathbf{B}}_{k}\mathbf{;}P\right) }\right\vert \overline{\mathbf{G}}_{k}%
\mathbf{,}\overline{\mathbf{A}}_{k-1}\mathbf{=}\overline{\mathbf{a}}_{k-1}%
\right]  \\
&=&E_{P}\left[ \left. \frac{1}{\lambda _{\overline{\mathbf{a}}_{k-1}}\left( 
\overline{\mathbf{G}}_{k-1},\overline{\mathbf{B}}_{k-1}\mathbf{;}P\right) }%
\frac{E_{P}\left[ {A}_{k}|\overline{\mathbf{G}}_{k},\overline{\mathbf{%
B}}_{k},\overline{\mathbf{A}}_{k-1}\mathbf{=}\overline{\mathbf{a}}_{k-1}%
\right] }{\pi _{a_{k}}\left( \overline{\mathbf{G}}_{k},\overline{\mathbf{B}}%
_{k}\mathbf{;}P\right) }\right\vert \overline{\mathbf{G}}_{k}\mathbf{,}%
\overline{\mathbf{A}}_{k-1}\mathbf{=}\overline{\mathbf{a}}_{k-1}\right]  \\
&=&E_{P}\left[ \left. \frac{1}{\lambda _{\overline{\mathbf{a}}_{k-1}}\left( 
\overline{\mathbf{G}}_{k-1},\overline{\mathbf{B}}_{k-1}\mathbf{;}P\right) }%
\right\vert \overline{\mathbf{G}}_{k}\mathbf{,}\overline{\mathbf{A}}_{k-1}%
\mathbf{=}\overline{\mathbf{a}}_{k-1}\right]  \\
&=&E_{P}\left[ \left. \frac{1}{\lambda _{\overline{\mathbf{a}}_{k-1}}\left( 
\overline{\mathbf{G}}_{k-1},\overline{\mathbf{B}}_{k-1}\mathbf{;}P\right) }%
\right\vert \overline{\mathbf{G}}_{k-1}\mathbf{,}\overline{\mathbf{A}}_{k-1}%
\mathbf{=}\overline{\mathbf{a}}_{k-1}\right] 
\end{eqnarray*}%
where the last equality is by $\left( \ref{eq:ind_G}\right) $ applied to $%
j=k.$ In addition, 
\begin{eqnarray*}
&&E_{P}\left[ \left. \frac{1}{\lambda _{\overline{\mathbf{a}}_{0}}\left( 
\overline{\mathbf{G}}_{0},\overline{\mathbf{B}}_{0}\mathbf{;}P\right) }%
\right\vert \overline{\mathbf{G}}_{0}\mathbf{,}\overline{\mathbf{A}}_{0}%
\mathbf{=}\overline{\mathbf{a}}_{0}\right] \pi _{a_{0}}\left( \overline{%
\mathbf{G}}_{0}\mathbf{;}P\right)  \\
&=&E_{P}\left[ \left. \frac{1}{\pi _{a_{0}}\left( \mathbf{G}_{0},\mathbf{B}%
_{0}\mathbf{;}P\right) }\right\vert \mathbf{G}_{0}\mathbf{,}A_{0}\mathbf{=}%
a_{0}\right] P\left( A_{0}\mathbf{=}a_{0}|\mathbf{G}_{0}\right)  \\
&=&E_{P}\left[ \left. \frac{A_{0}}{\pi _{a_{0}}\left( \mathbf{G}_{0},\mathbf{%
B}_{0}\mathbf{;}P\right) }\right\vert \mathbf{G}_{0}\right]  \\
&=&1
\end{eqnarray*}%
so%
\[
E_{P}\left[ \left. \frac{1}{\lambda _{\overline{\mathbf{a}}_{0}}\left( 
\overline{\mathbf{G}}_{0},\overline{\mathbf{B}}_{0}\mathbf{;}P\right) }%
\right\vert \overline{\mathbf{G}}_{0}\mathbf{,}\overline{\mathbf{A}}_{0}%
\mathbf{=}\overline{\mathbf{a}}_{0}\right] =\frac{1}{\pi _{a_{0}}\left( 
\overline{\mathbf{G}}_{0}\mathbf{;}P\right) }.
\]%
Then, for any $k\in \left\{ 0,\dots,p\right\} $%
\begin{align*}
&E_{P}\left[ \left. \frac{1}{\lambda _{\overline{\mathbf{a}}_{k}}\left( 
\overline{\mathbf{G}}_{k},\overline{\mathbf{B}}_{k}\mathbf{;}P\right) }%
\right\vert \overline{\mathbf{G}}_{k}\mathbf{,}\overline{\mathbf{A}}_{k}%
\mathbf{=}\overline{\mathbf{a}}_{k}\right]  \\&=\frac{1}{\pi _{a_{k}}\left( 
\overline{\mathbf{G}}_{k}\mathbf{;}P\right) }E_{P}\left[ \left. \frac{1}{%
\lambda _{\overline{\mathbf{a}}_{k-1}}\left( \overline{\mathbf{G}}_{k-1},%
\overline{\mathbf{B}}_{k-1}\mathbf{;}P\right) }\right\vert \overline{\mathbf{%
G}}_{k-1}\mathbf{,}\overline{\mathbf{A}}_{k-1}\mathbf{=}\overline{\mathbf{a}}%
_{k-1}\right]  \\
&=\frac{1}{\pi _{a_{k-1}}\left( \overline{\mathbf{G}}_{k-1}\mathbf{;}%
P\right) }\frac{1}{\pi _{a_{k}}\left( \overline{\mathbf{G}}_{k}\mathbf{;}%
P\right) }E_{P}\left[ \left. \frac{1}{\lambda _{\overline{\mathbf{a}}%
_{k-2}}\left( \overline{\mathbf{G}}_{k-2},\overline{\mathbf{B}}_{k-2}\mathbf{%
;}P\right) }\right\vert \overline{\mathbf{G}}_{k-2}\mathbf{,}\overline{%
\mathbf{A}}_{k-2}\mathbf{=}\overline{\mathbf{a}}_{k-2}\right]  \\
&=\dots \\
&=\frac{1}{\pi _{a_{0}}\left( \overline{\mathbf{G}}_{0}\mathbf{;}P\right) }%
\frac{1}{\pi _{a_{k-1}}\left( \overline{\mathbf{G}}_{k-1}\mathbf{;}P\right) }%
\frac{1}{\pi _{a_{k}}\left( \overline{\mathbf{G}}_{k}\mathbf{;}P\right) } \\
&=\frac{1}{\lambda _{\overline{\mathbf{a}}_{k}}\left( \overline{\mathbf{G}}%
_{k}\mathbf{;}P\right) }.
\end{align*}%
Next, we note that re-arranging terms, the influence function \eqref{eq:inf_func_time_dep} can
be re-expressed as 
\[
\psi _{P,\mathbf{a}}\left( \mathbf{Z;}P\right) =\frac{I_{\mathbf{a}}\left( 
\mathbf{A}\right) }{\lambda _{\overline{\mathbf{a}}_{p}}\left( \mathbf{Z;}%
P\right) }\left\{ Y-b_{\overline{\mathbf{a}}_{p}}\left( \mathbf{Z;}P\right)
\right\} +\sum_{k=0}^{p}\frac{I_{\overline{\mathbf{a}}_{k-1}}\left( 
\overline{\mathbf{A}}_{k-1}\right) }{\lambda _{\overline{\mathbf{a}}%
_{k-1}}\left( \overline{\mathbf{Z}}_{k-1}\mathbf{;}P\right) }\left\{ b_{%
\overline{\mathbf{a}}_{k}}\left( \overline{\mathbf{Z}}_{k}\mathbf{;}P\right)
-b_{\overline{\mathbf{a}}_{k-1}}\left( \overline{\mathbf{Z}}_{k-1}\mathbf{;}%
P\right) \right\} 
\]%
where $b_{\overline{\mathbf{a}}_{-1}}\left( \overline{\mathbf{Z}}_{-1}%
\mathbf{;}P\right) \equiv \chi _{\mathbf{a}}\left( P;\mathcal{G}\right) .$
Furthermore, for any $\mathbf{Z,}$ the terms 
$$
\frac{I_{\overline{\mathbf{a}}%
_{k-1}}\left( \overline{\mathbf{A}}_{k-1}\right) }{\lambda _{\overline{%
\mathbf{a}}_{k-1}}\left( \overline{\mathbf{Z}}_{k-1}\mathbf{;}P\right) }%
\left\{ b_{\overline{\mathbf{a}}_{k}}\left( \overline{\mathbf{Z}}_{k}\mathbf{%
;}P\right) -b_{\overline{\mathbf{a}}_{k-1}}\left( \overline{\mathbf{Z}}_{k-1}%
\mathbf{;}P\right) \right\} ,k\in \left\{ 0,\dots,p\right\} 
$$
and 
$$
\frac{I_{%
\mathbf{a}}\left( \mathbf{A}\right) }{\lambda _{\overline{\mathbf{a}}%
_{p}}\left( \mathbf{Z;}P\right) }\left\{ Y-b_{\overline{\mathbf{a}}%
_{p}}\left( \mathbf{Z;}P\right) \right\} 
$$
are mutually uncorrelated under $P$. Then, 
\begin{align*}
var_{P}\left[ \psi _{P,\mathbf{a}}\left( \mathbf{G,B;}P\right) \right]
&=var_{P}\left[ \frac{I_{\mathbf{a}}\left( \mathbf{A}\right) }{\lambda _{%
\overline{\mathbf{a}}_{p}}\left( \mathbf{G,B;}P\right) }\left\{ Y-b_{%
\overline{\mathbf{a}}_{p}}\left( \mathbf{G,B;}P\right) \right\} \right]
\\
&+\sum_{k=0}^{p}var_{P}\left[ \frac{I_{\overline{\mathbf{a}}_{k-1}}\left( 
\overline{\mathbf{A}}_{k-1}\right) }{\lambda _{\overline{\mathbf{a}}%
_{k-1}}\left( \overline{\mathbf{G}}_{k-1},\overline{\mathbf{B}}_{k-1}\mathbf{%
;}P\right) }\left\{ b_{\overline{\mathbf{a}}_{k}}\left( \overline{\mathbf{G}}%
_{k},\overline{\mathbf{B}}_{k}\mathbf{;}P\right) -b_{\overline{\mathbf{a}}%
_{k-1}}\left( \overline{\mathbf{G}}_{k-1},\overline{\mathbf{B}}_{k-1}\mathbf{%
;}P\right) \right\} \right] .
\end{align*}
Now,%
\begin{eqnarray*}
&&E_{P}\left[ \left. \frac{I_{\mathbf{a}}\left( \mathbf{A}\right) }{\lambda
_{\overline{\mathbf{a}}_{p}}\left( \mathbf{G,B;}P\right) }\left\{ Y-b_{%
\overline{\mathbf{a}}_{p}}\left( \mathbf{G,B;}P\right) \right\} \right\vert
Y,\overline{\mathbf{G}}_{p},\overline{\mathbf{A}}_{p}\right]  \\
&=&I_{\mathbf{a}}\left( \mathbf{A}\right) \left\{ Y-b_{\overline{\mathbf{a}}%
_{p}}\left( \mathbf{G;}P\right) \right\} E_{P}\left[ \left. \frac{1}{\lambda
_{\overline{\mathbf{a}}_{p}}\left( \overline{\mathbf{G}}_{p},\overline{%
\mathbf{B}}_{p}\mathbf{;}P\right) }\right\vert Y,\overline{\mathbf{G}}_{p},%
\overline{\mathbf{A}}_{p}=\overline{\mathbf{a}}_{p}\right]  \\
&=&I_{\mathbf{a}}\left( \mathbf{A}\right) \left\{ Y-b_{\overline{\mathbf{a}}%
_{p}}\left( \mathbf{G;}P\right) \right\} E_{P}\left[ \left. \frac{1}{\lambda
_{\overline{\mathbf{a}}_{p}}\left( \overline{\mathbf{G}}_{p},\overline{%
\mathbf{B}}_{p}\mathbf{;}P\right) }\right\vert \overline{\mathbf{G}}_{p},%
\overline{\mathbf{A}}_{p}=\overline{\mathbf{a}}_{p}\right]  \\
&=&\frac{I_{\mathbf{a}}\left( \mathbf{A}\right) }{\lambda _{\overline{%
\mathbf{a}}_{p}}\left( \mathbf{G;}P\right) }\left\{ Y-b_{\overline{\mathbf{a}%
}_{p}}\left( \mathbf{G;}P\right) \right\} 
\end{eqnarray*}%
where the first equality is by $\left( \ref{eq:TD_1}\right) $ applied to $k=p
$, the second is by $\left( \ref{eq:ind_Y}\right) $ and the third is by $%
\left( \ref{eq:reduccion}\right) $ applied to $k=p.$ Also, for any $k\in
\left\{ 0,\dots,p\right\} $%
\begin{eqnarray*}
&&E_{P}\left[ \left. \frac{I_{\overline{\mathbf{a}}_{k-1}}\left( \overline{%
\mathbf{A}}_{k-1}\right) }{\lambda _{\overline{\mathbf{a}}_{k-1}}\left( 
\overline{\mathbf{G}}_{k-1},\overline{\mathbf{B}}_{k-1}\mathbf{;}P\right) }%
\left\{ b_{\overline{\mathbf{a}}_{k}}\left( \overline{\mathbf{G}}_{k},%
\overline{\mathbf{B}}_{k}\mathbf{;}P\right) -b_{\overline{\mathbf{a}}%
_{k-1}}\left( \overline{\mathbf{G}}_{k-1},\overline{\mathbf{B}}_{k-1}\mathbf{%
;}P\right) \right\} \right\vert \overline{\mathbf{G}}_{k},\overline{\mathbf{A%
}}_{k-1}\right]  \\
&=&I_{\overline{\mathbf{a}}_{k-1}}\left( \overline{\mathbf{A}}_{k-1}\right)
\left\{ b_{\overline{\mathbf{a}}_{k}}\left( \overline{\mathbf{G}}_{k}\mathbf{%
;}P\right) -b_{\overline{\mathbf{a}}_{k-1}}\left( \overline{\mathbf{G}}_{k-1}%
\mathbf{;}P\right) \right\} E_{P}\left[ \left. \frac{1}{\lambda _{\overline{%
\mathbf{a}}_{k-1}}\left( \overline{\mathbf{G}}_{k-1},\overline{\mathbf{B}}%
_{k-1}\mathbf{;}P\right) }\right\vert \overline{\mathbf{G}}_{k},\overline{%
\mathbf{A}}_{k-1}=\overline{\mathbf{a}}_{k-1}\right]  \\
&=&I_{\overline{\mathbf{a}}_{k-1}}\left( \overline{\mathbf{A}}_{k-1}\right)
\left\{ b_{\overline{\mathbf{a}}_{k}}\left( \overline{\mathbf{G}}_{k}\mathbf{%
;}P\right) -b_{\overline{\mathbf{a}}_{k-1}}\left( \overline{\mathbf{G}}_{k-1}%
\mathbf{;}P\right) \right\} E_{P}\left[ \left. \frac{1}{\lambda _{\overline{%
\mathbf{a}}_{k-1}}\left( \overline{\mathbf{G}}_{k-1},\overline{\mathbf{B}}%
_{k-1}\mathbf{;}P\right) }\right\vert \overline{\mathbf{G}}_{k-1},\overline{%
\mathbf{A}}_{k-1}=\overline{\mathbf{a}}_{k-1}\right]  \\
&=&\frac{I_{\overline{\mathbf{a}}_{k-1}}\left( \overline{\mathbf{A}}%
_{k-1}\right) }{\lambda _{\overline{\mathbf{a}}_{k-1}}\left( \overline{%
\mathbf{G}}_{k-1}\mathbf{;}P\right) }\left\{ b_{\overline{\mathbf{a}}%
_{k}}\left( \overline{\mathbf{G}}_{k}\mathbf{;}P\right) -b_{\overline{%
\mathbf{a}}_{k-1}}\left( \overline{\mathbf{G}}_{k-1}\mathbf{;}P\right)
\right\} 
\end{eqnarray*}%
the first equality is by $\left( \ref{eq:TD_1}\right) ,$ the second is by $%
\left( \ref{eq:ind_G}\right) $ and the third is by $\left( \ref{eq:reduccion}%
\right) $ and where, recall, for $k=0$, $I_{\overline{\mathbf{a}}%
_{k-1}}\left( \overline{\mathbf{A}}_{k-1}\right) \equiv \lambda _{\overline{%
\mathbf{a}}_{k-1}}\left( \overline{\mathbf{G}}_{k-1}\mathbf{;}P\right)
\equiv 1$ and $b_{\overline{\mathbf{a}}_{k-1}}\left( \overline{\mathbf{G}}%
_{k-1}\mathbf{;}P\right) \equiv \chi_{\mathbf{a}} \left( P;\mathcal{G}\right) .$ 

Then%
\begin{align*}
&var_{P}\left[ \psi _{P,\mathbf{a}}\left( \mathbf{G,B;}P\right) \right]  \\
&=var_{P}\left[ \frac{I_{\mathbf{a}}\left( \mathbf{A}\right) }{\lambda _{%
\overline{\mathbf{a}}_{p}}\left( \mathbf{G,B;}P\right) }\left\{ Y-b_{%
\overline{\mathbf{a}}_{p}}\left( \mathbf{G,B;}P\right) \right\} \right]
\\
&+\sum_{k=0}^{p}var_{P}\left[ \frac{I_{\overline{\mathbf{a}}_{k-1}}\left( 
\overline{\mathbf{A}}_{k-1}\right) }{\lambda _{\overline{\mathbf{a}}%
_{k-1}}\left( \overline{\mathbf{G}}_{k-1},\overline{\mathbf{B}}_{k-1}\mathbf{%
;}P\right) }\left\{ b_{\overline{\mathbf{a}}_{k}}\left( \overline{\mathbf{G}}%
_{k},\overline{\mathbf{B}}_{k}\mathbf{;}P\right) -b_{\overline{\mathbf{a}}%
_{k-1}}\left( \overline{\mathbf{G}}_{k-1},\overline{\mathbf{B}}_{k-1}\mathbf{%
;}P\right) \right\} \right]  
\\
&=var_{P}\left[ E_{P}\left[ \left. \frac{I_{\mathbf{a}}\left( \mathbf{A}%
\right) }{\lambda _{\overline{\mathbf{a}}_{p}}\left( \mathbf{G,B;}P\right) }%
\left\{ Y-b_{\overline{\mathbf{a}}_{p}}\left( \mathbf{G,B;}P\right) \right\}
\right\vert Y,\overline{\mathbf{G}}_{p},\overline{\mathbf{A}}_{p}\right] %
\right] 
\\
&+E_{P}\left[ var_{P}\left[ \left. \frac{I_{\mathbf{a}}\left( \mathbf{%
A}\right) }{\lambda _{\overline{\mathbf{a}}_{p}}\left( \mathbf{G,B;}P\right) 
}\left\{ Y-b_{\overline{\mathbf{a}}_{p}}\left( \mathbf{G,B;}P\right)
\right\} \right\vert Y,\overline{\mathbf{G}}_{p},\overline{\mathbf{A}}_{p}%
\right] \right]  
\\
&+\sum_{k=0}^{p}var_{P}\left[ E_{P}\left[ \left. \frac{I_{\overline{\mathbf{%
a}}_{k-1}}\left( \overline{\mathbf{A}}_{k-1}\right) }{\lambda _{\overline{%
\mathbf{a}}_{k-1}}\left( \overline{\mathbf{G}}_{k-1},\overline{\mathbf{B}}%
_{k-1}\mathbf{;}P\right) }\left\{ b_{\overline{\mathbf{a}}_{k}}\left( 
\overline{\mathbf{G}}_{k},\overline{\mathbf{B}}_{k}\mathbf{;}P\right) -b_{%
\overline{\mathbf{a}}_{k-1}}\left( \overline{\mathbf{G}}_{k-1},\overline{%
\mathbf{B}}_{k-1}\mathbf{;}P\right) \right\} \right\vert \overline{\mathbf{G}%
}_{k},\overline{\mathbf{A}}_{k-1}\right] \right] 
\\
&+\sum_{k=0}^{p}E_{P}\left[ var_{P}\left[ \left. \frac{I_{\overline{\mathbf{%
a}}_{k-1}}\left( \overline{\mathbf{A}}_{k-1}\right) }{\lambda _{\overline{%
\mathbf{a}}_{k-1}}\left( \overline{\mathbf{G}}_{k-1},\overline{\mathbf{B}}%
_{k-1}\mathbf{;}P\right) }\left\{ b_{\overline{\mathbf{a}}_{k}}\left( 
\overline{\mathbf{G}}_{k},\overline{\mathbf{B}}_{k}\mathbf{;}P\right) -b_{%
\overline{\mathbf{a}}_{k-1}}\left( \overline{\mathbf{G}}_{k-1},\overline{%
\mathbf{B}}_{k-1}\mathbf{;}P\right) \right\} \right\vert \overline{\mathbf{G}%
}_{k},\overline{\mathbf{A}}_{k-1}\right] \right]  
\\
&=var_{P}\left[ \frac{I_{\mathbf{a}}\left( \mathbf{A}\right) }{\lambda _{%
\overline{\mathbf{a}}_{p}}\left( \mathbf{G;}P\right) }\left\{ Y-b_{\overline{%
\mathbf{a}}_{p}}\left( \mathbf{G;}P\right) \right\} \right]
+\sum_{k=0}^{p}var_{P}\left[ \frac{I_{\overline{\mathbf{a}}_{k-1}}\left( 
\overline{\mathbf{A}}_{k-1}\right) }{\lambda _{\overline{\mathbf{a}}%
_{k-1}}\left( \overline{\mathbf{G}}_{k-1}\mathbf{;}P\right) }\left\{ b_{%
\overline{\mathbf{a}}_{k}}\left( \overline{\mathbf{G}}_{k}\mathbf{;}P\right)
-b_{\overline{\mathbf{a}}_{k-1}}\left( \overline{\mathbf{G}}_{k-1}\mathbf{;}%
P\right) \right\} \right]  
\\
&+E_{P}\left[ var_{P}\left[ \left. \frac{I_{\mathbf{a}}\left( \mathbf{A}%
\right) }{\lambda _{\overline{\mathbf{a}}_{p}}\left( \mathbf{G,B;}P\right) }%
\left\{ Y-b_{\overline{\mathbf{a}}_{p}}\left( \mathbf{G,B;}P\right) \right\}
\right\vert Y,\overline{\mathbf{G}}_{p},\overline{\mathbf{A}}_{p}\right] %
\right]  \\
&+\sum_{k=0}^{p}E_{P}\left[ var_{P}\left[ \left. \frac{I_{\overline{\mathbf{%
a}}_{k-1}}\left( \overline{\mathbf{A}}_{k-1}\right) }{\lambda _{\overline{%
\mathbf{a}}_{k-1}}\left( \overline{\mathbf{G}}_{k-1},\overline{\mathbf{B}}%
_{k-1}\mathbf{;}P\right) }\left\{ b_{\overline{\mathbf{a}}_{k}}\left( 
\overline{\mathbf{G}}_{k},\overline{\mathbf{B}}_{k}\mathbf{;}P\right) -b_{%
\overline{\mathbf{a}}_{k-1}}\left( \overline{\mathbf{G}}_{k-1},\overline{%
\mathbf{B}}_{k-1}\mathbf{;}P\right) \right\} \right\vert \overline{\mathbf{G}%
}_{k},\overline{\mathbf{A}}_{k-1}\right] \right]  \\
&=var_{P}\left[ \psi _{P,\mathbf{a}}\left( \mathbf{G;}P\right) \right]
+E_{P}\left[ var_{P}\left[ \left. \frac{I_{\mathbf{a}}\left( \mathbf{A}%
\right) }{\lambda _{\overline{\mathbf{a}}_{p}}\left( \mathbf{G,B;}P\right) }%
\left\{ Y-b_{\overline{\mathbf{a}}_{p}}\left( \mathbf{G,B;}P\right) \right\}
\right\vert Y,\overline{\mathbf{G}}_{p},\overline{\mathbf{A}}_{p}\right] %
\right]  \\
&+\sum_{k=0}^{p}E_{P}\left[ var_{P}\left[ \left. \frac{I_{\overline{\mathbf{%
a}}_{k-1}}\left( \overline{\mathbf{A}}_{k-1}\right) }{\lambda _{\overline{%
\mathbf{a}}_{k-1}}\left( \overline{\mathbf{G}}_{k-1},\overline{\mathbf{B}}%
_{k-1}\mathbf{;}P\right) }\left\{ b_{\overline{\mathbf{a}}_{k}}\left( 
\overline{\mathbf{G}}_{k},\overline{\mathbf{B}}_{k}\mathbf{;}P\right) -b_{%
\overline{\mathbf{a}}_{k-1}}\left( \overline{\mathbf{G}}_{k-1},\overline{%
\mathbf{B}}_{k-1}\mathbf{;}P\right) \right\} \right\vert \overline{\mathbf{G}%
}_{k},\overline{\mathbf{A}}_{k-1}\right] \right].
\end{align*}
Next, 
\begin{align*}
&\sigma _{\Delta ,\mathbf{B}}^{2}\left( \mathbf{G},\mathbf{B}\right) \equiv
var_{P}\left[ \mathbf{c}^{T}\mathbf{\psi }_{P}\left( \mathbf{G},\mathbf{B;}%
P\right) \right]  \\
&=var_{P}\left[ \sum_{a\in \mathcal{A}}c_{\mathbf{a}}\frac{I_{\mathbf{a}%
}\left( \mathbf{A}\right) }{\lambda _{\overline{\mathbf{a}}_{p}}\left( 
\mathbf{G,B;}P\right) }\left\{ Y-b_{\overline{\mathbf{a}}_{p}}\left( \mathbf{%
G,B;}P\right) \right\} \right] 
\\
&+\sum_{k=0}^{p}var_{P}\left[ \sum_{a\in 
\mathcal{A}}c_{\mathbf{a}}\frac{I_{\overline{\mathbf{a}}_{k-1}}\left( 
\overline{\mathbf{A}}_{k-1}\right) }{\lambda _{\overline{\mathbf{a}}%
_{k-1}}\left( \overline{\mathbf{G}}_{k-1},\overline{\mathbf{B}}_{k-1}\mathbf{%
;}P\right) }\left\{ b_{\overline{\mathbf{a}}_{k}}\left( \overline{\mathbf{G}}%
_{k},\overline{\mathbf{B}}_{k}\mathbf{;}P\right) -b_{\overline{\mathbf{a}}%
_{k-1}}\left( \overline{\mathbf{G}}_{k-1},\overline{\mathbf{B}}_{k-1}\mathbf{%
;}P\right) \right\} \right]  
\\
&=var_{P}\left[ E_{P}\left[ \left. \sum_{a\in \mathcal{A}}c_{\mathbf{a}}%
\frac{I_{\mathbf{a}}\left( \mathbf{A}\right) }{\lambda _{\overline{\mathbf{a}%
}_{p}}\left( \mathbf{G,B;}P\right) }\left\{ Y-b_{\overline{\mathbf{a}}%
_{p}}\left( \mathbf{G,B;}P\right) \right\} \right\vert Y,\overline{\mathbf{G}%
}_{p},\overline{\mathbf{A}}_{p}\right] \right] 
\\
&+E_{P}\left[ var_{P}\left[
\sum_{a\in \mathcal{A}}c_{\mathbf{a}}\left. \frac{I_{\mathbf{a}}\left( 
\mathbf{A}\right) }{\lambda _{\overline{\mathbf{a}}_{p}}\left( \mathbf{G,B;}%
P\right) }\left\{ Y-b_{\overline{\mathbf{a}}_{p}}\left( \mathbf{G,B;}%
P\right) \right\} \right\vert Y,\overline{\mathbf{G}}_{p},\overline{\mathbf{A%
}}_{p}\right] \right]  
\\
&+\sum_{k=0}^{p}var_{P}\left[ E_{P}\left[ \sum_{a\in \mathcal{A}}c_{\mathbf{%
a}}\left. \frac{I_{\overline{\mathbf{a}}_{k-1}}\left( \overline{\mathbf{A}}%
_{k-1}\right) }{\lambda _{\overline{\mathbf{a}}_{k-1}}\left( \overline{%
\mathbf{G}}_{k-1},\overline{\mathbf{B}}_{k-1}\mathbf{;}P\right) }\left\{ b_{%
\overline{\mathbf{a}}_{k}}\left( \overline{\mathbf{G}}_{k},\overline{\mathbf{%
B}}_{k}\mathbf{;}P\right) -b_{\overline{\mathbf{a}}_{k-1}}\left( \overline{%
\mathbf{G}}_{k-1},\overline{\mathbf{B}}_{k-1}\mathbf{;}P\right) \right\}
\right\vert \overline{\mathbf{G}}_{k},\overline{\mathbf{A}}_{k-1}\right] %
\right]  
\\
&+\sum_{k=0}^{p}E_{P}\left[ var_{P}\left[ \sum_{a\in \mathcal{A}}c_{\mathbf{%
a}}\left. \frac{I_{\overline{\mathbf{a}}_{k-1}}\left( \overline{\mathbf{A}}%
_{k-1}\right) }{\lambda _{\overline{\mathbf{a}}_{k-1}}\left( \overline{%
\mathbf{G}}_{k-1},\overline{\mathbf{B}}_{k-1}\mathbf{;}P\right) }\left\{ b_{%
\overline{\mathbf{a}}_{k}}\left( \overline{\mathbf{G}}_{k},\overline{\mathbf{%
B}}_{k}\mathbf{;}P\right) -b_{\overline{\mathbf{a}}_{k-1}}\left( \overline{%
\mathbf{G}}_{k-1},\overline{\mathbf{B}}_{k-1}\mathbf{;}P\right) \right\}
\right\vert \overline{\mathbf{G}}_{k},\overline{\mathbf{A}}_{k-1}\right] %
\right] 
\\
&=var_{P}\left[ \sum_{a\in \mathcal{A}}c_{\mathbf{a}}\frac{I_{\mathbf{a}%
}\left( \mathbf{A}\right) }{\lambda _{\overline{\mathbf{a}}_{p}}\left( 
\mathbf{G;}P\right) }\left\{ Y-b_{\overline{\mathbf{a}}_{p}}\left( \mathbf{G;%
}P\right) \right\} \right] 
\\
&+\sum_{k=0}^{p}var_{P}\left[ \sum_{a\in \mathcal{A%
}}c_{\mathbf{a}}\frac{I_{\overline{\mathbf{a}}_{k-1}}\left( \overline{%
\mathbf{A}}_{k-1}\right) }{\lambda _{\overline{\mathbf{a}}_{k-1}}\left( 
\overline{\mathbf{G}}_{k-1}\mathbf{;}P\right) }\left\{ b_{\overline{\mathbf{a%
}}_{k}}\left( \overline{\mathbf{G}}_{k}\mathbf{;}P\right) -b_{\overline{%
\mathbf{a}}_{k-1}}\left( \overline{\mathbf{G}}_{k-1}\mathbf{;}P\right)
\right\} \right]  
\\
&+E_{P}\left[ var_{P}\left[ \sum_{a\in \mathcal{A}}c_{\mathbf{a}}\left. 
\frac{I_{\mathbf{a}}\left( \mathbf{A}\right) }{\lambda _{\overline{\mathbf{a}%
}_{p}}\left( \mathbf{G,B;}P\right) }\left\{ Y-b_{\overline{\mathbf{a}}%
_{p}}\left( \mathbf{G,B;}P\right) \right\} \right\vert Y,\overline{\mathbf{G}%
}_{p},\overline{\mathbf{A}}_{p}\right] \right]  
\\
&+\sum_{k=0}^{p}E_{P}\left[ var_{P}\left[ \sum_{a\in \mathcal{A}}c_{\mathbf{%
a}}\left. \frac{I_{\overline{\mathbf{a}}_{k-1}}\left( \overline{\mathbf{A}}%
_{k-1}\right) }{\lambda _{\overline{\mathbf{a}}_{k-1}}\left( \overline{%
\mathbf{G}}_{k-1},\overline{\mathbf{B}}_{k-1}\mathbf{;}P\right) }\left\{ b_{%
\overline{\mathbf{a}}_{k}}\left( \overline{\mathbf{G}}_{k},\overline{\mathbf{%
B}}_{k}\mathbf{;}P\right) -b_{\overline{\mathbf{a}}_{k-1}}\left( \overline{%
\mathbf{G}}_{k-1},\overline{\mathbf{B}}_{k-1}\mathbf{;}P\right) \right\}
\right\vert \overline{\mathbf{G}}_{k},\overline{\mathbf{A}}_{k-1}\right] %
\right]  
\\
&=var_{P}\left[ \mathbf{c}^{T}\mathbf{\psi }_{P}\left( \mathbf{G;}P\right) %
\right] +E_{P}\left[ var_{P}\left[ \sum_{a\in \mathcal{A}}c_{\mathbf{a}%
}\left. \frac{I_{\mathbf{a}}\left( \mathbf{A}\right) }{\lambda _{\overline{%
\mathbf{a}}_{p}}\left( \mathbf{G,B;}P\right) }\left\{ Y-b_{\overline{\mathbf{%
a}}_{p}}\left( \mathbf{G,B;}P\right) \right\} \right\vert Y,\overline{%
\mathbf{G}}_{p},\overline{\mathbf{A}}_{p}\right] \right]  
\\
&+\sum_{k=0}^{p}E_{P}\left[ var_{P}\left[ \sum_{a\in \mathcal{A}}c_{\mathbf{%
a}}\left. \frac{I_{\overline{\mathbf{a}}_{k-1}}\left( \overline{\mathbf{A}}%
_{k-1}\right) }{\lambda _{\overline{\mathbf{a}}_{k-1}}\left( \overline{%
\mathbf{G}}_{k-1},\overline{\mathbf{B}}_{k-1}\mathbf{;}P\right) }\left\{ b_{%
\overline{\mathbf{a}}_{k}}\left( \overline{\mathbf{G}}_{k},\overline{\mathbf{%
B}}_{k}\mathbf{;}P\right) -b_{\overline{\mathbf{a}}_{k-1}}\left( \overline{%
\mathbf{G}}_{k-1},\overline{\mathbf{B}}_{k-1}\mathbf{;}P\right) \right\}
\right\vert \overline{\mathbf{G}}_{k},\overline{\mathbf{A}}_{k-1}\right] %
\right].
\end{align*}%
This concludes the proof of Lemma \ref{lemma:deletion_dep}.
\end{proof}

\subsubsection{Proofs of results in Section \protect\ref{sec:efficient_est}}

\begin{lemma}\label{lemma:ARE}
For $\mathcal{G}$ the DAG in Figure \ref{fig:adj}, let
$P_{\alpha}\in\mathcal{M(G)}$ satisfy
\begin{enumerate}
    \item $b_{a}\left( \mathbf{O};P_{\alpha}\right)
=O_{1}+O_{2}+\alpha O_{1}O_{2}$,
    \item  $%
E_{P_{\alpha}}\left( O_{1}\right) =E_{P_{\alpha}}\left( O_{2}\right) =0$,
\item $%
E_{P_{\alpha}}\left( O_{1}^{2}\right) =E_{P_{\alpha}}\left( O_{2}^{2}\right) =1$,
\item There exists a fixed $C>0$ independent of $\alpha$ such that $
var_{P_{\alpha}}\left(Y \mid A=a, \mathbf{O} \right)\leq C$ and $\pi_{a}(\mathbf{O}_{min};P_{\alpha}) \geq 1/C$.
\end{enumerate}
Then
$$
\Delta _{P_{\alpha}}\left( \mathbf{O}\right)  = b_{a}\left( \mathbf{O};P_{\alpha}\right)
-E_{P_{\alpha}}\left[ b_{a}\left( \mathbf{O};P_{\alpha}\right) |O_{1}\right] -E_{P_{\alpha}}\left[
b_{a}\left( \mathbf{O};P_{\alpha}\right) |O_{2}\right] +E_{P_{\alpha}}\left[ b_{a}\left( \mathbf{O}%
;P_{\alpha}\right) \right]= \alpha O_{1}O_{2}
$$
and
\begin{equation*}
\frac{var_{P_{\alpha}}\left[\psi _{P_{\alpha},a}\left( \mathbf{O};\mathcal{G}\right) \right] 
}{var_{P_{\alpha}}\left[ \chi _{P,a,eff}^{1}\left( \mathbf{V};\mathcal{G}\right)
\right] }\underset{\left\vert \alpha \right\vert \rightarrow \infty }{%
\rightarrow }\infty.
\end{equation*}
\end{lemma}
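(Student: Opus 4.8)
The plan is to reduce both the numerator and the denominator to explicit expressions whose $\alpha$-dependence is transparent, exploiting the marginal independence $O_{1}\ort_{\mathcal{G}}O_{2}$ that holds in the DAG of Figure \ref{fig:adj}. First I would establish the identity $\Delta_{P_\alpha}(\mathbf{O})=\alpha O_{1}O_{2}$. Writing $b\equiv b_{a}(\mathbf{O};P_\alpha)=O_{1}+O_{2}+\alpha O_{1}O_{2}$ and using that $O_{1}$ and $O_{2}$ are independent with mean $0$ and second moment $1$ under $P_\alpha$, one gets $E_{P_\alpha}[b\mid O_{1}]=O_{1}+E_{P_\alpha}[O_{2}]+\alpha O_{1}E_{P_\alpha}[O_{2}]=O_{1}$, and symmetrically $E_{P_\alpha}[b\mid O_{2}]=O_{2}$, while $E_{P_\alpha}[b]=\alpha E_{P_\alpha}[O_{1}]E_{P_\alpha}[O_{2}]=0$. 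Substituting these into the definition of $\Delta_{P_\alpha}(\mathbf{O})$ collapses it to $b-O_{1}-O_{2}=\alpha O_{1}O_{2}$, which also yields $\chi_{a}(P_\alpha;\mathcal{G})=E_{P_\alpha}[b]=0$.

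Next I would rewrite the two influence functions in a form exhibiting an orthogonal decomposition. Setting $R\equiv Y-b$ (so that $E_{P_\alpha}[R\mid A=a,\mathbf{O}]=0$) and noting that $\mathbf{O}_{\min}=\mathbf{O}$ here, we have $\psi_{P_\alpha,a}(\mathbf{O};\mathcal{G})=\frac{I_{a}(A)}{\pi_{a}(\mathbf{O};P_\alpha)}R+b$ and, using part (a), $\chi^{1}_{P_\alpha,a,eff}=\psi_{P_\alpha,a}(\mathbf{O};\mathcal{G})-\alpha O_{1}O_{2}=\frac{I_{a}(A)}{\pi_{a}(\mathbf{O};P_\alpha)}R+(O_{1}+O_{2})$. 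The key structural fact is that the weighted residual $\frac{I_{a}(A)}{\pi_{a}(\mathbf{O};P_\alpha)}R$ has conditional mean zero given $\mathbf{O}$, since $E_{P_\alpha}[I_{a}(A)R\mid\mathbf{O}]=\pi_{a}(\mathbf{O};P_\alpha)E_{P_\alpha}[R\mid A=a,\mathbf{O}]=0$; hence it is mean zero and uncorrelated with every function of $\mathbf{O}$. Consequently both variances split as the sum of the residual variance and the variance of the $\mathbf{O}$-measurable part.

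Finally I would compute the pieces. The residual contribution is common to both and bounded uniformly in $\alpha$: $E_{P_\alpha}\!\left[\frac{1}{\pi_{a}(\mathbf{O};P_\alpha)}\,\mathrm{var}_{P_\alpha}(Y\mid A=a,\mathbf{O})\right]\le C^{2}$ by condition 4. For the $\mathbf{O}$-measurable parts, the independence and moment conditions give $\mathrm{var}_{P_\alpha}(O_{1}+O_{2})=2$, whereas $\mathrm{var}_{P_\alpha}(b)=\mathrm{var}_{P_\alpha}(O_{1}+O_{2})+\alpha^{2}\mathrm{var}_{P_\alpha}(O_{1}O_{2})=2+\alpha^{2}$, the cross term vanishing because $E_{P_\alpha}[(O_{1}+O_{2})O_{1}O_{2}]=0$. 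Therefore $\mathrm{var}_{P_\alpha}[\psi_{P_\alpha,a}]\ge\alpha^{2}+2\to\infty$ while $\mathrm{var}_{P_\alpha}[\chi^{1}_{P_\alpha,a,eff}]\le C^{2}+2$ stays bounded, so their ratio diverges as $|\alpha|\to\infty$. The step requiring the most care is the uniform boundedness of the denominator: this is precisely where condition 4 (the bound on $\mathrm{var}_{P_\alpha}(Y\mid A=a,\mathbf{O})$ together with the lower bound on $\pi_{a}(\mathbf{O}_{\min};P_\alpha)$) is indispensable, and verifying the orthogonality claim that makes the variance decompositions exact is the only genuinely structural point; the remaining moment computations are routine.
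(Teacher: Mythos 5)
Your proof is correct, and it reaches the conclusion by a mildly different route than the paper. The shared core is identical: you compute $\Delta_{P_{\alpha}}(\mathbf{O})=\alpha O_{1}O_{2}$ (and $\chi_{a}(P_{\alpha};\mathcal{G})=E_{P_{\alpha}}[b]=0$) from the independence of $O_{1},O_{2}$ and the moment conditions, you get $var_{P_{\alpha}}(b)=2+\alpha^{2}$ with vanishing cross terms, and you use condition 4 to control the inverse-probability-weighted residual. Where you differ is the final mechanism: the paper works with the projection characterization $\chi_{P_{\alpha},a,eff}^{1}=\Pi\left[\psi_{P_{\alpha},a}\mid \Lambda(P_{\alpha})\right]$ and $\Delta_{P_{\alpha}}(\mathbf{O})=\Pi\left[\psi_{P_{\alpha},a}\mid \Lambda(P_{\alpha})^{\perp}\right]$, invokes Pythagoras to write $var_{P_{\alpha}}[\chi^{1}_{P_{\alpha},a,eff}]=var_{P_{\alpha}}[\psi_{P_{\alpha},a}]-var_{P_{\alpha}}[\Delta_{P_{\alpha}}(\mathbf{O})]$, and concludes by showing $var_{P_{\alpha}}[\Delta_{P_{\alpha}}(\mathbf{O})]/var_{P_{\alpha}}[\psi_{P_{\alpha},a}]\rightarrow 1$; you instead substitute $\Delta_{P_{\alpha}}(\mathbf{O})=\alpha O_{1}O_{2}$ into the identity $\chi^{1}_{P_{\alpha},a,eff}=\psi_{P_{\alpha},a}-\Delta_{P_{\alpha}}(\mathbf{O})$ (available from Section \ref{sec:efficient_est} via Algorithm \ref{algo:main}, which the paper's proof also relies on) to obtain the explicit form $\frac{I_{a}(A)}{\pi_{a}(\mathbf{O};P_{\alpha})}(Y-b)+O_{1}+O_{2}$, verify directly that the weighted residual has conditional mean zero given $\mathbf{O}$ and is hence uncorrelated with all $\mathbf{O}$-measurable terms, and bound the two variances separately, $var_{P_{\alpha}}[\chi^{1}_{P_{\alpha},a,eff}]\leq C^{2}+2$ uniformly in $\alpha$ versus $var_{P_{\alpha}}[\psi_{P_{\alpha},a}]\geq 2+\alpha^{2}$. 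Your version buys a quantitative statement (the denominator stays bounded while the numerator grows, so the ratio diverges at rate $\alpha^{2}$) and is more elementary, replacing the tangent-space Pythagoras step with the hands-on orthogonality of the IPW residual; incidentally your bound $C^{2}$ on the residual term slightly sharpens the paper's $C^{3}$. Two small points worth making explicit: your assertion $\mathbf{O}_{\min}=\mathbf{O}$ holds because both $O_{1}$ and $O_{2}$ are parents of $A$ in Figure \ref{fig:adj}, so neither can be dropped, and in any case $\pi_{a}(\mathbf{O};P)=\pi_{a}(\mathbf{O}_{\min};P)$ for all $P\in\mathcal{M(G)}$ by the defining d-separation, so condition 4 applies exactly as you use it.
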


\begin{proof}

\[
\psi _{,a}(\mathbf{O};\mathcal{G})\equiv \frac{I_{a}(A)}{\pi _{a}(\mathbf{O}%
_{min};P)}\left\{ Y-b_{a}\left( \mathbf{O};P\right)
\right\} +b_{a}\left( \mathbf{O};P\right) -\chi _{a}(P;%
\mathcal{G})
\]%
is an influence function of $\chi _{a}\left( P;\mathcal{G}\right) $ under
the Bayesian Network $\mathcal{M}\left( \mathcal{G}\right) $. This is
because by $\mathbf{O}$ being an adjustment set we know that for all $P\in $ 
$\mathcal{M}\left( \mathcal{G}\right) ,$ $\chi _{a}\left( P;\mathcal{G}%
\right) =E_{P}\left[ E_{p}\left( Y|A=a,\mathbf{O}\right) \right] .$ Then, 
\[
\chi _{P,a,eff}^{1}(\mathbf{V};\mathcal{G})=\Pi \left[ \psi _{P,a}(\mathbf{O}%
;\mathcal{G})|\Lambda \left( P\right) \right] 
\]%
where $\Lambda \left( P\right) $ is the tangent space of model $\mathcal{M}%
\left( \mathcal{G}\right) $ at $P.$ Consequently,
\[
\Delta _{P_{\alpha }}\left( \mathbf{O}\right) =\Pi \left[ \psi _{P_{\alpha
},a}(\mathbf{O};\mathcal{G})|\Lambda \left( P_{\alpha}\right) ^{\perp }\right] 
\]%
and by Pythagoras's Theorem, we have 
\[
var_{P_{\alpha }}\left[ \chi _{P_{\alpha},a,eff}^{1}(\mathbf{V};\mathcal{G})\right]
=var_{P_{\alpha }}\left[ \psi _{P_{\alpha },a}(\mathbf{O};\mathcal{G})\right]
-var_{P_{\alpha }}\left[ \Delta _{P_{\alpha }}\left( \mathbf{O}\right) %
\right] 
\]%
Therefore, 
\[
\frac{var_{P_{\alpha }}\left[ \chi _{P_{\alpha},a,eff}^{1}(\mathbf{V};\mathcal{G})%
\right] }{var_{P_{\alpha }}\left[ \psi _{P_{\alpha },a}(\mathbf{O};\mathcal{G%
})\right] }=1-\frac{var_{P_{\alpha }}\left[ \Delta _{P_{\alpha }}\left( 
\mathbf{O}\right) \right] }{var_{P_{\alpha }}\left[ \psi _{P_{\alpha },a}(%
\mathbf{O};\mathcal{G})\right] }.
\]%
Now, $O_{1}$ and $O_{2}$ are marginally independent under all $P\in \mathcal{%
M(G)}$. Since $E_{P_{\alpha }}\left( O_{1}\right) =E_{P_{\alpha }}\left(
O_{2}\right) =0$, we have that $E_{P_{\alpha }}\left[ b_{a}\left( \mathbf{O}%
;P_{\alpha }\right) |O_{1}\right] =O_{1}$, $E_{P_{\alpha }}\left[
b_{a}\left( \mathbf{O};P_{\alpha }\right) |O_{2}\right] =O_{2}$ and $%
E_{P_{\alpha }}\left[ b_{a}\left( \mathbf{O};P_{\alpha }\right) \right] =0$.
Thus 
\[
\Delta _{P_{\alpha }}\left( \mathbf{O}\right) =b_{a}\left( \mathbf{O}%
;P_{\alpha }\right) -E_{P_{\alpha }}\left[ b_{a}\left( \mathbf{O};P_{\alpha
}\right) |O_{1}\right] -E_{P_{\alpha }}\left[ b_{a}\left( \mathbf{O}%
;P_{\alpha }\right) |O_{2}\right] +E_{P_{\alpha }}\left[ b_{a}\left( \mathbf{%
O};P_{\alpha }\right) \right] =\alpha O_{1}O_{2}
\]%
Consequently, 
\[
var_{P_{\alpha }}\left[ \Delta _{P_{\alpha }}\left( \mathbf{O}\right) \right]
=\alpha ^{2}E_{P_{\alpha }}\left[ O_{1}^{2}O_{2}^{2}\right] =\alpha ^{2}.
\]%
On the other hand,       
\begin{align*}
var_{P_{\alpha }}\left[ \psi _{P,a}(\mathbf{V};\mathcal{G})\right] &
=var_{P_{\alpha }}\left[ \frac{I_{a}(A)}{\pi _{a}(\mathbf{O}_{min};P_{\alpha
})}\left\{ Y-b_{a}\left( \mathbf{O};P_{\alpha }\right) \right\} \right]
+var_{P_{\alpha }}\left[ b_{a}\left( \mathbf{O};P_{\alpha }\right) -\chi
_{a}(P_{\alpha };\mathcal{G})\right]  \\
& =E_{P_{\alpha }}\left[ \frac{I_{a}(A)}{\pi _{a}^{2}(\mathbf{O}%
_{min};P_{\alpha })}\left\{ Y-b_{a}\left( \mathbf{O};P_{\alpha }\right)
\right\} ^{2}\right] +E_{P_{\alpha }}\left[ b_{a}^{2}\left( \mathbf{O}%
;P_{\alpha }\right) \right]  \\
& =E_{P_{\alpha }}\left[ \frac{I_{a}(A)}{\pi _{a}^{2}(\mathbf{O}%
_{min};P_{\alpha })}var_{P_{\alpha }}(Y\mid A=a,\mathbf{O})\right]
+E_{P_{\alpha }}\left[ b_{a}^{2}\left( \mathbf{O};P_{\alpha }\right) \right]
.
\end{align*}%
Moreover, since $O_{1}$ and $O_{2}$ have zero mean, unit variance, and are
uncorrelated under $P_{\alpha }$, 
\[
E_{P_{\alpha }}\left[ b_{a}^{2}\left( \mathbf{O};P_{\alpha }\right) \right]
=E_{P_{\alpha }}\left[ \left\{ O_{1}+O_{2}+\alpha O_{1}O_{2}\right\} ^{2}%
\right] =E_{P_{\alpha }}\left[ O_{1}^{2}+O_{2}^{2}+\alpha
^{2}(O_{1}O_{2})^{2}\right] =2+\alpha ^{2}.
\]%
Thus 
\[
var_{P_{\alpha }}\left[ \psi _{P,a}(\mathbf{V};\mathcal{G})\right]
=E_{P_{\alpha }}\left[ \frac{I_{a}(A)}{\pi _{a}^{2}(\mathbf{O}%
_{min};P_{\alpha })}var_{P_{\alpha }}(Y\mid A=a,\mathbf{O})\right] +2+\alpha
^{2}.
\]
Since by assumption $var_{P_{\alpha }}(Y\mid A=a,\mathbf{O})\leq C$ and $\pi
_{a}(\mathbf{O}_{min};P_{\alpha })\geq 1/C$, we have 
\begin{equation}
E_{P_{\alpha }}\left[ \frac{I_{a}(A)}{\pi _{a}^{2}(\mathbf{O}%
_{min};P_{\alpha })}var_{P_{\alpha }}(Y\mid A=a,\mathbf{O})\right] \leq
CE_{P_{\alpha }}\left[ \frac{I_{a}(A)}{\pi _{a}^{2}(\mathbf{O}%
_{min};P_{\alpha })}\right] \leq C^{3}.  \label{eq:are_bound_var}
\end{equation}%
Consequently,%
\begin{eqnarray*}
\frac{var_{P_{\alpha }}\left[ \Delta _{P_{\alpha }}\left( \mathbf{O}\right) %
\right] }{var_{P_{\alpha }}\left[ \psi _{P_{\alpha },a}(\mathbf{O};\mathcal{G%
})\right] } &=&\frac{\alpha ^{2}}{E_{P_{\alpha }}\left[ {I_{a}(A)}{\pi
_{a}^{-2}(\mathbf{O}_{min};P_{\alpha })}var_{P_{\alpha }}(Y\mid A=a,\mathbf{O}%
)\right] +2+\alpha ^{2}} \\
&\rightarrow &1
\end{eqnarray*}%
and therefore 
$$
\frac{var_{P_{\alpha }}\left[ \chi _{P_{\alpha},a,eff}^{1}(\mathbf{V};%
\mathcal{G})\right] }{var_{P_{\alpha }}\left[ \psi _{P_{\alpha },a}(\mathbf{O%
};\mathcal{G})\right] }=1-\frac{var_{P_{\alpha }}\left[ \Delta _{P_{\alpha
}}\left( \mathbf{O}\right) \right] }{var_{P_{\alpha }}\left[ \psi
_{P_{\alpha },a}(\mathbf{O};\mathcal{G})\right] }\rightarrow 0.
$$ 
\end{proof}

\medskip

\begin{lemma}\label{lemma:orth_decomp}
Let $\mathcal{G}$ be a DAG with vertex set that stands for a
random vector $\mathbf{V=}\left( V_{1},...,V_{s}\right) \mathbf{.}$ Suppose
that the laws in the Bayesian Network $\mathcal{M}\left( \mathcal{G}\right) $
are dominated by some measure $\mu .$ Then the tangent space of model $%
\mathcal{M}\left( \mathcal{G}\right) $ at a law $P$ is given by $\Lambda
\equiv \oplus _{j=1}^{s}\Lambda _{j}$ where 
\begin{equation}
\Lambda _{j}=\left\{ G\equiv g\left( V_{j},\pa_{\mathcal{G}}\left(
V_{j}\right) \right) \in L_{2}\left( P\right) :E_{P}\left[ G|\pa_{%
\mathcal{G}}\left( V_{j}\right) \right] =0\right\} .  \label{eq:tangent_j}
\end{equation}
\end{lemma}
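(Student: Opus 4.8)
The plan is to establish the two inclusions $\oplus_{j=1}^{s}\Lambda_j \subseteq \Lambda$ and $\Lambda \subseteq \oplus_{j=1}^{s}\Lambda_j$ separately. First I would record the two structural facts that make the direct sum meaningful. Each $\Lambda_j$ is closed in $L_2(P)$, being the intersection of the closed subspace of $(V_j,\pa_{\mathcal{G}}(V_j))$-measurable square-integrable functions with the kernel of the bounded operator $h\mapsto E_P[h\mid\pa_{\mathcal{G}}(V_j)]$. For mutual orthogonality I would fix a topological order $V_1,\dots,V_s$ so that $\pa_{\mathcal{G}}(V_j)\subseteq\{V_1,\dots,V_{j-1}\}$, take $G_j\in\Lambda_j$, $G_k\in\Lambda_k$ with $j<k$, and condition on $(V_1,\dots,V_{k-1})$. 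Since $G_j$ is measurable with respect to this $\sigma$-field while $\{V_1,\dots,V_{k-1}\}\subseteq\nd_{\mathcal{G}}(V_k)$, the Local Markov Property gives $E_P[G_k\mid V_1,\dots,V_{k-1}]=E_P[G_k\mid\pa_{\mathcal{G}}(V_k)]=0$, so $E_P[G_jG_k]=0$. Consequently $\oplus_{j=1}^{s}\Lambda_j$ is a finite orthogonal sum of closed subspaces and is therefore itself closed.

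For $\oplus_j\Lambda_j\subseteq\Lambda$, I would show that every bounded element of $\Lambda_j$ is the score of a regular submodel lying in $\mathcal{M}(\mathcal{G})$. Given a bounded $g(V_j,\pa_{\mathcal{G}}(V_j))$ with $E_P[g\mid\pa_{\mathcal{G}}(V_j)]=0$, define $p_t(\mathbf{v})=p(\mathbf{v})\{1+tg(v_j,\pa_{\mathcal{G}}(v_j))\}$ for $|t|$ small enough that $p_t\geq 0$. The conditional-mean-zero constraint forces $\int p(v_j\mid\pa_{\mathcal{G}}(v_j))\{1+tg\}\,d\mu(v_j)=1$, so only the $j$-th conditional density is perturbed and the $\mathcal{G}$-factorization is preserved; hence $P_t\in\mathcal{M}(\mathcal{G})$ and its score at $t=0$ equals $g$. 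Since bounded functions are dense in $\Lambda_j$ and $\Lambda$ is closed, $\Lambda_j\subseteq\Lambda$, and because $\Lambda$ is a closed subspace containing every $\Lambda_j$ it contains their closed sum.

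For the reverse inclusion, I would take an arbitrary regular one-dimensional submodel $t\mapsto P_t\in\mathcal{M}(\mathcal{G})$ with score $S$ at $t=0$. Because each $P_t$ factorizes along $\mathcal{G}$, I would write $\log p_t(\mathbf{v})=\sum_{j=1}^{s}\log p_t(v_j\mid\pa_{\mathcal{G}}(v_j))$ and differentiate termwise at $t=0$ to get $S=\sum_{j=1}^{s}s_j$ with $s_j(v_j,\pa_{\mathcal{G}}(v_j))=\partial_t\log p_t(v_j\mid\pa_{\mathcal{G}}(v_j))|_{t=0}$. Differentiating the identity $\int p_t(v_j\mid\pa_{\mathcal{G}}(v_j))\,d\mu(v_j)=1$ shows $E_P[s_j\mid\pa_{\mathcal{G}}(V_j)]=0$, i.e. $s_j\in\Lambda_j$. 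Thus every score lies in the closed space $\oplus_j\Lambda_j$, and passing to closed linear spans yields $\Lambda\subseteq\oplus_j\Lambda_j$. The two inclusions give the claimed equality.

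The main obstacle is the reverse inclusion: justifying the termwise differentiation of the factorized log-likelihood and the conditional-mean-zero property of each $s_j$ for an arbitrary regular submodel, rather than for the specially constructed ones. This is where dominance by $\mu$ and the regularity (quadratic-mean-differentiability) of the submodel are essential, since an arbitrary submodel may perturb all conditionals simultaneously, yet the DAG factorization must be shown to force the score to split cleanly into the orthogonal pieces $\Lambda_j$.
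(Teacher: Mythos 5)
Your proposal is correct in outline and takes a genuinely different route from the paper: the paper's proof is essentially a two-line citation, invoking Lemma 1.6 of van der Laan and Robins for the decomposition of the tangent space of a factorized model into per-factor pieces, and Theorem 4.5 of Tsiatis for the fact that each unrestricted conditional factor contributes the full conditional-mean-zero space $\Lambda_j$, whereas you give a self-contained argument. Your forward inclusion (the bounded perturbation $p_t = p\{1+tg\}$ of the $j$-th conditional, which stays in $\mathcal{M}(\mathcal{G})$ precisely because $E_P[g\mid \pa_{\mathcal{G}}(V_j)]=0$ makes the perturbed factor integrate to one), your orthogonality argument via a topological order and the Local Markov Property, and your closedness observations are all complete and correct, and in fact supply exactly the content the paper outsources to Tsiatis. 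The one place where your writeup is a sketch rather than a proof is the reverse inclusion, and you correctly diagnose it yourself: for an arbitrary regular (quadratic-mean-differentiable) path $t\mapsto P_t$ in $\mathcal{M}(\mathcal{G})$, the termwise pointwise differentiation of $\log p_t(\mathbf{v})=\sum_j \log p_t(v_j\mid \pa_{\mathcal{G}}(v_j))$ is not automatic, and this step matters because $\oplus_j\Lambda_j$ is a \emph{strict} subspace of $L_2^0(P)$ for incomplete DAGs, so the inclusion $\Lambda\subseteq\oplus_j\Lambda_j$ carries real content. This QMD bookkeeping — showing each induced conditional path is differentiable in quadratic mean with score $s_j\in\Lambda_j$ and that $S=\sum_j s_j$ in $L_2(P)$ — is precisely what the cited van der Laan--Robins lemma establishes, so you could either cite it at that point (recovering the paper's proof) or close the gap directly, e.g.\ by the martingale decomposition $S=\sum_j\{E_P[S\mid \overline{V}_j]-E_P[S\mid \overline{V}_{j-1}]\}$ along your topological order together with a conditional-QMD argument showing each increment depends only on $(V_j,\pa_{\mathcal{G}}(V_j))$. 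What your approach buys is transparency and self-containment, at the cost of one nontrivial technical lemma; what the paper's approach buys is brevity, at the cost of burying exactly the step you flagged inside a citation.
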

\begin{proof}
For any $P\in \mathcal{M}\left( \mathcal{G}\right) $ let $p$
denote, any version of, the density of $P$ with respect to $\mu .$ For any $%
P\in $ $\mathcal{M}\left( \mathcal{G}\right) ,$ $p\left( \mathbf{V}\right) $ factors
as  
\[
p\left( \mathbf{V}\right) =\prod\limits_{k=1}^{s}p_{k}\left( V_{k}|\text{pa}_{\mathcal{G}%
}\left( V_{k}\right) \right) 
\]%
where $p_{j}$ is, any version of, the conditional density of $V_{j}$ given pa%
$_{\mathcal{G}}\left( V_{j}\right) .$ Lemma 1.6 of \cite{vanderlaan}, implies that the tangent space of model $\mathcal{M}\left( \mathcal{G%
}\right) $ at a law $P$ is given by $\Lambda \equiv \oplus _{j=1}^{s}\Lambda
_{j}$ where $\Lambda _{j}$ is the closed linear span of scores of one
dimensional regular parametric submodels 
$$
t\rightarrow p\left( \mathbf{V};t\right)
=p_{j}\left( V_{j}|\text{pa}_{\mathcal{G}}\left( V_{j}\right) ;t\right) \prod\limits_{k=1,k\not=j}^{s}p_{k}\left( V_{k}|\text{pa}_{\mathcal{G}}\left(
V_{k}\right) \right) .
$$
Such $\Lambda _{j}$ is equal to the set in the right
hand side of $\left( \ref{eq:tangent_j}\right) $ because model $\mathcal{M}%
\left( \mathcal{G}\right) $ does not impose restrictions on the law $%
p_{j}\left( V_{j}|\text{pa}_{\mathcal{G}}\left( V_{j}\right) \right) $
(\citet{tsiatis}, Theorem 4.5). This concludes the proof.
\end{proof}

\medskip

\begin{proof}[Proof of Theorem \protect\ref{lemma:eff_if_expression}]
\begin{equation*}
\psi _{P,a}\left[ \pa_{\mathcal{G}}(A);\mathcal{G} \right]= J_{P,a,\mathcal{G%
}}-\left\{ \frac{I_{a}(A)}{\pi_{a} \left( \text{pa}_{\mathcal{G}}\left(
A\right) ;P\right) }-1\right\} b_{a}\left( \text{pa}_{\mathcal{G}}\left(
A\right) ;P\right) -\chi_{a} \left( P;\mathcal{G}\right)
\end{equation*}
is an influence function for $\chi_{a} \left( P;\mathcal{G}\right) $ in
model $\mathcal{M}\left( \mathcal{G}\right) $ because it is the unique
influence function for $\chi_{a} \left( P;\mathcal{G}\right) $ in the
non-parametric model that does not impose any restrictions on $P$.

Let the vertex set of $\mathcal{G}$ be given by $\mathbf{V}=\left\lbrace
V_{1},\dots, V_{s}\right\rbrace$. In Lemma \ref{lemma:orth_decomp} we showed that for all $P\in 
\mathcal{M(G)}$ the tangent space at $P$ of model $\mathcal{M(G)}$ is given
by 
\begin{equation*}
\Lambda(P) = \oplus _{j=1}^{s}\Lambda_{j}(P),
\end{equation*}
where 
\begin{equation*}
\Lambda_{j}(P)\equiv \left\{ G\equiv g\left( V_{j},\text{pa}_{\mathcal{G}%
}\left( V_{j}\right) \right) \in L_{2}\left( P\right) :E_{P}\left[ G|\text{pa%
}_{\mathcal{G}}\left( V_{j}\right) \right] =0\right\} .
\end{equation*}%
Now, it is easy to show that the projection of any random variable $U$ onto $%
\Lambda_{j}(P)$ is given by 
\begin{equation*}
E_{P}\left[ U \mid V_{j}, \pa_{\mathcal{G}}(V_{j}) \right]-E_{P}\left[ U
\mid \pa_{\mathcal{G}}(V_{j}) \right]
\end{equation*}
and hence the projection of $U$ onto $\Lambda(P)$ is given by 
\begin{equation*}
\sum\limits_{j=1}^{s}\left\lbrace E_{P}\left[ U \mid V_{j}, \pa_{\mathcal{G}%
}(V_{j}) \right]-E_{P}\left[ U \mid \pa_{\mathcal{G}}(V_{j}) \right]
\right\rbrace.
\end{equation*}
Thus 
\begin{equation*}
\chi^{1}_{P,a,eff}(\mathbf{V};\mathcal{G})=\sum\limits_{j=1}^{s}\left\lbrace
E_{P}\left[ \psi _{P,a}\left[ \pa_{\mathcal{G}}(A);\mathcal{G} \right] \mid
V_{j}, \pa_{\mathcal{G}}(V_{j}) \right]-E_{P}\left[ \psi _{P,a}\left[ \pa_{%
\mathcal{G}}(A);\mathcal{G} \right] \mid \pa_{\mathcal{G}}(V_{j}) \right]
\right\rbrace.
\end{equation*}

Because $\chi_{a}\left( P;\mathcal{G}\right) $ does not depend on the law of 
$A$ given pa$_{\mathcal{G}}\left( A\right)$, $\psi _{P,a}\left[ \pa_{%
\mathcal{G}}(A);\mathcal{G} \right]$ is orthogonal to the scores for all
regular parametric submodels for the law $A$ given pa$_{\mathcal{G}}\left(
A\right) .$ Consequently, $E_{P}\left\lbrace \left. \psi _{P,a}\left[ \pa_{%
\mathcal{G}}(A);\mathcal{G} \right]\right\vert A,\text{pa}_{\mathcal{G}%
}\left( A\right) \right\rbrace -E_{P}\left\lbrace \left. \psi _{P,a}\left[ %
\pa_{\mathcal{G}}(A);\mathcal{G} \right]\right\vert \text{pa}_{\mathcal{G}%
}\left( A\right) \right\rbrace =0$ for all $P\in \mathcal{M}\left( \mathcal{G%
}\right)$. This implies that 
\begin{equation*}
\chi^{1}_{P,a,eff}(\mathbf{V};\mathcal{G})=\sum\limits_{j: V_{j}\neq
A}\left\lbrace E_{P}\left[ \psi _{P,a}\left[ \pa_{\mathcal{G}}(A);\mathcal{G}
\right] \mid V_{j}, \pa_{\mathcal{G}}(V_{j}) \right]-E_{P}\left[ \psi _{P,a}%
\left[ \pa_{\mathcal{G}}(A);\mathcal{G} \right] \mid \pa_{\mathcal{G}%
}(V_{j}) \right] \right\rbrace.
\end{equation*}

Now consider any $V_{j}\neq A.$ We will show that 
\begin{align}
&E_{P}\left[ \psi _{P,a}\left[ \pa_{\mathcal{G}}(A);\mathcal{G} \right] \mid
V_{j}, \pa_{\mathcal{G}}(V_{j}) \right]-E_{P}\left[ \psi _{P,a}\left[ \pa_{%
\mathcal{G}}(A);\mathcal{G} \right] \mid \pa_{\mathcal{G}}(V_{j}) \right]
\nonumber
\\
&=E_{P}\left[ \left. J_{P,a,\mathcal{G}}\right\vert V_{j},\text{pa}_{\mathcal{%
G}}\left( V_{j}\right) \right] -E_{P}\left[ \left. J_{P,a \mathcal{G}%
}\right\vert \text{pa}_{\mathcal{G}}\left( V_{j}\right) \right] .
\label{eq:solo_J}
\end{align}%
Suppose first that $V_{j}\in $de$_{\mathcal{G}}^{c}\left( A\right) ,$ then 
\begin{eqnarray*}
&&E_{P}\left[ \left. \left\{ \frac{I_{a}(A)}{\pi_{a} \left( \text{pa}_{%
\mathcal{G}}\left( A\right) ;P\right) }-1\right\} b_{a}\left( \text{pa}_{%
\mathcal{G}}\left( A\right) ;P\right) \right\vert V_{j},\text{pa}_{\mathcal{G%
}}\left( V_{j}\right) \right] \\
&=&E_{P}\left[ \left. \left\{ \frac{E_{P}\left[ I_{a}(A)|\text{pa}_{\mathcal{%
G}}\left( A\right) ,V_{j},\text{pa}_{\mathcal{G}}\left( V_{j}\right) \right] 
}{\pi_{a} \left( \text{pa}_{\mathcal{G}}\left( A\right) ;P\right) }%
-1\right\} b_{a}\left( \text{pa}_{\mathcal{G}}\left( A\right) ;P\right)
\right\vert V_{j},\text{pa}_{\mathcal{G}}\left( V_{j}\right) \right] \\
&=&E_{P}\left[ \left. \left\{ \frac{E_{P}\left[ I_{a}(A)|\text{pa}_{\mathcal{%
G}}\left( A\right) \right] }{\pi_{a} \left( \text{pa}_{\mathcal{G}}\left(
A\right) ;P\right) }-1\right\} b_{a}\left( \text{pa}_{\mathcal{G}}\left(
A\right) ;P\right) \right\vert V_{j},\text{pa}_{\mathcal{G}}\left(
V_{j}\right) \right] \\
&=&0.
\end{eqnarray*}%
where the second equality holds because $A\perp \!\!\!\perp \left[
\left[ V_{j},\text{pa}_{\mathcal{G}}\left( V_{j}\right) \right] \backslash 
\text{pa}_{\mathcal{G}}\left( A\right) \right] |$pa$_{\mathcal{G}}\left(
A\right) $ by the Local Markov property since $\lbrace V_{j}\rbrace \cup \pa_{\mathcal{G}%
}\left( V_{j}\right) \subset \de_{\mathcal{G}}^{c}\left( A\right) .$ The last display implies that
\begin{equation*}
E_{P}\left[ \left. \left\{ \frac{I_{a}(A)}{\pi_{a} \left( \text{pa}_{\mathcal{G}%
}\left( A\right) ;P\right) }-1\right\} b_{a}\left( \text{pa}_{\mathcal{G}%
}\left( A\right) ;P\right) \right\vert \text{pa}_{\mathcal{G}}\left(
V_{j}\right) \right] =0,
\end{equation*}
thus showing \eqref{eq:solo_J}. Next, suppose that $V_{j}\in $de$_{\mathcal{G%
}}\left( A\right) .$ Then, $\lbrace A \rbrace \cup $pa$_{\mathcal{G}}\left( A\right) \subset $%
de$_{\mathcal{G}}^{c}\left( V_{j}\right) .$ Consequently, by the Local
Markov property, $\left[ A,\text{pa}_{\mathcal{G}}\left( A\right) \right]
\perp \!\!\!\perp V_{j}|$pa$_{\mathcal{G}}\left( V_{j}\right)
. $ Then \eqref{eq:solo_J} holds because 
$$
 \left\{ \frac{I_{a}(A)}{\pi_{a} \left( \text{pa}_{\mathcal{G}%
}\left( A\right) ;P\right) }-1\right\} b_{a}\left( \text{pa}_{\mathcal{G}%
}\left( A\right) ;P\right)
$$
is a function of $A$ and $\pa_{\mathcal{G}}\left(A\right) $ only.

We have thus shown that 
\begin{equation}
\chi _{P,a,eff}^{1}\left( \mathbf{V};\mathcal{G}\right) =\sum_{j:V_{j}\neq
A}\left\{ E_{P}\left[ J_{P,a,\mathcal{G}}|V_{j},\pa_{\mathcal{G}}\left(
V_{j}\right) \right] -E_{P}\left[ J_{P,a,\mathcal{G}}|\pa_{\mathcal{G}%
}\left( V_{j}\right) \right] \right\} .  \notag
\end{equation}%
By Proposition \ref{prop:remove_directed_through_A} in Section \ref{sec:aux_eff}, if $V_{j}\in \indir(A,Y,%
\mathcal{G})$ then 
\begin{equation*}
E_{P} \left[ J_{P,a,\mathcal{G}}|V_{j},\pa_{\mathcal{G}}\left( V_{j}\right) %
\right] -E_{P}\left[ J_{P,a,\mathcal{G}}|\pa_{\mathcal{G}}\left(
V_{j}\right) \right]=0.
\end{equation*}
Next, take $V_{j}\in\an^{c}_{\mathcal{G}}(\lbrace A,Y\rbrace)$. Then $\pa_{%
\mathcal{G}}(A), A, Y$ are non-descendants of $V_{j}$ and thus by the Local
Markov Property 
\begin{equation*}
V_{j} \perp \!\!\!\perp \pa_{\mathcal{G}}(A), A, Y \mid \pa_{\mathcal{G}%
}(V_{j}).
\end{equation*}
Therefore, since $J_{P,a,\mathcal{G}}$ is a function of only $\pa_{\mathcal{G%
}}(A), A, Y$ 
\begin{equation*}
E_{P} \left[ J_{P,a,\mathcal{G}}|V_{j},\pa_{\mathcal{G}}\left( V_{j}\right) %
\right] -E_{P}\left[ J_{P,a,\mathcal{G}}|\pa_{\mathcal{G}}\left(
V_{j}\right) \right]=0.
\end{equation*}
Hence 
\begin{equation}
\chi _{P,a,eff}^{1}\left( \mathbf{V};\mathcal{G}\right) =\sum_{j:V_{j}\notin %
\irrel(A,Y,\mathcal{G})\cup \lbrace A\rbrace}\left\{ E_{P}\left[ J_{P,a,%
\mathcal{G}}|V_{j},\pa_{\mathcal{G}}\left( V_{j}\right) \right] -E_{P}\left[
J_{P,a,\mathcal{G}}|\pa_{\mathcal{G}}\left( V_{j}\right) \right] \right\} . 
\notag
\end{equation}%

Turn now to the proof that $\chi _{P,a,eff}^{1}(%
\mathbf{V};\mathcal{G})$ does not depend on any $V\in \irrel(A,Y,\mathcal{G})$. 
Take $V\in \irrel(A,Y,\mathcal{G})$ and $W\in \ch_{\mathcal{G}}(V)\setminus
\lbrace A \rbrace$. We will show next that $W\in \irrel(A,Y,\mathcal{G})$.
This, together with the last display, will imply that $\chi _{P,a,eff}^{1}(%
\mathbf{V};\mathcal{G})$ is a function only of $\mathbf{V}_{marg}=\mathbf{V}%
\backslash \left\{ \an_{\mathcal{G}}^{c}\left( \left\{ A,Y\right\} \right)
\cup \indir(A,Y,\mathcal{G}) \right\}$. This is because the only way in which $V\in\irrel(A,Y,\mathcal{G})$ can appear in  $\chi _{P,a,eff}^{1}(%
\mathbf{V};\mathcal{G})$ is if it belongs to the parent set of a node $W$ that is not in $\irrel(A,Y,\mathcal{G})\cup\lbrace A \rbrace$.

Now, if $W\in \an_{\mathcal{G}%
}(A)\setminus \lbrace A \rbrace$ then $W\in \indir(A,Y,\mathcal{G})$, since $%
W$ is a children of $V$ and $V\in \irrel(A,Y,\mathcal{G})$. If $W\notin \an_{%
\mathcal{G}}(A)$ then if $W$ where an ancestor of $Y$, there would exist a directed path from $W$ to $Y$ that does not intersect $A$. Since $W$ is a child of $V$, this would imply that $V\notin \irrel(A,Y,{\mathcal{G}})$, contradicting the assumption that $V\in \irrel(A,Y,{\mathcal{G}})$. Hence, $W$ is not an ancestor of $Y$ nor of $A$, thus $W\in\irrel(A,Y,\mathcal{G})$.
Thus, in all cases, $W\in \irrel(A,Y,
\mathcal{G})$, which is what we wanted to show.
\end{proof}

\medskip

\begin{proof}[Proof of Proposition \protect\ref{lemma:marg-eff}]
Let $P^{\prime }\in \mathcal{M}^{\prime }$ and $P\in \mathcal{M}$ with
marginal law $P^{\prime }.$ Let $\mathbf{V}^{c}\mathbf{=V\backslash V}%
^{\prime }.$ Let $t\in [0,\varepsilon ]\rightarrow P_{t}$ be a regular
parametric submodel of $\mathcal{M}$ with $P_{t=0}=P$ and score $S.$
Decompose $S$ as $S_{\mathbf{V}^{\prime }}+S_{\mathbf{V}^{c}|\mathbf{V}%
^{\prime }}$ where $S_{\mathbf{V}^{\prime }}$ is the score in the induced
regular parametric submodel $t\in (0,\varepsilon ]\rightarrow P_{t}^{\prime
} $ of $\mathcal{M}^{\prime }$ with $P_{t=0}^{\prime }=P^{\prime }.$ Then%
\begin{eqnarray*}
\left. \frac{d}{dt}\chi \left( P_{t}\right) \right\vert _{t=0} &=&E_{P}\left[
\chi _{P,eff}^{1}S\right] \\
&=&E_{P}\left[ \chi _{P,eff}^{1}S_{\mathbf{V}^{\prime }}\right] +E_{P}\left[
\chi _{P,eff}^{1}S_{\mathbf{V}^{c}|\mathbf{V}^{\prime }}\right] \\
&=&E_{P}\left[ \chi _{P,eff}^{1}S_{\mathbf{V}^{\prime }}\right]
\end{eqnarray*}%
where the last equality follows because $S_{\mathbf{V}^{c}|\mathbf{V}%
^{\prime }}$ is a conditional score for the law of $\mathbf{V}^{c}|\mathbf{V}%
^{\prime }$ and, by assumption, $\chi _{P,eff}^{1}$ is a function of $%
\mathbf{V}^{\prime }$ only. On the other hand, $\left. \frac{d}{dt}\chi
\left( P_{t}\right) \right\vert _{t=0}=\left. \frac{d}{dt}\nu \left(
P_{t}^{\prime }\right) \right\vert _{t=0}$ because by assumption, $\chi
\left( P_{t}\right) =\nu \left( P_{t}^{\prime }\right) .$ Then, $\chi
_{P,eff}^{1}$ is an influence function for $\nu \left( P^{\prime }\right) .$
Now let $\Lambda ^{\prime }$ be the tangent space for model $\mathcal{M}%
^{\prime }$ at $P^{\prime }.$ Then, $\Lambda =\Lambda ^{\prime }\oplus $ $\ $%
the closed linear span of $\left\{ S_{\mathbf{V}^{c}|\mathbf{V}^{\prime
}}:S_{\mathbf{V}^{c}|\mathbf{V}^{\prime }}\text{ is a conditional score under model } \mathcal{M}%
\right\} .$ Since $E_{P}\left[ \chi _{P,eff}^{1}S_{\mathbf{V}^{c}|\mathbf{V}%
^{\prime }}\right] =0$ for all conditional scores $S_{\mathbf{V}^{c}|\mathbf{%
V}^{\prime }}$ we conclude that $\chi _{P,eff}^{1}$ is in $\Lambda ^{\prime
} $ and consequently, it is the efficient influence function $\nu
_{P^{\prime },eff}^{1}.$
\end{proof}

\medskip

\begin{proof}[Proof of Lemma \protect\ref{lemma:correct_algo_prun_indir}]
We will use the following property which can be shown straightforwardly. Let 
$\mathcal{G}^{1},$ $\mathcal{G}^{2}$ and $\mathcal{G}^{3}$ be DAGs with
vertex sets $\mathbf{V}^{1}, \mathbf{V}^{2}$ and $\mathbf{V}^{3}$ such that $%
\mathbf{V}^{1}\supset \mathbf{V}^{2}\supset\mathbf{V}^{3}$. Then, 
\begin{equation}
\mathcal{M}\left( \mathcal{G}^{1},\mathbf{V}^{2}\right) =\mathcal{M}\left( 
\mathcal{G}^{2}\right) \text{ and }\mathcal{M}\left( \mathcal{G}^{2},\mathbf{%
V}^{3}\right) =\mathcal{M}\left( \mathcal{G}^{3}\right) \Rightarrow \mathcal{%
M}\left( \mathcal{G}^{1},\mathbf{V}^{3}\right) =\mathcal{M}\left( \mathcal{G}%
^{3 }\right)  \label{eq:p1}
\end{equation}%
The set $\mathbf{V\backslash }\an_{\mathcal{G}}^{c}\left( \left\{
A,Y\right\} \right) $ is an ancestral set, that is, it contains all its own
ancestors: 
\begin{equation*}
\mathbf{V\backslash }\an_{\mathcal{G}}^{c}\left( \left\{ A,Y\right\} \right)
=\an_{\mathcal{G}}\left( \mathbf{V\backslash }\an_{\mathcal{G}}^{c}\left(
\left\{ A,Y\right\} \right) \right) .
\end{equation*}%
Then, by Proposition 1 (a) of \cite{evans-marginal} 
\begin{equation}
\mathcal{M}\left( \mathcal{G},\mathbf{V}\backslash \an_{\mathcal{G}%
}^{c}\left( \left\{ A,Y\right\} \right) \right) =\mathcal{M}\left( \mathcal{G%
}_{\mathbf{V}\backslash \an_{\mathcal{G}}^{c}\left( \left\{ A,Y\right\}
\right) }\right) .  \label{eq:p2}
\end{equation}%

Now, let $\widetilde{\mathcal{G}}^{l+1}\equiv \mathcal{G}_{\mathbf{V}%
\backslash \an_{\mathcal{G}}^{c}\left( \left\{ A,Y\right\} \right) }$ and 
let $(I_{1},\dots,I_{l})$ be the set of nodes in $\indir(A,Y,\widetilde{\mathcal{G}}^{l+1})$, topologically sorted with respect to $\widetilde{\mathcal{G}}^{l+1}$. Recursively define for $j=l,l-1,\dots,1,$ $\widetilde{\mathcal{G}}^{j}\equiv
\tau \left( \widetilde{\mathcal{G}}^{j+1},I_{j}\right) .$ Noticing that in $%
\widetilde{\mathcal{G}}^{j+1},$ $I_{j}$ has a sole child equal to $A$, then
combining Lemma 1 and Lemma 3 of \cite{evans-marginal}, yields that for $%
j=l,l-1,\dots,1$ , 
\begin{equation}
\mathcal{M}\left( \widetilde{\mathcal{G}}^{j+1},\mathbf{V}\backslash \left\{ %
\an^{c}_{\mathcal{G}}\left( \left\{ A,Y\right\} \right) \cup \left( \cup
_{i=j}^{l}I_{i}\right) \right\} \right) =\mathcal{M}\left( \widetilde{%
\mathcal{G}}^{j}\right) .  \label{eq:p3}
\end{equation}%
Repeatedly invoking \eqref{eq:p1} to the equalities \eqref{eq:p2} and %
\eqref{eq:p3} yields 
\begin{equation*}
\mathcal{M}\left( \mathcal{G},\mathbf{V}\setminus\lbrace \an_{\mathcal{G}%
}^{c}(\lbrace A, Y\rbrace)\cup\indir(A,Y,\mathcal{G}) \rbrace \right) =%
\mathcal{M}\left( \widetilde{\mathcal{G}}^{1}\right).
\end{equation*}
Since $\mathcal{G}^{\prime}=\widetilde{\mathcal{G}}^{1}$ is the output of
Algorithm \ref{algo:prun_indir}, this finishes the proof of the first part
of the Lemma.

Now note that the pruning algorithm prunes neither $A$ nor $Y.$
Furthermore, it neither adds new causal paths nor deletes causal paths
between $A$ and $Y.$ Then, $\cn(A,Y,\mathcal{G})=\cn(A,Y,\mathcal{G}^{\prime
}).$ Also, the pruning algorithm neither adds nor deletes any vertex that is both a
non-descendant of $A$ in $\mathcal{G}$ and parent of a vertex in $\cn(A,Y,%
\mathcal{G})$ in $\mathcal{G}.$ But the set of such vertices is precisely
the set $\mathbf{O}\left( A,Y,\mathcal{G}\right) .$ This shows that $\mathbf{%
O}\left( A,Y,\mathcal{G}\right) =\mathbf{O}\left( A,Y,\mathcal{G}^{\prime
}\right) .$ Then, if $P\in\mathcal{M(G)}$, $b_{a}\left( \mathbf{O}\left( A,Y,%
\mathcal{G}\right) ;P\right) =b_{a}\left( \mathbf{O}\left( A,Y,\mathcal{G}%
^{\prime }\right) ;P_{marg}\right) $ and $\pi_{a} \left( \mathbf{O}\left(
A,Y,\mathcal{G}\right) ;P\right) =\pi_{a} \left( \mathbf{O}\left( A,Y,%
\mathcal{G}^{\prime }\right) ;P_{marg}\right) .$ Consequently, 
\begin{equation*}
\psi _{P,a}\left[ \mathbf{O}\left( A,Y,\mathcal{G}\right); \mathcal{G} %
\right] =\psi _{P_{marg},a}\left[ \mathbf{O}\left( A,Y,\mathcal{G}^{\prime
}\right) ; \mathcal{G}^{\prime}\right] .
\end{equation*}%
But since $\mathbf{O}\left( A,Y,%
\mathcal{G}\right) $ is an adjustment set relative to $A$ and $Y$ in $%
\mathcal{G}$ (and $\mathcal{G}^{\prime }$) we have that 
$$
\chi_{a} \left( P;%
\mathcal{G}\right) =E_{P}\left[ b_{a}\left( \mathbf{O}\left( A,Y,\mathcal{G}%
\right) ;P\right) \right]  \text{ and } \chi_{a} \left( P_{marg};\mathcal{G}%
^{\prime }\right) =E_{P_{marg}}\left[ b_{a}\left( \mathbf{O}\left( A,Y,%
\mathcal{G}^{\prime }\right) ;P_{marg}\right) \right] 
$$
and thus conclude
that $\chi_{a} \left( P;\mathcal{G}\right) =\chi_{a} \left( P_{marg};%
\mathcal{G}^{\prime }\right)$.

We turn next to the proof of $\chi _{P,a,eff}^{1}(\mathbf{V};\mathcal{G}%
)=\chi _{P_{marg},eff}^{1}(\mathbf{V}_{marg};\mathcal{G}^{\prime}).$ 
By Theorem \ref{lemma:eff_if_expression}, $\chi _{P,a,eff}^{1}(\mathbf{V};%
\mathcal{G})$ is a function only of $\mathbf{V}_{marg}=\mathbf{V}\backslash
\left\{ \an_{\mathcal{G}}^{c}\left( \left\{ A,Y\right\} \right) \cup \indir%
(A,Y,\mathcal{G}) \right\}$. Since we have already shown that $\mathcal{M}%
\left( \mathcal{G},\text{ }\mathbf{V}_{marg}\right) =\mathcal{M}\left( 
\mathcal{G}^{\prime }\right)$, that $\chi_{a} \left( P_{marg};\mathcal{G}%
^{\prime }\right) =\chi_{a} \left( P;\mathcal{G}\right) $, Proposition \ref%
{lemma:marg-eff} implies that $\chi _{P,a,eff}^{1}(\mathbf{V};\mathcal{G}%
)=\chi _{P_{marg},a,eff}^{1}(\mathbf{V}_{marg};\mathcal{G}^{\prime}).$
\end{proof}

\medskip

\begin{proof}[Proof of Lemma \ref{lemma:eff_simple}]
We begin with the proof of part 1).
\begin{align*}
    &E_{P}\left[ \frac{I_{a}(A)Y}{\pi_{a}(\pa_{\mathcal{G}}(A);P)}\mid W_{j}, \pa_{\mathcal{G}}(W_{j})\right]
    \\
    &=E_{P}\left[ \frac{I_{a}(A)E_{P}\left[ Y\mid A=a, W_{j},\pa_{\mathcal{G}}(W_{j}), \mathbf{O}, \pa_{\mathcal{G}}(A) \right]}{\pi_{a}(\pa_{\mathcal{G}}(A);P)}\mid W_{j}, \pa_{\mathcal{G}}(W_{j})\right]
    \\
    &=E_{P}\left[ \frac{I_{a}(A)E_{P}\left[ Y\mid A=a, \mathbf{O}\right]}{\pi_{a}(\pa_{\mathcal{G}}(A);P)}\mid W_{j}, \pa_{\mathcal{G}}(W_{j})\right]
    \\
    &=E_{P}\left[ E_{P}\left[ Y\mid A=a, \mathbf{O}\right]\frac{E_{P}\left[ I_{a}(A)\mid  \mathbf{O},W_{j}, \pa_{\mathcal{G}}(W_{j}),\pa_{\mathcal{G}}(A)) \right]}{\pi_{a}(\pa_{\mathcal{G}}(A);P)}\mid W_{j}, \pa_{\mathcal{G}}(W_{j})\right]
    \\
    &=E_{P}\left[ E_{P}\left[ Y\mid A=a, \mathbf{O}\right]\mid W_{j}, \pa_{\mathcal{G}}(W_{j})\right]
    \\
    &=E_{P}\left[b_{a}(\mathbf{O};P)\mid W_{j}, \pa_{\mathcal{G}}(W_{j})\right],
\end{align*}
where the second equality holds because 
$$
Y\ort_{\mathcal{G}}  \left[\lbrace W_{j}\rbrace \cup \pa_{\mathcal{G}}(W_{j}) \cup \pa_{\mathcal{G}}(A)\right]\setminus \mathbf O\mid \mathbf{O}, A
$$
and the third equality holds because the set
$
\left[\lbrace W_{j}\rbrace \cup \pa_{\mathcal{G}}(W_{j}) \cup \mathbf{O}\right]
$
is comprised of non-descendants of $A$ and hence by the Local Markov Property
$$
A \ort_{\mathcal{G}} \left[\lbrace W_{j}\rbrace \cup \pa_{\mathcal{G}}(W_{j}) \cup \mathbf{O}\right]\setminus \pa_{\mathcal{G}}(A) \mid \pa_{\mathcal{G}}(A).
$$

Next, we prove part 2). First note that%
\begin{eqnarray}
E_{P}\left[ J_{P,a,\mathcal{G}}|M_{k},\text{pa}_{\mathcal{G}}\left(
M_{k}\right) \right]  &=&E_{P}\left[ I_{a}(A)Y\left. E_{P}\left[ \left. 
\frac{1}{\pi_{a}(\pa_{\mathcal{G}}(A);P)}%
\right\vert A=a,\mathbf{O},Y,M_{k},\text{pa}_{\mathcal{G}}\left(
M_{k}\right) \right] \right\vert M_{k},\text{pa}_{\mathcal{G}}\left(
M_{k}\right) \right]   \notag\\
&=&E_{P}\left[ I_{a}(A)Y\left. E_{P}\left[ \left. \frac{1}{\pi_{a}(\pa_{\mathcal{G}}(A);P)}\right\vert A=a, \mathbf{O}\right]
\right\vert M_{k},\text{pa}_{\mathcal{G}}\left( M_{k}\right) \right]   \notag
\\
&=&E_{P}\left[ \left. \frac{I_{a}(A)}{\pi_{a}(\mathbf{O}_{min};P)}%
Y\right\vert M_{k},\text{pa}_{\mathcal{G}}\left( M_{k}\right) \right]  
\notag
\\
&=&E_{P}\left[ \left. T_{P,a,\mathcal{G}} \right\vert M_{k},\text{pa}_{\mathcal{G}}\left( M_{k}\right) \right]  
\notag
\end{eqnarray}%
where the second equality follows from 
\begin{equation}
\left( Y,\mathbf{M}\right) \perp \!\!\!\perp _{\mathcal{G}}\text{pa}_{%
\mathcal{G}}\left( A\right) \backslash \mathbf{O}\mid \left[ \mathbf{O}\cup
\left\{ A\right\} \right]   \label{eq:indep_med_Y}
\end{equation}%
and the fact that for any $k,$ pa$_{\mathcal{G}}\left( M_{k}\right) \subset 
\mathbf{M\cup }\left\{ A\right\} \mathbf{\cup O,}$ and the third equality
follows because 
\begin{equation}
E_{P}\left[ \left. \frac{1}{\pi_{a}(\pa_{\mathcal{G}}(A);P) }\right\vert A=a,\mathbf{O}\right] =\frac{1}{\pi_{a}(\mathbf{O}_{min};P)}  \nonumber
\end{equation}%
which is a consequence of Lemma \ref{lemma:inv_pi} in Section \ref{sec:aux_res} and the definition of 
$\mathbf{O}_{min}$. This finishes the proof of part 2).

Turn now to the proof of part 3). 
\begin{eqnarray}
E_{P}\left[ J_{P,a,\mathcal{G}}|Y,\text{pa}_{\mathcal{G}}\left(
Y\right) \right]  &=& E_{P}\left[ I_{a}(A)Y\left. E_{P}\left[ \left. 
\frac{1}{\pi_{a}(\pa_{\mathcal{G}}(A);P)}%
\right\vert A=a,\mathbf{O},Y,\text{pa}_{\mathcal{G}}\left(
Y\right) \right] \right\vert Y,\text{pa}_{\mathcal{G}}\left(
Y\right) \right]   \notag\\
&=&E_{P}\left[ I_{a}(A)Y\left. E_{P}\left[ \left. \frac{1}{\pi_{a}(\pa_{\mathcal{G}}(A);P)}\right\vert A=a, \mathbf{O}\right]
\right\vert Y,\text{pa}_{\mathcal{G}}\left( Y\right) \right]   \notag
\\
&=&E_{P}\left[ \left. \frac{I_{a}(A)}{\pi_{a}(\mathbf{O}_{min};P)}%
Y\right\vert Y,\text{pa}_{\mathcal{G}}\left( Y\right) \right]  
\notag
\\
&=&E_{P}\left[ \left. T_{P,a,\mathcal{G}} \right\vert Y,\text{pa}_{\mathcal{G}}\left(Y\right) \right]  
\notag
\end{eqnarray}%
where the second equality follows again from \eqref{eq:indep_med_Y} and the 
fact that $\pa_{\mathcal{G}}\left( Y\right) \subset 
\mathbf{M\cup }\left\{ A\right\} \mathbf{\cup O,}$
and third equality follows from Lemma \ref{lemma:inv_pi} and the definition of 
$\mathbf{O}_{min}$. This concludes the proof of the theorem.

\end{proof}

\begin{proof}[Proof of Theorem \ref{theo:eff_simple}]

Because $\irrel(A,Y,\mathcal{G})=\emptyset$,  we can partition the node set $\mathbf{V}$\textbf{\ }%
of $\mathcal{G}$ as $\mathbf{M}\cup \mathbf{W\cup }\left\{ A,Y\right\} $
where the vertices in $\mathbf{M}$ intersect at least one causal path
between $A$ and $Y$, that is, $\mathbf{M}$ is the set of mediators in the causal
pathways between $A$ and $Y,$ and $\mathbf{W}$ are non-descendants of $A.$
We can therefore sort topologically $\mathbf{V}$ as $\left(
W_{1},\dots,W_{J},A,M_{1},\dots,M_{K},Y\right)$, where the set $\mathbf{W}=\empty$ if $J=0$ and the set $\mathbf{K}=\empty$ if $K=0$.

By Theorem \ref{lemma:eff_if_expression}, 
\begin{align*}
\chi _{P,a,eff}^{1}\left( \mathbf{V};\mathcal{G}\right) &=\sum_{j:V_{j}\notin %
\left[ \irrel\left( A,Y,\mathcal{G}\right) \cup \left\{ A\right\} \right]}\left\{ E_{P}%
\left[ J_{P,a,\mathcal{G}}|V_{j},\pa_{\mathcal{G}}\left( V_{j}\right) \right]
-E_{P}\left[ J_{P,a,\mathcal{G}}|\pa_{\mathcal{G}}\left( V_{j}\right) \right]
\right\} \\
&=E_{P}\left[
J_{P,a,\mathcal{G}}|Y,\pa_{\mathcal{G}}\left( Y\right) \right] -E_{P}\left[
J_{P,a,\mathcal{G}}|\pa_{\mathcal{G}}\left( Y\right) \right]
\label{eq:formula_eff} \\
&+\sum_{k=1}^{K}\left\{ E_{P}\left[ J_{P,a,\mathcal{G}}|M_{k},\pa_{\mathcal{%
G}}\left( M_{k}\right) \right] -E_{P}\left[ J_{P,a,\mathcal{G}}|\pa_{%
\mathcal{G}}\left( M_{k}\right) \right] \right\}  \notag \\
&+\sum_{j=1}^{J}\left\{ E_{P}\left[ J_{P,a,\mathcal{G}}|W_{j},\pa_{\mathcal{%
G}}\left( W_{j}\right) \right] -E_{P}\left[ J_{P,a,\mathcal{G}}|\pa_{%
\mathcal{G}}\left( W_{j}\right) \right] \right\} 
\end{align*}
where we make the conventions that
$$
\sum\limits_{k=1}^{0} \cdot \equiv 0, \quad \sum\limits_{j=1}^{0} \cdot \equiv 0.
$$
Next, using Lemma \ref{lemma:eff_simple},
\begin{align*}
\chi _{P,a,eff}^{1}\left( \mathbf{V};\mathcal{G}\right) 
&=E_{P}\left[
T_{P,a,\mathcal{G}}|Y,\pa_{\mathcal{G}}\left( Y\right) \right] -E_{P}\left[
T_{P,a,\mathcal{G}}|\pa_{\mathcal{G}}\left( Y\right) \right]
\nonumber \\
&+\sum_{k=1}^{K}\left\{ E_{P}\left[ T_{P,a,\mathcal{G}}|M_{k},\pa_{\mathcal{%
G}}\left( M_{k}\right) \right] -E_{P}\left[ T_{P,a,\mathcal{G}}|\pa_{%
\mathcal{G}}\left( M_{k}\right) \right] \right\}  \notag \\
&+\sum_{j=1}^{J}\left\{ E_{P}\left[ b_{a}(\mathbf{O};P)|W_{j},\pa_{\mathcal{%
G}}\left( W_{j}\right) \right] -E_{P}\left[ b_{a}(\mathbf{O};P)|\pa_{%
\mathcal{G}}\left( W_{j}\right) \right] \right\}.
\end{align*}

This concludes the proof of the theorem.
\end{proof}

In what follows let 
\begin{equation}
q_{\mathcal{G}}\left( \mathbf{W};P\right) \mathbf{\equiv }%
\sum_{j=1}^{J}\left\{ E_{P}\left[b_{a}\left( \mathbf{O};P\right) \mid W_{j},\pa_{\mathcal{G}}(W_j)\right] -E_{P}\left[ b_{a}\left( \mathbf{O};P\right)\mid \pa_{\mathcal{G}}(W_j) \right] \right\}   \label{eq:qz}
\end{equation}
and
\begin{eqnarray}
h_{\mathcal{G}}\left( A,\mathbf{O,M,}Y;P\right) &\mathbf{\equiv }%
&\sum_{j=1}^{K}\left\{ E_{P}\left[ T_{P,a,\mathcal{G}}|M_{j},\text{pa}_{%
\mathcal{G}}\left( M_{j}\right) \right] -E_{P}\left[ T_{P,a,\mathcal{G}}|\text{%
pa}_{\mathcal{G}}\left( M_{j}\right) \right] \right\}  \label{eq:hm} \\
&&+\left\{ E_{P}\left[ T_{P,a,\mathcal{G}}|Y,\text{pa}_{\mathcal{G}}\left(
Y\right) \right] -E_{P}\left[ T_{P,a,\mathcal{G}}|\text{pa}_{\mathcal{G}%
}\left( Y\right) \right] \right\} .  \notag
\end{eqnarray}%
Note that by Theorem \ref{theo:eff_simple}, 
$$
\chi^{1}_{P,a,eff}(\mathbf{V};\mathcal{G})=q_{\mathcal{G}}\left( \mathbf{W};P\right)+h_{\mathcal{G}}\left( A,\mathbf{O,M,}Y;P\right).
$$

\begin{proof}[Proof of Theorem \ref{theo:main}]

The assertion that if Algorithm \ref{algo:main} exits with output \textup{\texttt{efficient=True}} then
$$
\chi _{P,a,eff}^{1}\left( \mathbf{V};%
\mathcal{G}\right) =\psi _{P,a}\left( \mathbf{O};\mathcal{G}\right) \quad \text{for all} \quad P\in\mathcal{M}(\mathcal{G})
$$
was proved in the discussion preceding Theorem \ref{theo:main}. Here, we prove that if Algorithm \ref{algo:main} exits with output \texttt{efficient=False} then there exists $P^{\ast}\in\mathcal{M}(\mathcal{G})$ 
$$
\chi _{P^{\ast},a,eff}^{1}\left( \mathbf{V};%
\mathcal{G}\right) \neq \psi _{P^{\ast},a}\left( \mathbf{O};\mathcal{G}\right) .
$$

Assume first that the algorithm exits with output \texttt{efficient=False} because $\mathbf{O}\setminus O_{T} \not\subset \pa_{\mathcal{G}}(O_{T})$. This can only occur if $J>1$. By Lemma \ref{lemma:non_desc} in Section \ref{sec:aux_eff} we have that $O_{T}=W_{J}$. 
Then, since $W_{J}$  appears only in the
term $E_{P}\left[ b_{a}\left( \mathbf{O};P\right) |W_{J},\text{pa}_{\mathcal{G}%
}\left( W_{J}\right) \right] $ of $q_{\mathcal{G}}\left( \mathbf{W};P\right)
,$ we conclude that $q_{\mathcal{G}}\left( \mathbf{W};P\right) =g_{1}\left[
W_{J},\text{pa}_{\mathcal{G}}\left( W_{J}\right) \right] +g_{2}\left( 
\mathbf{W}\backslash W_{J}\right) $ for some functions $g_{1}$ and $g_{2}.$
This implies that $q_{\mathcal{G}}\left( \mathbf{W};P\right) $ cannot be equal
to, for instance, $b^{\ast }\left( \mathbf{O}\right) +g_{2}\left( \mathbf{W}%
\backslash O_{T}\right) $ for $b^{\ast }\left( \mathbf{O}\right)
=O_{1}\times \dots \times O_{T}.$ By Lemma \ref{lemma:unrestricted} in Section \ref{sec:aux_eff} we can find $P^{\ast}\in\mathcal{M}(\mathcal{G})$ such that $b_{a}\left( \mathbf{O};P^{\ast}\right)=b^{\ast }\left( \mathbf{O}\right)$. For this $P^{\ast}\in\mathcal{M}(\mathcal{G})$ clearly
$$
\chi _{P^{\ast},a,eff}^{1}\left( \mathbf{V};%
\mathcal{G}\right) \neq \psi _{P^{\ast},a}\left( \mathbf{O};\mathcal{G}\right).
$$

Assume now that $\mathbf{O}\setminus O_{T} \subset \pa_{\mathcal{G}}(O_{T}), J>1$ and the algorithm exits with output \texttt{efficient=False} because there exists $j^{\ast}\in\lbrace 2,\dots, J-1\rbrace$ such that
$\pa_{\mathcal{G}}(W_{j^{\ast}+1})\setminus \lbrace W_{j^{\ast}} \rbrace \not\subset \pa_{\mathcal{G}}(W_{j^{\ast}})$. Then, by part 4) of Lemma \ref{lemma:parents_nondesc} of Section \ref{sec:aux_eff} we have that
$$
\mathbf{O\backslash I}_{j^{\ast }}\not\ort_{\mathcal{G}}\left[ \pa_{%
\mathcal{G}}\left( W_{j^{\ast }}\right) \cup W_{j^{\ast }}\right]
\bigtriangleup \pa_{\mathcal{G}}\left( W_{j^{\ast }+1}\right) |\mathbf{I}%
_{j^{\ast }}.
$$
By Lemma \ref{lemma:non_desc} in Section \ref{sec:aux_eff}, there exists $P^{\ast}\in\mathcal{M}$ such that
\begin{equation}
q_{\mathcal{G}}\left( \mathbf{W};P^{\ast}\right) = b_{a}(\mathbf{O};P^{\ast}) - \chi_{a}(P^{\ast};\mathcal{G})+ g(\mathbf{W}),
\label{eq:q_theo10}
\end{equation}
where $g(\mathbf{W})$ is non-constant function of $W_{j^{\ast}}$. 
We argued in the discussion preceding Theorem \ref{theo:main} that if equation $\left( \ref{eq:inclusion_1}\right)$ holds for $k=K+1$ and equation $\left( \ref{eq:inclusion_2}\right)$ holds for all  $k\in \left\{
2,\dots,K+1\right\} $ hold then $h_{\mathcal{G}}(A,\mathbf{O},\mathbf{M},Y;P)$ is equal to
$$
\frac{I_{a}(A)}{\pi_{a}(\mathbf{O}_{min};P)}(Y-b_{a}(\mathbf{O};P)).
$$
Therefore
\begin{align*}
\chi _{P^{\ast},a,eff}^{1}\left( \mathbf{V};%
\mathcal{G}\right) &= q_{\mathcal{G}}\left( \mathbf{W};P^{\ast}\right)+h_{\mathcal{G}}(A,\mathbf{O},\mathbf{M},Y;P^{\ast})
\\
&=b_{a}(\mathbf{O};P^{\ast}) - \chi_{a}(P^{\ast};\mathcal{G})+ g(\mathbf{W})+\frac{I_{a}(A)}{\pi_{a}(\mathbf{O}_{min};P^{\ast})}(Y-b_{a}(\mathbf{O};P^{\ast})).
\end{align*}
cannot be equal to 
$$
\psi _{P^{\ast},a}\left( \mathbf{O};\mathcal{G}\right)=b_{a}(\mathbf{O};P^{\ast}) - \chi_{a}(P^{\ast};\mathcal{G})+\frac{I_{a}(A)}{\pi_{a}(\mathbf{O}_{min};P^{\ast})}(Y-b_{a}(\mathbf{O};P^{\ast})).
$$

On the other hand, by part 1 of Lemma \ref{lemma:main_Z} in Section \ref{sec:aux_eff}, if $\left( \ref{eq:inclusion_1}\right)$ fails for $k=K+1$, then there exists $P^{\ast}\in\mathcal{M(G)}$ such that the term
\begin{equation}
\frac{I_{a}(A)}{\pi_{a}(\mathbf{O}_{min};P^{\ast})}Y
\label{eq:pi_Ay}
\end{equation}
does not appear in the expression for $h_{\mathcal{G}}(A,\mathbf{O},\mathbf{M},Y;P^{\ast})$.
Since the term \eqref{eq:pi_Ay} appears in the expression for $\psi _{P^{\ast},a}\left( \mathbf{O};\mathcal{G}\right)$ this shows that
$$
\chi _{P^{\ast},a,eff}^{1}\left( \mathbf{V};%
\mathcal{G}\right) \neq \psi _{P^{\ast},a}\left( \mathbf{O};\mathcal{G}\right).
$$
Next, if $\left( \ref{eq:inclusion_1}\right)$ holds for $k=K+1$ but  $\left( \ref{eq:inclusion_2}\right)$ fails for $k=K+1$ then by part 2 of Lemma \ref{lemma:main_Z}
there exists $P^{\ast}\in\mathcal{M(G)}$ such that
$h_{\mathcal{G}}(A,\mathbf{O},\mathbf{M},Y;P^{\ast})$ depends on $M_{K}$. Then
$$
\chi _{P^{\ast},a,eff}^{1}\left( \mathbf{V};%
\mathcal{G}\right) =q_{\mathcal{G}}\left( \mathbf{W};P^{\ast}\right)+h_{\mathcal{G}}(A,\mathbf{O},\mathbf{M},Y;P^{\ast})
$$
cannot be equal to $\psi _{P^{\ast},a}\left( \mathbf{O};\mathcal{G}\right)$, since 
$\psi _{P^{\ast},a}\left( \mathbf{O};\mathcal{G}\right)$ is not a function of $M_{K}$.

Finally, if  $\left( \ref{eq:inclusion_1}\right)$ holds for $k=K+1$ and \eqref{eq:inclusion_2} 
fails for some 
$k\in \left\{
2,\dots,K\right\} $ but holds for all $j\in\lbrace k+1,\dots, K+1\rbrace$ then by part 3) of Lemma \ref{lemma:main_Z}, there exists $P^{\ast}\in\mathcal{M(G)}$ such that $h_{\mathcal{G}}(A,\mathbf{O},\mathbf{M},Y;P^{\ast})$ depends on $M_{k}$.  Then again
$
\chi _{P^{\ast},a,eff}^{1}\left( \mathbf{V};%
\mathcal{G}\right)
$
cannot be equal to $\psi _{P^{\ast},a}\left( \mathbf{O};\mathcal{G}\right)$, since 
$\psi _{P^{\ast},a}\left( \mathbf{O};\mathcal{G}\right)$ is not a function of $M_{k}$.

Next, assume that either $J\in\lbrace 0, 1\rbrace$ or $J>1$ and $\mathbf{O}\setminus O_{T} \subset \pa_{\mathcal{G}}(O_{T})$,
$
\pa_{\mathcal{G}}(W_{j+1})\setminus \lbrace W_{j} \rbrace \subset \pa_{\mathcal{G}}(W_{j})  
$
for all $j\in\lbrace 2,\dots, J-1\rbrace$,
and that the algorithm exits with output \texttt{efficient=False} because $\lbrace A\rbrace \cup \mathbf{O}_{min} \not\subset \pa_{\mathcal{G}}(Y)$. Assume for the sake of contradiction that 
\begin{equation}
\psi _{P,a}\left( \mathbf{O}\left( A,Y;%
\mathcal{G}\right);\mathcal{G} \right) =\chi _{P,a,eff}^{1}(\mathbf{V};%
\mathcal{G})=q_{\mathcal{G}}\left( \mathbf{W}%
;P\right) +h_{\mathcal{G}}\left( A,\mathbf{O,M,}Y;P\right)  \text{ for all } P\in\mathcal{M(G)}.
\label{eq:first_med_theo10}
\end{equation}
The only term in the expression for $h_{\mathcal{G}}\left( A,\mathbf{O,M,}%
Y;P\right) $ in $\left( \ref{eq:hm}\right) $ that could possibly be a
non-constant function of $Y$ is $E_{P}\left[ T_{P,\mathcal{G}}|Y,\text{pa}_{%
\mathcal{G}}\left( Y\right) \right] $ because ch$_{\mathcal{G}}\left(
Y\right) =\emptyset .$ Then, since \eqref{eq:first_med_theo10} holds, the following equality must also hold 
\begin{equation}
E_{P}\left[ T_{P,\mathcal{G}}|Y,\text{pa}_{\mathcal{G}}\left( Y\right) %
\right] =\frac{I_{a}(A)}{\pi_{a} \left( \mathbf{O}_{\min };P\right) }Y+g\left( A,\mathbf{O},\mathbf{M};P\right) ,  \label{eq:ultimo}
\end{equation}%
for some $g\left( A,\mathbf{O}, \mathbf{M};P\right) $ that does not depend on $Y$. 
This implies $\lbrace  A \rbrace \cup \mathbf{O}_{\min
} \subset \pa_{\mathcal{G}}\left( Y\right)$. We have arrived at a contradiction. It must therefore be that there exists $P^{\ast}\in\mathcal{M}(\mathcal{G})$ such that
$$
\chi _{P^{\ast},a,eff}^{1}\left( \mathbf{V};%
\mathcal{G}\right) \neq \psi _{P^{\ast},a}\left( \mathbf{O};\mathcal{G}\right) .
$$

Finally assume that
\begin{enumerate}
    \item $J\in\lbrace 0, 1\rbrace$ or,
    \item $J>1$ and $\mathbf{O}\setminus O_{T} \subset \pa_{\mathcal{G}}(O_{T})$,
$
\pa_{\mathcal{G}}(W_{j+1})\setminus \lbrace W_{j} \rbrace \subset \pa_{\mathcal{G}}(W_{j})  
$
for all $j\in\lbrace 2,\dots, J-1\rbrace$
\end{enumerate}
and that $\lbrace A\rbrace \cup \mathbf{O}_{min} \subset \pa_{\mathcal{G}}(Y)$ but the algorithm exits with output \texttt{efficient=False} because there exists $k\in \lbrace 2,\dots, K+1\rbrace$ such that
$$
\pa_{\mathcal{G}}(M_{k})\not\subset \pa_{\mathcal{G}}(M_{k-1})\cup \lbrace M_{k-1}\rbrace
$$
and
$$
\pa_{\mathcal{G}}(M_{j})\subset \pa_{\mathcal{G}}(M_{j-1})\cup \lbrace M_{j-1}\rbrace
$$
for $j \in \lbrace k+1,\dots,K+1 \rbrace$, where the last statement is nil if $k=K+1$.
Then, parts 2) and 3) of Lemma \ref{lemma:main_Z} imply
that there exists $P^{\ast }\in \mathcal{M}\left( \mathcal{G}\right) $ such
that $E_{P^{\ast }}\left[ T_{P^{\ast },a,\mathcal{G}}|M_{k-1},\text{pa}_{%
\mathcal{G}}\left( M_{k-1}\right) \right] -E_{P^{\ast }}\left[ T_{P^{\ast },a,
\mathcal{G}}|\text{pa}_{\mathcal{G}}\left( M_{k}\right) \right] $ is a
non-constant function of $M_{k-1}$. Moreover, as argued in Section \ref{sec:semiparam_effi}, if $k<K+1$ then for all $j\in\lbrace k+1,\dots, K+1\rbrace$ 
$$
E_{P^{\ast }}\left[
T_{P^{\ast },a,\mathcal{G}}|M_{j-1},\text{pa}_{\mathcal{G}}\left( M_{j-1}\right) %
\right] -E_{P^{\ast }}\left[ T_{P^{\ast },a,\mathcal{G}}|\text{pa}_{\mathcal{G}%
}\left( M_{j}\right) \right] =0.
$$
Also, by part 1) of Lemma \ref{lemma:main_Z},
$$
E_{P^{\ast }}\left[T_{P^{\ast },a,\mathcal{G}} \mid Y, \pa_{\mathcal{G}}(Y) \right]=\frac{I_{a}(A)Y}{\pi_{a}(\mathbf{O}_{min};P^{\ast })}.
$$
Then, with the
convention that $\sum_{j=2}^{k-1}\left( \cdot \right) \equiv 0$ if $k=2,$ we
have 
\begin{eqnarray*}
h_{\mathcal{G}}\left( A,\mathbf{O,M,}Y;P^{\ast }\right) &=& \frac{I_{a}(A)Y}{\pi_{a}(\mathbf{O}_{min};P^{\ast })}
+\left\{ E_{P^{\ast }}\left[ T_{P^{\ast },\mathcal{G}}|M_{k-1},\text{pa}_{%
\mathcal{G}}\left( M_{k-1}\right) \right] -E_{P^{\ast }}\left[ T_{P^{\ast },a,
\mathcal{G}}|\text{pa}_{\mathcal{G}}\left( M_{k}\right) \right] \right\} \\
&&+\sum_{j=2}^{k-1}\left\{ E_{P^{\ast }}\left[ T_{P^{\ast },a,\mathcal{G}%
}|M_{j-1},\text{pa}_{\mathcal{G}}\left( M_{j-1}\right) \right] -E_{P^{\ast }}%
\left[ T_{P^{\ast },a,\mathcal{G}}|\text{pa}_{\mathcal{G}}\left(
M_{j}\right) \right] \right\} 
\\
&&-E_{P^{\ast }}\left[ T_{P^{\ast },a,\mathcal{G}%
}|\text{pa}_{\mathcal{G}}\left( M_{1}\right) \right].
\end{eqnarray*}%
Now, by the topological order of $\left( M_{1},\dots,M_{K+1}\right) ,$ $M_{k-1}$
does not belong to $\pa_{\mathcal{G}}\left( M_{j}\right) $ for any $j\leq k-1$
and consequently none of the terms $E_{P^{\ast }}\left[ T_{P^{\ast },%
\mathcal{G}}|M_{j-1},\text{pa}_{\mathcal{G}}\left( M_{j-1}\right) \right]
-E_{P^{\ast }}\left[ T_{P^{\ast },\mathcal{G}}|\text{pa}_{\mathcal{G}}\left(
M_{j}\right) \right] $ for $j<k-1$ in the last display depend on $M_{k-1}$.
This then shows that $h_{\mathcal{G}%
}\left( A,\mathbf{O,M,}Y;P^{\ast }\right) $ is a non-constant function of $M_{k-1}$
thus implying that $\psi _{P^{\ast},a}\left[ \mathbf{O}\left( A,Y;\mathcal{G}%
\right);\mathcal{G} \right] \neq \chi _{P^{\ast},a,eff}^{1}(\mathbf{V};%
\mathcal{G})$ since $\psi _{P^{\ast},a}\left[ \mathbf{O}\left( A,Y;\mathcal{G}%
\right);\mathcal{G} \right] $ does not depend on $M_{k-1}.$

This finishes the proof of the theorem.
\end{proof}

\subsection{Auxiliary results}\label{sec:aux_res}

In the proof of several of the assertions in the paper we invoke the
following lemma.

\begin{lemma}
\label{lemma:inv_pi} If $\A\perp \!\!\!\perp _{\mathcal{G}}\mathbf{Z}%
_{1}\backslash \mathbf{Z}_{2}\mid \mathbf{Z}_{2}$ then for all $P\in 
\mathcal{M}\left( \mathcal{G}\right) $%
\begin{equation*}
E_{P}\left[ \left. \frac{1}{\pi_{\mathbf{a}} (\mathbf{Z}_{2};P)}\right\vert \A=\mathbf{a},%
\mathbf{Z}_{1}\right] =\frac{1}{\pi_{\mathbf{a}} (\mathbf{Z}_{1};P)},
\end{equation*}
\end{lemma}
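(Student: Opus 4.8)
The plan is to reduce the claim to a single normalization identity and then verify that identity with the tower property. Writing $\pi_{\mathbf{a}}(\mathbf{Z};P)\equiv P[\mathbf{A}=\mathbf{a}\mid\mathbf{Z}]$ as in the text, I first note that, on the (positivity) event where $\pi_{\mathbf{a}}(\mathbf{Z}_1;P)>0$, the definition of conditional expectation given the event $\{\mathbf{A}=\mathbf{a}\}$ yields
\[
E_{P}\left[\frac{1}{\pi_{\mathbf{a}}(\mathbf{Z}_2;P)}\;\middle|\;\mathbf{A}=\mathbf{a},\mathbf{Z}_1\right]
=\frac{E_{P}\!\left[I_{\mathbf{a}}(\mathbf{A})\big/\pi_{\mathbf{a}}(\mathbf{Z}_2;P)\mid\mathbf{Z}_1\right]}{\pi_{\mathbf{a}}(\mathbf{Z}_1;P)}.
\]
Hence it suffices to prove that the numerator equals one, i.e. that $E_{P}\!\left[I_{\mathbf{a}}(\mathbf{A})\big/\pi_{\mathbf{a}}(\mathbf{Z}_2;P)\mid\mathbf{Z}_1\right]=1$; dividing by $\pi_{\mathbf{a}}(\mathbf{Z}_1;P)$ then gives the stated formula.

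Next I would evaluate this numerator by conditioning on the enlarged $\sigma$-field generated by $(\mathbf{Z}_1,\mathbf{Z}_2)$. Since $1/\pi_{\mathbf{a}}(\mathbf{Z}_2;P)$ is $(\mathbf{Z}_1,\mathbf{Z}_2)$-measurable, it factors out of the inner conditional expectation, leaving $E_{P}[I_{\mathbf{a}}(\mathbf{A})\mid\mathbf{Z}_1,\mathbf{Z}_2]$ in the numerator. The crucial step is to show this inner propensity collapses to $\pi_{\mathbf{a}}(\mathbf{Z}_2;P)$: conditioning on $(\mathbf{Z}_1,\mathbf{Z}_2)$ is the same as conditioning on $(\mathbf{Z}_1\backslash\mathbf{Z}_2,\mathbf{Z}_2)$, and the hypothesis $\mathbf{A}\perp\!\!\!\perp_{\mathcal{G}}\mathbf{Z}_1\backslash\mathbf{Z}_2\mid\mathbf{Z}_2$, combined with the equivalence between d-separation in $\mathcal{G}$ and conditional independence under every $P\in\mathcal{M}(\mathcal{G})$ established by \citet{verma-pearl} and \citet{geiger} (Section \ref{sec:back}), gives $P[\mathbf{A}=\mathbf{a}\mid\mathbf{Z}_1,\mathbf{Z}_2]=P[\mathbf{A}=\mathbf{a}\mid\mathbf{Z}_2]=\pi_{\mathbf{a}}(\mathbf{Z}_2;P)$. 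Substituting, the integrand becomes $\pi_{\mathbf{a}}(\mathbf{Z}_2;P)/\pi_{\mathbf{a}}(\mathbf{Z}_2;P)=1$, and the outer expectation of the constant $1$ given $\mathbf{Z}_1$ is $1$, completing the argument.

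The only real subtlety, and the step I would flag as the main obstacle, is the bookkeeping about \emph{which} conditioning set is being used: because $\mathbf{Z}_2$ need not be contained in $\mathbf{Z}_1$, the factor $1/\pi_{\mathbf{a}}(\mathbf{Z}_2;P)$ is in general \emph{not} $\mathbf{Z}_1$-measurable, so one cannot simply pull it out of $E_{P}[\,\cdot\mid\mathbf{Z}_1]$; this is precisely why the intermediate conditioning on $(\mathbf{Z}_1,\mathbf{Z}_2)$ is introduced and why the d-separation hypothesis is invoked at exactly that point. Everything else is a routine application of the tower property and the representation of conditional expectation given an event, so no further technical difficulty is anticipated beyond verifying this collapse of the propensity score.
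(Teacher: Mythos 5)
Your proposal is correct and follows essentially the same route as the paper's proof: both multiply through by $\pi_{\mathbf{a}}(\mathbf{Z}_{1};P)$ to rewrite the conditional expectation given $\{\mathbf{A}=\mathbf{a},\mathbf{Z}_{1}\}$ as $E_{P}\left[ I_{\mathbf{a}}(\mathbf{A})/\pi_{\mathbf{a}}(\mathbf{Z}_{2};P)\mid \mathbf{Z}_{1}\right]$, then apply the tower property conditioning on $(\mathbf{Z}_{1},\mathbf{Z}_{2})$ and use the d-separation hypothesis (via the global Markov property) to collapse $E_{P}\left[ I_{\mathbf{a}}(\mathbf{A})\mid \mathbf{Z}_{1},\mathbf{Z}_{2}\right]$ to $\pi_{\mathbf{a}}(\mathbf{Z}_{2};P)$, making the integrand identically one. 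Your explicit remarks on positivity and on the non-$\mathbf{Z}_{1}$-measurability of $1/\pi_{\mathbf{a}}(\mathbf{Z}_{2};P)$ are sound clarifications of points the paper leaves implicit, but they do not change the argument.
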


\begin{proof}[Proof of Lemma \protect\ref{lemma:inv_pi}]
\begin{eqnarray*}
E_{P}\left[ \left. \frac{1}{\pi_{\mathbf{a}} (\mathbf{Z}_{2};P)}\right\vert \A=\mathbf{a},%
\mathbf{Z}_{1}\right] \pi_{\mathbf{a}} \left( \mathbf{Z}_{1};P\right) &\equiv &E_{P}%
\left[ \left. \frac{1}{\pi_{\mathbf{a}} \left( \mathbf{Z}_{2};P\right) }\right\vert
\A=\mathbf{a},\mathbf{Z}_{1}\right] P\left( \A=\mathbf{a}|\mathbf{Z}_{1}\right) \\
&=&E_{P}\left[ \left. \frac{I_{\mathbf{a}}(\A)}{\pi_{\mathbf{a}} \left( \mathbf{Z}_{2};P\right) 
}\right\vert \mathbf{Z}_{1}\right] \\
&=&E_{P}\left[ \left. \frac{E_{P}\left( I_{\mathbf{a}}(\A)|\mathbf{Z}_{2},\mathbf{Z}%
_{1}\right) }{\pi_{\mathbf{a}} \left( \mathbf{Z}_{2};P\right) }\right\vert \mathbf{Z}%
_{1}\right] \\
&=&1
\end{eqnarray*}%
where the last equality follows because by the fact that, $%
\A\perp \!\!\!\perp \mathbf{Z}_{1}\backslash \mathbf{Z}_{2}\mid \mathbf{Z}%
_{2} $ $\left[ P\right] $ so 
$$
E_{P}\left( I_{\mathbf{a}}(\A)|\mathbf{Z}_{2},\mathbf{Z}%
_{1}\right) =E_{P}\left( I_{\mathbf{a}}(\A)|\mathbf{Z}_{2}\right) =\pi_{\mathbf{a}} \left( 
\mathbf{Z}_{2};P\right) .
$$
\end{proof}

\subsubsection{Auxiliary results for Section \ref{sec:optimal_adj}}
\label{sec:aux_opt}

\begin{lemma}\label{lemma:a2}
If $\mathbf{Z}$ is a minimal adjustment set relative to $(A,Y)$ in DAG $\mathcal{G}$, then for all $W$ in $%
\mathbf{Z}$ there exists a path $\delta $ between $W$ and $A$ that is open given $\mathbf{%
Z}\backslash W$.
\end{lemma}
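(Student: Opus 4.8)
The plan is to argue by contraposition: I would show that if, for some $W\in\mathbf{Z}$, every path between $W$ and $A$ is blocked given $\mathbf{Z}\setminus W$—that is, $A\ort_{\mathcal{G}} W\mid \mathbf{Z}\setminus W$—then $\mathbf{Z}\setminus W$ is itself an adjustment set relative to $(A,Y)$, which contradicts the minimality of $\mathbf{Z}$ since $\mathbf{Z}\setminus W$ is a proper subset. The engine for certifying that $\mathbf{Z}\setminus W$ is an adjustment set is the graphical necessary and sufficient condition of \cite{shpitser-adjustment}, which for the point intervention $A$ amounts to two requirements: (i) $\mathbf{Z}\setminus W$ contains no vertex of the forbidden set $\forb(A,Y,\mathcal{G})$, and (ii) $\mathbf{Z}\setminus W$ blocks every proper non-causal path from $A$ to $Y$. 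Since $\mathbf{Z}$ is an adjustment set it satisfies both, and requirement (i) is inherited automatically by the subset $\mathbf{Z}\setminus W$, so the entire burden falls on re-establishing (ii).

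First I would record the elementary monotonicity fact that deleting a vertex from a conditioning set can only ever open a path through a vertex lying on it as a non-collider: removing $W$ can never activate a collider, because a collider remains blocked unless it or one of its descendants stays in the conditioning set. Hence, if some proper non-causal path $\pi$ from $A$ to $Y$ were blocked by $\mathbf{Z}$ yet open given $\mathbf{Z}\setminus W$, then (taking $\pi$ to be a simple path) every collider of $\pi$ would still have a descendant in $\mathbf{Z}\setminus W\subseteq\mathbf{Z}$, so colliders cannot be the reason $\pi$ is blocked under $\mathbf{Z}$. The only remaining blocking vertex must therefore be a non-collider of $\pi$ that belongs to $\mathbf{Z}$; and since $\pi$ is open given $\mathbf{Z}\setminus W$, no non-collider of $\pi$ lies in $\mathbf{Z}\setminus W$, forcing this blocking non-collider to be $W$ itself. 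In particular $W$ is an interior non-collider of $\pi$, being neither $A$ nor $Y$.

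Next I would split $\pi$ at $W$ into its $A$-to-$W$ subpath $\pi_1$ and its $W$-to-$Y$ subpath. Every interior vertex of $\pi_1$ keeps its two $\pi$-neighbours, hence its collider/non-collider status; so the interior non-colliders of $\pi_1$ lie outside $\mathbf{Z}\setminus W$ and its interior colliders have descendants in $\mathbf{Z}\setminus W$, whence $\pi_1$ is a path between $A$ and $W$ that is open given $\mathbf{Z}\setminus W$ (its endpoints $A$ and $W$ never block it). This directly contradicts $A\ort_{\mathcal{G}} W\mid \mathbf{Z}\setminus W$. Consequently no such $\pi$ exists, requirement (ii) holds for $\mathbf{Z}\setminus W$, and $\mathbf{Z}\setminus W$ is an adjustment set, contradicting the minimality of $\mathbf{Z}$; the contrapositive then yields the stated claim.

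The step needing the most care is the bookkeeping in the middle: pinning down that the unique vertex responsible for blocking $\pi$ under $\mathbf{Z}$ is $W$ acting as a non-collider, which genuinely uses both halves of the $d$-connection definition (non-colliders kept out of, and descendants of colliders kept in, the conditioning set). I would also restrict attention throughout to simple paths, so that $W$ occurs exactly once on $\pi$ and the decomposition into $\pi_1$ and $\pi_2$ is unambiguous; this is harmless because $d$-connection can always be witnessed by a path without repeated vertices.
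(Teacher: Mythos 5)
Your proof is correct and follows essentially the same route as the paper's: both arguments rest on the necessary-and-sufficient adjustment criterion of Shpitser et al.\ to produce a proper non-causal path between $A$ and $Y$ that is open given $\mathbf{Z}\setminus W$ yet blocked by $\mathbf{Z}$ (forcing $W$ to lie on it as the blocking non-collider), and then extract the $A$-to-$W$ subpath as the witness $\delta$, which inherits openness given $\mathbf{Z}\setminus W$. The only difference is presentational: the paper cites the existence of such a path directly from minimality, whereas you re-derive it inside a contrapositive argument with the collider-monotonicity bookkeeping made explicit.
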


\begin{proof}[Proof of Lemma \ref{lemma:a2}]
Since $\mathbf{Z}$ is a minimal adjustment set, we know (see \cite{shpitser-adjustment}) that there exists a non-causal  $\gamma$ path between $A$ and $Y$ that is open when we condition on $\mathbf{Z}
\backslash W$ but is blocked when we condition on $\mathbf{Z}$. The path $%
\gamma $ must intersect $W$ because if it did not,
since the path is open when we condition on $\mathbf{Z}\backslash W$ 
it would also be open when we condition on $\mathbf{Z}$.
Let $\delta$ be the subpath of $\gamma$ that goes from $A$ to the first ocurrence of $W$ in $\gamma$. $\delta$ is open given $\mathbf{Z}\backslash W$, since $\gamma$ is open given $\mathbf{Z}\backslash W$.
\end{proof}

\begin{lemma}\label{lemma:a3}
If $\mathbf{Z}$ is a minimal adjustment set relative to $(A,Y)$ in DAG $\mathcal{G}$, then $\mathbf{Z}\subset \de_{\mathcal{G}}^{c}\left( A\right) .$
\end{lemma}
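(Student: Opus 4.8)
The plan is to argue by contradiction. Suppose $\mathbf{Z}$ is a minimal adjustment set relative to $(A,Y)$ that contains a descendant of $A$, and set $S \equiv \mathbf{Z}\cap \de_{\mathcal{G}}(A)$; since $A,Y\notin \mathbf{Z}$ we have $A,Y\notin S$, and $S\neq\emptyset$ by assumption. The key preparatory move is to choose $W$ to be a \emph{deepest} vertex of $S$, i.e.\ one with $\de_{\mathcal{G}}(W)\cap\mathbf{Z}=\{W\}$; such a $W$ exists because $\mathcal{G}$ is a finite DAG. Because $\mathbf{Z}$ is a valid adjustment set, the criterion of \cite{shpitser-adjustment} gives $\mathbf{Z}\cap\forb(A,Y,\mathcal{G})=\emptyset$, so $W\notin\forb(A,Y,\mathcal{G})=\de_{\mathcal{G}}(\cn(A,Y,\mathcal{G}))\cup\{A\}$; in particular $W\notin\cn(A,Y,\mathcal{G})$. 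Also, since $W\in\de_{\mathcal{G}}(A)$ and $W\neq A$, there is a directed path $A\to\cdots\to W$.

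Next I would extract a path reaching $Y$ from minimality, exactly as in the proof of Lemma \ref{lemma:a2} but without truncating at $A$. Since $\mathbf{Z}$ is minimal, $\mathbf{Z}\setminus\{W\}$ is not a valid adjustment set; as it is a subset of $\mathbf{Z}$ it still meets the forbidden-set condition, so it must fail to block some non-causal path, i.e.\ there is a non-causal path $\gamma$ from $A$ to $Y$ that is open given $\mathbf{Z}\setminus\{W\}$ but, since $\mathbf{Z}$ is valid, blocked given $\mathbf{Z}$. Enlarging the conditioning set by the single vertex $W$ can only open colliders, so $\gamma$ can be newly blocked only through $W$ being a conditioned non-collider; hence $W$ lies on $\gamma$ as a non-collider. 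Write $\gamma=\gamma_1\cdot\gamma_2$, split at $W$, with $\gamma_1$ running from $A$ to $W$ and $\gamma_2$ from $W$ to $Y$; being subpaths of $\gamma$ sharing the endpoint $W$, both are open given $\mathbf{Z}\setminus\{W\}$.

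The engine of the argument is the standard reachability fact: if a path $p$ from $x$ to $u$ is open given a set $\mathbf{C}$ and its edge at $x$ points out of $x$, then $x\in\an_{\mathcal{G}}(\{u\}\cup\mathbf{C})$ (walk forward from $x$: the vertices up to the first collider form a directed path out of $x$, and that first collider, if any, is an ancestor of $\mathbf{C}$). Since $W$ is a non-collider on $\gamma$, at least one of $\gamma_1,\gamma_2$ leaves $W$ through an outgoing edge. If $\gamma_2$ does, the fact gives $W\in\an_{\mathcal{G}}(\{Y\}\cup(\mathbf{Z}\setminus\{W\}))$: either $W\in\an_{\mathcal{G}}(Y)$, and composing the directed path $A\to\cdots\to W$ with a directed path $W\to\cdots\to Y$ (which share only $W$, by acyclicity) exhibits $W$ on a causal path from $A$ to $Y$, forcing $W\in\cn(A,Y,\mathcal{G})$, a contradiction; or $W$ is a proper ancestor of some $Z'\in\mathbf{Z}\setminus\{W\}$, whence $Z'\in\de_{\mathcal{G}}(W)\cap\mathbf{Z}\setminus\{W\}$, contradicting deepness of $W$. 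If instead $\gamma_1$ leaves $W$ outward, the fact gives $W\in\an_{\mathcal{G}}(\{A\}\cup(\mathbf{Z}\setminus\{W\}))$; here $W\in\an_{\mathcal{G}}(A)$ is impossible ($W\in\de_{\mathcal{G}}(A)$, $W\neq A$ would create a cycle), so again $W$ has a proper descendant in $\mathbf{Z}$, contradicting deepness. Either way we reach a contradiction, so no descendant of $A$ lies in $\mathbf{Z}$.

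The main obstacle, which the above arrangement is designed to dodge, is the case in which every open path witnessing the role of $W$ enters $W$ through an incoming edge: then the reachability fact yields nothing, and one cannot directly conclude $W\in\cn$ or locate a deeper $\mathbf{Z}$-descendant. This is precisely why I would insist on using the full minimality path all the way to $Y$ rather than only a path back to $A$ (as in Lemma \ref{lemma:a2}): the non-collider status of $W$ guarantees an outgoing edge on \emph{some} side, and having $Y$ available on one side is what lets the directed path $A\to\cdots\to W$ close the loop into the contradiction $W\in\cn(A,Y,\mathcal{G})$. The deepest choice of $W$ is the other half of the trick, converting the otherwise inconclusive conclusion ``$W$ is an ancestor of $\mathbf{Z}$'' into an immediate contradiction.
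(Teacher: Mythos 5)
Your proof is correct, but it takes a genuinely different route from the paper: the paper's entire proof of Lemma \ref{lemma:a3} is a one-line appeal to Theorem 5 of \cite{shpitser-adjustment}, whereas you reprove the statement from scratch using only the soundness and completeness of the graphical adjustment criterion (validity $\Leftrightarrow$ $\mathbf{Z}\cap\forb(A,Y,\mathcal{G})=\emptyset$ plus blocking of all non-causal paths), which is the same machinery the paper invokes elsewhere, e.g.\ in Lemma \ref{lemma:a2}. The three moving parts of your argument all check out: choosing $W$ topologically maximal in $S=\mathbf{Z}\cap\de_{\mathcal{G}}(A)$ does yield $\de_{\mathcal{G}}(W)\cap\mathbf{Z}=\{W\}$, since any strict descendant of $W$ in $\mathbf{Z}$ would itself lie in $S$; the minimality extraction is sound, because $\mathbf{Z}\setminus\{W\}$ inherits the forbidden-set condition, so by completeness its failure must be an unblocked non-causal path $\gamma$, and since enlarging a conditioning set can only close paths at conditioned non-colliders, $W$ must sit on $\gamma$ as a non-collider, guaranteeing an outgoing edge on at least one side; and the reachability fact (outgoing first edge on an open path forces $x\in\an_{\mathcal{G}}(\{u\}\cup\mathbf{C})$) is standard and correctly applied, with the case $W\in\an_{\mathcal{G}}(Y)$ killed by $W\in\cn(A,Y,\mathcal{G})\subset\forb(A,Y,\mathcal{G})$ (the two directed paths share only $W$ by acyclicity, as you note), the case $W\in\an_{\mathcal{G}}(A)$ killed by acyclicity against $W\in\de_{\mathcal{G}}(A)$, and the remaining case killed by your deepest-$W$ choice. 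The one implicit convention is that $\gamma$ is a path with distinct vertices, so that $W$ has a single, well-defined collider status; the paper makes the same without-loss-of-generality remark in its proof of Theorem \ref{theo:optimal_adj_min}, so this is cosmetic. What each approach buys: the paper's citation is shorter and delegates correctness to the literature, while your argument is self-contained modulo the adjustment criterion, makes explicit that it is minimality (not validity alone) that excludes descendants of $A$ that lie outside $\forb(A,Y,\mathcal{G})$, and stylistically matches the concatenation-of-open-paths arguments the authors themselves use for Theorem \ref{theo:optimal_adj_min}.
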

\begin{proof}[Proof of Lemma \ref{lemma:a3}]
This is an immediate corollary of Theorem 5 from \cite{shpitser-adjustment}.
\end{proof}

\subsubsection{Auxiliary results for Section \protect\ref{sec:efficient_est}}
\label{sec:aux_eff}

\begin{definition}
\label{def:dir} 
\begin{align*}
& \mathbf{F}\left( A,Y,\mathcal{G}\right) \equiv \{V_{j}\in \mathbf{V}:\text{
there exists a path between A and Y in } \mathcal{G} \text{ that has }V_{j}\text{ as its only fork}%
\}, \\
& \dir\left( A,Y,\mathcal{G}\right) \equiv \{Y\}\cup \left\{ V_{j}\in 
\mathbf{V}:V_{j}\text{ has a directed path to } Y \text{ in } \mathcal{G}\text{ that does not
intersect }A\right\} \setminus \mathbf{F}(A,Y,\mathcal{G}).
\end{align*}
\end{definition}

\begin{lemma}
\label{lemma:indep_dir_indir} Let $V\in \dir\left( A,Y,\mathcal{G}\right) $
and $W\in \indir\left( A,Y,\mathcal{G}\right) $. Then 
\begin{equation*}
V\perp \!\!\!\perp _{\mathcal{G}}W\mid A,\mathbf{F}\left( A,Y,\mathcal{G}%
\right)
\end{equation*}
\end{lemma}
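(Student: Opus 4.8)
The plan is to argue by contradiction. Assume there is a path $\delta$ between $W$ and $V$ that is $d$-connecting (open) given $Z \equiv \{A\}\cup \mathbf{F}(A,Y,\mathcal{G})$, and derive a contradiction with one of three facts: (a) since $W\in\indir(A,Y,\mathcal{G})$, $W\in\an_{\mathcal{G}}(A)\setminus\{A\}$ and every directed path from $W$ to $Y$ meets $A$; (b) since $V\in\dir(A,Y,\mathcal{G})$, there is a directed path $\sigma:V\to\cdots\to Y$ that avoids $A$ (the degenerate case $V=Y$ is subsumed); and (c) $V\notin\mathbf{F}(A,Y,\mathcal{G})$. I would fix once and for all the directed path $\rho:W\to\cdots\to A$ from (a) and the path $\sigma$ from (b). I also record two elementary observations used throughout: a directed path contains neither forks nor colliders; and if a node $F$ admits directed paths to both $A$ and $Y$, then suitably chosen node-disjoint such paths can be glued into an $A$–$Y$ path whose unique fork is $F$, so that $F\in\mathbf{F}\subseteq Z$.

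First I would treat the case in which $\delta$ has no collider, which already exhibits the mechanism. Then $\delta$ is either directed or has a single fork $F^{\ast}$. Because $A\in Z$ would be a non-collider on any directed segment of $\delta$, no directed segment of $\delta$ can pass through $A$; hence every directed sub-path of $\delta$ avoids $A$. If $\delta$ is the directed path $W\to\cdots\to V$, concatenating with $\sigma$ gives a directed path $W\to\cdots\to Y$ avoiding $A$, contradicting (a). If $\delta$ is the directed path $V\to\cdots\to W$, concatenating with $\rho$ shows $V$ has a directed path to $A$; together with $\sigma$ this makes $V$ a common ancestor of $A$ and $Y$, hence (by the gluing observation) $V\in\mathbf{F}$, contradicting (c). If $\delta$ has the single fork $F^{\ast}$, then its two directed halves, prolonged by $\sigma$ and by $\rho$ respectively, are directed paths $F^{\ast}\to\cdots\to Y$ (avoiding $A$) and $F^{\ast}\to\cdots\to A$; so $F^{\ast}$ is a common ancestor of $A$ and $Y$, forcing a node of $\mathbf{F}$ to be a non-collider lying on $\delta$, which blocks $\delta$ — a contradiction.

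The remaining and principal work is the case where $\delta$ has at least one collider. Here I would take $\delta$ to be a shortest $d$-connecting path and process its colliders using their activations: each collider $C$ has a descendant $z\in Z$ reached by a shortest directed path $C\to\cdots\to z$. When $z=A$ I splice this segment with the relevant portion of $\delta$; when $z=F\in\mathbf{F}$ I additionally use that $F$, being the unique fork of some $A$–$Y$ path, possesses directed legs to both $A$ and to $Y$, which lets me reach $Y$. Processing the colliders nearest an endpoint reduces, collider by collider, to the collider-free analysis above and again forces one of the contradictions (a)–(c).

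The step I expect to be the main obstacle is this collider reduction together with the justification, already latent in the single-fork subcase, that common ancestors of $A$ and $Y$ genuinely yield \emph{blocking} members of $\mathbf{F}$. Concretely, the delicate points are: controlling repeated vertices when appending the directed activation segments and the two legs of a fork witness $F$, so as not to reintroduce a non-collider of $Z$ that destroys openness; verifying that the common ancestor produced at each stage corresponds to a node of $\mathbf{F}$ that actually lies on $\delta$ (so that it blocks $\delta$), which requires choosing shortest directed paths and invoking minimality of $\delta$; and organizing the whole argument as an induction on the number of colliders that terminates in a directed $W$-to-$Y$ path avoiding $A$ (against (a)) or in $V$ being a unique fork of an $A$–$Y$ path (against (c)). Making the bookkeeping of these splices airtight is where essentially all the effort lies.
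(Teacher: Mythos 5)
Your collider-free analysis (directed paths and the single-fork case) is correct and coincides with cases (i) and (ii) of the paper's proof, including the same three contradictions. The genuine gap is in your main case, paths with at least one collider: there you have only a plan, not an argument. You propose an induction on the number of colliders along a shortest open path, splicing in activation segments (distinguishing whether the activating descendant is $A$ or some $F\in \mathbf{F}$) and gluing fork legs to reach $Y$, and you yourself flag that the bookkeeping of these splices, the control of repeated vertices, and the verification that the produced fork witnesses actually lie on $\delta$ are unresolved. As written, none of that reduction is carried out, so the proposal does not prove the lemma in the case that carries all the content.

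What you are missing is the observation that collapses the collider case to a purely local analysis, with no induction and no splicing at all. Every node of $\mathbf{F}\left( A,Y,\mathcal{G}\right) $ is, by definition, the fork of a path between $A$ and $Y$ and hence has a directed path to $A$; consequently every collider on a path $\delta $ that is open given $\left\{ A\right\} \cup \mathbf{F}\left( A,Y,\mathcal{G}\right) $ is an ancestor of $A$ (its activating descendant is either $A$ itself or a member of $\mathbf{F}$, which is an ancestor of $A$). Given this, one only inspects the edge of $\delta $ at the endpoint $V$. If $\delta $ leaves $V$ through an outgoing edge, the segment from $V$ to the collider $C$ nearest $V$ is directed, so $V$ has a directed path to $C$ and thence to $A$; if $V=Y$ this contradicts acyclicity (since $A\in \an_{\mathcal{G}}\left( Y\right) $), and otherwise, combined with $V$'s directed path to $Y$ avoiding $A$, it shows $V\in \mathbf{F}\left( A,Y,\mathcal{G}\right) $, contradicting $V\in \dir\left( A,Y,\mathcal{G}\right) $. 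If instead $\delta $ enters $V$, then between $V$ and the nearest collider $D$ the path must contain a fork $K$; $K$ is an ancestor of $D$, hence of $A$, and an ancestor of $Y$ through $V$ avoiding $A$, so $K\in \mathbf{F}\left( A,Y,\mathcal{G}\right) $, and since $K$ is a non-collider on $\delta $ this blocks $\delta $, a contradiction. This is exactly how the paper disposes of your ``principal work'': the delicate points you anticipate (which endpoint to process, whether the activation leads to $A$ or to $\mathbf{F}$, termination of the induction) never arise, because the ancestry observation makes every collider interchangeable and only the structure of $\delta $ adjacent to $V$ matters.
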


\begin{proof}[Proof of Lemma \protect\ref{lemma:indep_dir_indir}]
Let $\mathbf{F}\equiv \mathbf{F}\left( A,Y,\mathcal{G}\right)$. We will show
that no path between $V$ and $W$ can be open given $A,\mathbf{F} $. We
analyze separately paths that (i) are directed, (ii) are not directed and have exactly
one fork and (iii) are not directed and have at least one
collider. We use the notation $T\rightrightarrows S$ to represent a directed
path between $T$ and $S$.

\textbf{(i) Directed}

Assume that there is a directed path between $V$ and $W$ and call it $\delta$%
. Assume first that $\delta$ leaves $V$ through the front-door. If $V=Y$,
since $W$ is an ancestor of $A$, this implies that $Y$ is an ancestor of $A$%
, which is a contradiction. If $V\neq Y$, since $V$ has a directed path to $%
Y $ that does not intersect $A$, we deduce that $V\in \mathbf{F}$, a
contradiction. Assume now that $\delta$ leaves $V$ through the backdoor.
This implies that there is a directed path betweeen $W$ and $Y$ that does
not intersect $A$, which is a contradiction.

Hence, there are no directed paths between $V$ and $W$ that are open given  $(A,\mathbf{F})$.

\textbf{(ii) Not directed, exactly one fork}

Assume there is a path between $V$ and $W$ that has at exactly one fork, and consequently no colliders, and is open given $(A,\mathbf{F})$. Call the path $\delta$ and
call the fork, H. Recall that $W$ is an ancestor of $A$. Since $V$ is
either equal to $Y$ or has a directed path to $Y$ that does not intersect $A$%
, the path $V \leftleftarrows H \rightrightarrows
W\rightrightarrows A$ shows that $H\in \mathbf{F}$ and hence $%
\delta$ is blocked by $\mathbf{F}$, a contradiction.

\textbf{(iii) Not directed, with at least one collider}

Assume there is a path between $V$ and $W$ that has at least one collider
and is open given $(A,\mathbf{F})$. Call the path $\delta$. All colliders in 
$\delta$ must be either in $(A,\mathbf{F})$ or have a descendant in $(A,%
\mathbf{F})$. Hence, all colliders are ancestors of $A$.

Assume first that $\delta$ leaves $V$ through the frontdoor. Consider the
collider in $\delta$ that is closest to $V$ and call it $C$. If $V=Y$, then
the directed path $Y\rightrightarrows C \rightrightarrows A$ shows
that $A$ is a descendant of $Y$, a contradiction. If $V\neq Y$, since $V$
has a directed path to $Y$ that does not intersect $A$, the path $Y
\leftleftarrows V \rightrightarrows C \rightrightarrows A$ shows
that $V\in \mathbf{F}$, which is a contradiction.

Assume now that $\delta$ leaves $V$ throught the backdoor. Consider the
collider in $\delta$ that is closest to $V$ and call it $D$. Because in the subpath of $\delta$ between $V$ and $D$ the edge with endpoint $V$ points into $V$ and the edge with endpoint $D$ points to $D$ then in that subpath there has to be a fork, say $K$. Such $K$ belongs to $\mathbf{F}$, because $K$ has directed path to $D$ and $D$ is an ancestor of $A$ and also $K$ has a directed path to $V$ that does not intersect $A$ and $V$ is either equal to $Y$ or has directed path to $Y$ that does not intersect $A$. Hence $\delta$ is blocked by $K$, which is a contradiction.

This concludes the proof of the lemma.
\end{proof}

\begin{proposition}
\label{prop:remove_directed_through_A} For any node $V_{j}\in \indir(A,Y,%
\mathcal{G})$ 
\begin{equation*}
E_{P}\left[ J_{P,\mathcal{G}}\mid V_{j},\pa_{\mathcal{G}}(V_{j})\right]
-E_{P}\left[ J_{P,\mathcal{G}}\mid \pa_{\mathcal{G}}(V_{j})\right] =0.
\end{equation*}
\end{proposition}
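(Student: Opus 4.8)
The plan is to prove the equivalent statement $E_{P}[J_{P,a,\mathcal{G}}\mid V_{j},\pa_{\mathcal{G}}(V_{j})]=E_{P}[J_{P,a,\mathcal{G}}\mid\pa_{\mathcal{G}}(V_{j})]$ for every $P\in\mathcal{M}(\mathcal{G})$, since the asserted difference is exactly the gap between these two conditional expectations. The first step is to remove the inverse-probability weight. Because $V_{j}\in\an_{\mathcal{G}}(A)\setminus\{A\}$, the set $\{V_{j}\}\cup\pa_{\mathcal{G}}(V_{j})$ consists of non-descendants of $A$; hence for any $\mathbf{S}\subseteq\nd_{\mathcal{G}}(A)$ the Local Markov Property yields $P(A=a\mid\pa_{\mathcal{G}}(A),\mathbf{S})=\pi_{a}(\pa_{\mathcal{G}}(A);P)$. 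Conditioning $J_{P,a,\mathcal{G}}$ first on $(\pa_{\mathcal{G}}(A),\mathbf{S})$ and invoking this identity (the cancellation behind Lemma \ref{lemma:inv_pi}) collapses the weight and gives
\[
E_{P}[J_{P,a,\mathcal{G}}\mid\mathbf{S}]=E_{P}\big[E_{P}[Y\mid A=a,\pa_{\mathcal{G}}(A),\mathbf{S}]\,\big|\,\mathbf{S}\big].
\]
Applying this with $\mathbf{S}=\{V_{j}\}\cup\pa_{\mathcal{G}}(V_{j})$ and with $\mathbf{S}=\pa_{\mathcal{G}}(V_{j})$ reduces the claim to showing that the right-hand side does not depend on $V_{j}$.

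The second step is to reinterpret the resulting quantity as a conditional interventional mean and transfer the problem to the manipulated graph. By the truncated factorization \eqref{eq:g-form} the displayed right-hand side equals $E[Y_{a}\mid\mathbf{S}]$, the mean of the interventional outcome $Y_{a}$ given the pre-treatment variables $\mathbf{S}$; here I use that $\mathbf{S}\subseteq\nd_{\mathcal{G}}(A)$, so the sub-vector $\mathbf{S}$ of $\mathbf{V}_{a}$ has the same law as the factual $\mathbf{S}$. Moreover \eqref{eq:g-form} is precisely the Bayesian-network factorization associated with the graph $\mathcal{G}_{\bar a}$ obtained from $\mathcal{G}$ by deleting every edge into $A$ and fixing $A=a$, so $\mathbf{V}_{a}$ is Markov relative to $\mathcal{G}_{\bar a}$ and d-separations in $\mathcal{G}_{\bar a}$ imply independences under $f[\cdot\mid\doo(a)]$. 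It therefore suffices to establish the d-separation
\[
V_{j}\ \ort_{\mathcal{G}_{\bar a}}\ Y\ \mid\ \pa_{\mathcal{G}}(V_{j}),
\]
from which $E[Y_{a}\mid V_{j},\pa_{\mathcal{G}}(V_{j})]=E[Y_{a}\mid\pa_{\mathcal{G}}(V_{j})]$, and hence the Proposition, follow.

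The heart of the argument, and the step I expect to be the main obstacle, is this last d-separation, which is exactly where the defining properties of $\indir(A,Y,\mathcal{G})$ enter. Any path that begins with an edge into $V_{j}$ is blocked at the corresponding member of $\pa_{\mathcal{G}}(V_{j})$, a non-collider in the conditioning set (the in-edges of $V_{j}$ are unchanged in $\mathcal{G}_{\bar a}$ because $V_{j}\neq A$). For a path beginning $V_{j}\to\cdots$, it cannot be a directed path to $Y$: such a path in $\mathcal{G}_{\bar a}$ would be directed in $\mathcal{G}$, hence would intersect $A$ by the definition of $\indir$, hence would use an edge into $A$ that has been deleted. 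Consequently such a path must contain a collider, and its first collider is a descendant of $V_{j}$ whose descendants cannot lie in $\pa_{\mathcal{G}}(V_{j})\subseteq\an_{\mathcal{G}}(V_{j})$ without creating a cycle, so the path is blocked there. The delicate bookkeeping is precisely this collider/fork analysis of $V_{j}$–$Y$ paths, and it is of the same nature as the proof of Lemma \ref{lemma:indep_dir_indir}; indeed, an alternative route is to apply that lemma in $\mathcal{G}$ to obtain $Y\ \ort_{\mathcal{G}}\ V_{j}\mid A,\mathbf{F}(A,Y,\mathcal{G})$ and then verify that neither the extra conditioning on $\pa_{\mathcal{G}}(V_{j})$ nor the passage to $\mathcal{G}_{\bar a}$ reactivates any blocked collider. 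Because conditioning on parents of $A$ can in principle reopen colliders, checking that the fork set $\mathbf{F}$ absorbs every such reactivation is the part that must be carried out carefully in the full write-up.
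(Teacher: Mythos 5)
Your proof is correct, but it takes a genuinely different route from the paper's. The paper stays entirely inside the observational law and the original DAG: it collapses the weight by conditioning on the enlarged non-descendant set $\mathbf{F}(A,Y,\mathcal{G})\cup\{V_{j}\}\cup\pa_{\mathcal{G}}(V_{j})$, invokes Lemma \ref{lemma:indep_dir_indir} (with $Y\in\dir(A,Y,\mathcal{G})$ and $\{V_{j}\}\cup\pa_{\mathcal{G}}(V_{j})\cup\pa_{\mathcal{G}}(A)$, up to forks, inside $\indir(A,Y,\mathcal{G})$) to reduce the inner expectation to $E_{P}[Y\mid A=a,\mathbf{F}]$, and then uses the Local Markov property once more (no fork can be a descendant of $V_{j}$, else there would be a causal $V_{j}$--$Y$ path avoiding $A$) to drop $V_{j}$. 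You instead collapse the weight with $\mathbf{S}=\{V_{j}\}\cup\pa_{\mathcal{G}}(V_{j})$ alone, reinterpret the result as the conditional interventional mean $E[Y_{a}\mid\mathbf{S}]$ via \eqref{eq:g-form}, and verify the single d-separation $V_{j}\ \ort_{\mathcal{G}_{\bar{a}}}\ Y\mid\pa_{\mathcal{G}}(V_{j})$ in the mutilated graph --- a do-calculus-flavored argument that avoids the fork set and Lemma \ref{lemma:indep_dir_indir} entirely. What each buys: the paper's version reuses machinery it needs elsewhere and never leaves $\mathcal{G}$ or the factual law; yours is more self-contained for this proposition (only acyclicity and the definition of $\indir$ enter the graphical step), at the cost of two standard facts the paper never makes explicit --- that the truncated-factorization law is Markov relative to $\mathcal{G}_{\bar{a}}$, so d-separation there is sound for $f[\cdot\mid\doo(a)]$, and the conditional back-door identity $E_{P}\bigl[E_{P}[Y\mid A=a,\pa_{\mathcal{G}}(A),\mathbf{S}]\mid\mathbf{S}\bigr]=E[Y_{a}\mid\mathbf{S}]$ for $\mathbf{S}\subseteq\nd_{\mathcal{G}}(A)$; both do follow from \eqref{eq:g-form} as you assert. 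One remark: your closing hedge is unnecessary, since the path analysis you actually gave is already complete --- paths into $V_{j}$ are blocked at a parent non-collider; a directed out-path would have to enter $A$ through a deleted edge; and any other out-path has a first collider which, together with all its descendants, descends from $V_{j}$ and so cannot meet $\pa_{\mathcal{G}}(V_{j})$ by acyclicity. The worry about conditioning reopening colliders pertains only to your alternative route through Lemma \ref{lemma:indep_dir_indir}, not to the mutilated-graph argument you carried out.
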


\begin{proof}[Proof of Proposition \protect\ref%
{prop:remove_directed_through_A}]
It suffices to show that 
\begin{equation*}
E_{P}\left[ J_{P,\mathcal{G}}\mid V_{j},\pa_{\mathcal{G}}(V_{j})\right]
\end{equation*}%
does not depend on $V_{j}$. Let $\mathbf{F}\equiv \mathbf{F}\left( A,Y,%
\mathcal{G}\right)$. We begin by noting the following: $ \mathbf{F}\cup \lbrace
V_{j}\rbrace \cup \pa_{\mathcal{G}}(V_{j})$ is comprised of non-descendants of $A. $ This
is because $V_{j}$ is a non-descendant of $A$ by assumption, since $A $ is a
descendant of $V_{j}$. This implies that $\pa_{\mathcal{G}}(V_{j})$ is a
non-descendant of $A$. Also, any node in $\mathbf{F}$ is, by definition, an
ancestor of a parent of $A$, therefore it cannot be a descendant of $A$.
Then, by the Local Markov property, 
\begin{equation*}
E_{P}\left[ I_{a}(A)|\pa_{\mathcal{G}}(A),\mathbf{F},V_{j},\pa_{\mathcal{G}}(V_{j})%
\right] =E_{P}\left[ I_{a}(A)\mid \pa_{\mathcal{G}}(A)\right] =\pi \left( \pa_{%
\mathcal{G}}(A);P\right) .
\end{equation*}%
Thus, 
\begin{equation*}
E_{P}\left[ J_{P,\mathcal{G}}\mid V_{j},\pa_{\mathcal{G}}(V_{j})\right]
=E_{P}\left[ E_{P}\left[ Y|A=a,\pa_{\mathcal{G}}(A),\mathbf{F},V_{j},\pa_{%
\mathcal{G}}(V_{j})\right] \mid V_{j},\pa_{\mathcal{G}}(V_{j})\right] .
\end{equation*}

We will show next that 
\begin{equation*}
E_{P}\left[ Y|A=a,\pa_{\mathcal{G}}(A),\mathbf{F},V_{j},\pa_{\mathcal{G}%
}(V_{j})\right] =E_{P}\left[ Y|A=a,\mathbf{F}\right] .
\end{equation*}%
To do so, it suffices to show that 
\begin{equation}
\left. Y\perp \!\!\!\perp _{\mathcal{G}} \left[\lbrace V_{j}\rbrace \cup \pa_{\mathcal{G}%
}(V_{j}) \cup\pa_{\mathcal{G}}(A)\right] \backslash \mathbf{F}%
\right\vert A,\mathbf{F}.  \label{eq:remove_directed_through_A_indep}
\end{equation}%
Note that 
\begin{equation*}
\left[\lbrace V_{j}\rbrace \cup \pa_{\mathcal{G}%
}(V_{j}) \cup\pa_{\mathcal{G}}(A)\right]\subset \text{indir}(A,Y,\mathcal{G}).
\end{equation*}%
Then by Lemma \ref{lemma:indep_dir_indir} equation %
\eqref{eq:remove_directed_through_A_indep} holds. Hence 
\begin{equation*}
E_{P}\left[ J_{P,\mathcal{G}}\mid V_{j},\pa_{\mathcal{G}}(V_{j})\right]
=E_{P}\left[ E_{P}\left[ Y|A=a,\mathbf{F}\right] \mid V_{j},\pa_{\mathcal{G}%
}(V_{j})\right].
\end{equation*}

Now note that vertices in $\mathbf{F}$ cannot be descendants of $V_{j}$,
since, if $V\in \mathbf{F}$ were a descendant of $V_{j}$, then there would
be a directed path from $V_{j}$ to $Y$ that does not intersect $A$, a
contradiction. Hence by the Local Markov Property 
\begin{equation*}
V_{j}\perp \!\!\!\perp \mathbf{F}\mid \pa_{\mathcal{G}}(V_{j}).
\end{equation*}%
Thus 
\begin{equation*}
E_{P}\left[ J_{P,\mathcal{G}}\mid V_{j},\pa_{\mathcal{G}}(V_{j})\right]
=E_{P}\left[ E_{P}\left[ Y|A=a,\mathbf{F}\right] \mid V_{j},\pa_{\mathcal{G}%
}(V_{j})\right] =E_{P}\left[ E_{P}\left[ Y|A=a,\mathbf{F}\right] \mid \pa_{%
\mathcal{G}}(V_{j})\right].
\end{equation*}%
which does not depend on $V_{j}$. This finishes the proof of the proposition.
\end{proof}

\begin{lemma}\label{lemma:parents_nondesc}
Assume that $\mathcal{G}$ is a DAG and $A$ and $Y$ are two distinct vertices in $%
\mathcal{G}$ such that $A\in \an_{\mathcal{G}}\left( Y\right)$.  Let $%
\mathbf{W}\equiv \de_{\mathcal{G}}^{c}\left( A\right) $ and $\mathbf{O\equiv
O}\left( A,Y,\mathcal{G}\right) $. Assume that $\irrel(A,Y,\mathcal{G})=\emptyset$. Write $\mathbf{W}\equiv \left(
W_{1},\dots,W_{J}\right) $, where we assume $J\geq 1$ and write $\mathbf{O\equiv }\left( O_{1},\dots,O_{T}\right) 
$ in topological order relative to $\mathcal{G}.$ Assume 
$$
\mathbf{%
O\backslash }O_{T}\mathbf{\subset }\pa_{\mathcal{G}}(O_{T}).
$$ Let 
$$
\mathbf{I}_{j} \equiv \left[ \pa_{\mathcal{G}}\left( W_{j}\right)
\cup \left\{ W_{j}\right\} \right] \cap \pa_{\mathcal{G}}\left(
W_{j+1}\right) \quad \text{for} \quad  j\in \lbrace 1,\dots,J-1\rbrace.
$$
Then,
\begin{enumerate}
\item  $W_{J}=O_{T}$
\end{enumerate}
Moreover, if $J\geq 2$,
\begin{enumerate}
\item[2.] $W_{J-1}\in \pa_{\mathcal{G}}\left( W_{J}\right) $

\item[3.] If for some $1<j^{\ast }\leq J-1$ it holds that for $j\in \left\{
j^{\ast },\dots,J-1\right\} ,$%
\begin{equation}
\pa_{\mathcal{G}}\left( W_{j+1}\right) \backslash \left\{ W_{j}\right\}
\subset \pa_{\mathcal{G}}\left( W_{j}\right) , \label{eq_new_assertion}
\end{equation}%
then
\begin{equation}
W_{j}\in \pa_{\mathcal{G}}\left( W_{j+1}\right) \text{ for }j\in \left\{
j^{\ast }-1,j^{\ast },\dots,J-1\right\}   \label{eq:padres_W_2}
\end{equation}
and
\begin{equation}
\mathbf{O\backslash I}_{j}\perp \!\!\!\perp _{\mathcal{G}}\left[ \pa_{%
\mathcal{G}}\left( W_{j}\right) \cup W_{j}\right] \bigtriangleup \pa_{%
\mathcal{G}}\left( W_{j+1}\right) |\mathbf{I}_{j} 
\text{ for }j\in \left\{
j^{\ast },\dots,J-1\right\}    \label{eq:vale}
\end{equation}

\item[4.]  Suppose that for some $j^{\ast }\in \left\{ 2,\dots ,J-1\right\} $ it
holds that 
\begin{equation}
\pa_{\mathcal{G}}\left( W_{j^{\ast }+1}\right) \backslash \left\{ W_{j^{\ast
}}\right\} \not\subset \pa_{\mathcal{G}}\left( W_{j^{\ast }}\right) 
\label{eq:ultima_3}
\end{equation}%
and that $\left( \ref{eq_new_assertion}\right) $ holds for all $j\in \left\{
j^{\ast }+1,\dots,J-1\right\} $\ if $j^{\ast }\,<J-1.$ Then,
\begin{equation}
\mathbf{O\backslash I}_{j^{\ast }}\not\ort_{\mathcal{G}}\left[ \pa_{%
\mathcal{G}}\left( W_{j^{\ast }}\right) \cup W_{j^{\ast }}\right]
\bigtriangleup \pa_{\mathcal{G}}\left( W_{j^{\ast }+1}\right) |\mathbf{I}%
_{j^{\ast }}  \label{eq:no_vale_2}.
\end{equation}%
\end{enumerate}
\end{lemma}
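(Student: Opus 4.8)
The plan is to run everything off one recurring device, which I will call the \emph{successor argument}. Since $\irrel(A,Y,\mathcal{G})=\emptyset$, every $W\in\mathbf{W}=\de^{c}_{\mathcal{G}}(A)$ is an ancestor of $Y$ that is not in $\indir(A,Y,\mathcal{G})$, so there is a directed path from $W$ to $Y$ avoiding $A$; the child $W'$ of $W$ on this path comes after $W$ topologically and is not $A$, hence either $W'$ is a later non-descendant $W_{i}$ (giving $W\in\pa_{\mathcal{G}}(W_{i})$) or $W'\in\cn(A,Y,\mathcal{G})$, in which case $W\in\pa_{\mathcal{G}}(\cn(A,Y,\mathcal{G}))$ and, being a non-descendant of $A$ (hence outside $\forb(A,Y,\mathcal{G})=\de_{\mathcal{G}}(\cn(A,Y,\mathcal{G}))\cup\{A\}$), lies in $\mathbf{O}$. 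Applying this to $W_{J}$ gives $W_{J}\in\mathbf{O}$, and topological order forces $W_{J}=O_{T}$, which is Part 1. Applying it to $W_{J-1}$: the successor is either $W_{J}$ (so $W_{J-1}\in\pa_{\mathcal{G}}(W_{J})$) or places $W_{J-1}\in\mathbf{O}\setminus\{O_{T}\}$, whence $W_{J-1}\in\pa_{\mathcal{G}}(O_{T})=\pa_{\mathcal{G}}(W_{J})$ by \eqref{eq:padres_O_T}; either way Part 2 holds.

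For the parent chain \eqref{eq:padres_W_2}, fix $j\in\{j^{\ast}-1,\dots,J-1\}$. The successor argument yields some $i\in\{j+1,\dots,J\}$ with $W_{j}\in\pa_{\mathcal{G}}(W_{i})$ (directly, or via $W_{j}\in\mathbf{O}\setminus\{O_{T}\}\subset\pa_{\mathcal{G}}(W_{J})$, so $i=J$). I then descend: whenever $W_{j}\in\pa_{\mathcal{G}}(W_{i})$ with $i\geq j+2$, hypothesis \eqref{eq_new_assertion} at index $i-1$ gives $\pa_{\mathcal{G}}(W_{i})\subset\pa_{\mathcal{G}}(W_{i-1})\cup\{W_{i-1}\}$, and since $W_{j}\neq W_{i-1}$ this forces $W_{j}\in\pa_{\mathcal{G}}(W_{i-1})$. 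Iterating down to index $j+1$ proves $W_{j}\in\pa_{\mathcal{G}}(W_{j+1})$; the descent only invokes \eqref{eq_new_assertion} at indices $\geq j+1\geq j^{\ast}$, which is exactly the hypothesised range.

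Part 3's d-separation \eqref{eq:vale} is the technical heart and the main obstacle. First I simplify the sets: \eqref{eq_new_assertion} gives $\pa_{\mathcal{G}}(W_{j+1})\subset\pa_{\mathcal{G}}(W_{j})\cup\{W_{j}\}$, so $\mathbf{I}_{j}=\pa_{\mathcal{G}}(W_{j+1})$, and using $W_{j}\in\pa_{\mathcal{G}}(W_{j+1})$ from the chain the symmetric difference collapses to $\pa_{\mathcal{G}}(W_{j})\setminus\pa_{\mathcal{G}}(W_{j+1})$. Iterating \eqref{eq_new_assertion} also gives $\pa_{\mathcal{G}}(W_{i})\subset\pa_{\mathcal{G}}(W_{j+1})\cup\{W_{j+1},\dots,W_{i-1}\}$ for $i\in\{j+1,\dots,J\}$, from which $\mathbf{O}\setminus\pa_{\mathcal{G}}(W_{j+1})\subseteq\{W_{j+1},\dots,W_{J}\}$ (recall $\mathbf{O}\setminus\{O_{T}\}\subset\pa_{\mathcal{G}}(W_{J})$ and $O_{T}=W_{J}$). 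It then suffices to prove the stronger $\pa_{\mathcal{G}}(W_{j})\setminus\pa_{\mathcal{G}}(W_{j+1})\ \ort_{\mathcal{G}}\ \{W_{j+1},\dots,W_{J}\}\mid\pa_{\mathcal{G}}(W_{j+1})$, which I would do by contradiction on a \emph{shortest} $\pa_{\mathcal{G}}(W_{j+1})$-open path $\delta$ between the two sets. A confinement step shows $\delta$ stays inside $\mathbf{W}$: every edge crossing the $\de_{\mathcal{G}}(A)$/$\de^{c}_{\mathcal{G}}(A)$ boundary points into $\de_{\mathcal{G}}(A)$, and no descendant of $A$ is in or an ancestor of the conditioning set $\pa_{\mathcal{G}}(W_{j+1})\subset\mathbf{W}$, so any excursion into $\de_{\mathcal{G}}(A)$ must contain a collider that blocks $\delta$. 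With $\delta$ confined to $\mathbf{W}$ and ending at some $W_{i}$, I examine its last edge: if it points into $W_{i}$ its source lies in $\pa_{\mathcal{G}}(W_{i})\subset\pa_{\mathcal{G}}(W_{j+1})\cup\{W_{j+1},\dots,W_{i-1}\}$, yielding either a blocking non-collider in the conditioning set or a strictly shorter open path ending at some $W_{i'}$; if it points out of $W_{i}$, the neighbour is a later $W_{i''}$, again giving a strictly shorter open path. Both cases contradict minimality, so no open path exists.

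Finally, for Part 4 I use \eqref{eq:ultima_3} to pick $R\in\pa_{\mathcal{G}}(W_{j^{\ast}+1})\setminus(\pa_{\mathcal{G}}(W_{j^{\ast}})\cup\{W_{j^{\ast}}\})$; then $R$ lies in the symmetric difference $[\pa_{\mathcal{G}}(W_{j^{\ast}})\cup\{W_{j^{\ast}}\}]\bigtriangleup\pa_{\mathcal{G}}(W_{j^{\ast}+1})$ and $R\notin\mathbf{I}_{j^{\ast}}$. Since \eqref{eq_new_assertion} holds for $j\in\{j^{\ast}+1,\dots,J-1\}$, Part 3 applied with that starting index supplies the chain $W_{j^{\ast}+1}\to\cdots\to W_{J}=O_{T}$, so $R\to W_{j^{\ast}+1}\to\cdots\to W_{J}$ is a directed path whose interior nodes $W_{j^{\ast}+1},\dots,W_{J-1}$ all come at or after $W_{j^{\ast}+1}$ and hence avoid $\pa_{\mathcal{G}}(W_{j^{\ast}+1})\supseteq\mathbf{I}_{j^{\ast}}$. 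This path is open given $\mathbf{I}_{j^{\ast}}$ and connects $R$ to $O_{T}\in\mathbf{O}\setminus\mathbf{I}_{j^{\ast}}$, establishing \eqref{eq:no_vale_2}.
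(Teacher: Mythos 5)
Your proof is correct, and its skeleton matches the paper's: Parts 1 and 2 via the ``successor'' observation (a child of $W_{j}$ on a directed path to $Y$ avoiding $A$ is either a later $W_{i}$ or a node of $\cn(A,Y,\mathcal{G})$, the latter forcing $W_{j}\in\mathbf{O}$), the chain \eqref{eq:padres_W_2} by descending through the parent inclusions \eqref{eq_new_assertion} (the paper does this by reverse induction on $j^{\ast}$ --- same content), the same reduction $\mathbf{I}_{j}=\pa_{\mathcal{G}}(W_{j+1})$ with the symmetric difference collapsing to $\pa_{\mathcal{G}}(W_{j})\setminus\pa_{\mathcal{G}}(W_{j+1})$, and Part 4 via the identical explicit directed path from the offending parent through $W_{j^{\ast}+1}\rightarrow\cdots\rightarrow W_{J}=O_{T}$. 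The one place you genuinely diverge is the contradiction establishing \eqref{eq:vale}: the paper asserts that any open path between some $W_{l}$ with $l<j$ and some $W_{u}\in\mathbf{O}$ with $u\geq j+1$ must contain a crossing edge $W_{l^{\ast}}\rightarrow W_{u^{\ast}}$ with $l^{\ast}<j$ and $u^{\ast}\geq j+1$, which tacitly presupposes that the path never leaves $\mathbf{W}=\de_{\mathcal{G}}^{c}(A)$; your explicit confinement step --- boundary edges all point into $\de_{\mathcal{G}}(A)$, so an excursion contains a collider whose descendants, lying in $\de_{\mathcal{G}}(A)$, cannot meet $\pa_{\mathcal{G}}(W_{j+1})\subset\de_{\mathcal{G}}^{c}(A)$ --- supplies exactly the justification the paper leaves implicit, and your shortest-open-path minimality argument then replaces the paper's crossing-edge bookkeeping while proving the slightly stronger separation with $\{W_{j+1},\dots,W_{J}\}$ in place of $\mathbf{O}\cap\{W_{j+1},\dots,W_{J}\}$. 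The trade-off: the paper's version is shorter where confinement is taken for granted; yours is self-contained at that step (including the boundary cases, e.g.\ length-one paths and $j^{\ast}=J-1$ in Part 4, where the chain degenerates to the single edge into $W_{J}$) at the cost of carrying a minimality induction.
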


\begin{proof}
To prove 1), note that, since $\irrel(A,Y,\mathcal{G})=\emptyset$, there exists a
directed path between $W_{J}$ and $Y$ that does not intersect $A.$ Let $W$
be a child of $W_{J}$ in that path. Then $W$ cannot be in the set $\left\{
W_{1},\dots,W_{J}\right\} $ because $W_{J}$ is the last element in the
topolocally ordered sequence $W_{1},\dots,W_{J}$ of non-descendants of $A.$
Then $W\in \mathbf{M}\cup \left\{ Y\right\} $ which implies that $%
W_{J}\in \mathbf{O}$ and, since $\left( O_{1},\dots,O_{T}\right) $ is ordered
topologically, we conclude that $W_{J}=O_{T}.$ 

In the following proofs we will assume $J\geq 2$.

Turn now to the proof of part 2). Suppose that $W_{J-1}\not\in $pa$_{%
\mathcal{G}}\left( W_{J}\right) .$ Then, $W_{J-1}\notin \mathbf{O}$ because
by assumption, $\mathbf{O}\backslash O_{T}\subset \pa_{\mathcal{G}}\left(
W_{J}\right) .$ This implies that $W_{J-1}$ is either an ancestor of $%
Y$ such that all the directed paths between $W_{J-1}$ and $Y$ intersect $A,$
or $W_{J-1}$ is not an ancestor of $Y.$ Both possibilities are impossible
because they contradict that $\irrel\left( A,Y,\mathcal{G}\right) =\emptyset
.$

Turn now to the proof of part 3). 
We will first show \eqref{eq:padres_W_2} by reverse induction on $j^{\ast }$. Suppose $j^{\ast
}=J-1.$ We want to show that $W_{J-2}\in \pa_{\mathcal{G}}\left(
W_{J-1}\right) .$ If $W_{J-2}\in \mathbf{O}$ then by $\mathbf{O\backslash }%
O_{T}\mathbf{\subset }\pa_{\mathcal{G}}(O_{T})$ and part 1) of this lemma, $W_{J-2}\in $pa%
$_{\mathcal{G}}\left( W_{J}\right) ,$ which then implies by $\left( \ref%
{eq_new_assertion}\right) $ applied to $j=J-1$ that $W_{J-2}\in $pa$_{\mathcal{G}}\left(
W_{J-1}\right) .$ Suppose next that $W_{J-2}\not\in \mathbf{O}$ and $%
W_{J-2}\not\in \pa_{\mathcal{G}}\left( W_{J-1}\right) $, then by $\left( \ref%
{eq_new_assertion}\right) ,$ $W_{J-2}\not\in \pa_{\mathcal{G}}\left(
W_{J}\right) .$ Consequently, $W_{J-2}$ is either an ancestor of $Y$ such
that all the directed paths between $W_{J-2}$ and $Y$ intersect $A$ or $%
W_{J-2}$ is not an ancestor of $Y.$ Both possibilities are impossible
because they contradict that $\irrel\left( A,Y,\mathcal{G}\right) =\emptyset
.$ This shows that \eqref{eq:padres_W_2} is true for $j^{\ast }=J-1.$ Suppose now that the result holds for $j^{\ast }=m,\dots,J-1,$ for some $%
2<m\leq J-1.$ We will show that it also holds for $j^{\ast }=m-1.$
Henceforth suppose that $\left( \ref{eq_new_assertion}\right) $ holds for $%
j\in \left\{ m-1,\dots,J-1\right\} .$ Then, $\left( \ref{eq_new_assertion}%
\right) $ holds for $j\in \left\{ m,\dots,J-1\right\} $ and consequently, by
the inductive hypothesis, $\left( \ref{eq:padres_W_2}\right) $ holds for $%
j\in \left\{ m-1,m,\dots,J-1\right\} .$ It remains to show that $W_{m-2}\in \pa%
_{\mathcal{G}}\left( W_{m-1}\right) .$ Suppose that $W_{m-2}\in \mathbf{O,}$
then by $\mathbf{O\backslash }O_{T}\mathbf{\subset }\pa_{\mathcal{G}}(O_{T})$
and part 1), $W_{m-2}\in $pa$_{\mathcal{G}}\left( W_{J}\right) ,$ which
then implies, by $\left( \ref{eq_new_assertion}\right) $ being valid for all 
$j\in \left\{ m-1,\dots,J-1\right\} ,$ that 
\[
W_{m-2}\in \pa_{\mathcal{G}}\left( W_{J}\right) \backslash \left\{
W_{m-1},\dots,W_{J-1}\right\} \subset \pa_{\mathcal{G}}\left( W_{J-1}\right)
\backslash \left\{ W_{m-1},\dots,W_{J-2}\right\} \subset \dots\subset \pa_{%
\mathcal{G}}\left( W_{m-1}\right) .
\]
On the other hand, if $W_{m-2}\notin \mathbf{O}$, since $\irrel(A,Y, \mathcal{G})=\emptyset$, necessarily $W_{m-2}\in \pa_{\mathcal{G}}(W_{j})$ for some $j>m-2$. Arguing as before, this implies that $W_{m-2}\in \pa_{%
\mathcal{G}}\left( W_{m-1}\right)$.

Next we prove \eqref{eq:vale}. Suppose that for $j\in \left\{ j^{\ast
},\dots,J-1\right\} ,$ $\left( \ref{eq_new_assertion}\right) $ holds. Then,
for $j\in \left\{ j^{\ast },\dots,J-1\right\} $ we have 
\begin{eqnarray*}
\mathbf{I}_{j} &\equiv &\left[ \text{pa}_{\mathcal{G}}\left( W_{j}\right)
\cup \left\{ W_{j}\right\} \right] \cap \text{pa}_{\mathcal{G}}\left(
W_{j+1}\right)  \\
&=&\left[ \text{pa}_{\mathcal{G}}\left( W_{j}\right) \cup \left\{
W_{j}\right\} \right] \cap \left[ \left[ \text{pa}_{\mathcal{G}}\left(
W_{j+1}\right) \backslash \left\{ W_{j}\right\} \right] \cup \left\{
W_{j}\right\} \right]  \\
&=&\left[ \left[ \text{pa}_{\mathcal{G}}\left( W_{j+1}\right) \backslash
\left\{ W_{j}\right\} \right] \cup W_{j}\right]  \\
&=&\text{pa}_{\mathcal{G}}\left( W_{j+1}\right) 
\end{eqnarray*}%
where the second and forth equalities follow by $\left( \ref{eq:padres_W_2}%
\right) $ and the third follows by $\left( \ref{eq_new_assertion}\right)$. On the other hand, because by assumption $\mathbf{O}\backslash O_{T}\subset $%
pa$_{\mathcal{G}}\left( W_{J}\right) ,$ then $\mathbf{O\backslash }\left(
W_{j+1},\dots,W_{J}\right) \subset $pa$_{\mathcal{G}}\left( W_{j+1}\right) .$
Consequently, $\left( \ref{eq:vale}\right) $ holds if and only if  
\begin{equation}
\mathbf{O\cap }\left( W_{j+1},\dots,W_{J}\right) \perp \!\!\!\perp _{\mathcal{G%
}}\text{pa}_{\mathcal{G}}\left( W_{j}\right) \backslash \text{pa}_{\mathcal{G%
}}\left( W_{j+1}\right) |\text{pa}_{\mathcal{G}}\left( W_{j+1}\right) .
\label{eq:vale_equiv}
\end{equation}%
We will show by contradiction that $\left( \ref{eq:vale_equiv}\right) $
holds, and consequently that $\left( \ref{eq:vale}\right) $ holds, for $j\in
\left\{ j^{\ast },j^{\ast }+1,\dots,J-1\right\} .$ Suppose that $\left( \ref%
{eq:vale_equiv}\right) $ were not true for some $j\in \left\{ j^{\ast
},j^{\ast }+1,\dots,J-1\right\} .$ Then there would exist $u\geq j+1$ and $l<j$
such that $W_{u}\not\ort_{\mathcal{G}}W_{l}|$pa$_{\mathcal{G}}\left(
W_{j+1}\right) $ with $W_{u}\in \mathbf{O}$ and 
\begin{equation}
W_{l}\in \text{pa}_{\mathcal{G}}\left( W_{j}\right) \backslash 
\text{pa}_{\mathcal{G}}\left( W_{j+1}\right) .  \label{eq:pasado}
\end{equation}
Because, by $\left( \ref{eq:padres_W_2}\right) ,$ $W_{j}\in $pa$_{\mathcal{G}%
}\left( W_{j+1}\right) ,$ the path between $W_{l}$ and $W_{u}$ that would be
open given pa$_{\mathcal{G}}\left( W_{j+1}\right) $ would necessarily have
to include an edge $W_{l^{\ast }}\rightarrow W_{u^{\ast }}$ for some $%
l^{\ast }<j$ and $u^{\ast }\geq j+1.$ If $u^{\ast }=j+1,$ then this implies
that $W_{l^{\ast }}\in $pa$_{\mathcal{G}}\left( W_{j+1}\right) $ which is
impossible because it contradicts $W_{u}\not\ort_{\mathcal{G}}W_{l}|$pa$_{\mathcal{G}}\left(
W_{j+1}\right) $. If $%
u^{\ast }>j+1,$ then by $W_{l^{\ast }}\in $pa$_{\mathcal{G}}\left(
W_{u^{\ast }}\right) $ we have $W_{l^{\ast }}\in $pa$_{\mathcal{G}}\left(
W_{u^{\ast }}\right) \backslash \left\{ W_{j+1},\dots,W_{u^{\ast }-1}\right\} $
because $l^{\ast }<j.$ However,  by $\left( \ref{eq_new_assertion}\right) ,$%
\begin{align*}
\text{pa}_{\mathcal{G}}\left( W_{u^{\ast }}\right) \backslash \left\{
W_{j+1},\dots,W_{u^{\ast }-1}\right\} \subset \text{pa}_{\mathcal{G}}\left(
W_{u^{\ast }-1}\right) \backslash \left\{ W_{j+1},\dots,W_{u^{\ast
}-2}\right\} &\subset \dots \subset\text{pa}_{\mathcal{G}}\left( W_{j+3}\right)
\backslash \left\{ W_{j+1},W_{j+2}\right\} 
&\\
&\subset \text{pa}_{\mathcal{G}%
}\left( W_{j+2}\right) \backslash \left\{ W_{j+1}\right\} \subset \text{pa}_{%
\mathcal{G}}\left( W_{j+1}\right) 
\end{align*}
which then implies that $W_{l^{\ast }}\in $pa$_{\mathcal{G}}\left(
W_{j+1}\right) $ again contradicting $W_{u}\not\ort_{\mathcal{G}}W_{l}|$pa$_{\mathcal{G}}\left(
W_{j+1}\right) $. This
proves \eqref{eq:vale}.

Turn now to the proof of part 4). Suppose that pa$_{\mathcal{G}}\left(
W_{j^{\ast }+1}\right) \backslash \left\{ W_{j^{\ast }}\right\} \not\subset $%
pa$_{\mathcal{G}}\left( W_{j^{\ast }}\right) $ and that $\left( \ref%
{eq_new_assertion}\right) $ holds for all $j\in \left\{ j^{\ast
}+1,\dots,J-1\right\}$ if $j^{\ast }<J-1$. Then there exists $l<j^{\ast }$
such that $W_{l}\in $pa$_{\mathcal{G}}\left( W_{j^{\ast }+1}\right)
\backslash $pa$_{\mathcal{G}}\left( W_{j^{\ast }}\right) .$ By \eqref{eq:padres_W_2}, $%
W_{j}\in \pa_{\mathcal{G}}\left( W_{j+1}\right) $ for all $j=j^{\ast
},j^{\ast }+1,\dots,J-1.$ Consequently, the path $W_{l}\rightarrow W_{j^{\ast
}+1}\rightarrow W_{j^{\ast }+2}\rightarrow \circ \dots\circ \rightarrow W_{J}$
is open in $\mathcal{G}$ when conditioning on $\mathbf{I}_{j^{\ast
}}$.
By part 1), $W_{J}=O_{T}\in \mathbf{O\cap }\left( W_{j+1},\dots,W_{J}\right) ,$
and $W_{l}\in \left[ \pa_{\mathcal{G}}\left( W_{j^{\ast }}\right) \cup
W_{j^{\ast }}\right] \bigtriangleup \pa_{\mathcal{G}}\left( W_{j^{\ast
}+1}\right) ,$ thus the aforementioned open path shows that $\left( \ref%
{eq:no_vale_2}\right) $ holds. This concludes the proof of \eqref{eq:ultima_3}.
\end{proof}

\begin{lemma}\label{lemma:indep_med}
Let $\mathcal{G}$ be a DAG with vertex set $\mathbf{V}$ and
let $A$ and $Y$ be two distinct vertices in $\mathbf{V.}$ Suppose that $\irrel
\left( A,Y;\mathcal{G}\right) =\emptyset .$ Suppose $\mathbf{M}\mathbf{%
\equiv }\de_{\mathcal{G}}\left( A\right) \backslash \left\{ A,Y\right\}
\not=\emptyset $ and let $\left( M_{1},\dots,M_{K}\right) $ be the elements of
$\mathbf{M}$ sorted topologically. Let $M_{0}\equiv A$ and $M_{K+1}\mathbf{%
\equiv }Y.$ 

Suppose that for some $k^{\ast }\geq 2,$ the following inclussion holds for $%
k\in \left\{ k^{\ast },\dots,K+1\right\} $%
\begin{equation}
\pa_{\mathcal{G}}\left( M_{k}\right) \subset \pa_{\mathcal{G}%
}\left( M_{k-1}\right) \cup \left\{ M_{k-1}\right\}.   \label{eq:inclusion_2_lemma}
\end{equation}
Then, $M_{K}\in \pa_{\mathcal{G}}\left( Y\right) $ and for all $k\in
\left\{ k^{\ast },\dots,K+1\right\} $

(i)  $M_{k-2}\in \pa\left( M_{k-1}\right) $ and

(ii)   
\begin{equation}
Y\ort _{\mathcal{G}}\left[ M_{k-1},\pa_{\mathcal{G}}\left(
M_{k-1}\right) \right] \backslash \pa_{\mathcal{G}}\left( M_{k}\right)
|\pa_{\mathcal{G}}\left( M_{k}\right) .  \label{eq:d_sep_mediators_lemma}
\end{equation}
\end{lemma}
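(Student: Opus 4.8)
The plan is to exploit the hypothesis $\irrel(A,Y;\mathcal{G})=\emptyset$, which forces a rigid geometry on the mediators. First I would record three consequences that will be used throughout. Since $\irrel(A,Y,\mathcal{G})=\indir(A,Y,\mathcal{G})\cup\an_{\mathcal{G}}(Y)^{c}$, the assumption gives $\an_{\mathcal{G}}(Y)^{c}=\emptyset$, so \textbf{(P1)} every vertex is an ancestor of $Y$. From (P1) and acyclicity, \textbf{(P2)} $Y$ is a sink, i.e. $\ch_{\mathcal{G}}(Y)=\emptyset$, since any child of $Y$ would be both a descendant and an ancestor of $Y$. Finally, \textbf{(P3)} if $V\in\de_{\mathcal{G}}(A)$ and $C\in\ch_{\mathcal{G}}(V)$ then $C\in\mathbf{M}\cup\{Y\}$: indeed $C$ is a strict descendant of $A$ and, by (P1), an ancestor of $Y$, so $C\in(\de_{\mathcal{G}}(A)\setminus\{A,Y\})\cup\{Y\}=\mathbf{M}\cup\{Y\}$.

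Next I would telescope the inclusion hypothesis \eqref{eq:inclusion_2_lemma}. Iterating it at $M_{\ell},M_{\ell-1},\dots,M_{k+1}$, whose indices lie in $\{k+1,\dots,\ell\}\subseteq\{k^{\ast},\dots,K+1\}$ whenever $k\geq k^{\ast}$, yields for every $k\in\{k^{\ast},\dots,K+1\}$ and $\ell\in\{k,\dots,K+1\}$
\begin{equation*}
\pa_{\mathcal{G}}(M_{\ell})\subset\pa_{\mathcal{G}}(M_{k})\cup\{M_{k},\dots,M_{\ell-1}\}.\tag{$\star$}
\end{equation*}
I would prove $M_{K}\in\pa_{\mathcal{G}}(Y)$ directly: by (P1) some child $C$ of $M_{K}$ lies on a directed path to $Y$, so by (P3) $C\in\mathbf{M}\cup\{Y\}$, and since $M_{K}$ is the topologically last mediator $C$ cannot be a mediator, forcing $C=Y$. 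For assertion (i), fix $k$ and set $j=k-1$; applying (P3) to a child of $M_{j-1}$ on a directed path to $Y$ (with $M_{j-1}=A$ allowed when $j=1$) yields some $M_{i}$ with $i\geq j$ and $M_{j-1}\in\pa_{\mathcal{G}}(M_{i})$, and if $i>j$ I would walk this membership down to index $j$ using \eqref{eq:inclusion_2_lemma} at the indices $i,i-1,\dots,j+1$ (all $\geq k^{\ast}$), the topological order guaranteeing $M_{j-1}\neq M_{i'-1}$ at each step. Together with $M_{K}\in\pa_{\mathcal{G}}(Y)$ this yields the chain $M_{k^{\ast}-2}\to\cdots\to M_{K}\to Y$; in particular $M_{k-1}\in\pa_{\mathcal{G}}(M_{k})$, so the set appearing in \eqref{eq:d_sep_mediators_lemma} reduces to $S\equiv\pa_{\mathcal{G}}(M_{k-1})\setminus\pa_{\mathcal{G}}(M_{k})$, which is topologically earlier than $M_{k}$ and disjoint from both $Z\equiv\pa_{\mathcal{G}}(M_{k})$ and $\mathbf{D}\equiv\{M_{k},\dots,M_{K},Y\}$.

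For the separation \eqref{eq:d_sep_mediators_lemma} I would show that conditioning on $Z$ seals off $\mathbf{D}$. By $(\star)$ every $V\in\mathbf{D}$ satisfies $\pa_{\mathcal{G}}(V)\subset Z\cup\mathbf{D}$, and by (P3) together with the topological order every child of a node of $\mathbf{D}$ again lies in $\mathbf{D}$; hence a path can leave $\mathbf{D}$ only through an edge $U\to V$ with $V\in\mathbf{D}$ and $U\in Z$. Arguing by contradiction, suppose a path $\delta$ from $Y$ to some $W\in S$ is open given $Z$. By (P2) $\delta$ begins with an in-edge, so it does not exit $\mathbf{D}$ at $Y$; let $V\in\mathbf{D}\setminus\{Y\}$ be the first node at which $\delta$ leaves $\mathbf{D}$, with successor $U\in Z$ and edge $U\to V$. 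If $V$ is a collider on $\delta$ it blocks $\delta$, because $V\notin Z$ and every descendant of $V$ is topologically at least $M_{k}$, hence outside $Z$; if $V$ is a non-collider, then $U$ is a non-collider lying in the conditioning set $Z$ and again blocks $\delta$. Either way the openness of $\delta$ is contradicted, establishing \eqref{eq:d_sep_mediators_lemma}.

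The main obstacle is the separation step. The delicate point is the bookkeeping that the chain $M_{k},\dots,M_{K},Y$ is sealed off once one conditions on $\pa_{\mathcal{G}}(M_{k})$: one must verify that a path can exit $\mathbf{D}$ only into $Z$, and that whichever endpoint of the exiting edge one inspects ($V$ as a collider with no descendant in $Z$, or $U$ as a non-collider inside $Z$) forces a block. Getting the collider/non-collider casework and the role of the topological order exactly right, and correctly disposing of the boundary case in which $\delta$ would try to leave $\mathbf{D}$ already at $Y$ (excluded by (P2)), is where the real care is needed; the chain from (i) and the telescoped inclusion $(\star)$ are precisely the tools that close each case.
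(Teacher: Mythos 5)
Your preliminaries, your proof that $M_{K}\in \pa_{\mathcal{G}}(Y)$, and your proof of part (i) follow essentially the paper's route: the paper likewise derives the edge $M_{K}\rightarrow Y$ from $\irrel(A,Y,\mathcal{G})=\emptyset$ and the topological order, and proves (i) by locating a child of $M_{k-2}$ among the later mediators or $Y$ and walking the parent membership down through the inclusion hypothesis, exactly as your (P3)-plus-telescoping argument does. For part (ii), however, you take a genuinely different route. The paper argues by contradiction on the \emph{shape} of a hypothetical open path: it shows any such path must terminate in an edge $M_{t}\rightarrow Y$ with $t\in\{k,\dots,K\}$, rules out fork configurations, and then descends through the mediator chain until the path is forced through a vertex of $\pa_{\mathcal{G}}(M_{k})$, where it is blocked. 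You instead isolate the structural fact that $\mathbf{D}=\{M_{k},\dots,M_{K},Y\}$ is sealed by $Z=\pa_{\mathcal{G}}(M_{k})$: the telescoped inclusion puts every parent of a node of $\mathbf{D}$ inside $Z\cup\mathbf{D}$, while $\irrel(A,Y,\mathcal{G})=\emptyset$ and the topological order put every child of a node of $\mathbf{D}$ inside $\mathbf{D}$, so the only boundary edges point from $Z$ into $\mathbf{D}$, and any path from $Y$ to $S=\pa_{\mathcal{G}}(M_{k-1})\setminus\pa_{\mathcal{G}}(M_{k})$ must cross one. This absorbing-set formulation is shorter, makes the role of the conditioning set transparent, and avoids the paper's delicate case-by-case peeling of path forms.

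One step is wrong as written, though your own machinery repairs it in a line. The claim ``by (P2) $\delta$ begins with an in-edge, so it does not exit $\mathbf{D}$ at $Y$'' is a non sequitur: the in-edge $X\rightarrow Y$ does not prevent $X\notin\mathbf{D}$, and in the case $k=K+1$ one has $\mathbf{D}=\{Y\}$, so \emph{every} path exits at $Y$ and your ``first node $V\in\mathbf{D}\setminus\{Y\}$ at which $\delta$ leaves $\mathbf{D}$'' does not exist; as literally written, such paths are never blocked by your dichotomy. The fix: if the exit occurs at $Y$, the successor $U=X$ lies in $\pa_{\mathcal{G}}(Y)\setminus\mathbf{D}\subset Z$ by your telescoped inclusion, the edge $U\rightarrow Y$ points out of $U$, so $U$ is a non-collider in the conditioning set, and $U\neq W$ since $S\cap Z=\emptyset$; hence $\delta$ is blocked. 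In fact your collider/non-collider dichotomy on $V$ is superfluous for the same reason: the tail $U$ of any boundary edge is \emph{automatically} a non-collider on $\delta$ (its edge points out of it), lies in $Z$, and is an intermediate vertex, so it blocks $\delta$ in every case, including $V=Y$. With that one-line correction your proof is complete and sound.
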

\begin{proof}

That $M_{K}\in $pa$_{\mathcal{G}}\left( Y\right) $ follows from irrel$\left(
A,Y;\mathcal{G}\right) =\emptyset $ and the fact that $M_{K}$ is last in the
topological order of $\mathbf{M}$.

To show (i), assume that for some $k^{\ast }\geq 2,\left( \ref%
{eq:inclusion_2_lemma}\right) $ holds for all $k\in \left\{ k^{\ast
},\dots,K+1\right\} $ $.$ Let $k\in \left\{ k^{\ast },\dots,K+1\right\} .$ If $%
k=k^{\ast }=2,$ then $M_{k-2}=A\in $pa$_{\mathcal{G}}\left( M_{1}\right) $
for otherwise $M_{1}$ would not be a descendant of $A.$ Next assume $k>2.$
The assumption that irrel$\left( A,Y;\mathcal{G}\right) =\emptyset $ and the
topological order of $\left( M_{1},\dots,M_{K}\right) $ implies that $%
M_{k-2}\in $pa$_{\mathcal{G}}\left( M_{r}\right) $ for some $r\in \left\{
k-1,k,\dots,K+1\right\} .$ If $r=k-1$ we are done. If $r\geq k,$ then $r\in
\left\{ k^{\ast },\dots,K+1\right\} $ and consequently $\left( \ref%
{eq:inclusion_2_lemma}\right) $ implies that  
\[
\text{pa}_{\mathcal{G}}\left( M_{r}\right) \subset \text{pa}_{\mathcal{G}%
}\left( M_{r-1}\right) \cup \left\{ M_{r-1}\right\} \subset \dots\subset \text{%
pa}_{\mathcal{G}}\left( M_{k-1}\right) \cup \left\{
M_{r-1},M_{r-2},\dots,M_{k-1}\right\} 
\]%
Consequently, $M_{k-2}\in $pa$_{\mathcal{G}}\left( M_{k-1}\right) \cup
\left\{ M_{r-1},M_{r-2},\dots,M_{k-1}\right\} $ and since $M_{k-2}\not\in
\left\{ M_{r-1},M_{r-2},\dots,M_{k-1}\right\} $ then $M_{k-2}\in $pa$_{%
\mathcal{G}}\left( M_{k-1}\right) .$

To show (ii), assume that for some $k^{\ast }\geq 2,$ $\left( \ref%
{eq:inclusion_2_lemma}\right) $ holds for all $k\in \left\{ k^{\ast
},\dots,K+1\right\} .$ Let $k\in \left\{ k^{\ast },\dots,K+1\right\} .$
Assumption $\left( \ref{eq:inclusion_2_lemma}\right) $ implies that  
\begin{eqnarray}
\text{pa}_{\mathcal{G}}\left( Y\right)  &\subset &\text{pa}_{\mathcal{G}%
}\left( M_{K}\right) \cup \left\{ M_{K}\right\} \subset \text{pa}_{\mathcal{G%
}}\left( M_{K-1}\right) \cup \left\{ M_{K},M_{K-1}\right\} 
\label{eq:inclusion_10} \\
&\subset &\cdots\subset \text{pa}_{\mathcal{G}}\left( M_{k}\right) \cup \left\{
M_{K},M_{K-1},\dots,M_{k}\right\} \subset \text{pa}_{\mathcal{G}}\left(
M_{k-1}\right) \cup \left\{ M_{K},M_{K-1},\dots,M_{k-1}\right\}   \nonumber
\end{eqnarray}%
By part (i) we have $M_{k-1}\in $pa$_{\mathcal{G}}\left( M_{k}\right) .$
Then, $\left( \ref{eq:d_sep_mediators_lemma}\right) $ is the same as 
\begin{equation}
Y\ort _{\mathcal{G}}\text{pa}_{\mathcal{G}}\left( M_{k-1}\right)
\backslash \text{pa}_{\mathcal{G}}\left( M_{k}\right) |\text{pa}_{\mathcal{G}%
}\left( M_{k}\right)   \label{eq:d_sep_mediators_2}
\end{equation}%
Suppose $\left( \ref{eq:d_sep_mediators_2}\right) $ is false. Let $M_{j}\in $%
pa$_{\mathcal{G}}\left( M_{k-1}\right) \backslash $pa$_{\mathcal{G}}\left(
M_{k}\right) $ such that $Y\not \ort_{\mathcal{G}}M_{j}|$pa$_{\mathcal{G}%
}\left( M_{k}\right) .$ Because $Y$ has no descendants in the DAG, then any
open path between $M_{j}$ and $Y$ must end with an edge pointing into $Y.$
If such path is open when we condition on pa$_{\mathcal{G}}\left(
M_{k}\right) =\left[ \text{pa}_{\mathcal{G}}\left( Y\right) \backslash
\left\{ M_{K},M_{K-1},\dots,M_{k}\right\} \right] \cup \left[ \text{pa}_{%
\mathcal{G}}\left( M_{k}\right) \backslash \text{pa}_{\mathcal{G}}\left(
Y\right) \right] ,$ then this edge must connect a vertex $M_{t}\in \left\{
M_{K},M_{K-1},\dots,M_{k}\right\} $ with $Y.$ This is because any other vertex
would be in pa$_{\mathcal{G}}\left( Y\right) \backslash \left\{
M_{K},M_{K-1},\dots,M_{k}\right\} $ and the path would then be closed because
we are conditioning on 
$$pa_{\mathcal{G}}\left( Y\right) \backslash \left\{
M_{K},M_{K-1},\dots,M_{k}\right\} .
$$
Then the path between $M_{j}$ and $Y$
that is open when we condition on pa$_{\mathcal{G}}\left( M_{k}\right) $
must be of the form 
\begin{equation}
M_{j}-\circ -\circ \dots\circ -V\rightarrow M_{t}\rightarrow Y\text{  }
\label{path:1}
\end{equation}%
or%
\begin{equation}
M_{j}-\circ -\circ \dots\circ -V\leftarrow M_{t}\rightarrow Y  \label{path:2}
\end{equation}%
for some $t\in \left\{ k,k+1,\dots,K\right\} $ and some $V\in \mathbf{V}.$ We
now argue that it cannot be of the form $\left( \ref{path:2}\right) .$
Suppose it was of the form $\left( \ref{path:2}\right) .$ Then, $V$ would
belong to $\mathbf{M}$ because $V$ is a child of a descendant of $A$ and
consequently it is itself a descendant of $A.$ By the topological order of $%
\left( M_{1},\dots,M_{K}\right) $ this would imply that $V=M_{h}$ for some $%
h>t.$ But in such case the path between $M_{j}$ and $M_{h}$ would eventually
intersect a collider $M_{r}$ for some $r>h,$ i.e. it would be of the form  
\[
M_{j}-\circ -\circ \dots\circ -\rightarrow M_{r}\leftarrow \circ \dots\leftarrow
\circ \leftarrow M_{h}\leftarrow M_{t}\rightarrow Y
\]%
However, this is impossible because by $r>h>t\in \left\{ k,k+1,\dots,K\right\} 
$ we have that neither $M_{r}$ nor its descendants are in pa$_{\mathcal{G}%
}\left( M_{k}\right) $, so the path is closed at the collider $M_{r}$ when
we condition on pa$_{\mathcal{G}}\left( M_{k}\right) .$ 

We thus conclude that if an open path exists it must be of the form $\left( %
\ref{path:1}\right) $ for some $t\in \left\{ k,k+1,\dots,K\right\} .$ However,
we will now show that this is also impossible. First we note that the
assumption that the path is open when we condition on pa$_{\mathcal{G}%
}\left( M_{k}\right) $ implies that  
\[
V\not\in \text{pa}_{\mathcal{G}}\left( M_{k}\right) .
\]%
This implies that 
\[
k+1\leq t\leq K.
\]%
Next, note that because $V\in $pa$_{\mathcal{G}}\left( M_{t}\right) $ and  
\[
\text{pa}_{\mathcal{G}}\left( M_{t}\right) \subset \text{pa}_{\mathcal{G}%
}\left( M_{k}\right) \cup \left\{ M_{k},\dots,M_{t-1}\right\} 
\]%
this implies that $V\in \left\{ M_{k},\dots,M_{t-1}\right\} .$ So, we conclude
that the open path must be of the form 
\begin{equation}
M_{j}-\circ -\circ \dots\circ -V^{\prime }\rightarrow M_{h}\rightarrow
M_{t}\rightarrow Y\text{ }  \label{path:3}
\end{equation}%
or%
\begin{equation}
M_{j}-\circ -\circ \dots\circ -V^{\prime }\leftarrow M_{h}\rightarrow
M_{t}\rightarrow Y  \label{path:4}
\end{equation}%
for some 
\[
k\leq h<t\leq K.
\]
However, reasoning as above we rule out the path $\left( \ref{path:4}\right) 
$ and conclude that the path must be of the form $\left( \ref{path:3}\right) 
$ for $V^{\prime }=M_{r}$ with $r$ such that
\[
k\leq r<h<t\leq K.
\]%
Continuing in this fashion we arrive at the conclusion that the path must
be of the form 
\[
M_{j}-\circ -\circ \dots\circ -V^{\ast }\rightarrow M_{k}\dots M_{r}\rightarrow
M_{h}\rightarrow M_{t}\rightarrow Y
\]%
But this contradicts the assumption that the path is open when we condition
on pa$_{\mathcal{G}}\left( M_{k}\right) $ since $V^{\ast }\in $ pa$_{%
\mathcal{G}}\left( M_{k}\right) $. This concludes the proof.
\end{proof}

\begin{lemma}
\label{lemma:unrestricted} 
Assume that $\mathcal{G}$ is a DAG and $A$ and $Y$ are two distinct vertices in $%
\mathcal{G}$ such that $A\in \an_{\mathcal{G}}\left( Y\right)$ and $\irrel%
\left( A,Y,\mathcal{G}\right) =\emptyset .$  Let $%
\mathbf{W}\equiv \de_{\mathcal{G}}^{c}\left( A\right) $ and $\mathbf{O\equiv
O}\left( A,Y,\mathcal{G}\right) $. Write $\mathbf{W}\equiv \left(
W_{1},\dots,W_{J}\right) $ and $\mathbf{O\equiv }\left( O_{1},\dots,O_{T}\right) 
$ in topological order relative to $\mathcal{G}.$
Then, under $\mathcal{M}\left( \mathcal{G}\right) ,$ the law of $Y$
given $\mathbf{W}$ is the same as the law of $Y$ given $\left( A,\mathbf{O}%
\right) $ and the law of $Y$ given $\left( A,\mathbf{O}\right) $ is
unrestricted. In particular, the conditional expectation $E\left( Y|\mathbf{W%
}\right) =E\left( Y|A,\mathbf{O}\right) $ is unrestricted. Furthermore, the
law of $Y$ given $\left( A,\mathbf{O}\right) $ and the law of $\mathbf{W} \cup \left\{ A\right\} $ are variation independent.
\end{lemma}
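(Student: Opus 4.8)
The plan is to prove the three assertions in order — the reduction of the conditional law of $Y$, its unrestrictedness, and the variation independence — after first recording the structural facts about $\mathbf{O}$ and $\mathbf{W}$ that drive everything. First I would note that, because $\irrel(A,Y,\mathcal{G})=\emptyset$ forces every vertex to be an ancestor of $Y$, the node set splits in topological order as $(W_1,\dots,W_J,A,M_1,\dots,M_K,Y)$ with $\mathbf{M}=\cn(A,Y,\mathcal{G})\setminus\{Y\}$ the mediators and $\mathbf{W}=\nd_{\mathcal{G}}(A)$. Two inclusions are then immediate from $\mathbf{O}=\pa_{\mathcal{G}}(\cn(A,Y,\mathcal{G}))\setminus\forb(A,Y,\mathcal{G})$ together with $\forb\subseteq\de_{\mathcal{G}}(A)$: every non-descendant parent of a mediator (or of $Y$) lies in $\mathbf{O}$, so $\pa_{\mathcal{G}}(V)\subseteq\{A\}\cup\mathbf{O}\cup\mathbf{M}$ for each $V\in\mathbf{M}\cup\{Y\}$; and every element of $\{A\}\cup\mathbf{O}$ is an ancestor of $Y$ whose last edge before $Y$ lands on a vertex of $\{A\}\cup\mathbf{O}\cup\mathbf{M}$.

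For the first assertion I would show that $\mathbf{W}=\nd_{\mathcal{G}}(A)$ is itself a valid adjustment set relative to $(A,Y)$: it avoids $\forb(A,Y,\mathcal{G})$ (disjoint since $\forb\subseteq\de_{\mathcal{G}}(A)$) and blocks every proper non-causal path at its first edge out of $A$, which necessarily enters a parent of $A$ lying in $\mathbf{W}$. Invoking condition \eqref{eq:cond_indep_Y} — shown by \cite{perkovic} to hold for $\mathbf{G}=\mathbf{O}$ and any adjustment set $\mathbf{B}$, here $\mathbf{B}=\mathbf{W}$ — yields $Y\ort_{\mathcal{G}}[\mathbf{W}\setminus\mathbf{O}]\mid\mathbf{O},A$. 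Hence the conditional law of $Y$ given $(\mathbf{W},A)$ depends on $(\mathbf{W},A)$ only through $(\mathbf{O},A)$, which is the intended reduction, and in particular $b_a(\mathbf{O};P)=E_P[Y\mid A=a,\mathbf{W}]$ is a function of $(\mathbf{O},A)$ alone.

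For the variation independence I would use that $\mathbf{W}\cup\{A\}$ is an ancestral set: the parents of each $W_j$ are ancestors of $W_j$, hence non-descendants of $A$, and $\pa_{\mathcal{G}}(A)\subseteq\mathbf{W}$. Thus in the factorization $p=\prod_V p(v\mid\pa_{\mathcal{G}}(v))$ the marginal law of $\mathbf{W}\cup\{A\}$ is the product of the factors for $v\in\mathbf{W}\cup\{A\}$ alone, whereas the conditional law of $(\mathbf{M},Y)$ given $(\mathbf{W},A)$ — and therefore, by the previous paragraph, the conditional law of $Y$ given $(A,\mathbf{O})$ — is a function only of the factors for $v\in\mathbf{M}\cup\{Y\}$. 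Since $\mathcal{M}(\mathcal{G})$ imposes no constraint tying these two disjoint groups of factors together, the two laws are variation independent.

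The unrestrictedness is the step I expect to be the main obstacle, and I would establish it by an explicit construction exhibiting surjectivity of the map $P\mapsto(\text{law of }Y\text{ given }(A,\mathbf{O}))$. Given a target conditional law $Q(y\mid a,\mathbf{o})$, I would choose, among the second group of factors, each mediator to be a deterministic ``accumulator'' of its parents; since $\pa_{\mathcal{G}}(M)\subseteq\{A\}\cup\mathbf{O}\cup\mathbf{M}$, every mediator becomes a deterministic function of $(A,\mathbf{O})$, and — routing each element of $\{A\}\cup\mathbf{O}$ along a causal path into a parent of $Y$ — the joint value of $\pa_{\mathcal{G}}(Y)$ recovers $(A,\mathbf{O})$ injectively. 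Setting $p(y\mid\pa_{\mathcal{G}}(Y))=Q$ on the range of this injection then realizes $Q$ as the conditional law of $Y$ given $(A,\mathbf{O})$, with the factors for $\mathbf{W}\cup\{A\}$ left free. The genuine difficulty I would have to treat with care is that these accumulators must transmit all of $(A,\mathbf{O})$ through possibly few parents of $Y$, which requires the mediator state spaces to be rich enough to carry the forwarded information; this is automatic when the relevant variables are real-valued and is sufficient for the binary constructions used later, for instance to realize $b_a(\mathbf{O})=O_1\cdots O_T$ in the proof of Theorem \ref{theo:main}.
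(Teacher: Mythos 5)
Your proposal is correct, but it reaches the crucial unrestrictedness claim by a genuinely different route from the paper's. The paper proceeds by graph surgery: it restricts to the ancestral set $\mathbf{V}\setminus\an^{c}_{\mathcal{G}}(A,Y)$ and then removes the mediators $M_{K},M_{K-1},\dots,M_{1}$ one at a time via the exogenization operation $\tau(\cdot,M_{k})$ (each $M_{k}$ has sole child $Y$ at the moment it is removed), invoking Proposition 1 and Lemmas 1 and 3 of \cite{evans-marginal} to conclude that $\mathcal{M}(\mathcal{G}_{1})=\mathcal{M}(\mathcal{G},\mathbf{V}\setminus\mathbf{M})$ with $\pa_{\mathcal{G}_{1}}(Y)=\{A\}\cup\mathbf{O}$; in the reduced Bayesian network the conditional of $Y$ given its parents is unrestricted and variation independent of the remaining factors, and the marginal-model identity transfers both properties back to $\mathcal{M}(\mathcal{G})$. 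You instead argue directly on the factorization: variation independence because the law of the ancestral set $\mathbf{W}\cup\{A\}$ and the conditional of $Y$ given $(A,\mathbf{O})$ depend on disjoint groups of factors, and unrestrictedness via an explicit construction in which each mediator deterministically copies its parent tuple so that the value of $\pa_{\mathcal{G}}(Y)$ decodes $(A,\mathbf{O})$ injectively. Your accumulator construction is essentially an inlined proof of the special case of Evans' exogenization lemma that the paper cites — both arguments hinge on the same structural fact, namely $\pa_{\mathcal{G}}(V)\subseteq\{A\}\cup\mathbf{O}\cup\mathbf{M}$ for every $V\in\mathbf{M}\cup\{Y\}$, which is also what makes $\pa_{\mathcal{G}_{1}}(Y)=\{A\}\cup\mathbf{O}$ in the paper. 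The paper's route is shorter given the cited machinery and yields the reusable model identity $\mathcal{M}(\mathcal{G}_{1})=\mathcal{M}(\mathcal{G},\mathbf{V}\setminus\mathbf{M})$ (of the same kind exploited in Lemma \ref{lemma:correct_algo_prun_indir}); yours is self-contained and makes explicit the enrichment of the mediators' state spaces that the marginal-model argument uses silently — the containment direction of Evans' lemma likewise builds the marginalized vertex as a copy of its parents, so the two proofs are on equal footing on that point, and your remark that real-valued (or sufficiently rich) mediator spaces suffice is exactly the right caveat.

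Two small repairs. First, your one-line justification that $\mathbf{W}$ is a valid adjustment set only handles paths whose first edge points into $A$; a non-causal path leaving $A$ through a child must contain a collider lying in $\de_{\mathcal{G}}(A)$, and since $\de_{\mathcal{G}}(A)$ is disjoint from $\mathbf{W}$ neither that collider nor any of its descendants is conditioned on, so such paths are blocked too. Alternatively, you can bypass adjustment-set language and Henckel et al.\ entirely and verify the d-separation $Y\ort_{\mathcal{G}}\mathbf{W}\setminus(\{A\}\cup\mathbf{O})\mid A,\mathbf{O}$ directly, which is all the paper asserts at this step. Second, you present variation independence before unrestrictedness; that ordering is legitimate because the disjoint-factor argument shows the two ranges combine freely whatever those ranges are, but it is worth saying so explicitly, since otherwise the reader may think the later deterministic construction is needed to license the earlier claim.
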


\begin{proof}[Proof of Lemma \protect\ref{lemma:unrestricted}]
That the law of $Y$ given $\mathbf{W}$ is the same as the law of $Y$ given $A,\mathbf{O}$
under any $P\in \mathcal{M}\left( \mathcal{G}\right) $ follows because $%
Y\perp \!\!\!\perp _{\mathcal{G}}\mathbf{W}\backslash \left( A,\mathbf{O}%
\right) |\left( A,\mathbf{O}\right) $.

Next, assume $\de_{\mathcal{G}}\left( A\right)\setminus \lbrace Y \rbrace \neq \emptyset$. 
Let $\mathcal{G}^{\prime}=\mathcal{G}_{\mathbf{V}%
\backslash \text{an}_{\mathcal{G}}^{c}\left( A,Y\right) }$ and
$\mathbf{V}^{\prime}= \mathbf{V}\setminus  \text{an}_{\mathcal{G}}^{c}\left( A,Y\right)  $.
Since $\mathbf{V}%
\backslash \an_{\mathcal{G}}^{c}\left( A,Y\right) $ is ancestral, $\mathcal{M}\left( 
\mathcal{G}^{\prime }\right) =\mathcal{M}\left( \mathcal{G},\mathbf{V}^{\prime} \right) $ (see Proposition 1 from \cite{evans-marginal}). 
Let $\mathbf{M\equiv }\left(
M_{1},\dots,M_{K}\right) \mathbf{\equiv }\de_{\mathcal{G}}\left( A\right)\setminus \lbrace Y \rbrace$ be topologically ordered relative
to $\mathcal{G}^{\prime}$. Now, define $\mathcal{G}_{K}\mathbf{\equiv }\tau
\left( \mathcal{G}^{\prime},M_{K}\right) $ and recursively for $%
k=K-1,K-2,\dots,1$ define $\mathcal{G}_{k}\mathbf{\equiv }\tau \left( \mathcal{%
G}_{k+1},M_{k}\right) .$  Now, since ch$_{\mathcal{G}^{\prime}}\left(
M_{K}\right) =\left\{ Y\right\} $, by Lemma 3 of \cite{evans-marginal}, $\mathcal{M}\left( \mathcal{G}
_{K}\right) =\mathcal{M}\left( \mathcal{G}^{\prime},\mathbf{V}^{\prime} \setminus \lbrace M_{K}\rbrace\right) .$ Furthermore, $$\pa_{\mathcal{G}_{K}}\left( Y\right)
= \pa_{\mathcal{G}^{\prime}}\left( Y\right) \cup \pa_{\mathcal{G}^{\prime}}\left(
M_{K}\right) .$$
Likewise, since for $k=K-1,K-2,\dots,1,$ ch$_{\mathcal{G}%
_{k+1}}\left( M_{k}\right) =\left\{ Y\right\} ,$ then we can recursively
show that for $k=K-1,K-2,\dots,1$, $\mathcal{M}\left( \mathcal{G}_{k}\right) =%
\mathcal{M}\left( \mathcal{G}_{k+1},\mathbf{V}^{\prime}\backslash \left\{ M_{K},M_{K-1},\dots,M_{k}\right\} \right) $
and pa$_{\mathcal{G}_{k}}\left( Y\right) =$pa$_{\mathcal{G}^{\prime}}\left( Y\right)
\cup \left[ \cup _{l=k}^{K}\text{pa}_{\mathcal{G}^{\prime}}\left( M_{l}\right) \right]
.$ In particular, $\mathcal{M}\left( \mathcal{G}_{1}\right) =\mathcal{M}%
\left( \mathcal{G}_{2},\mathbf{V}^{\prime}\backslash \mathbf{M} \right) $ and pa$_{\mathcal{G}%
_{1}}\left( Y\right) =$pa$_{\mathcal{G}}\left( Y\right) \cup \left[ \cup
_{l=1}^{K}\text{pa}_{\mathcal{G}}\left( M_{l}\right) \right] .$ Applying
repeatedly the property $\left( \ref{eq:p1}\right) $ we arrive at $\mathcal{M%
}\left( \mathcal{G}_{1}\right) =\mathcal{M}\left( \mathcal{G},\mathbf{V}^{\prime}%
\backslash \mathbf{M} \right) .$ But $\left( A,\mathbf{O}%
\right) = \pa_{\mathcal{G}_{1}}(Y)$ and in $\mathcal{M}\left( \mathcal{%
G}_{1}\right) $ the law of $Y$ given pa$_{\mathcal{G}_{1}}\left( Y\right) $
is unrestricted. This implies that the law of $Y$ given $\left( A,\mathbf{O}%
\right) $ is unrestricted under $\mathcal{M}\left( \mathcal{G}_{1}\right) $. Then, $\mathcal{M}\left( \mathcal{G}_{1}\right) =\mathcal{M}%
\left( \mathcal{G},\mathbf{V}^{\prime}\backslash\mathbf{M}\right) $ implies
that the law of $Y$ given $\left( A,\mathbf{O}\right) $ is unrestricted
under $\mathcal{M}\left( \mathcal{G}\right).$ Finally, in model $\mathcal{M}%
\left( \mathcal{G}_{1}\right) $ (and consequently in model $\mathcal{M}%
\left( \mathcal{G}\right) )$ the law of $
\de_{\mathcal{G}}^{c}\left( A\right)
\cup \left\{ A\right\} $ and the law of $Y$ given pa$_{\mathcal{G}%
_{1}}\left( Y\right) $ are variation independent, and therefore so are the
laws of de$_{\mathcal{G}}^{c}\left( A\right) \cup \left\{ A\right\} $ and of 
$Y$ given $\left( A,\mathbf{O}\right) $ under model $\mathcal{M}%
\left( \mathcal{G}\right)$.

If  $\de_{\mathcal{G}}\left( A\right)\setminus \lbrace Y \rbrace = \emptyset$ then 
$\left( A,\mathbf{O}\right) =\pa_{\mathcal{G}}(Y)$ and the result follows immediately arguing as above.
\end{proof}

\medskip

\begin{lemma}
\label{lemma:non_desc} Assume that $\mathcal{G}$ is a DAG with vertex set $%
\mathbf{V}$ and $A$ and $Y$ are two distinct vertices in $\mathcal{G}$ such
that $A\in \an_{\mathcal{G}}\left( Y\right) $. Let $\mathbf{W}\equiv \de_{%
\mathcal{G}}^{c}\left( A\right) $ and $\mathbf{O\equiv O}\left( A,Y,\mathcal{%
G}\right) $. Write $\mathbf{W}\equiv \left( W_{1},\dots ,W_{J}\right) $ and $%
\mathbf{O\equiv }\left( O_{1},\dots ,O_{T}\right) $ in topological order
relative to $\mathcal{G}.$ Assume $\mathbf{O\backslash }O_{T}\mathbf{\subset 
}\pa_{\mathcal{G}}(O_{T})$ and $\irrel\left( A,Y,\mathcal{G}\right)
=\emptyset .$ Let 
\[
\mathbf{I}_{j}\equiv \left[ \pa_{\mathcal{G}}\left( W_{j}\right) \cup
\left\{ W_{j}\right\} \right] \cap \pa_{\mathcal{G}}\left( W_{j+1}\right) .
\]%
Assume that $J>1$ and for some $j\in \left\{ 1,\dots ,J-1\right\} $ it holds
that for $k=j+1,\dots ,J-1,$ 
\begin{equation}
\mathbf{O\backslash I}_{k}\perp \!\!\!\perp _{\mathcal{G}}\left[ \pa_{%
\mathcal{G}}\left( W_{k}\right) \cup W_{k}\right] \bigtriangleup \pa_{%
\mathcal{G}}\left( W_{k+1}\right) |\mathbf{I}_{k}  \label{eq:ind1}
\end{equation}%
and 
\begin{equation}
\mathbf{O\backslash I}_{j}\not\ort_{\mathcal{G}}\left[ \pa_{\mathcal{G}%
}\left( W_{j}\right) \cup W_{j}\right] \bigtriangleup \pa_{\mathcal{G}%
}\left( W_{j+1}\right) |\mathbf{I}_{j}  \label{eq:ind2}
\end{equation}%
where the assertion $\left( \ref{eq:ind1}\right) $ is inexistant if $j=J-1$.
Then there exists $P^{\ast }\in \mathcal{M(G)}$ such that 
\begin{equation}
E_{P^{\ast }}\left[ b_{a}\left( \mathbf{O};P^{\ast }\right) |W_{j},\pa_{%
\mathcal{G}}\left( W_{j}\right) \right] -E_{P^{\ast }}\left[ b_{a}\left( 
\mathbf{O};P^{\ast }\right) |\pa_{\mathcal{G}}\left( W_{j+1}\right) \right] 
\label{eq:dif_non_desc}
\end{equation}%
is a non-constant function of $W_{j}$.
\end{lemma}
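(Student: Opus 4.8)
The plan is to exhibit a single law $P^{\ast}\in\mathcal{M}(\mathcal{G})$ together with a cleverly chosen outcome regression and then to compute the difference \eqref{eq:dif_non_desc} directly. The enabling device is Lemma \ref{lemma:unrestricted}: it guarantees that the law of $\mathbf{W}\cup\{A\}$ and the conditional law of $Y$ given $(A,\mathbf{O})$ are variation independent and that $b_{a}(\mathbf{O};P^{\ast})=E_{P^{\ast}}[Y\mid A=a,\mathbf{O}]$ may be taken to be \emph{any} function of $\mathbf{O}$. Since every conditioning set in \eqref{eq:dif_non_desc} is contained in $\mathbf{W}=\de_{\mathcal{G}}^{c}(A)$ and $\mathbf{O}\subseteq\mathbf{W}$, the whole computation lives inside the law of $\mathbf{W}$; the laws of $A$ and the mediators may be chosen arbitrarily subject to the Markov property and are irrelevant.

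First I would extract the structural consequences of the hypotheses. Write $S_{j}\equiv\pa_{\mathcal{G}}(W_{j})\cup\{W_{j}\}$ and $P_{j+1}\equiv\pa_{\mathcal{G}}(W_{j+1})$. I claim the inclusion \eqref{eq_new_assertion} holds for every $k\in\{j+1,\dots,J-1\}$ and fails at $k=j$. Indeed, if \eqref{eq_new_assertion} failed at some index in $\{j+1,\dots,J-1\}$, then taking the largest such index and applying part 4 of Lemma \ref{lemma:parents_nondesc} would contradict the d-separation \eqref{eq:ind1}; and if \eqref{eq_new_assertion} held at $j$ as well, part 3 of Lemma \ref{lemma:parents_nondesc} (with $j^{\ast}=j$) would force d-separation at $j$, contradicting \eqref{eq:ind2}. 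Consequently part 3 of Lemma \ref{lemma:parents_nondesc} (with $j^{\ast}=j+1$) yields through \eqref{eq:padres_W_2} the directed chain $W_{j}\rightarrow W_{j+1}\rightarrow\cdots\rightarrow W_{J}=O_{T}$, so in particular $W_{j}\in\pa_{\mathcal{G}}(W_{j+1})$, hence $W_{j}\in\mathbf{I}_{j}$; and the failure of \eqref{eq_new_assertion} at $j$ produces a vertex $W_{l}\in\pa_{\mathcal{G}}(W_{j+1})\setminus S_{j}$ with $l<j$. Exactly as in the proof of part 4, the path $W_{l}\rightarrow W_{j+1}\rightarrow\cdots\rightarrow W_{J}=O_{T}$ is open given $\mathbf{I}_{j}$, with $O_{T}\in\mathbf{O}\setminus\mathbf{I}_{j}$ and $W_{l}\in S_{j}\bigtriangleup P_{j+1}$.

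Next I would take $P^{\ast}$ to be a linear Gaussian structural equation model that is Markov to $\mathcal{G}$ but in which every structural coefficient vanishes except on the edges $W_{l}\rightarrow W_{j+1}$ (coefficient $\beta\neq0$), $W_{j}\rightarrow W_{j+1}$ (coefficient $\gamma\neq0$) and $W_{j+1}\rightarrow\cdots\rightarrow W_{J}$ (unit coefficients), with mutually independent mean-zero Gaussian errors; such a $P^{\ast}$ lies in the linear submodel of $\mathcal{M}(\mathcal{G})$, and is Markov to the sparse subgraph containing only these edges. Under $P^{\ast}$ one has $O_{T}=\gamma W_{j}+\beta W_{l}+\eta$ with $\eta$ independent of $(W_{j},W_{l})$, and because in the sparse subgraph the only link between $W_{l}$ and $S_{j}$ runs through the unconditioned collider $W_{j+1}$, it follows that $W_{l}\ort_{P^{\ast}}S_{j}$. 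Using Lemma \ref{lemma:unrestricted} I would set $b_{a}(\mathbf{O};P^{\ast})=O_{T}^{2}$; the nonlinearity is the crucial point. A direct moment computation gives $E_{P^{\ast}}[O_{T}^{2}\mid S_{j}]=\gamma^{2}W_{j}^{2}+\beta^{2}E_{P^{\ast}}[W_{l}^{2}]+E_{P^{\ast}}[\eta^{2}]$ and $E_{P^{\ast}}[O_{T}^{2}\mid P_{j+1}]=(\gamma W_{j}+\beta W_{l})^{2}+E_{P^{\ast}}[\eta^{2}]$, so that \eqref{eq:dif_non_desc} equals $\beta^{2}E_{P^{\ast}}[W_{l}^{2}]-2\gamma\beta\,W_{j}W_{l}-\beta^{2}W_{l}^{2}$. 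The cross term $-2\gamma\beta\,W_{j}W_{l}$ is nonzero and exhibits this difference as a non-constant function of $W_{j}$, which is what we want.

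The main obstacle is the first step rather than the moment computation: one must faithfully translate the purely d-separation hypotheses \eqref{eq:ind1} and \eqref{eq:ind2} into the explicit chain-plus-extra-parent picture via Lemma \ref{lemma:parents_nondesc}, and one must argue that the edge-sparse model genuinely kills every competing path, so that the independence $W_{l}\ort_{P^{\ast}}S_{j}$ holds exactly (this follows from Markovianity with respect to the sparse subgraph, together with the fact that $S_{j}$ contains no descendant of the collider $W_{j+1}$). I would also stress why the naive linear choice $b_{a}(\mathbf{O};P^{\ast})=O_{T}$ fails: there the $W_{j}$-contributions to the two conditional means coincide and cancel, leaving a difference that is non-constant only in $W_{l}$. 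It is precisely the quadratic $b_{a}$ that converts the conditional covariance between $W_{j}$ and $W_{l}$ carried by $P_{j+1}$ into a non-cancelling dependence on $W_{j}$.
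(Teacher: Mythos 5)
Your route is genuinely different from the paper's and, on the range where it applies, it is cleaner. The paper never converts the d-connection \eqref{eq:ind2} into a parent-inclusion failure: it first notes only that \eqref{eq:ind1} at $k$ forces $W_{k}\in \pa_{\mathcal{G}}(W_{k+1})$, then splits on whether $W_{j}\in \pa_{\mathcal{G}}(W_{j+1})$ and on where the d-connection witness of \eqref{eq:ind2} lies (three cases), building open paths by hand and invoking Lemma \ref{lemma:unrestricted} to realize tailored choices such as $b_{a}(\mathbf{O};P^{\ast})=O_{f,l}^{j}$ or $h^{\ast}\left[\mathbf{O}\left(W_{j},O_{f,1}^{j},\mathcal{G}\right)\right]O_{f,1}^{j}$ together with bespoke conditional means. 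You instead use parts 3 and 4 of Lemma \ref{lemma:parents_nondesc} to upgrade the hypotheses to a concrete edge-level picture --- \eqref{eq_new_assertion} holds for $k\in\{j+1,\dots,J-1\}$ and fails at $k=j$, yielding $W_{l}\rightarrow W_{j+1}\leftarrow W_{j}$ with $W_{l}\notin \pa_{\mathcal{G}}(W_{j})\cup\{W_{j}\}$ and the chain $W_{j+1}\rightarrow\cdots\rightarrow W_{J}=O_{T}$ from \eqref{eq:padres_W_2} --- and then a single sparse linear-Gaussian law with the quadratic choice $b_{a}(\mathbf{O};P^{\ast})=O_{T}^{2}$ finishes the job by an explicit moment computation. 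That computation is correct (the sparse SEM is Markov to $\mathcal{G}$, $\eta\ort\pa_{\mathcal{G}}(W_{j+1})$ and $W_{l}\ort\left[\pa_{\mathcal{G}}(W_{j})\cup\{W_{j}\}\right]$ hold exactly in your model, and the cross term $-2\gamma\beta W_{j}W_{l}$ is genuinely non-constant in $W_{j}$), and your remark on why a linear $b_{a}$ cancels is apt. A by-product of your reduction is that $W_{j}\in\pa_{\mathcal{G}}(W_{j+1})$ always holds under the full hypotheses, so the paper's first case ($W_{j}\notin\pa_{\mathcal{G}}(W_{j+1})$) is actually vacuous --- the paper does not need this observation because it argues directly from \eqref{eq:ind2}.

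However, your reduction has a genuine gap at $j=1$. Part 3 of Lemma \ref{lemma:parents_nondesc} requires $1<j^{\ast}$, so you cannot take $j^{\ast}=j$ there, and your dichotomy is in fact false at $j=1$: since $\pa_{\mathcal{G}}(W_{1})=\emptyset$ and every parent of $W_{2}$ is a non-descendant of $A$ preceding $W_{2}$ in the topological order, $\pa_{\mathcal{G}}(W_{2})\subseteq\{W_{1}\}$, so \eqref{eq_new_assertion} holds vacuously at $j=1$ and there is no $W_{l}$ with $l<1$ for your construction to use. The lemma survives because the hypotheses are unsatisfiable at $j=1$, but you must prove that: \eqref{eq:ind2} at $j=1$ forces $\pa_{\mathcal{G}}(W_{2})=\emptyset$ (otherwise the symmetric difference is empty) and reduces to $\mathbf{O}\not\ort_{\mathcal{G}}W_{1}$; meanwhile $\irrel(A,Y,\mathcal{G})=\emptyset$ and $\mathbf{O}\backslash O_{T}\subset\pa_{\mathcal{G}}(O_{T})$ force a directed path from $W_{1}$ to $O_{T}$ lying inside $\mathbf{W}$, whose first edge $W_{1}\rightarrow W_{m^{\ast}}$ has $m^{\ast}\geq 3$; the directed path $W_{1}\rightarrow W_{m^{\ast}}\rightarrow\cdots\rightarrow W_{J}$ is then open given $\mathbf{I}_{m^{\ast}-1}\subseteq\pa_{\mathcal{G}}(W_{m^{\ast}})$, while minimality of $m^{\ast}$ puts $W_{1}$ in $\left[\pa_{\mathcal{G}}(W_{m^{\ast}-1})\cup W_{m^{\ast}-1}\right]\bigtriangleup\pa_{\mathcal{G}}(W_{m^{\ast}})$, violating \eqref{eq:ind1} at $k=m^{\ast}-1$ (for $J=2$ one gets $W_{1}\in\pa_{\mathcal{G}}(W_{2})$ directly, killing \eqref{eq:ind2}). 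Either supply this vacuity argument or treat $j=1$ separately. Finally, a small citation slip: for $j=J-1$ your appeal to part 3 with $j^{\ast}=j+1=J$ is out of its stated range $j^{\ast}\leq J-1$; the needed fact $W_{J-1}\in\pa_{\mathcal{G}}(W_{J})$ is exactly part 2 of Lemma \ref{lemma:parents_nondesc}, after which the chain is the single edge $W_{J-1}\rightarrow W_{J}=O_{T}$ and the rest of your argument goes through unchanged.
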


\begin{proof}[Proof of Lemma \protect\ref{lemma:non_desc}]

First we show that if $\left( \ref{eq:ind1}\right) $ holds for some $k\in
\left\{ 1,...,J-1\right\} ,$ then for such $k\,\ $it holds that $W_{k}\in \pa%
_{\mathcal{G}}\left( W_{k+1}\right) .$ Assume for the sake of contradiction
that $W_{k}\notin \pa_{\mathcal{G}}(W_{k+1})$. Since $\irrel(A,Y,\mathcal{G}%
)=\emptyset $, there exists a directed path between $W_{k}$ and $Y$ that
does not intersect $A$. Such a path must intersect $\mathbf{O}$. Since $%
\mathbf{O}\subset \an_{\mathcal{G}}(O_{T})$ we conclude that $W_{k}\in \an_{%
\mathcal{G}}(O_{T})$. Note also that $\mathbf{I}_{k}\cap \de_{\mathcal{G}%
}(W_{k})=\emptyset $. Then 
\begin{equation}
O_{T}\not\ort_{\mathcal{G}}W_{k}\mid \mathbf{I}_{k}.  \label{eq:zk_O}
\end{equation}%
Now $O_{T}\in \mathbf{O}\setminus \mathbf{I}_{k}$ because $O_{T}=W_{J}$.
Since $W_{k}\notin \pa_{\mathcal{G}}(W_{k+1})$ then 
\[
W_{k}\in \left[ \pa_{\mathcal{G}}\left( W_{k}\right) \cup W_{k}\right]
\bigtriangleup \pa_{\mathcal{G}}(W_{k+1}),
\]%
which together with $\left( \ref{eq:zk_O}\right) $ implies 
\begin{equation}
\mathbf{O\backslash I}_{k}\not\ort_{\mathcal{G}}\left[ \pa_{\mathcal{G}%
}\left( W_{k}\right) \cup W_{k}\right] \bigtriangleup \pa_{\mathcal{G}%
}\left( W_{k+1}\right) |\mathbf{I}_{k},  \nonumber
\end{equation}%
The last display contradicts $\left( \ref{eq:ind1}\right) $, thus proving
that $W_{k}\in \pa_{\mathcal{G}}(W_{k+1})$.

We will show that for some $P^{\ast }\in \mathcal{M(G)}$, $\left( \ref%
{eq:dif_non_desc}\right) $ is a non-constant function of $W_{j}$ by
considering separately the cases $W_{j}\not\in $pa$_{\mathcal{G}}\left(
W_{j+1}\right) $ and $W_{j}\in $pa$_{\mathcal{G}}\left( W_{j+1}\right) .$

Suppose first that $W_{j}\not\in $pa$_{\mathcal{G}}\left( W_{j+1}\right) .$
Then, since $E_{P}\left[ b_{a}\left( \mathbf{O};P\right) |\text{pa}_{%
\mathcal{G}}\left( W_{j+1}\right) \right] $ does not depend on $W_{j}$ for
all $P\in \mathcal{M(G)}$, it suffices to prove that there exists $P^{\ast
}\in \mathcal{M(G)}$ such that 
\[
E_{P^{\ast }}\left[ b_{a}\left( \mathbf{O};P^{\ast }\right) |W_{j},\pa_{%
\mathcal{G}}\left( W_{j}\right) \right] 
\]%
is a non-constant function of $W_{j}$. To show this, first note that since $%
\irrel\left( A,Y,\mathcal{G}\right) =\emptyset $ there exists a directed
path between $W_{j}$ and $Y$ that does not intersect $A$. Such a path must
intersect $\mathbf{O}$. Since $\mathbf{O\backslash }O_{T}\mathbf{\subset }\pa%
_{\mathcal{G}}(O_{T}),$ then $W_{j}\in \an_{\mathcal{G}}(O_{T})$.
Consequently,  
\begin{equation}
W_{j}\not\ort_{\mathcal{G}}O_{T}\mid \pa_{\mathcal{G}}\left( W_{j}\right) .
\label{eq:d-sep-lemma16}
\end{equation}%
Thus, there exists a law $P^{\ast }\in \mathcal{M}\left( \mathcal{G}\right) $
such that under $P^{\ast },$ $W_{j}\not\ort O_{T}\mid \pa_{\mathcal{G}%
}\left( W_{j}\right) .$ In particular, there exists a function $b^{\ast
}\left( O_{T}\right) $ such that $E_{P^{\ast }}\left[ b^{\ast }\left(
O_{T}\right) |W_{j},\text{pa}_{\mathcal{G}}\left( W_{j}\right) \right] $ is
a non-constant function of $W_{j}.$  Lemma \ref{lemma:unrestricted} implies
that we can choose the law $P^{\ast }$ so that $b\left( \mathbf{O;}P^{\ast
}\right) =b^{\ast }\left( O_{T}\right) $ thus showing that for such law $%
P^{\ast },$ $E_{P^{\ast }}\left[ b\left( \mathbf{O};P^{\ast }\right) |W_{j},%
\text{pa}_{\mathcal{G}}\left( W_{j}\right) \right] =E_{P^{\ast }}\left[
b^{\ast }\left( O_{T}\right) |W_{j},\text{pa}_{\mathcal{G}}\left(
W_{j}\right) \right] $ is a non-constant function of $W_{j}$, and
consequently $E_{P^{\ast }}\left[ b\left( \mathbf{O};P^{\ast }\right) |W_{j},%
\text{pa}_{\mathcal{G}}\left( W_{j}\right) \right] -E_{P^{\ast }}\left[
b\left( \mathbf{O};P^{\ast }\right) |\text{pa}_{\mathcal{G}}\left(
W_{j+1}\right) \right] ,$ depends on $W_{j}.$

Suppose next that $W_{j}\in $pa$_{\mathcal{G}}\left( W_{j+1}\right) .$ For
each $i=1,\dots ,J-1,$ define 
\[
\mathbf{O}_{f}^{i}\equiv \left( W_{i+1},\dots ,W_{J}\right) \cap \mathbf{O}
\]
and 
\[
\mathbf{O}_{p}^{i}\equiv \mathbf{O\backslash O}_{f}^{i}
\]
The vertex set $\mathbf{O}_{f}^{i}$ $\ $is not empty because $W_{J}=O_{T}.$
Write $\mathbf{O}_{f}^{i}=\left( O_{f,1}^{i},\dots ,O_{f,h}^{i}\right) $ and 
$\mathbf{O}_{f}^{i}=\left( O_{p,1}^{i},\dots ,O_{p,m}^{i}\right) $ in
topological order relative to $\mathcal{G}$. \ The validity of $\left( \ref%
{eq:ind2}\right) $ is equivalent to the existence of $O\in \mathbf{%
O\backslash I}_{j}$ and of $W\in \left[ \pa_{\mathcal{G}}\left( W_{j}\right)
\cup W_{j}\right] \bigtriangleup \pa_{\mathcal{G}}\left( W_{j+1}\right) $
such that 
\begin{equation}
O\not\ort_{\mathcal{G}}W|\mathbf{I}_{j}  \label{eq:no_sep}
\end{equation}%
We will next show that if $W$ is in $\pa_{\mathcal{G}}\left( W_{j+1}\right)
\backslash \left[ W_{j}\cup \pa_{\mathcal{G}}\left( W_{j}\right) \right] ,$
then $\left( \ref{eq:no_sep}\right) $ holds for $O=O_{f,1}^{1}.$ So we will
consider separately the following three cases
\[
\begin{tabular}{lll}
Case & $O$ in & $W$ in \\ \hline  
1 & $\left\{ O_{f,1}^{1}\right\} $ & $\pa_{\mathcal{G}}\left( W_{j+1}\right)
\backslash \left[ W_{j}\cup \pa_{\mathcal{G}}\left( W_{j}\right) \right] $
\\ 
2 & $\mathbf{O}_{f}^{j}$ & $\pa_{\mathcal{G}}\left( W_{j}\right) \backslash %
\pa_{\mathcal{G}}\left( W_{j+1}\right) $ \\ 
3 & $\mathbf{O}_{p}^{j}\backslash \mathbf{I}_{j}$ & $\pa_{\mathcal{G}}\left(
W_{j}\right) \backslash \pa_{\mathcal{G}}\left( W_{j+1}\right) $%
\end{tabular}%
\]%
Notice that $\mathbf{O}_{f}^{j}\subset \mathbf{O}\backslash \mathbf{I}_{j}$
and that $\pa_{\mathcal{G}}\left( W_{j}\right) \backslash \pa_{\mathcal{G}%
}\left( W_{j+1}\right) =\left[ W_{j}\cup \pa_{\mathcal{G}}\left(
W_{j}\right) \right] \backslash \pa_{\mathcal{G}}\left( W_{j+1}\right) $
because we have assumed that $W_{j}\in $pa$_{\mathcal{G}}\left(
W_{j+1}\right) .$

In the subsequent analysis we will use the fact that $W_{j}$ and $W_{j+1}$
belong to an$_{\mathcal{G}}\left( O_{f,1}^{j}\right) $.  To see why this is
true, first note that if $j=J-1,$ then $W_{j}=W_{J-1}\in $pa$_{\mathcal{G}%
}\left( W_{j+1}\right) =$pa$_{\mathcal{G}}\left( W_{J}\right) =$pa$_{%
\mathcal{G}}\left( O_{T}\right) $ by assumption$.$ On the other hand, $%
\mathbf{O}_{f}^{J-1}=O_{f,1}^{J-1}=O_{T}.$ Then, $W_{j}=W_{J-1}$ and $%
W_{j+1}=W_{J}$ belong to an$_{\mathcal{G}}\left( O_{T}\right) =$an$_{%
\mathcal{G}}\left( O_{f,1}^{J-1}\right) =$an$_{\mathcal{G}}\left(
O_{f,1}^{j}\right) .$ If $j<J-1,$ then $W_{j}$ and $W_{j+1}$ also belong to
an$_{\mathcal{G}}\left( O_{f,1}^{j}\right) $ because we have already shown
that $W_{k}\in $pa$_{\mathcal{G}}\left( W_{k+1}\right) $ for all $%
k=j+1,\dots ,J-1$ and by definition $O_{f,1}^{j}\in \left\{
W_{j+1},...,W_{J}\right\} .$

Consider the case (1). The vertex $W$  belongs to $\mathbf{O}\left(
W_{j},O_{f,1}^{j},\mathcal{G}\right) $ by virtue of being an element of the
parent set of the child $W_{j+1}$ of $W_{j}$ and the facts that (i) $W\in \de%
_{\mathcal{G}}^{c}\left( W_{j}\right) $ because it belongs to $\pa_{\mathcal{%
G}}\left( W_{j+1}\right) \setminus \{W_{j}\}$ and (ii) $W_{j}$ and $W_{j+1}$
belong to an$_{\mathcal{G}}\left( O_{f,1}^{j}\right) .$ Note that this
implies that $W\in $an$_{\mathcal{G}}\left( O_{f,1}^{j}\right) $ and
consequently that $\left( \ref{eq:no_sep}\right) $ holds with $O=O_{f,1}^{j}$
because the path $W\rightarrow W_{j+1}\rightarrow W_{j+2}\rightarrow
...\rightarrow O_{f,1}^{j}$ is open given $\mathbf{I}_{j}$ since $\mathbf{I}%
_{j}$ does not include any of the nodes in the set $\left\{
W_{j+1},W_{j+2},...,O_{f,1}^{j}\right\} .$  Now, Lemma \ref%
{lemma:unrestricted} implies that $E_{P}\left[ O_{f,1}^{j}|W_{j},\mathbf{O}%
\left( W_{j},O_{f,1}^{j};\mathcal{G}\right) \right] $ is unrestricted in
model $\mathcal{M}\left( \mathcal{G}\right) $ so we can choose $P^{\ast }$
such that $E_{P^{\ast }}\left[ O_{f,1}^{j}|W_{j},\mathbf{O}\left(
W_{j},O_{f,1}^{j};\mathcal{G}\right) \right] =W_{j}W.$ We can also choose
such $P^{\ast }$ so as to also satisfy that $b\left( \mathbf{O;}P^{\ast
}\right) =O_{f,1}^{j}$ where $\mathbf{O\equiv O}\left( A,Y;\mathcal{G}%
\right) .$ This can be done because, as established in Lemma \ref%
{lemma:unrestricted}, the conditional law of $Y$ given $\left( A,\mathbf{O}%
\right) $ is variation independent with the joint law of $\mathbf{W,}$ and
in particular, with the joint law of the subvector $\left( O_{f,1}^{j},W_{j},%
\mathbf{O}_{W_{j}}\right) $ of $\mathbf{W}$. Then, 
\begin{eqnarray*}
E_{P^{\ast }}\left[ b\left( \mathbf{O;}P^{\ast }\right) |\text{pa}_{\mathcal{%
G}}\left( W_{j+1}\right) \right]  &=&E_{P^{\ast }}\left[ O_{f,1}^{j}|\text{pa%
}_{\mathcal{G}}\left( W_{j+1}\right) \right]  \\
&=&E_{P^{\ast }}\left\{ \left. E_{P}\left[ O_{f,1}^{j}|W_{j},\mathbf{O}%
_{W_{j}},\text{pa}_{\mathcal{G}}\left( W_{j+1}\right) \right] \right\vert 
\text{pa}_{\mathcal{G}}\left( W_{j+1}\right) \right\}  \\
&=&E_{P^{\ast }}\left\{ \left. E_{P^{\ast }}\left[ O_{f,1}^{j}|W_{j},\mathbf{%
O}_{W_{j}}\right] \right\vert \text{pa}_{\mathcal{G}}\left( W_{j+1}\right)
\right\}  \\
&=&E_{P^{\ast }}\left\{ \left. W_{j}W\right\vert \text{pa}_{\mathcal{G}%
}\left( W_{j+1}\right) \right\}  \\
&=&W_{j}W
\end{eqnarray*}%
Consequently, $E_{P^{\ast }}\left[ b\left( \mathbf{O};P^{\ast }\right)
|W_{j},\text{pa}_{\mathcal{G}}\left( W_{j}\right) \right] -E_{P^{\ast }}%
\left[ b\left( \mathbf{O};P\right) |\text{pa}_{\mathcal{G}}\left(
W_{j+1}\right) \right] =E_{P^{\ast }}\left[ b\left( \mathbf{O};P^{\ast
}\right) |W_{j},\text{pa}_{\mathcal{G}}\left( W_{j}\right) \right] -W_{j}W$
which depends on $W_{j}$ because $W\in $pa$_{\mathcal{G}}\left(
W_{j+1}\right) \backslash \left[ W_{j}\cup \pa_{\mathcal{G}}\left(
W_{j}\right) \right] .$%

Consider now case (2). Let $W\in  \pa_{\mathcal{G%
}}\left( W_{j}\right) \backslash \pa_{\mathcal{G}}\left(
W_{j+1}\right) $ and $O_{f,l}^{j}\in \mathbf{O\backslash I}_{j}$ such that 
\[
O_{f,l}^{j}\not\ort_{\mathcal{G}}W|\mathbf{I}_{j}.
\]%
Let $\tau $ denote a path between $%
O_{f,l}^{j}$ and $W$ that is open given $\mathbf{I}_{j}.$ In $\tau $
the edge with one endpoint equal to $O_{f,l}^{j}$ must point into $%
O_{f,l}^{j}.$ Suppose this was not the case, then $\tau $ would intersect a
collider, say $C,$ that is a descendant of $O_{f,l}^{j}.$ However, by the
definitions of $\mathbf{I}_{j}$ and $O_{f,l}^{j}$ we know that $\mathbf{I}%
_{j}\cap $de$_{\mathcal{G}}\left( O_{f,l}^{j}\right) =\emptyset .$
Consequently $C$ cannot have a descendant in $\mathbf{I}_{j}.\,\ $So, the
path $\tau $ would be blocked at $C$ given $\mathbf{I}_{j}$ contradicting
that $\tau $ is open given $\mathbf{I}_{j}$. Because the path $\tau $ is
open, then it must intersect an element of the set $\mathbf{O}\left(
W_{j},O_{f,l}^{j},\mathcal{G}\right) ,$ and consequently, $\mathbf{O}\left(
W_{j},O_{f,l}^{j},\mathcal{G}\right) \not\ort_{\mathcal{G}}W|%
\mathbf{I}_{j}.$ Now, $W\in $pa$\left( W_{j}\right) \backslash 
\mathbf{I}_{j}$ because $\left[ \{W_{j}\}\cup \pa_{\mathcal{G}}\left(
W_{j}\right) \right] \backslash \pa_{\mathcal{G}}\left( W_{j+1}\right) =$pa$%
_{\mathcal{G}}\left( W_{j}\right) \backslash \mathbf{I}_{j}$ since we have
assumed that $W_{j}\in \pa_{\mathcal{G}}\left( W_{j+1}\right) .$ We then
conclude that 
\begin{equation}
\mathbf{O}\left( W_{j},O_{f,l}^{j},\mathcal{G}\right) \not\ort_{\mathcal{G}%
}\left[ \text{pa}_{\mathcal{G}}\left( W_{j}\right) \backslash \mathbf{I}_{j}%
\right] |\mathbf{I}_{j}.  \label{eq:main1}
\end{equation}%
So, there exists $P^{\ast }\in \mathcal{M}\left( \mathcal{G}\right) $ such
that  
\begin{equation}
\mathbf{O}\left( W_{j},O_{f,l}^{j},\mathcal{G}\right) \not\ort\left[ \text{%
pa}_{\mathcal{G}}\left( W_{j}\right) \backslash \mathbf{I}_{j}\right] |%
\mathbf{I}_{j}\text{ under }P^{\ast }.  \label{eq:P_estrella}
\end{equation}%
Now, $\left( \ref{eq:P_estrella}\right) $ implies that there exists $h^{\ast
}\left[ \mathbf{O}\left( W_{j},O_{f,l}^{j},\mathcal{G}\right) \right] $ such
that $E_{P^{\ast }}\left\{ \left. h^{\ast }\left[ \mathbf{O}\left(
W_{j},O_{f,l}^{j},\mathcal{G}\right) \right] \right\vert \left[ \text{pa}%
\left( W_{j}\right) \backslash \mathbf{I}_{j}\right] ,\mathbf{I}_{j}\right\} 
$ is a non-constant function of pa$\left( W_{j}\right) \backslash \mathbf{I}%
_{j}.$ Then, since $\left[ \text{pa}\left( W_{j}\right) \backslash \mathbf{I}%
_{j}\right] \cup \mathbf{I}_{j}=$pa$_{\mathcal{G}}\left( W_{j}\right) \cup
W_{j}$ we conclude that 
$$
E_{P^{\ast }}\left\{ \left. h^{\ast }\left[ \mathbf{%
O}\left( W_{j},O_{f,l}^{j},\mathcal{G}\right) \right] \right\vert \text{pa}_{%
\mathcal{G}}\left( W_{j}\right) ,W_{j}\right\} 
$$
is a non-constant function
of pa$\left( W_{j}\right) \backslash \mathbf{I}_{j}.$ Furthermore, by the
Local Markov property, 
$$
E_{P^{\ast }}\left\{ \left. h^{\ast }\left[ \mathbf{O%
}\left( W_{j},O_{f,l}^{j},\mathcal{G}\right) \right] \right\vert \text{pa}_{%
\mathcal{G}}\left( W_{j}\right) ,W_{j}\right\} 
$$
does not depend on $W_{j}.$
So, we conclude that $E_{P^{\ast }}\left\{ \left. h^{\ast }\left[ \mathbf{O}%
\left( W_{j},O_{f,l}^{j},\mathcal{G}\right) \right] \right\vert \text{pa}_{%
\mathcal{G}}\left( W_{j}\right) ,W_{j}\right\} =g\left[ \pa_{\mathcal{G}%
}\left( W_{j}\right) \right] $ where $g\left[ \pa_{\mathcal{G}}\left(
W_{j}\right) \right] $ is a non-constant function of  pa$\left( W_{j}\right)
\backslash \mathbf{I}_{j}.$

Now, by the variation independence of the conditional law of $Y$ given $%
\left( A,\mathbf{O}\right) $ with the joint law of $\mathbf{W}$, which holds
as established in Lemma \ref{lemma:unrestricted}$\mathbf{,}$ we can take $%
P^{\ast }$ to also satisfy  $b\left( \mathbf{O};P^{\ast }\right)
=O_{f,l}^{j}.$ Furthermore, we can take $P^{\ast }$ to additionally satisfy that $%
E_{P^{\ast }}\left[ \left. O_{f,l}^{j}\right\vert W_{j},\mathbf{O}\left(
W_{j},O_{f,l}^{j},\mathcal{G}\right) \right] =W_{j}h^{\ast }\left[ \mathbf{O}%
\left( W_{j},O_{f,l}^{j},\mathcal{G}\right) \right] $ because, again by
Lemma \ref{lemma:unrestricted},  the conditional law of $O_{f,l}^{j}$ given $%
W_{j},\mathbf{O}\left( W_{j},O_{f,l}^{j},\mathcal{G}\right) $ is variation
independent with the law of de$_{\mathcal{G}}^{c}\left( W_{j}\right) \cup
W_{j},$ and in particular, with the joint law of  law of $\mathbf{O}\left(
W_{j},O_{f,l}^{j},\mathcal{G}\right) $ and $\left[  W_{j} \cup \text{pa}_{\mathcal{G}}\left( W_{j}\right) \right]
.$ For such $P^{\ast }$ we then have 
\begin{eqnarray*}
E_{P^{\ast }}\left[ b\left( \mathbf{O};P^{\ast }\right) |W_{j},\pa_{\mathcal{%
G}}\left( W_{j}\right) \right]  &=&E_{P^{\ast }}\left[ O_{f,l}^{j}|W_{j},\pa%
_{\mathcal{G}}\left( W_{j}\right) \right]  \\
&=&E_{P^{\ast }}\left[ \left. E_{P^{\ast}}\left[ O_{f,l}^{j}|W_{j},\mathbf{O}\left(
W_{j},O_{f,l}^{j},\mathcal{G}\right) \right] \right\vert W_{j},\pa_{\mathcal{%
G}}\left( W_{j}\right) \right]  \\
&=&W_{j}E_{P^{\ast }}\left[ \left. h^{\ast}\left[ \mathbf{O}\left(
W_{j},O_{f,l}^{j},\mathcal{G}\right)\right] \right\vert W_{j},\pa_{%
\mathcal{G}}\left( W_{j}\right) \right]  \\
&=&W_{j}g\left[ \pa_{\mathcal{G}}\left( W_{j}\right) \right] .
\end{eqnarray*}%
Then, $E_{P^{\ast }}\left[ b\left( \mathbf{O};P^{\ast }\right) |W_{j},\pa_{%
\mathcal{G}}\left( W_{j}\right) \right] -E_{P^{\ast }}\left[ b\left( \mathbf{%
O};P^{\ast }\right) |\pa_{\mathcal{G}}\left( W_{j+1}\right) \right] =W_{j}g%
\left[ \pa_{\mathcal{G}}\left( W_{j}\right) \right] -E_{P^{\ast }}\left[
b\left( \mathbf{O};P^{\ast }\right) |\pa_{\mathcal{G}}\left( W_{j+1}\right) %
\right] $ is a non-constant function of $W_{j}$ because $g\left[ \pa_{%
\mathcal{G}}\left( W_{j}\right) \right] $ is a non-constant function of pa$%
\left( W_{j}\right) \backslash \mathbf{I}_{j}$ and 
$$
\left[ \pa_{\mathcal{G}}\left(
W_{j}\right) \right]\backslash \mathbf{I}_{j}\cap \pa_{\mathcal{G}}\left(
W_{j+1}\right) =\emptyset .
$$

Finally, consider case (3). Let $W\in  \pa_{%
\mathcal{G}}\left( W_{j}\right) \backslash \pa_{\mathcal{G}}\left(
W_{j+1}\right) $ and $O_{p,l}^{j}\in \mathbf{O}_{p}^{j}\backslash \mathbf{I}%
_{j}$ such that 
\[
O_{p,l}^{j}\not\ort_{\mathcal{G}}W|\mathbf{I}_{j}.
\]
We then
have that $O_{p,l}^{j}\not\ort_{\mathcal{G}}\left[ \text{pa}_{\mathcal{%
G}}\left( W_{j}\right) \backslash \mathbf{I}_{j}\right] |\mathbf{I}_{j}$,
which then implies that  there exists $P^{\ast }\in \mathcal{M}\left( 
\mathcal{G}\right) $ such that 
\begin{equation}
O_{p,l}^{j}\not\ort\left[ \text{pa}_{\mathcal{G}}\left( W_{j}\right)
\backslash \mathbf{I}_{j}\right] |\mathbf{I}_{j}\text{ under }P^{\ast }.
\end{equation}%
The last display implies that there exists $h^{\ast }\left(
O_{p,l}^{j}\right) $ such that $E_{P^{\ast }}\left\{ \left. h^{\ast }\left(
O_{p,l}^{j}\right) \right\vert \left[ \text{pa}\left( W_{j}\right)
\backslash \mathbf{I}_{j}\right] ,\mathbf{I}_{j}\right\} $ is a non-constant
function of pa$\left( W_{j}\right) \backslash \mathbf{I}_{j}.$ Then, since $%
\left[ \text{pa}\left( W_{j}\right) \backslash \mathbf{I}_{j}\right] \cup 
\mathbf{I}_{j}=$pa$_{\mathcal{G}}\left( W_{j}\right) \cup W_{j}$ we conclude
that 
$$
E_{P^{\ast }}\left\{ \left. h^{\ast }\left( O_{p,l}^{j}\right)
\right\vert \text{pa}_{\mathcal{G}}\left( W_{j}\right) ,W_{j}\right\} 
$$ is a
non-constant function of pa$\left( W_{j}\right) \backslash \mathbf{I}_{j}.$
Furthermore, by the Local Markov property, 
$$
E_{P^{\ast }}\left\{ \left.
h^{\ast }\left( O_{p,l}^{j}\right) \right\vert \text{pa}_{\mathcal{G}}\left(
W_{j}\right) ,W_{j}\right\} 
$$
does not depend on $W_{j}.$ So, we conclude
that $E_{P^{\ast }}\left\{ \left. h^{\ast }\left( O_{p,l}^{j}\right)
\right\vert \text{pa}_{\mathcal{G}}\left( W_{j}\right) ,W_{j}\right\} =g%
\left[ \pa_{\mathcal{G}}\left( W_{j}\right) \right] $ where $g\left[ \pa_{%
\mathcal{G}}\left( W_{j}\right) \right] $ is a non-constant function of pa$%
\left( W_{j}\right) \backslash \mathbf{I}_{j}.$  By Lemma \ref%
{lemma:unrestricted} \ we can take $P^{\ast }$ to also satisfy that $b\left( 
\mathbf{O};P^{\ast }\right) =h^{\ast }\left( O_{p,l}^{j}\right) O_{f,1}^{j}$
and $E_{P^{\ast }}\left[ \left. O_{f,1}^{j}\right\vert W_{j},\mathbf{O}%
\left( W_{j},O_{f,1}^{j},\mathcal{G}\right) \right] =W_{j}.$ Then%
\begin{eqnarray*}
E_{P^{\ast }}\left[ b\left( \mathbf{O};P^{\ast }\right) |W_{j},\pa_{\mathcal{%
G}}\left( W_{j}\right) \right]  &=&E_{P^{\ast }}\left\{ \left. h^{\ast
}\left( O_{p,l}^{j}\right) O_{f,1}^{j}\right\vert W_{j},\pa_{\mathcal{G}%
}\left( W_{j}\right) \right\}  \\
&=&E_{P^{\ast }}\left\{ \left. h^{\ast }\left( O_{p,l}^{j}\right) E_{P}\left[
\left. O_{f,1}^{j}\right\vert W_{j},\mathbf{O}\left( W_{j},O_{f,1}^{j},%
\mathcal{G}\right) \right] \right\vert W_{j},\pa_{\mathcal{G}}\left(
W_{j}\right) \right\}  \\
&=&W_{j}E_{P^{\ast }}\left\{ \left. h^{\ast }\left( O_{p,l}^{j}\right)
\right\vert W_{j},\pa_{\mathcal{G}}\left( W_{j}\right) \right\}  \\
&=&W_{j}g\left[ \pa_{\mathcal{G}}\left( W_{j}\right) \right]. 
\end{eqnarray*}%
Consequently, 
$$
E_{P^{\ast }}\left[ b\left( \mathbf{O};P^{\ast }\right)
|W_{j},\text{pa}_{\mathcal{G}}\left( W_{j}\right) \right] -E_{P^{\ast}}\left[
b\left( \mathbf{O};P^{\ast }\right) |\text{pa}_{\mathcal{G}}\left(
W_{j+1}\right) \right] =W_{j}g\left[ \pa_{\mathcal{G}}\left( W_{j}\right) %
\right] -E_{P}\left[ b\left( \mathbf{O};P^{\ast }\right) |\text{pa}_{%
\mathcal{G}}\left( W_{j+1}\right) \right] 
$$
depends on $W_{j}.$ This
concludes the proof of the lemma.
\end{proof}

\begin{lemma}
\label{lemma:main_Z} 
Let $\mathcal{G}$ be a DAG with vertex set $\mathbf{V}$ and
let $A$ and $Y$ be two distinct vertices in $\mathbf{V.}$ Suppose that $\irrel
\left( A,Y;\mathcal{G}\right) =\emptyset .$ Suppose $\mathbf{M}\mathbf{%
\equiv }\de_{\mathcal{G}}\left( A\right) \backslash \left\{ A,Y\right\}
\not=\emptyset $ and let $\left( M_{1},\dots,M_{K}\right) $ be the elements of
$\mathbf{M}$ sorted topologically. Let $M_{0}\equiv A$ and $M_{K+1}\mathbf{%
\equiv }Y.$ Let $\mathbf{O}\equiv \mathbf{O}(A,Y,\mathcal{G})$. Let $\mathbf{O}_{min}$ be the
smallest among the subsets $\mathbf{O}_{sub}$ of $\mathbf{O}$ such that $A\perp \!\!\!\perp _{\mathcal{G}%
}\left( \mathbf{O\backslash O}_{sub}\right) |\mathbf{O}_{sub}$.
\begin{enumerate}

\item $E_{P}\left[ T_{P,a,\mathcal{G}}|Y,\pa_{\mathcal{G}}\left( Y\right) %
\right] =T_{P,a,\mathcal{G}} $ for all $P\in \mathcal{M}%
\left( \mathcal{G}\right) $ if and only if $\lbrace A \rbrace \cup \mathbf{O}%
_{\min }\subseteq \pa_{\mathcal{G}}\left( Y\right) $.

\item Suppose $\left\{ A\right\} \cup \mathbf{O}_{\min }\subseteq \pa_{%
\mathcal{G}}\left( Y\right) $. If $\pa_{\mathcal{G}}\left( Y\right)
\backslash \left\{ M_{K}\right\} \not\subset \pa_{\mathcal{G}}\left(
M_{K}\right) $ then there exists $P\in \mathcal{M}\left( \mathcal{G}\right) $
such that 
\begin{equation*}
E_{P}\left[ T_{P,a,\mathcal{G}}|M_{K},\pa_{\mathcal{G}}\left( M_{K}\right) \right]
-E_{P}\left[ T_{P,a,\mathcal{G}}\mid \pa_{\mathcal{G}}\left( Y\right) \right]
\end{equation*}%
is a non-constant function of $M_{K}.$

\item Suppose $\left\{ A\right\} \cup \mathbf{O}_{\min }\subseteq \pa_{%
\mathcal{G}}\left( Y\right) ,\pa_{\mathcal{G}}\left( Y\right) \backslash
\left\{ M_{K}\right\} \subset \pa_{\mathcal{G}}\left( M_{K}\right) $ and
there exists $j\geq 1$ such that for all $k=K-1,\dots,j+1,\pa_{\mathcal{G}%
}\left( M_{k+1}\right) \backslash \left\{ M_{k}\right\} \subset \pa_{%
\mathcal{G}}\left( M_{k}\right) $ but $\pa_{\mathcal{G}}\left(
M_{j+1}\right) \backslash \left\{ M_{j}\right\} \not\subset \pa_{\mathcal{G}%
}\left( M_{j}\right) .$ Then, there exists $P\in \mathcal{M}\left( \mathcal{G%
}\right) $ such that $E_{P}\left[ T_{P,a,\mathcal{G}}|M_{j},\pa_{\mathcal{G}%
}\left( M_{j}\right) \right] -E_{P}\left[ T_{P,a,\mathcal{G}}|\pa_{\mathcal{G}%
}\left( M_{j+1}\right) \right] $ is a non-constant of function of $M_{j}$.
\end{enumerate}
\end{lemma}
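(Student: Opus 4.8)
The plan is to construct a single law $P^{\ast}\in\mathcal{M}(\mathcal{G})$ under which the displayed difference is a non-constant function of $M_j$, by planting a multiplicative interaction between $M_j$ and the parent of $M_{j+1}$ that witnesses the failed inclusion, and then propagating that interaction down the mediator chain to $Y$.

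First I would extract from the hypothesis $\pa_{\mathcal{G}}(M_{j+1})\setminus\{M_j\}\not\subset\pa_{\mathcal{G}}(M_j)$ a vertex $V\in\pa_{\mathcal{G}}(M_{j+1})$ with $V\neq M_j$ and $V\notin\pa_{\mathcal{G}}(M_j)$. The assumed inclusions $\pa_{\mathcal{G}}(M_{k+1})\setminus\{M_k\}\subset\pa_{\mathcal{G}}(M_k)$ for $k=K-1,\dots,j+1$ together with $\pa_{\mathcal{G}}(Y)\setminus\{M_K\}\subset\pa_{\mathcal{G}}(M_K)$ are precisely condition \eqref{eq:inclusion_2_lemma} of Lemma \ref{lemma:indep_med} for the indices $j+2,\dots,K+1$, so Lemma \ref{lemma:indep_med} (with $k^{\ast}=j+2$) supplies the directed chain $M_j\to M_{j+1}\to\cdots\to M_K\to Y$; in particular $M_j\in\pa_{\mathcal{G}}(M_{j+1})$, each $M_k\in\pa_{\mathcal{G}}(M_{k+1})$ for $k\geq j+1$, and $M_K\in\pa_{\mathcal{G}}(Y)$. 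A topological-order argument then shows $V\in\nd_{\mathcal{G}}(M_j)$ (a proper descendant of $M_j$ would have index above $j$, whereas a parent of $M_{j+1}$ has index at most $j$, and the cases $V\in\{A\}\cup\mathbf{O}$ are immediate), whence by the Local Markov Property $V\ort_{\mathcal{G}}M_j\mid\pa_{\mathcal{G}}(M_j)$ in every $P\in\mathcal{M}(\mathcal{G})$.

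Next I would build $P^{\ast}$ using the variation independence of the node conditional laws of a Bayesian network (Lemma \ref{lemma:orth_decomp}) together with Lemma \ref{lemma:unrestricted}. Taking all variables real valued, I would specify: $A$ independent of $\pa_{\mathcal{G}}(A)$, so that $A\ort\mathbf{O}$ and $\pi_a(\mathbf{O}_{\min};P^{\ast})$ is a positive constant $c=P^{\ast}(A=a)$, making the weight in $T_{P^{\ast},a,\mathcal{G}}$ equal to $I_a(A)/c$; the conditional mean of $M_{j+1}$ given its parents equal to the product $M_j\,V$ (legitimate since $M_j,V\in\pa_{\mathcal{G}}(M_{j+1})$); each subsequent mediator and $Y$ a conditional-mean pass-through of its predecessor along the chain, so that $E_{P^{\ast}}[Y\mid\pa_{\mathcal{G}}(M_{j+1})]=M_j\,V$; and $V$ conditionally nondegenerate given $\pa_{\mathcal{G}}(M_j)$. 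All remaining numerical dependencies (on $A$ and on $\mathbf{O}$ in the mediator and outcome equations) are set to zero, which is permitted within $\mathcal{M}(\mathcal{G})$.

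Finally I would evaluate the two conditional expectations under $P^{\ast}$. Since $\pa_{\mathcal{G}}(M_{j+1})$ contains both $M_j$ and $V$, one gets $E_{P^{\ast}}[T_{P^{\ast},a,\mathcal{G}}\mid\pa_{\mathcal{G}}(M_{j+1})]\propto M_j\,V$; since $V\notin\{M_j\}\cup\pa_{\mathcal{G}}(M_j)$ and $V\ort_{\mathcal{G}}M_j\mid\pa_{\mathcal{G}}(M_j)$, conditioning on $(M_j,\pa_{\mathcal{G}}(M_j))$ instead marginalizes $V$ and yields $E_{P^{\ast}}[T_{P^{\ast},a,\mathcal{G}}\mid M_j,\pa_{\mathcal{G}}(M_j)]\propto M_j\,E_{P^{\ast}}[V\mid\pa_{\mathcal{G}}(M_j)]$. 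The difference is then proportional to $M_j\bigl(E_{P^{\ast}}[V\mid\pa_{\mathcal{G}}(M_j)]-V\bigr)$, up to a factor $I_a(A)/c$ dictated by whether $A$ lies in the respective conditioning sets; its coefficient of $M_j$ is nonzero on a set of positive probability because $V$ has positive conditional variance, so the difference is a non-constant function of $M_j$. The main obstacle is the bookkeeping in this last step: one must verify that the weight $I_a(A)/\pi_a(\mathbf{O}_{\min};P^{\ast})$ separates cleanly under each conditioning (handling the sub-cases $A\in\pa_{\mathcal{G}}(M_{j+1})$, $A\in\pa_{\mathcal{G}}(M_j)$, or neither), that the planted interaction is neither blocked along the chain nor cancelled by the pass-throughs, and that the constructed $P^{\ast}$ indeed belongs to $\mathcal{M}(\mathcal{G})$.
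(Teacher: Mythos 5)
For part 3 your construction is, in substance, the paper's own: your $V$ is the paper's $M^{\ast\ast}\in\pa_{\mathcal{G}}\left(M_{j+1}\right)\backslash\left[\left\{M_{j}\right\}\cup\pa_{\mathcal{G}}\left(M_{j}\right)\right]$, the directed chain $M_{j}\rightarrow M_{j+1}\rightarrow\cdots\rightarrow M_{K}\rightarrow Y$ comes from part (i) of Lemma \ref{lemma:indep_med} exactly as the paper invokes it, and planting $E_{P^{\ast}}\left[M_{j+1}\mid\pa_{\mathcal{G}}\left(M_{j+1}\right)\right]=M_{j}V$ with identity pass-throughs up the chain is precisely the paper's $P^{\ast}$. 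The bookkeeping you flag at the end is resolved as in the paper: the part-3 hypotheses yield the inclusion chain $\left\{A\right\}\cup\mathbf{O}_{\min}\subset\pa_{\mathcal{G}}\left(Y\right)\subset\left\{M_{K}\right\}\cup\pa_{\mathcal{G}}\left(M_{K}\right)\subset\cdots\subset\left\{M_{K},\dots,M_{j+1}\right\}\cup\pa_{\mathcal{G}}\left(M_{j+1}\right)$, and since neither $A$ nor any element of $\mathbf{O}_{\min}$ is a mediator, $\left\{A\right\}\cup\mathbf{O}_{\min}\subset\pa_{\mathcal{G}}\left(M_{j+1}\right)$; hence the weight $I_{a}(A)/\pi_{a}\left(\mathbf{O}_{\min};P\right)$ factors out of $E_{P}\left[\cdot\mid\pa_{\mathcal{G}}\left(M_{j+1}\right)\right]$ for \emph{any} $P$, so your sub-case analysis and the extra assumption $A\ort\pa_{\mathcal{G}}(A)$ are not needed there (though your constant-propensity simplification is legitimate). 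On the other side the paper leaves $E_{P^{\ast}}\left[I_{a}(A)\pi_{a}^{-1}\left(\mathbf{O}_{\min};P^{\ast}\right)M^{\ast\ast}\mid M_{j},\pa_{\mathcal{G}}\left(M_{j}\right)\right]$ unresolved, using only that it is free of $M_{j}$ — your observation that $V$, $A$ and $\mathbf{O}_{\min}$ are all non-descendants of $M_{j}$, so the Local Markov Property applies, is the same step. Taking $j=K$ (empty chain, interaction planted at $Y$ as $E_{P^{\ast}}\left[Y\mid\pa_{\mathcal{G}}\left(Y\right)\right]=M^{\ast}M_{K}$), your construction also delivers part 2, which the paper proves by exactly that planting.

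The genuine gap is that you prove only the mediator-difference statements: part 1 of the lemma is never addressed, and its converse direction needs an ingredient absent from your proposal. When $\mathbf{O}_{\min}\not\subset\pa_{\mathcal{G}}\left(Y\right)$ one must exhibit a law under which $T_{P,a,\mathcal{G}}$ fails to be a function of $\left(Y,\pa_{\mathcal{G}}\left(Y\right)\right)$; the paper does this via the \emph{minimality} of $\mathbf{O}_{\min}$, which guarantees, for any $O_{j}\in\mathbf{O}_{\min}\backslash\pa_{\mathcal{G}}\left(Y\right)$, a law $P^{\ast}$ making $I_{a}(A)Y/\pi_{a}\left(\mathbf{O}_{\min};P^{\ast}\right)$ a non-constant function of $O_{j}$ (otherwise a strictly smaller subset would d-separate $A$ from the rest of $\mathbf{O}$). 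Note that your blanket construction device — taking $A$ independent of $\pa_{\mathcal{G}}(A)$ so that $\pi_{a}\left(\mathbf{O}_{\min};P^{\ast}\right)$ is a positive constant — produces exactly the laws that can \emph{never} witness this failure, so part 1 cannot be recovered within your family of constructions; it requires the opposite regime, a propensity that genuinely varies with each coordinate of $\mathbf{O}_{\min}$. (The remaining case $A\notin\pa_{\mathcal{G}}\left(Y\right)$ is easy: $E_{P}\left[T_{P,a,\mathcal{G}}\mid Y,\pa_{\mathcal{G}}\left(Y\right)\right]$ is then not a function of $A$ while $T_{P,a,\mathcal{G}}$ is, as the paper notes.)
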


\begin{proof}[Proof of Lemma \protect\ref{lemma:main_Z}]

1) If $\lbrace A \rbrace \cup \mathbf{O}%
_{\min }\subseteq \pa_{\mathcal{G}}\left( Y\right)$, then $E_{P}\left[ T_{P,a,\mathcal{G}}|Y,\pa_{\mathcal{G}}\left( Y\right) %
\right] =T_{P,a,\mathcal{G}} $ for all $P\in \mathcal{M}%
\left( \mathcal{G}\right) $  holds trivially by the definition of $T_{P,a,\mathcal{G}}$.

Now suppose that $A\not\in \pa_{\mathcal{G}}\left( Y\right) $ or $\mathbf{%
O}_{\min }\not\subset \pa_{\mathcal{G}}\left( Y\right) .$ If $A\not\in $
pa$_{\mathcal{G}}\left( Y\right) $ then $E_{P}\left[ T_{P,a,\mathcal{G}}|Y,\text{%
pa}_{\mathcal{G}}\left( Y\right) \right] $ is not a function of $A$ and
consequently, it cannot be equal to $I_{a}(A)Y/\pi_{a} \left( \mathbf{O};P\right) .$
Next, suppose $\mathbf{O}_{min }\not\subseteq \pa_{\mathcal{G}}\left(
Y\right) $ because for some $O_{j}\in \mathbf{O}_{min },O_{j}\not\in \pa%
_{\mathcal{G}}\left( Y\right)$. Now, because $\mathbf{O}_{min}$ is the
smallest among the subsets $\mathbf{O}_{sub}$ of $\mathbf{O}$ such that $A\perp \!\!\!\perp _{\mathcal{G}%
}\left( \mathbf{O\backslash O}_{sub}\right) |\mathbf{O}_{sub},$ then
there exists a law $P^{\ast}\in \mathcal{M}\left( \mathcal{G}\right) $ such that $%
I_{a}(A)Y/\pi_{a} \left( \mathbf{O}_{min};P^{\ast}\right) $ is a non-constant function of $O_{j}.$
For such $P^{\ast}$, $E_{P^{\ast}}\left[ T_{P^{\ast},a,\mathcal{G}}|Y,\text{pa}_{\mathcal{G}}\left(
Y\right) \right] $ cannot be equal to $I_{a}(A) Y/\pi_{a} \left( \mathbf{O}_{min};P^{\ast}\right) .$

\medskip

2) 
Suppose that $\left\{ A\right\} \cup \mathbf{O}_{\min }\subset $pa$_{%
\mathcal{G}}\left( Y\right) $ but pa$_{\mathcal{G}}\left( Y\right)
\backslash \left\{ M_{K}\right\} \not\subset $ pa$_{\mathcal{G}}\left(
M_{K}\right) .$ Let 
$$
M^{\ast }\in \pa_{\mathcal{G}}\left( Y\right)
\backslash \left\{ M_{K}\cup \text{ pa}_{\mathcal{G}}\left( M_{K}\right)
\right\} .
$$ 
Since $M_{K}$ is the last element in the topological order of $\mathbf{M}$ and the assumptions that $\irrel(A,Y, \mathcal{G})$, $M_{K}\in \pa_{\mathcal{G}}(Y)$. Then there exists $P^{\ast}\in \mathcal{M}\left( G\right) $ be such that $E_{P^{\ast}}\left[
\left. Y\right\vert \text{pa}_{\mathcal{G}}\left( Y\right) \right] =M^{\ast
}M_{K}$. For such $P^{\ast}$, 
$$
E_{P^{\ast}}\left[ \left. T_{P^{\ast},a,\mathcal{G}}\right\vert 
\text{pa}_{\mathcal{G}}\left( Y\right) \right] =\frac{A}{\pi \left( \mathbf{O%
}_{\min };P^{\ast}\right) }M^{\ast } M_{K}.
$$
Furthermore, 
\begin{eqnarray*}
&&E_{P^{\ast}}\left[ T_{P^{\ast},a,\mathcal{G}}|M_{K},\text{pa}_{\mathcal{G}}\left(
M_{K}\right) \right] 
\\
&=&E_{P^{\ast}}\left[ \left. \frac{I_{a}(A)}{\pi_{a} \left( 
\mathbf{O}_{min} ;P^{\ast}\right) }E_{P^{\ast}}\left[ Y|A,\mathbf{O}_{min},M_{K},\text{pa}_{\mathcal{G}}\left(
M_{K}\right) ,\text{pa}_{\mathcal{G}}\left( Y\right) \right] \right\vert
M_{K},\text{pa}_{\mathcal{G}}\left( M_{K}\right) \right] \\
&=&E_{P^{\ast}}\left[ \left. \frac{I_{a}(A)}{\pi_{a} \left( 
\mathbf{O}_{min} ;P^{\ast}\right) }E_{P^{\ast}}\left[ Y|\text{pa}_{\mathcal{G}}\left(
Y\right) \right] \right\vert M_{K},\text{pa}_{\mathcal{G}}\left(
M_{K}\right) \right] \\
&=&E_{P^{\ast}}\left[ \left. \frac{I_{a}(A)}{\pi_{a} \left( 
\mathbf{O}_{min} ;P^{\ast}\right) }M^{\ast }M_{K}\right\vert M_{K},\text{pa}_{%
\mathcal{G}}\left( M_{K}\right) \right] \\
&=&M_{K}E_{P^{\ast}}\left[ \left. \frac{I_{a}(A)}{\pi_{a} \left( 
\mathbf{O}_{min} ;P^{\ast}\right) }M^{\ast }\right\vert M_{K},\text{pa}_{%
\mathcal{G}}\left( M_{K}\right) \right] .
\end{eqnarray*}%
Then, 
\begin{eqnarray*}
&&E_{P^{\ast}}\left[ T_{P^{\ast},a,\mathcal{G}}|M_{K},\text{pa}_{\mathcal{G}}\left(
M_{K}\right) \right] -E_{P^{\ast}}\left[ \left. T_{P^{\ast},a,\mathcal{G}}\right\vert 
\text{pa}_{\mathcal{G}}\left( Y\right) \right] \\
&=&M_{K}\left\{ E_{P^{\ast}}\left[ \left. \frac{I_{a}(A)}{\pi_{a} \left( \mathbf{O}_{min} ;P^{\ast}\right) }M^{\ast }\right\vert M_{K},\text{pa}_{%
\mathcal{G}}\left( M_{K}\right) \right] -\frac{A}{\pi \left( \mathbf{O%
}_{\min };P^{\ast}\right) }M^{\ast }\right\}.
\end{eqnarray*}%
The right hand side is a non-constant function of $M_{K}$ because $M^{\ast
}\notin \left\{ M_{K}\right\} \cup $pa$_{\mathcal{G}}\left( M_{K}\right) .$

\medskip

3) Suppose that $\left\{ A\right\} \cup \mathbf{O}_{\min }\subset \pa_{%
\mathcal{G}}\left( Y\right) $ and $\pa_{\mathcal{G}}\left( Y\right)
\backslash \left\{ M_{K}\right\} \subset $ pa$_{\mathcal{G}}\left(
M_{K}\right) $ and that pa$_{\mathcal{G}}\left( M_{k+1}\right) \backslash
\left\{ M_{k}\right\} \subset $ pa$_{\mathcal{G}}\left( M_{k}\right) $ for
all $k=K-1,\dots,j+1,$ but pa$_{\mathcal{G}}\left( M_{j+1}\right) \backslash
\left\{ M_{j}\right\} \not\subset $ pa$_{\mathcal{G}}\left( M_{j}\right) .$

Now pa$_{\mathcal{G}}\left( M_{j+1}\right) \backslash \left\{ M_{j}\right\}
\not\subset $ pa$_{\mathcal{G}}\left( M_{j}\right) $ implies that there
exists $M^{\ast \ast }\in $pa$_{\mathcal{G}}\left( M_{j+1}\right) \backslash
\left\{ M_{j}\cup \text{pa}_{\mathcal{G}}\left( M_{j}\right) \right\} .$
On the other hand, by part (i) of Lemma \ref{lemma:indep_med} we know that $M_{k}\in \pa_{\mathcal{G}}(M_{k+1})$ for $k=j,\dots,K$. 
Now, consider a law $P^{\ast}$ such that
$$
E_{P^{\ast}}\left[Y \mid \text{pa}_{\mathcal{G}}\left( Y\right)  \right]= M_{K}
$$
and
$$
E_{P^{\ast}}\left[M_{k} \mid \text{pa}_{\mathcal{G}}\left( M_{k}\right)  \right]= M_{k-1}
$$
for all $k=j+2,\dots,K$ and such that
$$
E_{P^{\ast}}\left[M_{j+1} \mid \text{pa}_{\mathcal{G}}\left( M_{j+1}\right)  \right]= M^{\ast\ast}M_{j}.
$$
Since
\begin{align*}
\lbrace A \rbrace \cup \mathbf{O}_{min} \subset \pa_{\mathcal{G}}(Y) \subset \lbrace M_{K}\rbrace  \cup \pa_{\mathcal{G}}(M_{K})&\subset \lbrace M_{K}, M_{K-1}\rbrace  \cup \pa_{\mathcal{G}}(M_{K-1}) 
\\
&\subset \dots \subset  \lbrace M_{K}, M_{K-1},\dots, M_{j+1}\rbrace  \cup \pa_{\mathcal{G}}(M_{j+1})
\end{align*}
then
\begin{eqnarray*}
E_{P^{\ast}}\left[ T_{P^{\ast},a,\mathcal{G}}|\text{pa}_{\mathcal{G}}\left( M_{j+1}\right) %
\right] &=&  \frac{I_{a}(A)}{\pi_{a} \left( \mathbf{O}_{min} ;P^{\ast}\right) }  E_{P^{\ast}}\left[ \left. E_{P^{\ast}}\left[ Y|\text{pa}_{\mathcal{G}}\left(
Y\right) ,\text{pa}_{\mathcal{G}}\left( M_{j+1}\right)  \right] \right\vert \text{pa}_{\mathcal{G}}\left(
M_{j+1}\right) \right] \\
&=& \frac{I_{a}(A)}{\pi_{a} \left( \mathbf{O}_{min} ;P^{\ast}\right) }  E_{P^{\ast}}\left[ \left. E_{P^{\ast}}\left[ Y|\text{pa}_{\mathcal{G}}\left( Y\right) %
\right] \right\vert \text{pa}_{\mathcal{G}}\left( M_{j+1}\right) \right]  \\
&=& \frac{I_{a}(A)}{\pi_{a} \left( \mathbf{O}_{min} ;P^{\ast}\right) }  E_{P^{\ast}}\left[ M_{K} %
\mid \text{pa}_{\mathcal{G}}\left( M_{j+1}\right) \right]  \\
&=& \frac{I_{a}(A)}{\pi_{a} \left( \mathbf{O}_{min} ;P^{\ast}\right) } E_{P^{\ast}}\left[ \left. E_{P^{\ast}}\left[ M_{K}|\text{pa}_{\mathcal{G}}\left(
M_{K}\right) ,\text{pa}_{\mathcal{G}%
}\left( M_{j+1}\right) \right] \right\vert \text{pa}_{\mathcal{G}}\left(
M_{j+1}\right) \right] \\
&=& \frac{I_{a}(A)}{\pi_{a} \left( \mathbf{O}_{min} ;P^{\ast}\right) } E_{P^{\ast}}\left[ \left. E_{P^{\ast}}\left[ M_{K}|\text{pa}_{\mathcal{G}}\left(
M_{K}\right) \right] \right\vert \text{pa}_{\mathcal{G}}\left(
M_{j+1}\right) \right] \\
&=& \frac{I_{a}(A)}{\pi_{a} \left( \mathbf{O}_{min} ;P^{\ast}\right) } E_{P^{\ast}}\left[ \left. M_{K-1} \right\vert \text{pa}_{\mathcal{G}}\left(
M_{j+1}\right) \right] \\
&=&\dots \\
&=& \frac{I_{a}(A)}{\pi_{a} \left( \mathbf{O}_{min} ;P^{\ast}\right) } E_{P^{\ast}}\left[ \left. M_{j+1}\right\vert \text{pa}_{\mathcal{G}}\left(
M_{j+1}\right) \right] \\
&=&\frac{I_{a}(A)}{\pi_{a} \left( \mathbf{O}_{min} ;P^{\ast}\right) } M^{\ast \ast}M_{j}.
\end{eqnarray*}%

On the other hand, 
\begin{eqnarray*}
&&E_{P^{\ast}}\left[ T_{P^{\ast},a,\mathcal{G}}|M_{j}, \text{pa}_{\mathcal{G}}\left( M_{j}\right) %
\right] 
\\
&=& E_{P^{\ast}}\left[ \frac{I_{a}(A)}{\pi_{a} \left( \mathbf{O}_{min} ;P^{\ast}\right) } \left. E_{P^{\ast}}\left[ Y| A,\mathbf{O}_{min},\text{pa}_{\mathcal{G}}\left(
Y\right) ,M_{j}, \text{pa}_{\mathcal{G}}\left( M_{j}\right)  \right] \right\vert M_{j}, \text{pa}_{\mathcal{G}}\left(
M_{j}\right) \right] \\
&=&  E_{P^{\ast}}\left[\frac{I_{a}(A)}{\pi_{a} \left( \mathbf{O}_{min} ;P^{\ast}\right) }  \left. E_{P^{\ast}}\left[ Y|\text{pa}_{\mathcal{G}}\left( Y\right) %
\right] \right\vert M_{j}, \text{pa}_{\mathcal{G}}\left( M_{j}\right) \right]  \\
&=& E_{P^{\ast}}\left[ \frac{I_{a}(A)}{\pi_{a} \left( \mathbf{O}_{min} ;P^{\ast}\right) }  M_{K} %
\mid M_{j}, \text{pa}_{\mathcal{G}}\left( M_{j}\right) \right]  \\
&=&  E_{P^{\ast}}\left[\frac{I_{a}(A)}{\pi_{a} \left( \mathbf{O}_{min} ;P^{\ast}\right) } \left. E_{P^{\ast}}\left[ M_{K}|A,\mathbf{O}_{min},\text{pa}_{\mathcal{G}}\left(
M_{K}\right) ,M_{j},\text{pa}_{\mathcal{G}%
}\left( M_{j}\right) \right] \right\vert M_{j}, \text{pa}_{\mathcal{G}}\left(
M_{j}\right) \right] \\
&=&  E_{P^{\ast}}\left[ \frac{I_{a}(A)}{\pi_{a} \left( \mathbf{O}_{min} ;P^{\ast}\right) }\left. E_{P^{\ast}}\left[ M_{K}|\text{pa}_{\mathcal{G}}\left(
M_{K}\right) \right] \right\vert M_{j}, \text{pa}_{\mathcal{G}}\left(
M_{j}\right) \right] \\
&=&  E_{P^{\ast}}\left[\frac{I_{a}(A)}{\pi_{a} \left( \mathbf{O}_{min} ;P^{\ast}\right) } \left. M_{K-1} \right\vert M_{j}, \text{pa}_{\mathcal{G}}\left(
M_{j}\right) \right] \\
&=&\dots \\
&=& E_{P^{\ast}}\left[ \frac{I_{a}(A)}{\pi_{a} \left( \mathbf{O}_{min} ;P^{\ast}\right) } \left. M_{j+1}\right\vert M_{j}, \text{pa}_{\mathcal{G}}\left(
M_{j}\right) \right] \\
&=& E_{P^{\ast}}\left[  \frac{I_{a}(A)}{\pi_{a} \left( \mathbf{O}_{min} ;P^{\ast}\right) }\left. E_{P^{\ast}}\left[ M_{j+1}\mid A,\mathbf{O}_{min},M_{j}, \text{pa}_{\mathcal{G}}\left(
M_{j}\right), \text{pa}_{\mathcal{G}}\left(
M_{j+1}\right)  \right]\right\vert M_{j}, \text{pa}_{\mathcal{G}}\left(
M_{j}\right) \right] 
\\
&=&  E_{P^{\ast}}\left[ \frac{I_{a}(A)}{\pi_{a} \left( \mathbf{O}_{min} ;P^{\ast}\right) }\left. E_{P^{\ast}}\left[ M_{j+1}\mid  \text{pa}_{\mathcal{G}}\left(
M_{j+1}\right)  \right]\right\vert M_{j}, \text{pa}_{\mathcal{G}}\left(
M_{j}\right) \right] 
\\
&=& E_{P^{\ast}}\left[  \frac{I_{a}(A)}{\pi_{a} \left( \mathbf{O}_{min} ;P^{\ast}\right) }\left. M^{\ast\ast} M_{j}\right\vert M_{j},\text{pa}_{\mathcal{G}}\left(
M_{j}\right) \right] \\
&=&M_{j} E_{P^{\ast}}\left[ \frac{I_{a}(A)}{\pi_{a} \left( \mathbf{O}_{min} ;P^{\ast}\right) }  \left. M^{\ast\ast} \right\vert M_{j},\text{pa}_{\mathcal{G}}\left(
M_{j}\right) \right].
\end{eqnarray*}%
Consequently,
\begin{align*}
&E_{P^{\ast}}\left[ T_{P^{\ast},a,\mathcal{G}}|M_{j},\text{pa}_{\mathcal{G}}\left(
M_{j}\right) \right] -E_{P^{\ast}}\left[ T_{P^{\ast},a,\mathcal{G}}|\text{pa}_{\mathcal{G}%
}\left( M_{j+1}\right) \right]
\\
&=  \frac{I_{a}(A)}{\pi_{a} \left( \mathbf{O}_{min} ;P^{\ast}\right) } M_{j} \left(E_{P^{\ast}}\left[ \frac{I_{a}(A)}{\pi_{a} \left( \mathbf{O}_{min} ;P^{\ast}\right) }  \left. M^{\ast\ast} \right\vert M_{j},\text{pa}_{\mathcal{G}}\left(
M_{j}\right) \right] - M^{\ast\ast} \right),
\end{align*}%
which is a non-constant function of $M_{j}$.

\end{proof}

\bibliographystyle{apalike}
\bibliography{efficient}

\end{document}